\crefname{equation}{}{}
\crefname{case}{case}{cases}
\Crefname{case}{Case}{Cases}
\setlist[itemize]{label={$\vcenter{\hbox{\tiny$\bullet$}}$}, leftmargin=2\parindent}
\setlist[enumerate,1]{label={\upshape(\roman*)}, leftmargin=2\parindent}
\DeclareMathAlphabet{\mathbbold}{U}{bbold}{m}{n}  
\newcommand{\id}{\mathbbold{1}}
\DeclareRobustCommand{\SkipTocEntry}[5]{}
\renewcommand\leq{\leqslant}
\renewcommand\geq{\geqslant}
\DeclareMathOperator{\wt}{wt}
\DeclareMathOperator{\GTP}{GTP}
\DeclareMathOperator{\SSYT}{SSYT}
\DeclareMathOperator{\res}{res}
\DeclareMathOperator{\GL}{GL}
\newcommand{\sh}{\vartheta} 
\newcommand{\M}{\mathbf{S}} 
\newcommand{\m}{\mathbf{s}} 
\newcommand{\mDG}{\mathfrak{p}^{\Delta}_{\Gamma}} 
\newcommand{\mGD}{\mathfrak{p}^{\Gamma}_{\Delta}} 
\newcommand{\Left}{\textnormal{\texttt{L}}}
\newcommand{\Right}{\textnormal{\texttt{R}}}
\newcommand{\Tr}{T_\Right}
\newcommand{\Tl}{T_\Left}
\newcommand{\Rrr}{R^\Right_\Right}
\newcommand{\Rll}{R^\Left_\Left}
\newcommand{\Rrl}{R^\Right_\Left}
\newcommand{\Rlr}{R^\Left_\Right}
\newcommand{\Vl}{V_\Left}
\newcommand{\Vr}{V_\Right}
\newcommand{\Sr}{\mathfrak{S}^\Right}
\newcommand{\SlN}[1]{\mathfrak{S}^{\Left, #1}}
\newcommand{\Zr}{Z^\Right}
\newcommand{\ZlN}[1]{Z^{\Left, #1}}
\newcommand{\Td}{T_\Delta}
\newcommand{\Tg}{T_\Gamma}
\newcommand{\Rdd}{R^\Delta_\Delta}
\newcommand{\Rgg}{R^\Gamma_\Gamma}
\newcommand{\Rdg}{R^\Delta_\Gamma}
\newcommand{\Rgd}{R^\Gamma_\Delta}
\newcommand{\rcw}[2][35]{\rotatebox[origin=c]{-#1}{$#2$}}
\newcommand{\rccw}[2][35]{\rotatebox[origin=c]{#1}{$#2$}}
\tikzstyle{spin}=[circle, draw, fill=white, minimum size=16pt, inner sep=1pt, outer sep=0pt]
\tikzstyle{dot}=[fill=black, circle, inner sep=0pt, minimum size=4pt]
\tikzstyle{gamma}=[double distance=0.8pt]
\tikzstyle{delta}=[ultra thick]
\tikzstyle{halo}=[circle, fill=white, inner sep=0pt]
\tikzstyle{paths}=[colored/.style = {ultra thick}, plus/.style = {}, unknown/.style = {dashed, ultra thin}]
\tikzstyle{train}=[paths, scale=0.75, baseline={([yshift=-0.5ex]current  bounding  box.center)}]
\let\oldtocsubsection=\tocsubsection
\let\oldtocsubsubsection=\tocsubsubsection
\renewcommand{\tocsubsection}[2]{\hspace{1.8em}\oldtocsubsection{#1}{#2}}
\renewcommand{\tocsubsubsection}[2]{\hspace{3.6em}\oldtocsubsubsection{#1}{#2}}
\newcommand{\cb}{\cellcolor{blue!15} \color{black}}
\newcommand{\hc}{|[highlighted entry]|}
\newcommand{\emptycol}{\hphantom{\hspace{0.8em} \centralLabel \hspace{0.8em}}}  
\tikzstyle{sipattern}=[
\definecolor{green}{RGB}{0,180,0}
\newtheorem{theorem}{Theorem}[section]
\newtheorem{lemma}[theorem]{Lemma}
\newtheorem{proposition}[theorem]{Proposition}
\newtheorem{corollary}[theorem]{Corollary}
\theoremstyle{definition}
\newtheorem{remark}[theorem]{Remark}
\newtheorem{algorithm}[theorem]{Algorithm}
\newtheorem{example}[theorem]{Example}
\numberwithin{equation}{section}
\begin{document}

\title{The Schützenberger involution and colored lattice models}

\author{Henrik P. A. Gustafsson}
\address{Department of Mathematics and Mathematical Statistics, Umeå University, SE-901 87 Umeå, Sweden}
\email{henrik.gustafsson@umu.se}

\author{Carl Westerlund}
\address{Department of Mathematics and Mathematical Statistics, Umeå University, SE-901 87 Umeå, Sweden}
\email{carl.westerlund@umu.se}

\subjclass[2020]{Primary: 82B23 Secondary: 16T25, 05E10, 05A19, 05E05}


\keywords{Solvable lattice models, Gamma-Delta duality, Gelfand--Tsetlin patterns, Berenstein--Kirillov involutions, Schützenberger involutions}

\begin{abstract}
  Colored lattice models can be used to describe many different types of special functions of interest in both algebraic combinatorics and representation theory, for example Schur polynomials, nonsymmetric Macdonald polynomials, and characters and Whittaker functions for representations of $p$-adic groups.
  A notable example is the metaplectic ice model of which there are actually two different variants: a Gamma and a Delta variant.
  These variants differ in key aspects but surprisingly produce equal partition functions, which are weighted sums over admissible configurations, and this equality is called the Gamma-Delta duality.
  The duality was used to prove the analytic continuation of certain multiple Dirichlet series and is highly non-trivial, especially since the number of configurations on each side of the equality can differ. 

  In this paper we construct a new family of solvable, colored lattice models and prove that they are dual to existing lattice models in the literature, including the above metaplectic case and the lattice model for (non-metaplectic) Iwahori Whittaker functions together with its crystal limit for Demazure atoms for Cartan type $A$.
  The equality of partition functions is shown using Yang--Baxter equations involving $R$-matrices mixing lattice model rows of types Gamma and Delta.

  For the crystal Demazure lattice model we show that the duality refines to a weight-respecting bijection of states given by the Schützenberger involution on the associated Gelfand--Tsetlin patterns or semistandard Young tableaux.
  We also show how the individual steps exchanging two rows in the proof of the duality for the partition functions refines to Berenstein--Kirillov, or Bender--Knuth involutions.   
\end{abstract}

\maketitle
\begin{samepage}
\tableofcontents
\end{samepage}

\section{Introduction}
The main focus of this paper is the construction of a new family of solvable, colored lattice models and showing that it is dual to the family of lattice models in~\cite{BBBG:duality}.
By \emph{dual} we mean that they give the same partition functions, which are functions in the row parameters $\mathbf{z} = (z_1, \ldots, z_r)$.
Both families of lattice models consist of configurations, or states, of colored paths through a two-dimensional grid, but an important difference is that in the new family the paths are only moving in the directions down and left, while in the previous family they are moving down and right.
The two families are not simply mirror images of each other; indeed proving that their partition functions are equal is highly non-trivial and the two partition functions may involve a different number of states. 

The partition functions of colored lattice models have been used in several papers \cite{BBBG:demazure,BBBG:Iwahori,BBBG:metahori} to describe special functions in representation theory such as characters and Whittaker functions of $p$-adic representations.
For this paper the representation theoretic origins of the lattice models will not be important for our arguments, but we present them here as a motivation.
From the viewpoint of algebraic combinatorics, these colored lattice models produce for example Schur polynomials and limits of Macdonald polynomials with prescribed symmetry.
For other lattice models producing nonsymmetric Macdonald polynomials see~\cite{Borodin-Wheeler}.

Two important cases of colored lattice models are those describing so-called Iwahori Whittaker functions and metaplectic spherical Whittaker functions of Cartan type $A$.
We sometimes call these lattice models Iwahori ice and metaplectic ice because of their resemblance to historical six-vertex models used to describe ice. 
The former was constructed in~\cite{BBBG:Iwahori} while the latter was originally constructed in~\cite{BBCFG, BBB} using a different description but later recast as a colored lattice model in~\cite{BBBG:duality}. 

The family of right-moving lattice models from~\cite{BBBG:duality} was constructed to interpolate between the Iwahori and metaplectic ice models through Drinfeld twists of the underlying affine supersymmetric quantum group $U_q(\widehat{\mathfrak{gl}}(m|1))$ where $m$ is the number of colors.
It was used in~\cite{BBBG:duality} to prove that the two seemingly very different types of Whittaker functions described above are actually equal and this identity was called the \emph{Iwahori-metaplectic duality}.

Another important feature that was used to prove this equality was that there are actually two variants of metaplectic ice called $\Gamma$ and $\Delta$ which are left- and right-moving, respectively, and that these two variants give the same partition functions.
This equality of the $\Gamma$ and $\Delta$ variants, called the $\Gamma$-$\Delta$ duality, was originally used to prove the analytic continuation and functional equations of so-called Weyl group multiple Dirichlet series in \cite{BBF:orange, BBB, BBBGray}.

The existence of these two variants for the metaplectic ice model suggested to us that there should be a left-moving variant of Iwahori ice as well, and in fact a whole family of left-moving lattice models dual to the right-moving family as visualized in \cref{fig:specializations}.
Indeed, we now prove that in this paper.

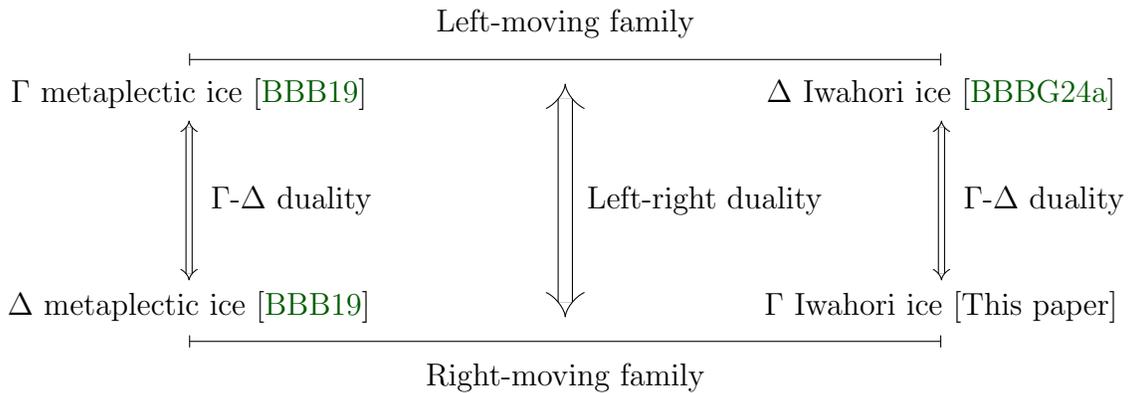
\begin{figure}[htpb]
  \centering
  \begin{tikzpicture}[scale=1.25]
    \node[label={above:Left-moving family}] (E) at (4,0) {};
    \node[label={below:Right-moving family}] (F) at (4,-3) {};
    \draw[|-|] (0,0) node[below=1mm] (A) {$\Gamma$ metaplectic ice \cite{BBB}} -- (8,0) node[below=1mm] (B) {$\Delta$ Iwahori ice \cite{BBBG:Iwahori}};
    \draw[|-|] (0,-3) node[above=1mm] (C) {$\Delta$ metaplectic ice \cite{BBB}} -- (8,-3) node[above=1mm] (D) {$\Gamma$ Iwahori ice [This paper]};
    \draw[Implies-Implies, double equal sign distance] (A) -- (C) node[midway, label={right:$\Gamma$-$\Delta$ duality}]{};
    \draw[Implies-Implies, double equal sign distance] (B) -- (D) node[midway, label={right:$\Gamma$-$\Delta$ duality}]{};
    \draw[Implies-Implies, double distance = 5pt, shorten <= 5pt, shorten >= 5pt] (E) -- (F) node[midway, label={right:Left-right duality}]{};
  \end{tikzpicture}
  \caption{The left- and right-moving families of lattice models together with their specializations, naming conventions and references.}
  \label{fig:specializations}
\end{figure}

We also prove that the whole family is \emph{solvable}, meaning that it satisfies Yang--Baxter equations including those that mix left- and right-moving models. 
The proof is self-contained and implies in particular the (mixed and non-mixed) Yang--Baxter equations for the known Iwahori ice model and both variants of the metaplectic ice models.
The mixed Yang--Baxter equations are then used to prove that the left- and right-moving families are dual using the familiar \emph{train argument} which swaps two rows of the lattice models.

An important motivation for the work in this paper was, as mentioned, to find a left-moving dual to Iwahori ice.
We plan to use this in a future project to prove a generalized version of the Iwahori-metaplectic duality for Whittaker functions which are both simultaneously metaplectic and Iwahori invariant.
The left-moving Iwahori model is also expected to be used to construct a lattice model for Iwahori Whittaker functions of types $B$ and $C$ by alternating left- and right-moving rows. 

Another important result of this paper is that the left-right duality refines to a weight-respecting left-right bijection of the states for a particular limit of the parameters of the families: first we specialize to the left- and right-moving Iwahori ice models and then we take the so-called crystal limit $v \to 0$ of a parameter $v$.
In this limit the states are naturally in bijection with Gelfand--Tsetlin patterns and therefore also semistandard Young tableaux (SSYT) on which the left-right bijection becomes the Schützenberger involution.
The weight of the state is then given by $\mathbf{z}^{\wt(T)}$ where $\wt(T)$ is the weight of the corresponding tableau.
In \cite{BBBG:demazure}, Gustafsson and collaborators showed that the right-moving crystal model describes Demazure characters of Cartan type $A$ and we thus get a new left-moving lattice model for these characters in this paper.

As mentioned above, the proof of the left-right duality for partition functions uses Yang--Baxter equations to swap pairs of adjacent rows.
We show that these row swaps can be refined by Berenstein--Kirillov involutions $t_i$ on Gelfand--Tsetlin patterns introduced in~\cite{KirillovBerenstein}, or Bender--Knuth involutions $\operatorname{BK}_i$ on SSYT introduced in~\cite{BenderKnuth}, which act on the weight by a simple transposition. 
The way the left-right duality reduces into individual row swaps mirrors the factorization of the Schützenberger involution into Berenstein--Kirillov involutions.

To summarize, we get the following commuting diagram which is weight-respecting, meaning that the associated weights transform by the corresponding simple transposition,
\begin{equation*}
  \begin{tikzcd}[baseline=2mm]
    \mathfrak{S}_{\lambda + \rho}^\Theta  \arrow[r, leftrightarrow] \arrow[d, leftrightarrow, dashed] 
    & \GTP_\lambda                        \arrow[r, leftrightarrow] \arrow[d, leftrightarrow, "t_{r-i}"] 
    & \SSYT(\lambda)                      \arrow[d, leftrightarrow, "\operatorname{BK}_{r-i}"] 
    \\
    \mathfrak{S}_{\lambda + \rho}^{s_{i}\Theta} \arrow[r, leftrightarrow] 
    & \GTP_\lambda \arrow[r, leftrightarrow]
    & \SSYT(\lambda)   
  \end{tikzcd}
\end{equation*}
Here $\operatorname{SSYT}(\lambda)$ is the set of semistandard Young tableaux with shape $\lambda = (\lambda_1, \ldots, \lambda_r) \in \mathbb{Z}^r$, where $\lambda_1 \geq \cdots \geq \lambda_r \geq 0$, and $\operatorname{GTP}_\lambda$ is the set of Gelfand--Tsetlin patterns with top row $\lambda$.
Lastly, $\mathfrak{S}^\Theta_{\lambda + \rho}$ denotes the set of states of the lattice model with (a mix of) left- and right-moving rows given by $\Theta$, such that $\Theta_i \neq \Theta_{i+1}$, and $\lambda + \rho$, where $\rho = (r-1, r-2, \ldots, 1, 0)$, denotes which column numbers have top boundary edges occupied by paths. 

As an intermediate step we define subsets of Gelfand--Tsetlin patterns where different pairs of consecutive rows are either so-called left-strict or right-strict matching the row types of the lattice model.
We show the combinatorial result that Berenstein--Kirillov involutions act naturally on these subsets by swapping distinct row types.

For a crystal model with $r$ rows we fix some $r$-tuple of colors for the top boundary edges. The $r$ colored paths of a state then goes from the top boundary and exit on the left and right boundary at left- and right-moving rows respectively and we can keep track of these boundary colors by an $r$-tuple $\sigma$. 
Denoting the set of such states by $\mathfrak{S}^\Theta_{\lambda + \rho,\sigma}$ we show that the left-right bijection respects the horizontal boundary colors and descends to a bijection $\mathfrak{S}^\Theta_{\lambda + \rho,\sigma} \longleftrightarrow \mathfrak{S}_{\lambda + \rho, s_i \sigma}^{s_{i}\Theta}$.
To prove this we show that $\sigma$ is determined from the associated Gelfand--Tsetlin pattern using an action of the Coxeter monoid corresponding to the symmetric group~$S_r$.
This generalizes a similar statement in~\cite{BBBG:demazure} to mixed states.
We complete the proof by showing how $\sigma$ transforms under Berenstein--Kirillov involutions using a combinatorial argument.

\medskip\noindent\textbf{Addressing two questions in the literature.}
We will now address a question connected to the book~\cite{BBF:orange} regarding \emph{packets} of states and another question connected to the paper~\cite{Buciumas-Scrimshaw} regarding the crystal limit.

As mentioned above, the non-crystal duality between the left- and right-moving families cannot, in general, be refined to a bijection on states. 
This issue was studied in~\cite{BBF:orange} for the metaplectic $\Gamma$ and $\Delta$ case (in the language of Gelfand--Tsetlin patterns predating the constructions of the corresponding lattice models).
They empirically observed that the Gelfand--Tsetlin patterns could be partitioned into smaller \emph{packets} which give a partial refinement of the duality, that is, that the partition functions of the packet and its dual agree. 

For special subsets of Gelfand--Tsetlin patterns they showed in~\cite[Chapter~10]{BBF:orange} that the corresponding packets are singletons (thus reducing the metaplectic $\Gamma$-$\Delta$ duality to a bijection of states for these particular subsets). 

Furthermore, they showed that the packets in a related \emph{Statement E} are described using facets of a particular simplex, but it is unclear how this can be interpreted in terms of lattice models states.

Since the members of each lattice model family is related by a form of Drinfeld twisting, it would be natural to consider packets that are invariants for the whole family.
The results of this paper for the crystal limit of the Iwahori specialization could then be an important piece of the puzzle.

Our results also address an important issue raised in~\cite{Buciumas-Scrimshaw}.
In that paper they construct a lattice model describing Demazure character for Cartan types $B$ and $C$ consisting of alternating left- and right-moving rows called $\Delta$ and $\Gamma$ respectively as well as a new type of vertex called a \emph{cap} connecting two rows.
The $\Gamma$ and $\Delta$ vertex configurations and weights agree with the left- and right-moving crystal limits of the Iwahori models in this paper. 
The right-moving $\Gamma$ variant was originally constructed in~\cite{BBBG:demazure} which was later generalized to the Iwahori ice model originally defined in~\cite{BBBG:Iwahori}. 

Buciumas and Scrimshaw showed that their model in~\cite{Buciumas-Scrimshaw} is \emph{quasi-solvable}, meaning that it satisfies Yang--Baxter equations using three (mixed) $R$-matrices.
However they were unable to find a suitable fourth $R$-matrix required for full solvability.
One reason for this that they mentioned was that there are internal loops of paths with colors not fixed by the boundary conditions in the Yang--Baxter equations (which are therefore summed over), seemingly resulting in an $m$-dependence on only one side of the equation. 

For the families in this paper we have found all four $R$-matrices and show that they satisfy the corresponding Yang--Baxter equations.
We show that the issue of color loops for the general family is resolved by the fact that the $m$-depending color loop sums are telescoping sums reducing to an expression that matches an $m$-dependence in the fourth $R$-matrix on the other side of the equation.
However, when taking the crystal limit $v\to 0$ in the Iwahori specialization of our families, the missing fourth $R$-matrix of~\cite{Buciumas-Scrimshaw} becomes very degenerate with most vertex weights being zero, and in particular it loses the $m$-dependent factors.

\medskip\noindent\textbf{This paper is organized as follows.}
We explain how the families of lattice models are constructed in \cref{sec:families_of_lattice_models} and how specializations of them relate to models previously defined in the literature. 
In \cref{sec:YBE} we introduce the different types of Yang--Baxter equations that are used in the paper and we prove that the families of lattice models satisfy these equations in \cref{thm:YBE}.
In the same section we also prove the duality of the left- and right-moving families, that is, show that their partition functions agree, in \cref{thm:left-right_duality}.
Because of color loops, the fourth $R$-matrix mentioned above needs to be treated differently from the other cases.
Although we do not need this particular $R$-matrix for the rest of our results, we show how the Yang--Baxter equations involving it can be reduced to the other cases without it in \cref{sec:YBE,appendix:proof_of_R-matrix_inverse}.
Lastly, we consider the crystal limit of the Iwahori models in \cref{sec:crystal} where we first show that there is a bijection between mixed states and Gelfand--Tsetlin patterns, define the Berenstein--Kirillov involutions and describe how they give the Schützenberger involution.
We show how the Berenstein--Kirillov involutions give a bijection of states of different mixed row types and, in \cref{thm:ti-state-weights}, how the weight of the state is transformed.
In \cref{thm:ti-boundary-conditions} we then prove that the boundary colors of the state are transformed as expected using a combinatorial argument based on patterns of elements in a Coxeter monoid.

\medskip\noindent\textbf{Acknowledgements. }
We would like to thank the participants of the \emph{Conference on 
Solvable Lattice Models, Number Theory and Combinatorics} in Dublin 2024 for their comments on an early presentation of the results in this paper.
We are particularly thankful to Ben Brubaker and Daniel Bump as well as Valentin Buciumas and Travis Scrimshaw for helpful discussions about their work \cite{BBF:orange} and \cite{Buciumas-Scrimshaw}.

\section{The families of lattice models}
\label{sec:families_of_lattice_models}

\subsection{Setup}
\label{sec:introduction_to_lattice_models}

Consider a rectangular grid with $r$ rows and $N$ columns, which we will think of as a graph with the crossings as vertices.
There are edges at the boundary of the grid that are only adjacent to a single vertex, so this is a slight generalization of the usual definition of a graph.
The edges at the boundary of the grid will be called \emph{boundary edges} while the remaining edges will be known as \emph{internal edges}.

For each edge, there is a set of possible labels called \emph{spins}, and a labeling of all the boundary edges will be referred to as a set of \emph{boundary conditions}.
A vertex configuration is a choice of spins for the four edges surrounding the vertex. 
The lattice models we consider assign a so-called \emph{Boltzmann weight} to each vertex configuration which are rational functions, or more precisely elements in the fraction field $\mathcal{F}$ of the complex polynomial ring in some indeterminates associated to lattice model.

We say that a vertex configuration is \emph{admissible} if its Boltzmann weight is non-zero. 
In this paper we mainly consider lattice models with six types of admissible vertex configurations, which is why they are sometimes called six-vertex models, and the Boltzmann weights for vertices at row $i$ will depend on a row parameter $z_i$. 
The rows are numbered $1, \ldots, r$ from top to bottom.

A \emph{state} $\mathfrak{s}$ of the model is a labeling of all the edges and we say that the state is \emph{admissible} if every vertex configuration of the state is admissible.
Given some specific conditions on the states we define the corresponding \emph{system} $\mathfrak{S}$ of the lattice model to be the set of all states satisfying these conditions.
Unless otherwise noted, we will consider systems consisting of all admissible states with some given boundary conditions.

The \emph{partition function} $Z(\mathfrak{S})$ of a system $\mathfrak{S}$ is a weighted sum over the states 
\begin{equation*}
  Z(\mathfrak{S}) := \sum_{\mathfrak{s}\in\mathfrak{S}} \wt(\mathfrak{s}),
\end{equation*}
where the \emph{Boltzmann weight} $\wt(\mathfrak{s})$ of the state $\mathfrak{s}$ is the product of the Boltzmann weights of all its vertices.
Note that non-admissible states have vanishing Boltzmann weights, and so even if a system is enlarged to include non-admissible states, the partition function would not be affected.

In this paper, the partition functions will be polynomials in $z_1^{\pm1}, \dots, z_r^{\pm1}$ with coefficients depending on, for example, a parameter $q$.
We will consider models whose states can be described by a set of $r$ colored paths going from the top boundary of the grid to either its left or right boundary.

We fix an ordered palette of $m$ colors $\mathcal{P} = \mathcal{P}_m = \{c_1, \dots, c_m\}$ where $c_1 < \dots < c_m$, and will assign a color to each path according to the following rules.
The set of possible spins (labellings) of an edge, called a \emph{spin set} is different for horizontal and vertical edges: the possible spins for the horizontal edges are either a color $c_1, \dots, c_m$ or no color at all, while for the vertical edges the possible spins are instead subsets of $\mathcal{P}$.
In other words, there can be at most one path on each horizontal edge, while on the vertical edges the paths can overlap assuming that they have distinct colors.
That the vertical edges can only carry at most one path of each color is the reason why the models are called \emph{fermionic} in contrast to \emph{bosonic} models (such as \cite{BumpNaprienko}) where vertical edges can carry any number of paths for each color.
It is not necessary that the number of colors $m$ equals the number of rows $r$, even though this is often considered for applications in $p$-adic representation theory such as in \cite{BBBG:Iwahori,BBBG:demazure}.

The models we will consider are divided into two types: \emph{left-moving} and \emph{right-moving} models.
Right-moving models are characterized by the fact that the colored paths start at the top boundary and move down and to the \emph{right} through the grid until they get to the \emph{right} boundary.
Left-moving models, on the other hand, are characterized by the fact that the colored paths start at the top boundary and move down and to the \emph{left} through the grid until they get to the \emph{left} boundary.
See \cref{fig:example_fused_right-moving_state,fig:example_fused_left-moving_state} for examples of states for these models.

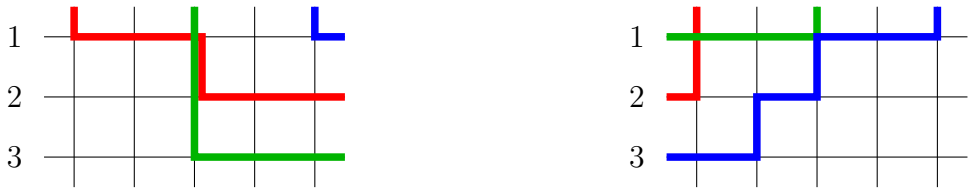
\begin{figure}[htbp]
  \newcommand{\scl}{0.8}
  \begin{subfigure}[t]{0.5\textwidth}
    \centering
    \begin{tikzpicture}[scale=\scl]
      \foreach \x in {0,...,4} {
        \draw (\x, -0.5) -- (\x, 2.5);
      }
      \foreach \y/\i in {0/3,1/2,2/1} {
        \draw (-0.5, \y) node[label=left:$\i$]{}  -- (4.5, \y);
      }

      \newcommand{\lineWidth}{1}
      \pgfmathsetmacro{\pathShift}{\lineWidth/(10*\scl)}
      \draw[blue, line width=\lineWidth mm] (4,2.5) -- (4,2) -- (4.5,2);
      \draw[red, line width=\lineWidth mm] (0,2.5) -- (0,2) -- (2+\pathShift,2) -- (2+\pathShift,1) -- (3,1) -- (4.5,1);
      \draw[green, line width=\lineWidth mm] (2,2.5) -- (2,0) -- (3,0) -- (4.5,0);
    \end{tikzpicture}
    \caption{An example state for a right-moving model.}
    \label{fig:example_fused_right-moving_state}
  \end{subfigure}%
  \begin{subfigure}[t]{0.5\textwidth}
    \centering
    \begin{tikzpicture}[scale=\scl]
      \foreach \x in {0,...,4} {
        \draw (\x, -0.5) -- (\x, 2.5);
      }
      \foreach \y/\i in {0/3,1/2,2/1} {
        \draw (-0.5, \y) node[label=left:$\i$]{}  -- (4.5, \y);
      }

      \newcommand{\lineWidth}{1}
      \pgfmathsetmacro{\pathShift}{\lineWidth/(10*\scl)}
      \draw[red, line width=\lineWidth mm] (0,2.5) -- (0,1) -- (-0.5,1);
      \draw[green, line width=\lineWidth mm] (2,2.5) -- (2,2) -- (-0.5,2);
      \draw[blue, line width=\lineWidth mm] (4,2.5) -- (4,2) -- (2,2) -- (2,1) -- (1,1) -- (1,0) -- (-0.5,0);
    \end{tikzpicture}
    \caption{An example state for a left-moving model.}
    \label{fig:example_fused_left-moving_state}
  \end{subfigure}
  \caption{Examples of colored states.}
  \vspace{-1em}
\end{figure}

For all the models we consider in this paper the admissible vertex configurations have a property we call \emph{color conservation}, which means that if we consider two of the edges of a vertex as \emph{inputs} and the other two as \emph{outputs} then the multiset of input colors is equal to the multiset of output colors. 
For the left- and right-moving models we assign the input and output edges according to how the paths are allowed to move, i.e.~the bottom and right edges are outputs for right-moving models and the bottom and left edges are outputs for the left-moving models, with the remaining edges being inputs. 

Because of this color conservation, for an admissible state of the left-moving model the colors of the paths on the left boundary must be a permutation of the colors of the paths on the top boundary, and similarly for the right-moving model.
The boundary conditions for a left- or right-moving system can therefore be described by the filled-in columns on the top boundary along with their colors, as well as the permutation of these colors that appear on the left or right boundary, respectively.

\subsection{Fusion and boundary conditions}
\label{sec:fusion}
In the previous section we described fermionic models where the vertical edges can carry multiple paths of distinct colors.
In order to specify the sets of admissible vertex configurations and their Boltzmann weights we will use an alternative description of these models which is often easier to work with.
Instead of using a subset of the palette $\mathcal{P}_m$ to describe the spin of a vertical edge the alternative description uses a list of $m$ binary elements specifying whether the corresponding colors are included or not.
To do this we split each column into $m$ new columns, one for each color of the palette, and restrict each new column to only carry paths of this color (or no path at all).
The $m$ new columns corresponding to a single column in the original model will be called a \emph{block}.
Due to this expansion of blocks into columns this new description is called the \emph{expanded} model and we will first describe it for the right-moving model, and then see how it relates to the non-expanded version described above.
The process is illustrated in \cref{fig:example_unfused_state_right-moving}.

To reiterate, the expanded model has a grid with $r$ rows and $Nm$ columns ($m$ times more than before) and we split the columns into $N$ blocks of $m$ columns.
In each block the columns are assigned a column color, from $c_1$ on the left to $c_m$ on the right.
We will also number the columns from \emph{right to left}, starting at $0$.
Each edge can carry at most one path, and the vertical edges are furthermore limited to paths of the corresponding column color.
The admissible vertex configurations are described in \cref{tab:T_right_weights} together with their Boltzmann weights with row parameters $z$ and parameters $q$ and $\Phi$.
We also have parameters $\mathfrak{X}_{i,j}$ for $1 \leq i,j \leq r$ which satisfy
\begin{equation}
  \label{eq:X-condition}
  \begin{cases}
    \mathfrak{X}_{i,i} = -q^2 \\
    \mathfrak{X}_{i,j} \mathfrak{X}_{j,i} = q^2 \text{ for } i \neq j.
  \end{cases}
\end{equation}
The Boltzmann weights are thus viewed as elements of the fraction field $\mathcal{F} = \mathbb{C}(q,z_i,\Phi,\mathfrak{X}_{i,j})$ for all $i$ and $j$ where $j > i$.

We will now describe how the boundary conditions for the expanded model is given by a list of colors $\sigma = (\sigma_1, \dots, \sigma_r) \in \mathcal{P}^r$ and a list of column numbers $\mu = (\mu_1, \dots, \mu_r) \in (\mathbb{Z}_{\geq 0})^r$ with $\mu_1 > \mu_2 > \dots > \mu_r$.
Here we assume that $Nm \geq \mu_1$, which we can make sure is true by choosing $N$ to be sufficiently large.
Firstly, there are no paths on the left and bottom boundaries.
Secondly, for the top boundary there is a path on the edges with column numbers $\mu_1, \dots, \mu_r$ of the corresponding column color (remember that vertical edges are only allowed to carry a path of a specific color), which explains why the assumption $Nm \geq \mu_1$ is required.
Finally, for the right boundary there is a path on every edge, and the color of the path on the $i^\text{th}$ row is $\sigma_i$ indexed, as before, from the top down.
Because of color conservation for admissible states, $\sigma$ needs to be a permutation of the top boundary colors.

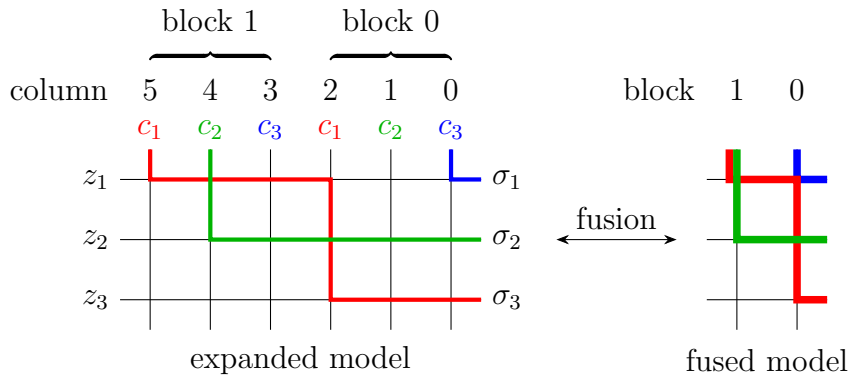
\begin{figure}[htbp]
  \newcommand{\scl}{0.8}
  \begin{tikzpicture}[baseline={(equalline)}, scale=\scl]
    \coordinate (equalline) at ($ (0,1) - (0,0.5ex) $);
    \begin{scope}[shift={(-0.75,0)}]
    \foreach \x/\Color in {0/red, 1/green, 2/blue, 3/red, 4/green, 5/blue} {
      \pgfmathtruncatemacro{\i}{mod(\x, 3)+1}
      \pgfmathtruncatemacro{\colnum}{5-\x}
      \draw (\x, -0.5) -- (\x, 2.5) node [above, \Color] {$c_\i$} node [above=1.25em] {\colnum};
    }
    \foreach \y in {0,...,2} {
      \pgfmathtruncatemacro{\i}{3-\y}
      \draw (-0.5, \y) node[left] {$z_{\i}$} -- (5.5, \y) node[right] {$\sigma_{\i}$};
    }

    \node[above=1.25em, anchor=south east] at (0,2.5) {column$\quad$};

    \draw[ultra thick, decorate, decoration = {calligraphic brace}] (0,4) --  (2,4) node[midway,above=0.5em] {block 1};
    \draw[ultra thick, decorate, decoration = {calligraphic brace}] (3,4) --  (5,4) node[midway,above=0.5em] {block 0};

    \draw[blue, ultra thick] (5,2.5) -- (5,2) -- (5.5,2);
    \draw[red, ultra thick] (0,2.5) -- (0,2) -- (3,2) -- (3,0) -- (3,0) -- (5.5,0);
    \draw[green, ultra thick] (1,2.5) -- (1,1) -- (4,1) -- (5.5,1);
    \node at (2.5,-1) {expanded model};

    \end{scope}
    
    \draw[Stealth-Stealth] (6,1) -- (8,1) node[midway,above] {fusion};
      
    \begin{scope}[shift={(9,0)}]
    \draw (0, -0.5) -- (0, 2.5) node [above=1.25em] {1};
    \draw (1, -0.5) -- (1, 2.5) node [above=1.25em] {0};

    \foreach \y in {0,...,2} {
      \draw (-0.5, \y) -- (1.5, \y);
    }

    \node[above=1.25em, anchor=south east] at (0,2.5) {block$\quad$};
    \node at (0.5,-1) {fused model};
    
    \newcommand{\lineWidth}{1}
    \pgfmathsetmacro{\pathShift}{\lineWidth/(10*\scl)}
    \draw[blue, line width=\lineWidth mm] (1,2.5) -- (1,2) -- (1.5,2);
    \draw[red, line width=\lineWidth mm] (0-\pathShift,2.5) -- (0-\pathShift,2) -- (1,2) -- (1,0) -- (1,0) -- (1.5,0);
    \draw[green, line width=\lineWidth mm] (0,2.5) -- (0,1) -- (1.5,1);
    \end{scope}
  \end{tikzpicture}
  \caption{A state for the unfused right-moving model with boundary data given by $\mu = (5,4,0)$ and $\sigma = (c_3, c_2, c_1)$ and its corresponding fused description.}
  \label{fig:example_unfused_state_right-moving}
\end{figure}

See \cref{fig:example_unfused_state_right-moving} for an example of a state for this model.
Note that the partition function does not depend on $N$ as long as $Nm \geq \mu_1$, because every vertex to the left of column $\mu_1$ will be an $\texttt{a}_1$ vertex, which has Boltzmann weight $1$.
We will denote by $\Sr_{\mu,\sigma}$ the system with these boundary conditions and usually abbreviate the partition function $Z(\Sr_{\mu,\sigma})(\mathbf{z})$ to $\Zr_{\mu,\sigma}(\mathbf{z})$, where $\mathbf{z} = (z_1, z_2, \dots, z_r)$.

\begin{table}[tbp]
  \caption{The vertex configurations $\Tr$ for the family of right-moving models considered in this paper together with the corresponding Boltzmann weights.
  Here, $c_j$ is the column color and $c_i$ is the color of a path.}
  \label{tab:T_right_weights}  
  \begin{tabular}{|c|c|c|c|c|c|}
    \hline
    \multicolumn{6}{|c|}{$\boldsymbol{\Tr} \vphantom{\displaystyle\int}$} \\
    \hline
    \hline
    $\texttt{a}_1$  &  $\texttt{a}_2$  &  $\texttt{b}_1$  &  $\texttt{b}_2$  &  $\texttt{c}_1$  &  $\texttt{c}_2$
    \\ \hline
    \begin{tikzpicture}
      \draw (0,-1) -- (0,1) node[label=above:$c_j$] {};
      \draw (-1,0) -- (1,0);
    \end{tikzpicture}
    &
    \begin{tikzpicture}
      \draw [blue, ultra thick] (0,-1) -- (0,1) node[label=above:$c_j$] {};
      \draw [red, ultra thick] (-1,0) node[label={[label distance=-0.5em] left:$c_i$}] {} -- (1,0);
    \end{tikzpicture}
    &
    \begin{tikzpicture}
      \draw (-1,0) -- (1,0);
      \draw [red, ultra thick] (0,-1) -- (0,1) node[label=above:$c_j$] {};
    \end{tikzpicture}
    &
    \begin{tikzpicture}
      \draw (0,-1) -- (0,1) node[label=above:$c_j$] {};
      \draw [red, ultra thick] (-1,0) node[label={[label distance=-0.5em] left:$c_i$}] {} -- (1,0);
    \end{tikzpicture}
    &
    \begin{tikzpicture}
      \draw (1,0) -- (0,0) -- (0,1) node[label={[red]above:$c_j$}] {};
      \draw [red, ultra thick] (-1,0) -- (0,0) -- (0,-1);
    \end{tikzpicture}
    &
    \begin{tikzpicture}
      \draw (-1,0) -- (0,0) -- (0,-1);
      \draw [red, ultra thick] (1,0) -- (0,0) -- (0,1) node[label=above:$c_j$] {};
    \end{tikzpicture}
    \\ \hline
    $\displaystyle 1$

    & $\displaystyle \Phi \mathfrak{X}_{i,j}
    \begin{cases}
      z & i = j \\
      1 & i \neq j
    \end{cases}$

    & $\Phi$

    & $\displaystyle \begin{cases}
      z & i = j \\
      1 & i \neq j
    \end{cases}$

    & $\Phi(1-q^2)z$

    & $\displaystyle 1$
    \\ \hline
  \end{tabular}
\end{table}
\begin{table}[tbp]
  \vspace{1em}
  \caption{The vertex configurations $\Tl$ for the family of left-moving models considered in this paper together with the corresponding Boltzmann weights.
  Here, $c_j$ is the column color and $c_i$ is the color of a path.}
  \label{tab:T_left_weights}
  \begin{tabular}{|c|c|c|c|c|c|}
    \hline
    \multicolumn{6}{|c|}{$\boldsymbol{\Tl}\vphantom{\displaystyle\int}$} \\
    \hline
    \hline
    $\texttt{a}_1$  &  $\texttt{a}_2$  &  $\texttt{b}_1$  &  $\texttt{b}_2$  &  $\texttt{c}_1$  &  $\texttt{c}_2$
    \\ \hline
    \begin{tikzpicture}
      \draw (0,-1) -- (0,1) node[label=above:$c_j$] {};
      \draw (-1,0) -- (1,0);
    \end{tikzpicture}
    &
    \begin{tikzpicture}
      \draw [blue, ultra thick] (0,-1) -- (0,1) node[label=above:$c_j$] {};
      \draw [red, ultra thick] (-1,0) -- (1,0) node[label={[label distance=-0.5em] right:$c_i$}] {};
    \end{tikzpicture}
    &
    \begin{tikzpicture}
      \draw (-1,0) -- (1,0);
      \draw [red, ultra thick] (0,-1) -- (0,1) node[label=above:$c_j$] {};
    \end{tikzpicture}
    &
    \begin{tikzpicture}
      \draw (0,-1) -- (0,1) node[label=above:$c_j$] {};
      \draw [red, ultra thick] (-1,0) -- (1,0) node[label={[label distance=-0.5em] right:$c_i$}] {};
    \end{tikzpicture}
    &
    \begin{tikzpicture}
      \draw (-1,0) -- (0,0) -- (0,1) node[label={[red]above:$c_j$}] {};
      \draw [red, ultra thick] (1,0) -- (0,0) -- (0,-1);
    \end{tikzpicture}
    &
    \begin{tikzpicture}
      \draw (1,0) -- (0,0) -- (0,-1);
      \draw [red, ultra thick] (-1,0) -- (0,0) -- (0,1) node[label=above:$c_j$] {};
    \end{tikzpicture}
    \\ \hline
    $\displaystyle 1$

    & $\displaystyle \Phi \mathfrak{X}_{j,i}
    \begin{cases}
      z^{-1} & i = j \\
      1      & i \neq j
    \end{cases}$

    & $\Phi$

    & $\displaystyle \begin{cases}
      z^{-1} & i = j \\
      1      & i \neq j
    \end{cases}$

    & $\Phi(1-q^2)$

    & $\displaystyle z^{-1}$
    \\ \hline
  \end{tabular} 
\end{table}

The process of going from this expanded model to the one we started with by merging the single-colored columns in a block into a single column is known as \emph{fusion}, and therefore we will call the models introduced in \cref{sec:introduction_to_lattice_models} \emph{fused}.
We will similarly sometimes called the expanded models \emph{unfused}.
Given an expanded model, we can find the admissible vertex configurations and the Boltzmann weights of the corresponding fused model by considering a one-row one-block grid, which corresponds to a single vertex in the fused model.
Any given vertex configuration for the fused model gives us boundary conditions of the expanded one-row one-block grid, i.e.~in the following equality we are given the fused edge spins $A$, $B$, $C$ and $D$, which determine the expanded edge spins as $A$, $B_i$, $C$ and $D_i$ by setting $B_i$ to carry a path if there is a path of color $c_i$ in $B$ and similarly for $D_i$ and $D$.

Given boundary conditions of the expanded one-row one-block grid, there is at most one way to color the internal edges because of color conservation.
If there is an admissible expanded state then we say that the fused vertex configuration we started with was admissible, and its weight is equal to that of the expanded state, i.e.~the product of the weights of the $m$ expanded vertices.
\begin{equation*}
  \wt\left(
    \begin{tikzpicture}[baseline=-1mm, scale=1]
      \draw (-0.9,0) node[spin] {$A$} -- (0.9,0) node[spin] {$C$};
      \draw (0,-0.9) node[spin] {$D$} -- (0,0.9) node[spin] {$B$};
    \end{tikzpicture}
  \right)
  =
  \wt \left(
    \begin{tikzpicture}[baseline=-1mm, scale=1]
      \draw (-0.9,0) node[spin] {$A$} -- (2.5,0);
      \draw (3,0) node {$\cdots$};
      \draw (3.5,0) -- (4.9,0) node[spin]{$C$};
      \foreach \x in {1,...,3} {
        \draw (\x-1,-0.9) node[spin] {$D_\x$} -- (\x-1,0.9) node[label=above:$c_\x$, spin] {$B_\x$};
      }
      \draw (4,-0.9) node[spin] {$D_m$} -- (4,0.9) node[label=above:$c_m$, spin] {$B_m$};
    \end{tikzpicture}
  \right)
\end{equation*}
If there is no admissible expanded state, then the fused vertex configuration was not admissible and the Boltzmann weight is set to zero.

The expanded description of the left-moving model is similarly defined and the fusion process is the same.
Specifically, the spin sets are the same as for the expanded Delta model and the $m$ column colors of the expanded columns in a block are still $c_1, \ldots, c_m$ from left to right. 
The admissible vertex configurations and their Boltzmann weights are listed in \cref{tab:T_left_weights}.
The boundary conditions for the top boundary are determined by $\mu$ in the same way as for the right-moving model, and the left boundary colors are given by $\sigma$ in the same way as the right boundary colors for the right-moving model.

It is not difficult to see that with these boundary conditions, $\texttt{b}_2$ is the only vertex configuration that can appear to the left of column $\mu_1$.
Since the Boltzmann weight of the $\texttt{b}_2$ vertex configuration in \cref{tab:T_left_weights} is not identically $1$, the partition function of the left-moving model actually depends on the number of blocks $N$, in contrast to the right-moving model.
We will therefore denote by $\SlN{N}_{\mu,\sigma}$ the system with the boundary conditions above, and by $\SlN{N}_\mu$ the system where we drop the condition on the colors on the left-hand side.
Furthermore, we will usually abbreviate the partition functions $Z(\SlN{N}_{\mu,\sigma})(\mathbf{z})$ and $Z(\SlN{N}_\mu)(\mathbf{z})$ to $\ZlN{N}_{\mu,\sigma}(\mathbf{z})$ and $\ZlN{N}_\mu(\mathbf{z})$, respectively.
In both of these cases, if $N$ is increased by $1$, then the partition function is multiplied by $\mathbf{z}^{(1,\ldots,1)} = z_1 z_2 \cdots z_r$.

\subsection{Specializations}
\label{sec:specializations}
We are interested in two specializations of these models which are related to two different types of Whittaker functions, namely metaplectic (spherical) Whittaker functions and (non-metaplectic) Iwahori Whittaker functions.
The two specializations are:
\begin{align}
  &\textbf{Metaplectic specialization:} & \Phi &= 1 & \mathfrak{X}_{i,j} &= g(j-i) & q^2 &= v \label{eq:metaplectic-specialization} \\
  &\textbf{Iwahori specialization: } & \Phi &= -v & \mathfrak{X}_{i,j} &= \left\{\begin{smallmatrix*}[l] -1 & & i < j \\ -1/v & & i \geq j \end{smallmatrix*}\right. & q^2 &= 1/v. \label{eq:Iwahori-specialization}
\end{align}
Here, the reparametrization of $q$ in terms of $v$ is due to historical reasons, and $g(a)$ is a Gauss sum which is a particular sum over roots of unity ubiquitous in number theory and which satisfies~\eqref{eq:X-condition}.
We will call the corresponding lattice models left- and right-moving metaplectic ice and Iwahori ice, respectively, and their weights are shown in~\cref{tab:T_right_specializations,tab:T_left_specializations}.

Let us now describe how these specializations relate to existing lattice models in the literature.
See also \cref{fig:specializations}.
\begin{itemize}
  \item The right-moving Iwahori ice model is exactly the one defined in~\cite{BBBG:Iwahori} for computing values of Iwahori Whittaker functions of unramified principal series of $\GL_r(F)$ where $F$ is an non-archimedean field with residue field cardinality $1/v$.
  \item The left-moving Iwahori ice model is new and the construction of this model was a motivation for this paper. The reason for this will be explained in the next paragraph.
  \item The right-moving metaplectic ice model is equivalent to the metaplectic $\Delta'$-model of \cite{BBBG:duality} by a reparametrization of row parameters and an exchange of colors with supercolors (as explained therein).
  The $\Delta'$-model is in turn related to the $\Delta$-model of the same paper by a change of basis for the spin sets.
  \item The left-moving metaplectic ice model is equivalent to the metaplectic $\Gamma$-model of~\cite{BBBG:duality} after reparametrizing the row parameters, multiplying the $T$-weights by $z$, applying the same change of basis and exchanging colors with supercolors as above.
\end{itemize}
The $\Delta$ and $\Gamma$ metaplectic models actually give the same partition functions (up to reversing $\mathbf{z}$) and were originally defined in \cite{BBB, BBCFG} to compute values of metaplectic spherical Whittaker functions.
The fact that the $\Gamma$ and $\Delta$ variants give the same partition functions is non-trivial and was proven in an intricate combinatorial argument in \cite{BBF:orange} and later using Yang--Baxter equations in \cite{BBB,BBBGray}.
The identity, which is called the $\Gamma$-$\Delta$ duality, is not a state-by-state identity but requires a complicated interplay between the terms of the partition function coming from differently sized sets of states on the two sides.
The duality was used in~\cite{BBBG:duality} to prove an unexpected and surprising equality of non-metaplectic Iwahori Whittaker functions and metaplectic spherical Whittaker functions.

One important goal of this paper was to construct a dual, left-moving model to the existing right-moving Iwahori model mirroring the above story for the metaplectic specialization.
In \cref{sec:left-right_duality} we will actually show that the whole left- and right-moving families described in this section are dual to each other, giving the same partition functions as detailed in~\cref{thm:left-right_duality}.

\begin{remark}
  \label{rem:naming-convention}
  Since the metaplectic specialization of the left-moving model is called a $\Gamma$ model and the right-moving model a $\Delta$ model, it may be tempting to classify the whole left- and right-moving family as $\Gamma$ and $\Delta$ models, respectively.
  Indeed, we used this terminology in early talks about this work.
  However, we thank Daniel Bump for pointing out to us that the right-moving Iwahori ice model reduces to a $\Gamma$ model when the palette size $m = 1$.
  This corresponds to a non-metaplectic spherical Whittaker function (and is thus a common point of origin for Iwahori ice and metaplectic ice).
  Thus, the right-moving Iwahori ice model should instead be classified as a $\Gamma$ model instead of a $\Delta$ model for all $m$.
  Its left-moving dual reduces to a $\Delta$ model for $m=1$ and will therefore be called a $\Delta$ model for all $m$.
  See \cref{fig:specializations}.
  As we will see in the next subsection this naming convention for the Iwahori specialization also agrees with the models in the $v\to 0$ limit studied in~\cite{Buciumas-Scrimshaw}.
  
  Since the $\Gamma$-$\Delta$ classification is therefore not an invariant for the continuous families of lattice models we will in this paper mostly use the terminology left- and right-moving and will use the terminology $\Gamma$ and $\Delta$ only for the Iwahori or the metaplectic specializations.
\end{remark}

In the next subsection we discus the $v\to 0$ limit of the Iwahori ice models where the left-right duality does reduce to a state-by-state equality.

\subsection{Crystal models}
\label{sec:crystal-models}
In the Iwahori specialization of both the left-moving and right-moving families we will now consider the limit $v \to 0$, known as the \emph{crystal limit} since it is connected to crystal bases of the quantum group defined by \cite{Kashiwara, Lusztig}.
For convenience the weights can be found in \cref{tab:T_right_specializations,tab:T_left_specializations}.
Note that in the crystal limit the $\texttt{b}_1$ weight becomes zero and thus the $\texttt{b}_1$ vertex configuration is no longer admissible.
For the $\texttt{a}_2$ configuration the weight also vanishes in the case $i<j$, which means that paths of distinct colors are only allowed to cross in one direction, and then those two paths cannot cross again.

\begin{table}[p]
  \caption{Specializations of the unfused right-moving model $\Tr$.}
  \label{tab:T_right_specializations}
  \newcommand{\scl}{0.78}
  \begin{tblr}{hlines, vlines, columns={c}, row{4,5,6} = {ht=1.5cm,valign=m}}
    \SetRow{ht=1cm}\SetCell[c=7]{c} $\boldsymbol{\Tr}$ \textbf{(specializations, unfused)} \\ \hline
    & $\texttt{a}_1$  &  $\texttt{a}_2$  &  $\texttt{b}_1$  &  $\texttt{b}_2$  &  $\texttt{c}_1$  &  $\texttt{c}_2$
    \\
    &
    \begin{tikzpicture}[scale=\scl]
      \draw (0,-1) -- (0,1) node[label=above:$c_j$] {};
      \draw (-1,0) -- (1,0);
    \end{tikzpicture}
    &
    \begin{tikzpicture}[scale=\scl]
      \draw [blue, ultra thick] (0,-1) -- (0,1) node[label=above:$c_j$] {};
      \draw [red, ultra thick] (-1,0) node[label={[label distance=-0.5em] left:$c_i$}] {} -- (1,0);
    \end{tikzpicture}
    &
    \begin{tikzpicture}[scale=\scl]
      \draw (-1,0) -- (1,0);
      \draw [red, ultra thick] (0,-1) -- (0,1) node[label=above:$c_j$] {};
    \end{tikzpicture}
    &
    \begin{tikzpicture}[scale=\scl]
      \draw (0,-1) -- (0,1) node[label=above:$c_j$] {};
      \draw [red, ultra thick] (-1,0) node[label={[label distance=-0.5em] left:$c_i$}] {} -- (1,0);
    \end{tikzpicture}
    &
    \begin{tikzpicture}[scale=\scl]
      \draw (1,0) -- (0,0) -- (0,1) node[label={[red]above:$c_j$}] {};
      \draw [red, ultra thick] (-1,0) -- (0,0) -- (0,-1);
    \end{tikzpicture}
    &
    \begin{tikzpicture}[scale=\scl]
      \draw (-1,0) -- (0,0) -- (0,-1);
      \draw [red, ultra thick] (1,0) -- (0,0) -- (0,1) node[label=above:$c_j$] {};
    \end{tikzpicture}
    \\
    
    {\footnotesize Metaplectic \\ $\Delta$}
    
    & $1$
    & $g(j-i)
    \displaystyle\left\{\begin{smallmatrix*}[l]
        z &&& i = j \\
        1 &&& i \neq j
    \end{smallmatrix*}\right.$
    & $1$
    &  $\displaystyle \left\{\begin{smallmatrix*}[l]
        z &&& i = j \\
        1 &&& i \neq j
    \end{smallmatrix*}\right.$
    & $(1-v)z$
    & 1
    
    \\
    
    {\footnotesize Iwahori \\ $\Gamma$}

    & $\displaystyle 1$

    & $\displaystyle\left\{\begin{smallmatrix*}[l]
        v &&& i < j \\
        z &&& i = j \\
        1 &&& i > j
    \end{smallmatrix*}\right.$

    & $\displaystyle -v$

    & $\displaystyle\left\{\begin{smallmatrix*}[l]
        z &&& i = j \\
        1 &&& i \neq j
    \end{smallmatrix*}\right.$

    & $\displaystyle (1-v)z$

    & $\displaystyle 1$
    \\ 

    {\centering\footnotesize Crystal \\ $\Gamma$}

    & $1$

    & $\displaystyle\left\{\begin{smallmatrix*}[l]
        0 &&& i < j \\
        z &&& i = j \\
        1 &&& i > j
    \end{smallmatrix*}\right.$

    & $\displaystyle 0$

    & $\displaystyle\left\{\begin{smallmatrix*}[l]
        z &&& i = j \\
        1 &&& i \neq j
    \end{smallmatrix*}\right.$

    & $\displaystyle z$

    & $\displaystyle 1$
    \\
  \end{tblr}
\end{table}
\begin{table}[p]
  \caption{Specializations of the unfused left-moving model $\Tl$.}
  \label{tab:T_left_specializations}
  \newcommand{\scl}{0.78}
  \begin{tblr}{hlines, vlines, columns={c}, row{4,5,6} = {ht=1.5cm,valign=m}}
    \SetRow{ht=1cm}\SetCell[c=7]{c}$\boldsymbol{\Tl}\vphantom{\displaystyle\int}$ \textbf{(specializations, unfused)} \\ \hline
    &  $\texttt{a}_1$  &  $\texttt{a}_2$  &  $\texttt{b}_1$  &  $\texttt{b}_2$  &  $\texttt{c}_1$  &  $\texttt{c}_2$
    \\ 
    &
    \begin{tikzpicture}[scale=\scl]
      \draw (0,-1) -- (0,1) node[label=above:$c_j$] {};
      \draw (-1,0) -- (1,0);
    \end{tikzpicture}
    &
    \begin{tikzpicture}[scale=\scl]
      \draw [blue, ultra thick] (0,-1) -- (0,1) node[label=above:$c_j$] {};
      \draw [red, ultra thick] (-1,0) -- (1,0) node[label={[label distance=-0.5em] right:$c_i$}] {};
    \end{tikzpicture}
    &
    \begin{tikzpicture}[scale=\scl]
      \draw (-1,0) -- (1,0);
      \draw [red, ultra thick] (0,-1) -- (0,1) node[label=above:$c_j$] {};
    \end{tikzpicture}
    &
    \begin{tikzpicture}[scale=\scl]
      \draw (0,-1) -- (0,1) node[label=above:$c_j$] {};
      \draw [red, ultra thick] (-1,0) -- (1,0) node[label={[label distance=-0.5em] right:$c_i$}] {};
    \end{tikzpicture}
    &
    \begin{tikzpicture}[scale=\scl]
      \draw (-1,0) -- (0,0) -- (0,1) node[label={[red]above:$c_j$}] {};
      \draw [red, ultra thick] (1,0) -- (0,0) -- (0,-1);
    \end{tikzpicture}
    &
    \begin{tikzpicture}[scale=\scl]
      \draw (1,0) -- (0,0) -- (0,-1);
      \draw [red, ultra thick] (-1,0) -- (0,0) -- (0,1) node[label=above:$c_j$] {};
    \end{tikzpicture}
    \\
    
    {\footnotesize Metaplectic \\ $\Gamma$}

    & $1$
    & $g(i-j)
    \displaystyle\left\{\begin{smallmatrix*}[l]
        z^{-1} && i = j \\
        1      && i \neq j
    \end{smallmatrix*}\right.$
    & $1$
    & $\displaystyle\left\{\begin{smallmatrix*}[l]
        z^{-1} &&& i = j \\
        1      &&& i \neq j
    \end{smallmatrix*}\right.$
    & $1-v$
    & $z^{-1}$

    \\

    {\footnotesize Iwahori \\ $\Delta$}

    & $\displaystyle 1$

    & $\displaystyle\left\{\begin{smallmatrix*}[l]
        1   &&& i < j \\
        z^{-1} &&& i = j \\
        v   &&& i > j
    \end{smallmatrix*}\right.$

    & $\displaystyle -v$

    & $\displaystyle\left\{\begin{smallmatrix*}[l]
        z^{-1} &&& i = j \\
        1      &&& i \neq j
    \end{smallmatrix*}\right.$

    & $\displaystyle 1-v$

    & $\displaystyle z^{-1}$
    \\

    {\footnotesize Crystal \\ $\Delta$}
    
    & $\displaystyle 1$

    & $\displaystyle\left\{\begin{smallmatrix*}[l]
        1      &&& i < j \\
        z^{-1} &&& i = j \\
        0      &&& i > j
    \end{smallmatrix*}\right.$

    & $\displaystyle 0$

    & $\displaystyle\left\{\begin{smallmatrix*}[l]
      z^{-1} & i = j \\
      1      & i \neq j
    \end{smallmatrix*}\right.$

    & $\displaystyle 1$

    & $\displaystyle z^{-1}$
    \\
  \end{tblr}
\end{table}

\begin{lemma}
  \label{lemma:crystal_models_at_most_one_color}
  Suppose we have an admissible vertex configuration of the crystal limit $v \to 0$ of the Iwahori specialization of the \emph{fused} left- or right-moving model (i.e.~$\Delta$ and $\Gamma$, respectively).
  If the bottom edge carries at most one colored path, then the same is true for the top edge.
\end{lemma}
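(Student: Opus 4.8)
The statement concerns a single vertex of the \emph{fused} model, so by the definition of fusion I would first unfold it into the admissible states of the corresponding expanded one-row one-block strip, exactly as in the paragraph following \cref{fig:example_unfused_state_right-moving}. In the strip the $m$ expanded columns carry the column colors $c_1,\dots,c_m$, the top (resp.\ bottom) edge is occupied in column $k$ precisely when $c_k$ lies in the top spin $B$ (resp.\ bottom spin $D$), and the left and right boundary edges $A,C$ each carry at most one color. The claim ``$|D|\le 1 \implies |B|\le 1$'' then becomes a statement about admissible configurations of this one-block strip, and I will argue the right-moving ($\Gamma$) case, deducing the left-moving ($\Delta$) case at the end by reflection.

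First I would extract from color conservation the unique case that must be ruled out. Summing the per-vertex color conservation over the strip makes every internal horizontal edge cancel (it is an output of one vertex and the matching input of the next), leaving the block identity $B \uplus A = D \uplus C$ as multisets of colors, and hence $|B|+|A| = |D|+|C|$ with $|A|,|C|\in\{0,1\}$ since horizontal edges carry at most one path. Assuming $|D|\le 1$ and $|B|\ge 2$ and substituting, one is forced into $|B|=2$, $|A|=0$, $|C|=1$, $|D|=1$. Thus the whole lemma reduces to excluding this single configuration.

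The crux is then to eliminate this configuration using the crystal vanishing phenomena. I would track the horizontal ``current'' $A = h_0, h_1, \dots, h_m = C$ along the strip. In the crystal limit the only vertices occupying a top edge are $\texttt{a}_2$ (a crossing, which also occupies the bottom) and $\texttt{c}_2$ (a top-to-right turn), because the weight of $\texttt{b}_1$ is now $0$; dually the bottom is occupied only by $\texttt{a}_2$ and the left-to-bottom turn $\texttt{c}_1$. Consequently the current is created only at $\texttt{c}_2$, absorbed only at $\texttt{c}_1$, and passes through while preserving its color at $\texttt{a}_2,\texttt{b}_2$; as it starts empty ($A=0$), ends with one path ($C\neq 0$), and never exceeds one path per edge, the counts satisfy $\#\texttt{c}_2 = \#\texttt{c}_1 + 1$. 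Combining this with the top-count $\#\texttt{a}_2 + \#\texttt{c}_2 = |B| = 2$ and the bottom-count $\#\texttt{a}_2 + \#\texttt{c}_1 = |D| = 1$ leaves exactly two possibilities: either one $\texttt{a}_2$, one $\texttt{c}_2$ and no $\texttt{c}_1$; or two $\texttt{c}_2$ and one $\texttt{c}_1$. In the first, the single $\texttt{c}_2$ injects a horizontal path whose color equals its own column color and which is never absorbed, so it occupies every horizontal edge to its right; since the crossing $\texttt{a}_2$ needs a horizontal path through it and none exists to the left of the $\texttt{c}_2$, the $\texttt{a}_2$ must lie to the right, where admissibility of the crossing in column $k$ forces the horizontal color index $i$ to satisfy $i\ge k$ (the $\texttt{a}_2$ weight vanishes for $i<k$), impossible for a path created at a smaller column. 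In the second, the current must return to empty between the two creations, so the path made by the first $\texttt{c}_2$ has to be absorbed at the lone $\texttt{c}_1$; but $\texttt{c}_1$ turns down only a path whose color matches its own column color, contradicting that the incoming color is that of the earlier $\texttt{c}_2$ column.

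This settles the right-moving case, and the left-moving ($\Delta$) case follows by the left-right reflection, which reverses the column order and turns the admissibility constraint $i\ge j$ into $i\le j$. The main obstacle I expect is precisely this third step: realizing that conservation alone pins down one dangerous configuration, and that the two surviving combinatorial possibilities are exactly the ones killed by the two crystal vanishings (the death of $\texttt{b}_1$ and the one-sided vanishing of $\texttt{a}_2$). Some care is needed to verify that the created/absorbed/passed-through bookkeeping of the current is exhaustive over all admissible vertices and that colors are genuinely preserved along pass-throughs.
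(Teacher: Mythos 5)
Your proof is correct, and it ultimately rests on the same two crystal degeneracies as the paper's (the vanishing of $\texttt{b}_1$ and the vanishing of $\texttt{a}_2$ when the path color is smaller than the column color in the $\Gamma$ case), but it is organized along a genuinely different decomposition. The paper, after the same conservation count reducing to bottom $=\{C\}$ and top $=\{D,C\}$, exploits that the internal edges of the expanded one-block strip are deterministically forced by the boundary, and splits on the relative order of the two top colors: for $D<C$ the forced configuration contains an $\texttt{a}_2$ with $i<j$, and for $C<D$ it contains a $\texttt{b}_1$, each of weight zero. You never invoke this determinism; instead your current-counting argument ($\#\texttt{c}_2 = \#\texttt{c}_1+1$ combined with the top and bottom occupation counts) classifies the admissible vertex-type multiplicities into exactly two multisets, $\{\texttt{a}_2,\texttt{c}_2\}$ and $\{\texttt{c}_2,\texttt{c}_2,\texttt{c}_1\}$. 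Your first case coincides with the paper's $D<C$ kill; your second case is one the paper's route never encounters (it only arises once the $\texttt{b}_1$ vanishing is imposed a priori, as you do), and you correctly kill it via the column-color constraint at $\texttt{c}_1$ — conversely, the paper's $C<D$ case is absorbed into your a priori restriction of the admissible vertex list. So the proofs are equivalent in substance; yours costs one extra case but makes the exhaustiveness of the analysis explicit through the creation/absorption/pass-through bookkeeping, which the paper leaves implicit in its forced-configuration pictures. One small caveat on your final step: in the paper's conventions the left-moving expanded block keeps the column colors $c_1,\dots,c_m$ from left to right, so the reflection does not literally reverse the column order; rather, the $\texttt{a}_2$ vanishing condition flips to $i>j$ (see the crystal $\Delta$ row of the tables), which is exactly what your mirrored right-to-left scan, with $\texttt{c}_2$ creating a leftward current from an empty right boundary edge, requires — so the conclusion stands as you state it.
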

\begin{proof}
  We will start with the right-moving case.
  Recall that by color conservation, the multiset of colors on the bottom and right edges is the same as the multiset of colors on the top and left edges.
  Since the horizontal edges can carry at most one color, this means that the top edge can carry at most one color more than the bottom edge.
  
  The statement then immediately holds in the case where the bottom edge has no colors at all.
  So suppose that the bottom edge has a single color $C$, and furthermore that the top edge carries two colors $D$ and $E$ which must therefore be distinct because the vertical edges are fermionic.
  If neither $D$ nor $E$ is equal to $C$, then we would get a contradiction from color conservation and the fact that the horizontal edges carry at most a single color.
  So we may assume that $E = C$.
  
  Recall that the column colors are increasing from left to right.
  If $D < C$ then we have the following situation
  \begin{equation}
    \begin{tikzpicture}[baseline, scale=0.5]
      \draw [red, ultra thick] (0.5,-1) -- (0.5,1) node[label=above:$\;C$] {};
      \draw [blue, ultra thick] (1.5,0) -- (0,0) -- (0,1) node[label=above:$D\;$] {};
    \end{tikzpicture}
  \end{equation}
  which has weight $0$ according to the first case of the $\texttt{a}_2$ weight in \cref{tab:T_right_specializations}.
  On the other hand, if $C < D$ we have
  \begin{equation}
    \begin{tikzpicture}[baseline, scale=0.5]
      \draw [red, ultra thick] (0.5,-1) -- (0.5,1) node[label=above:$C\;$] {};
      \draw [blue, ultra thick] (2,0) -- (1,0) -- (1,1) node[label=above:$\;D$] {};
    \end{tikzpicture}
  \end{equation}
  which has weight $0$ according to the $\texttt{b}_1$ weight.
  So in both cases we reach a contradiction.
  Thus we have shown that the top edge can carry at most one color which finishes the proof for the right-moving case.

  The proof for the left-moving case is analogous.
\end{proof}

\begin{table}
  \caption{The vertex configurations and weights of the fused Gamma Iwahori crystal model with at most one color on the vertical edges.}
  \label{tab:T_fused_gamma_Iwahori_crystal_weights}
  \newcommand{\scl}{0.8}
  \begin{tabular}{|c|c|c|c|c|c|c|c|}
    \hline
    \multicolumn{7}{|c|}{$\mathbf{T_\Right} = \mathbf{T_\Gamma}\vphantom{\displaystyle\int}$ \textbf{(fused crystal limit)}} \\
    \hline
    \hline
    $\texttt{a}_1$  &  $\texttt{a}_2$  &  $\texttt{a}_2'$  &  $\texttt{b}_1$  &  $\texttt{b}_2$  &  $\texttt{c}_1$  &  $\texttt{c}_2$
    \\ \hline
    \begin{tikzpicture}[scale=\scl]
      \draw (0,-1) -- (0,1);
      \draw (-1,0) -- (1,0);
    \end{tikzpicture}
    &
    \begin{tikzpicture}[scale=\scl]
      \draw [blue, ultra thick] (0,-1) -- (0,1) node[label=above:$c_j$] {};
      \draw [red, ultra thick] (-1,0) node[label={[label distance=-0.5em] left:$c_i$}] {} -- (1,0);
    \end{tikzpicture}
    &
    \begin{tikzpicture}[scale=\scl]
      \draw [blue, ultra thick] (1,0) -- (0,0) -- (0,1) node[label=above:$c_j$] {};
      \draw [red, ultra thick] (-1,0) node[label={[label distance=-0.5em] left:$c_i$}] {} -- (0,0) -- (0,-1);
    \end{tikzpicture}
    &
    \begin{tikzpicture}[scale=\scl]
      \draw (-1,0) -- (1,0);
      \draw [red, ultra thick] (0,-1) -- (0,1) node[label=above:$c_i$] {};
    \end{tikzpicture}
    &
    \begin{tikzpicture}[scale=\scl]
      \draw (0,-1) -- (0,1);
      \draw [red, ultra thick] (-1,0) node[label={[label distance=-0.5em] left:$c_i$}] {} -- (1,0);
    \end{tikzpicture}
    &
    \begin{tikzpicture}[scale=\scl]
      \draw (1,0) -- (0,0) -- (0,1);
      \draw [red, ultra thick] (-1,0) node[label={[label distance=-0.5em]left:$c_i$}] {} -- (0,0) -- (0,-1);
    \end{tikzpicture}
    &
    \begin{tikzpicture}[scale=\scl]
      \draw (-1,0) -- (0,0) -- (0,-1);
      \draw [red, ultra thick] (1,0) -- (0,0) -- (0,1) node[label=above:$c_i$] {};
    \end{tikzpicture}
    \\ \hline

    $\displaystyle 1$

    & $\displaystyle
    \begin{cases}
      0 & i < j \\
      z & i \geq j
    \end{cases}$

    & $\displaystyle
    \begin{cases}
      z & i < j \\
      0 & i > j
    \end{cases}$

    & $\displaystyle 0$

    & $\displaystyle z$

    & $\displaystyle z$

    & $\displaystyle 1$
    \\ \hline
  \end{tabular}
\end{table}

\begin{table}
  \caption{The vertex configurations and weights obtained by fusing the Delta Iwahori crystal model, restricting to at most one color on the vertical edges and then multiplying each weight by~$z$.
  The multiplcation by $z$ is convenient since it removes the dependence on the number of columns for the partition function.}
  \label{tab:T_fused_delta_Iwahori_crystal_weights}
  \newcommand{\scl}{0.8}
  \begin{tabular}{|c|c|c|c|c|c|c|}
    \hline
    \multicolumn{7}{|c|}{$z \mathbf{T_\Left} = z \mathbf{T_\Delta}\vphantom{\displaystyle\int}$ \textbf{(fused crystal limit)}} \\
    \hline
    \hline
    $\texttt{a}_1$  &  $\texttt{a}_2$  &  $\texttt{a}_2'$  &  $\texttt{b}_1$  &  $\texttt{b}_2$  &  $\texttt{c}_1$  &  $\texttt{c}_2$
    \\ \hline
    \begin{tikzpicture}[scale=\scl]
      \draw (0,-1) -- (0,1);
      \draw (-1,0) -- (1,0);
    \end{tikzpicture}
    &
    \begin{tikzpicture}[scale=\scl]
      \draw [blue, ultra thick] (0,-1) -- (0,1) node[label=above:$c_j$] {};
      \draw [red, ultra thick] (-1,0) -- (1,0) node[label={[label distance=-0.5em] right:$c_i$}] {};
    \end{tikzpicture}
    &
    \begin{tikzpicture}[scale=\scl]
      \draw [blue, ultra thick] (-1,0) -- (0,0) -- (0,1) node[label=above:$c_j$] {};
      \draw [red, ultra thick] (0,-1) -- (0,0) -- (1,0) node[label={[label distance=-0.5em] right:$c_i$}] {};
    \end{tikzpicture}
    &
    \begin{tikzpicture}[scale=\scl]
      \draw (-1,0) -- (1,0);
      \draw [red, ultra thick] (0,-1) -- (0,1) node[label=above:$c_i$] {};
    \end{tikzpicture}
    &
    \begin{tikzpicture}[scale=\scl]
      \draw (0,-1) -- (0,1);
      \draw [red, ultra thick] (-1,0) -- (1,0) node[label={[label distance=-0.5em] right:$c_i$}] {};
    \end{tikzpicture}
    &
    \begin{tikzpicture}[scale=\scl]
      \draw (-1,0) -- (0,0) -- (0,1);
      \draw [red, ultra thick] (0,-1) -- (0,0) -- (1,0) node[label={[label distance=-0.5em] right:$c_i$}] {};
    \end{tikzpicture}
    &
    \begin{tikzpicture}[scale=\scl]
      \draw (1,0) -- (0,0) -- (0,-1);
      \draw [red, ultra thick] (-1,0) -- (0,0) -- (0,1) node[label=above:$c_i$] {};
    \end{tikzpicture}
    \\ \hline

    $\displaystyle z$

    & $\displaystyle
    \begin{cases}
      1 & i \leq j \\
      0      & i > j
    \end{cases}$

    & $\displaystyle
    \begin{cases}
      0      & i < j \\
      1 & i > j
    \end{cases}$

    & $\displaystyle 0$

    & $\displaystyle 1$

    & $\displaystyle z$

    & $\displaystyle 1$
    \\ \hline
  \end{tabular} 
\end{table}

Recall that in our usual boundary conditions for a system the edges along the bottom boundary carry no paths; in other words they are unoccupied.
By arguing inductively upwards from the bottom row the above lemma then implies that the only vertex configurations that can appear in an admissible state are those with at most one color on the vertical paths.
Since we are always going to impose boundary conditions of this kind, we can restrict the crystal models to these admissible vertex configurations without affecting the partition functions, and from now on, it is always this restricted set of admissible vertex configurations that we will use for crystal models.

Following \cref{sec:fusion}, we can compute the Boltzmann weights of the fused admissible vertex configurations with at most one color on the vertical paths, and these can be found in \cref{tab:T_fused_gamma_Iwahori_crystal_weights,tab:T_fused_delta_Iwahori_crystal_weights}.
The weights of the latter have been multiplied by $z$ which conveniently removes the partition function's dependence on the number of columns, since any extra $\texttt{b}_2$ configurations added on the left side of the grid would only contribute by a factor of $1$.
We make this change \emph{after} the fusion process instead of making an analogous change for the unfused weights for the whole family to avoid factors of $z^{1/m}$.

As an example, \cref{fig:a2'_weight} illustrates the weight computation for one case of the $\texttt{a}_2'$ vertex configuration.

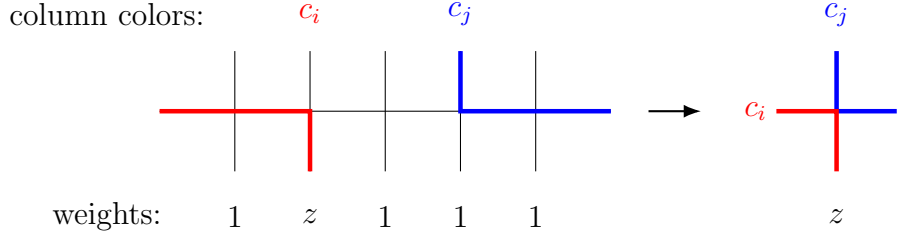
\begin{figure}
  \begin{tikzpicture}
    \newcommand{\lineWidth}{0.6}
    \newcommand{\bdry}{0.8}
    \newcommand{\columnColorsHeight}{1.3}
    \draw (-1.7, \columnColorsHeight) node {column colors:};
    \draw[red] (1, \columnColorsHeight) node {$c_i$};
    \draw[blue] (3, \columnColorsHeight) node {$c_j$};
    \newcommand{\weightHeight}{-1.4}
    \draw (-1.7, \weightHeight) node {weights:};
    \draw (0, \weightHeight) node {1};
    \draw (1, \weightHeight) node {$z$};
    \draw (2, \weightHeight) node {1};
    \draw (3, \weightHeight) node {1};
    \draw (4, \weightHeight) node {1};
    \begin{scope}
      \foreach \x in {0,...,4} {
        \draw (\x, -\bdry) -- (\x, \bdry);
      }
      \draw (-1, 0) -- (5, 0);

      \draw[blue, line width=\lineWidth mm] (3,\bdry) -- (3,0) -- (5,0);
      \draw[red, line width=\lineWidth mm] (-1,0) -- (1,0) -- (1,-\bdry);
    \end{scope}
    \draw[-Latex, thick] (5.5,0) -- (6.2,0);
    \begin{scope}[xshift=8cm]
      \draw [blue, line width=\lineWidth mm] (\bdry,0) -- (0,0) -- (0,\bdry) node[label=above:$c_j$] {};
      \draw [red, line width=\lineWidth mm] (-\bdry,0) node[label={[label distance=-0.5em] left:$c_i$}] {} -- (0,0) -- (0,-\bdry);
      \draw (0, \weightHeight) node {$z$};
    \end{scope}
  \end{tikzpicture} 
  \caption{Fusion of an $\texttt{a}_2'$ vertex of the left-moving Iwahori crystal model with $i < j$.}
  \label{fig:a2'_weight}
\end{figure}

\section{Solvability and Yang--Baxter equations}
\label{sec:YBE}
In this section we will show that the families of left- and right-moving models introduced in \cref{sec:families_of_lattice_models} satisfy various Yang--Baxter equations.
The general Yang--Baxter equation with respect to three vector spaces (or quantum group modules) $U$, $V$ and $W$ as well as three linear maps $R : U \otimes V \to V \otimes U$, $S: U \otimes W \to W \otimes U$ and $T : V \otimes W \to W \otimes V$ is the following functional equation 
\begin{equation}
  \label{eq:YBE}
  \llbracket R, S, T \rrbracket := (\id_W \otimes R)(S \otimes \id_V)(\id_U \otimes T) - (T \otimes \id_U)(\id_V \otimes S)(R \otimes \id_W) = 0
\end{equation}
as maps $U \otimes V \otimes W \to W \otimes V \otimes U$.
This definition of the Yang--Baxter commutator $\llbracket R, S, T \rrbracket$ is related the one used in, for example, \cite{BBF} which considers endomorphisms of $U\otimes V$ instead of maps $U \otimes V \to V \otimes U$.
They are related by applications of the linear map $\tau$ defined on pure tensors by $a \otimes b \mapsto b \otimes a$.

In the lattice model, the spaces $U$, $V$ and $W$ have bases enumerated by the spin sets of either vertical edges, or horizontal edges of left-moving type ($\Left$) or right-moving type ($\Right$) corresponding to the type of the model.
The horizontal vector spaces, and their associated maps, depend on parameters $z_i$ for row $i$ and we will denote them by $V_\Left(z_i)$ and $V_\Right(z_i)$.

We will consider two cases.
In the first case, called the $RTT$-equation, $W$ corresponds to vertical edges, and $U = V_X(z_1)$ and $V = V_Y(z_2)$ are horizontal edges of type $X$ and $Y$ (with $X,Y \in \{\Left, \Right\}$).
This means that $S : V_X(z_1) \otimes W \to W \otimes V_X(z_1)$ and $T : V_Y(z_2) \otimes W \to W \otimes V_Y(z_2)$ are maps involving both horizontal and vertical edges and we will associate them with the standard vertices \tikz[scale=0.18, thick]{\draw (-1,0) -- (1,0); \draw (0,-1) -- (0,1); \node[dot,minimum size=3pt] at (0,0){};} in the lattice model, which we will call $T$-vertices and we write $S = T_X$ and $T = T_Y$. 
The map $R := R_{XY} : V_X(z_1) \otimes V_Y(z_2) \to V_Y(z_2) \otimes V_X(z_1)$ involves only horizontal edges for two different rows and can be seen as swapping these rows.
In the lattice model we associate this map with a new type of vertex \tikz[scale=0.18, thick]{\draw (-1, -1) -- (1,1); \draw (1,-1) -- (-1,1); \node[dot,minimum size=3pt] at (0,0){};} connecting four horizontal edges. 

The lattice model description of \eqref{eq:YBE} in the $RTT$-case states that the partition functions for the following systems with fixed boundary conditions are equal.

\begin{equation}
\begin{gathered}
  \label{eq:RTT-picture}
  \llbracket R_{XY}, T_X, T_Y \rrbracket = 0\colon\hfill\\
  \begin{tikzpicture}[baseline=-1mm, scale=0.75]
    \draw (-1, 1) node[label=left:$b$]{} -- (1,-1) node[label={[blue, label distance=-1mm]below:$j$}]{} -- (3,-1) node[label=right:$e$]{};
    \draw (-1,-1) node[label=left:$a$]{} -- (1, 1) node[label={[blue, label distance=-1mm]above:$i$}]{} -- (3, 1) node[label=right:$d$]{};
    \draw (2,2) node[label=above:$c$]{} -- (2,0) node[label={[blue, label distance=-1mm]left:$k$}]{} -- (2,-2) node[label=below:$f$]{};
    \node[label=left:$R_{XY}$, dot] at (0,0) {};
    \node[label=north east:$T_X$, dot] at (2,1) {};
    \node[label=north east:$T_Y$, dot] at (2,-1) {};
  \end{tikzpicture}
  \qquad
  =
  \qquad
  \begin{tikzpicture}[baseline=-1mm,xscale=-0.75,yscale=-0.75]
    \draw (-1, 1) node[label=right:$e$]{} -- (1,-1) node[label={[blue, label distance=-1mm]above:$l$}]{} -- (3,-1) node[label=left:$b$]{};
    \draw (-1,-1) node[label=right:$d$]{} -- (1, 1) node[label={[blue, label distance=-1mm]below:$m$}]{} -- (3, 1) node[label=left:$a$]{};
    \draw (2,2) node[label=below:$f$]{} -- (2,0) node[label={[blue, label distance=-1mm]right:$n$}]{} -- (2,-2) node[label=above:$c$]{};
    \node[label=right:$R_{X,Y}$, dot] at (0,0) {};
    \node[label=north west:$T_X$, dot] at (2,1) {};
    \node[label=north west:$T_Y$, dot] at (2,-1) {};
  \end{tikzpicture}
\end{gathered}
\end{equation}
The internal edges $i$, $j$, $k$, $l$, $m$ and $n$ are summed over while the boundary edges are kept fixed.
These $RTT$-equations can be used to obtain functional equations for partition functions by a repeated use of \eqref{eq:RTT-picture} in what is called the \emph{train argument}, see for example the proof of \cref{lemma:mixed_model_exchange_rows}.

The other type of Yang--Baxter equation we will consider consists of three $R$-vertices and is therefore called the $RRR$-equation.
Here $U = V_X(z_1)$, $V = V_Y(z_2)$ and $W = V_Z(z_3)$ all correspond to horizontal edges of row types $X,Y,Z \in \{\Left,\Right\}$ with different row parameters $z_1$, $z_2$ and $z_3$.
The lattice model description of \eqref{eq:YBE} for the $RRR$-equation states that the partition functions for the following systems with fixed boundary conditions are equal.

\begin{equation}
  \begin{gathered}
  \label{eq:RRR-picture}
  \llbracket R_{XY}, R_{XZ}, R_{YZ} \rrbracket = 0\colon\hfill\\[0.5em]
  \begin{tikzpicture}[baseline, scale=0.75, shift={(0,-1)}]
    \draw (-1,-1) node[label=left:$a$]{} -- (1,1) node[label={[blue,label distance=-3mm]south east:$i$}]{} -- (3,3) node[label=right:$d$]{};
    \draw (-1,1) node[label=left:$b$]{} -- (1,-1) -- (2,-1) node[label={[blue,label distance=-2mm]above:$j$}]{} -- (3,-1) -- (5,1) node[label=right:$e$]{};
    \draw (1,3) node[label=left:$c$]{} -- (3,1) node[label={[blue,label distance=-3mm]south west:$k$}]{} -- (5,-1) node[label=right:$f$]{};
    \node[label=left:$R_{XY}$, dot] at (0,0) {}; 
    \node[label=left:$R_{XZ}$, dot] at (2,2) {}; 
    \node[label=right:$R_{YZ}$, dot] at (4,0) {}; 
  \end{tikzpicture}
  \qquad
  =
  \qquad
  \begin{tikzpicture}[baseline,xscale=0.75,yscale=-0.75, shift={(0,-1)}]
    \draw (-1,-1) node[label=left:$c$]{} -- (1,1) node[label={[blue,label distance=-3mm]north east:$n$}]{} -- (3,3) node[label=right:$f$]{};
    \draw (-1,1) node[label=left:$b$]{} -- (1,-1) -- (2,-1) node[label={[blue,label distance=-2mm]below:$l$}]{} -- (3,-1) -- (5,1) node[label=right:$e$]{};
    \draw (1,3) node[label=left:$a$]{} -- (3,1) node[label={[blue,label distance=-3mm]north west:$m$}]{} -- (5,-1) node[label=right:$d$]{};
    \node[label=left:$R_{YZ}$, dot] at (0,0) {}; 
    \node[label=left:$R_{XZ}$, dot] at (2,2) {}; 
    \node[label=right:$R_{XY}$, dot] at (4,0) {};
  \end{tikzpicture}
\end{gathered}
\end{equation}
As before, the internal edges $i$, $j$, $k$, $l$, $m$ and $n$ are summed over while the boundary edges are kept fixed.
Note how this picture is just a slight rearrangement of the one in~\eqref{eq:RTT-picture}.

We say that a (fused) lattice model is \emph{solvable} if all of the above Yang--Baxter equations hold.
The fusion process described in \cref{sec:fusion} for constructing lattice models was developed in \cite{BBBG:Iwahori} and Lemma~5.4 therein shows that if the unfused model satisfies \emph{auxiliary} or \emph{unfused} Yang--Baxter equations then this implies that the fused models satisfy the standard Yang--Baxter equations.
These auxiliary Yang--Baxter equations are very similar to those in~\eqref{eq:RTT-picture} and~\eqref{eq:RRR-picture} except the vertices now carry a vertex color, all being the same color $c_k$ for some $k$ except for the $R$-vertex in the right-hand side of~\eqref{eq:RTT-picture} which is $c_{k+1}$ (where $c_{m+1} = c_1$).
The fused $R$-vertex is obtained by restricting to $k=1$.
Since the auxiliary Yang--Baxter equations imply the standard Yang--Baxter equations we will often drop the term \emph{auxiliary} even for unfused models.

We present the different (unfused) $R$-vertices and their weights for our models in \cref{tab:R-vertices,tab:R-vertices-mixed} and the following theorem states that they, together with the different $T$-vertices presented before satisfy all the Yang--Baxter equations of \eqref{eq:RTT-picture} and \eqref{eq:RRR-picture}.

\begin{table}[htpb]
  \centering
  \caption{Configurations and Boltzmann weights for the expanded, unfused $\Rll$ and $\Rrr$ attaching to the left of two vertically aligned $T$-vertices with vertex color $c_k$ and row parameters $z_1$ above $z_2$.
  The row parameters are swapped by the $R$-vertices so that $z_2$ is above $z_1$ at the $R$-vertices' left edges as shown in the first column.
  Here $i \neq j$ and the weights on the first row are equal for $\Rll$ and $\Rrr$ after substituting $\Phi$ with $1/(q^2\Phi)$, thus exchanging the second and third weights, while for the second row they are equal after swapping $z_1$ and $z_2$.}
  \label{tab:R-vertices}
  \vspace{-0.5em}
  \begin{equation*}
    \begin{array}{|c|c|c|c|c|c|}
      \hline
      \multicolumn{6}{|c|}{\boldsymbol{\Rll} \textbf{ and } \boldsymbol{\Rrr} \vphantom{\displaystyle\int}}
      \\\hline\hline
      \begin{tikzpicture}[baseline, scale=0.75]
        \node[anchor=south] at (-1,-1) {$z_1$};
        \node[anchor=north] at (-1,1) {$z_2$};
      \end{tikzpicture}
      &
      \begin{tikzpicture}[baseline, scale=0.75]
        \draw (-1,-1) -- (1,1);
        \draw (-1,1) -- (1,-1);
        \node[label=above:$c_k$] (0,0) {};
      \end{tikzpicture}  
      &
      \begin{tikzpicture}[baseline, scale=0.75]
        \draw[ultra thick, red] (-1,-1) -- (1,1)  node[label=below:$c_i$]{};
        \draw (-1,1) -- (1,-1);
        \node[label=above:$c_k$] (0,0) {};
      \end{tikzpicture} 
      &
      \begin{tikzpicture}[baseline, scale=0.75]
        \draw (-1,-1) -- (1,1);
        \draw[ultra thick, red] (-1,1) -- (1,-1) node[label=above:$c_i$]{};
        \node[label=above:$c_k$] (0,0) {};
      \end{tikzpicture}
      &
      \begin{tikzpicture}[baseline, scale=0.75]
        \draw[ultra thick, red] (-1,-1) -- (1,1) node[label=below:$c_i$]{};
        \draw[ultra thick, red] (-1,1) -- (1,-1) node[label=above:$c_i$]{};
        \node[label=above:$c_k$] (0,0) {};
      \end{tikzpicture}
      &
      \begin{tikzpicture}[baseline, scale=0.75]
        \draw[ultra thick, red] (-1,-1) -- (1,1) node[label=below:$c_i$]{};
        \draw[ultra thick, blue] (-1,1) -- (1,-1) node[label=above:$c_j$]{};
        \node[label=above:$c_k$] (0,0) {};
      \end{tikzpicture}

      \\\hline
      
      \Rll\vphantom{\Bigl)}
      & z_1 - q^2 z_2 
      & q^2\Phi(z_1-z_2) 
      & (z_1 - z_2)/\Phi 
      & z_2 - q^2 z_1
      & \mathfrak{X}_{i,j} (z_1-z_2)

      \\\hline

      \Rrr\vphantom{\Bigl)}
      & \multicolumn{5}{c|}{\Phi \longrightarrow 1/(q^2\Phi)}       
      
      \\\hline\hline

      \begin{tikzpicture}[baseline, scale=0.75]
        \node[anchor=south] at (-1,-1) {$z_1$};
        \node[anchor=north] at (-1,1) {$z_2$};
      \end{tikzpicture}
      &
      \begin{tikzpicture}[baseline, scale=0.75]
        \draw (-1,-1) -- (0,0) -- (1,-1);
        \draw[ultra thick, red] (-1,1) -- (0,0) --  (1,1) node[label=below:$c_i$]{};
        \node[label=above:$c_k$] (0,0) {};
      \end{tikzpicture}
      &
      \begin{tikzpicture}[baseline, scale=0.75]
        \draw[ultra thick, red] (-1,-1) -- (0,0) -- (1,-1) node[label=above:$c_i$]{};
        \draw (-1,1) -- (0,0) --  (1,1);
        \node[label=above:$c_k$] (0,0) {};
      \end{tikzpicture}  
      & \multicolumn{3}{c|}{
      \begin{tikzpicture}[baseline, scale=0.75]
        \draw[ultra thick, red] (-1,-1) -- (0,0) -- (1,-1) node[label=above:$c_i$]{};
        \draw[ultra thick, blue] (-1,1) -- (0,0) --  (1,1) node[label=below:$c_j$]{};
        \node[label=above:$c_k$] (0,0) {};
      \end{tikzpicture} 
      }
      \\\hline
      \Rll\vphantom{\Bigl)} & (1-q^2)z_1 & (1-q^2)z_2 & 
      \multicolumn{3}{c|}{
        (1-q^2) \cdot
        \left\{\begin{smallmatrix*}[l]
          z_2 & \text{if $i < j < k$ or $j < k \leq i$ or $k \leq i < j$}\\
          z_1 & \text{if $j < i < k$ or $i < k \leq j$ or $k \leq j < i$}
        \end{smallmatrix*}\right.
      }
      \\\hline
      \Rrr\vphantom{\Bigl)}
      &
      \multicolumn{5}{c|}{z_1 \longleftrightarrow z_2}
      \\\hline
    \end{array}
  \end{equation*}  
\end{table}


\begin{table}[htpb]
  \centering
  \caption{Configurations and Boltzmann weights for the expanded, unfused $\Rlr$ and $\Rrl$ attaching to the left of two vertically aligned $T$-vertices with vertex color $c_k$ and row parameters $z_1$ above~$z_2$.
  The row parameters are swapped by the $R$-vertices so that $z_2$ is above $z_1$ at the $R$-vertices' left edges as shown in the first column.
  Here $i \neq j$ and the conditions for the last weights are the same as in \cref{tab:T_left_specializations}.}
  \label{tab:R-vertices-mixed}
  \vspace{-0.5em}

  \makebox[\textwidth][c]{
  \begin{tblr}{width=1.05\textwidth, hlines, vlines, colspec={cccccX[c]}, columns={mode=math}}
    \SetCell[c=6]{c}{\boldsymbol{\Rlr} \textbf{ and } \boldsymbol{\Rrl}\vphantom{\displaystyle\int}} 
      \\\hline
      \begin{tikzpicture}[baseline, scale=0.75]
        \node[anchor=south] at (-1,-1) {$z_1$};
        \node[anchor=north] at (-1,1) {$z_2$};
      \end{tikzpicture}
      &
      \begin{tikzpicture}[baseline, scale=0.75]
        \draw (-1,-1) -- (1,1);
        \draw (-1,1) -- (1,-1);
        \node[label=above:$c_k$] (0,0) {};
      \end{tikzpicture}  
      &
      \begin{tikzpicture}[baseline, scale=0.75]
        \draw[ultra thick, red] (-1,-1) -- (1,1)  node[label=below:$c_i$]{};
        \draw (-1,1) -- (1,-1);
        \node[label=above:$c_k$] (0,0) {};
      \end{tikzpicture} 
      &
      \begin{tikzpicture}[baseline, scale=0.75]
        \draw (-1,-1) -- (1,1);
        \draw[ultra thick, red] (-1,1) -- (1,-1) node[label=above:$c_i$]{};
        \node[label=above:$c_k$] (0,0) {};
      \end{tikzpicture}
      &
      \begin{tikzpicture}[baseline, scale=0.75]
        \draw[ultra thick, red] (-1,-1) -- (1,1) node[label=below:$c_i$]{};
        \draw[ultra thick, red] (-1,1) -- (1,-1) node[label=above:$c_i$]{};
        \node[label=above:$c_k$] (0,0) {};
      \end{tikzpicture}
      &
      \begin{tikzpicture}[baseline, scale=0.75]
        \draw[ultra thick, red] (-1,-1) -- (1,1) node[label=below:$c_i$]{};
        \draw[ultra thick, blue] (-1,1) -- (1,-1) node[label=above:$c_j$]{};
        \node[label=above:$c_k$] (0,0) {};
      \end{tikzpicture}
      \\
      \Rlr \vphantom{\Bigl)}
      & z_1 - z_2 
      & \Phi (z_1- q^2 z_2) 
      & \Phi(z_1- q^2 z_2) 
      & \Phi^2(q^4 z_2 - z_1) 
      & \Phi^2 \mathfrak{X}_{j,i} (z_1-q^2z_2)
      \\
      \Rrl \vphantom{\Bigl)}
      & z_2 - q^{2m+2} z_1
      & (z_2 - q^{2m}z_1)/\Phi
      & (z_2 - q^{2m}z_1)/\Phi
      & (q^{2m-2}z_1 - z_2)/\Phi^2
      & \frac{\mathfrak{X}_{j,i}}{(q\Phi)^2} (z_2 - q^{2m}z_1) 
  \end{tblr}
}
\\[0.35mm]
\makebox[\textwidth][c]{
  \begin{tblr}{width=1.05\textwidth, hlines, vlines, colspec={cccX[c]}, columns={mode=math}}
      \begin{tikzpicture}[baseline, scale=0.75]
        \node[anchor=south] at (-1,-1) {$z_1$};
        \node[anchor=north] at (-1,1) {$z_2$};
      \end{tikzpicture}
      &
      \begin{tikzpicture}[baseline, scale=0.75]
        \draw[ultra thick, red] (1,1) -- (0,0) -- (1,-1) node[label=above:$c_i$]{};
        \draw (-1,-1) -- (0,0) --  (-1,1);
        \node[label=above:$c_k$] (0,0) {};
      \end{tikzpicture} 
      &
      \begin{tikzpicture}[baseline, scale=0.75]
        \draw (1,-1) -- (0,0) -- (1,1);
        \draw[ultra thick, red] (-1,1) -- (0,0) --  (-1,-1) node[label=above:$c_i$]{};
        \node[label=above:$c_k$] (0,0) {};
      \end{tikzpicture}
      & 
      \begin{tikzpicture}[baseline, scale=0.75]
        \draw[ultra thick, red] (-1,1) -- (0,0) -- (-1,-1) node[label=above:$c_i$]{};
        \draw[ultra thick, blue] (1,1) -- (0,0) --  (1,-1) node[label=above:$c_j$]{};
        \node[label=above:$c_k$] (0,0) {};
      \end{tikzpicture} 
      \\
      \Rlr \vphantom{\Bigl)} 
      & \Phi(1-q^2)z_1 
      & \Phi(1-q^2)z_2 
      & 
        -\Phi^2(1-q^2) \cdot
        \left\{\begin{smallmatrix*}[l]
          q^2 z_2 & \text{if $i<j<k$, \ldots}\\
          z_1 & \text{if $j<i<k$, \ldots}
        \end{smallmatrix*}\right.
      \\
      \Rrl \vphantom{\Bigl)}
      & \frac{q^{2\res^m(k-i)-2}}{\Phi}(1-q^2)z_2
      & \frac{q^{2\res_m(i-k)}}{\Phi}(1-q^2)z_1
      & 
      \frac{q^{2\res_m(i-j)-2}}{\Phi^2} (1-q^2)\cdot
        \left\{\begin{smallmatrix*}[l]
          z_1 & \text{if $i<j<k$, \ldots}\\
          z_2 & \text{if $j<i<k$, \ldots}
        \end{smallmatrix*}\right.
      \\
    \end{tblr}
  }
\end{table}

\begin{theorem}
  \label{thm:YBE}
  The unfused R-vertices of \cref{tab:R-vertices,tab:R-vertices-mixed} and T-vertices of \cref{tab:T_right_weights,tab:T_left_weights} satisfy the auxiliary Yang--Baxter $RTT$-equations in~\eqref{eq:RTT-picture} and the $RRR$-equations in~\eqref{eq:RRR-picture} for all combinations of row types $X,Y,Z \in \{\Left,\Right\}$.
\end{theorem}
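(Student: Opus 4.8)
The plan is to reduce the Yang--Baxter equations, which are identities of linear maps $U \otimes V \otimes W \to W \otimes V \otimes U$, to a finite list of scalar identities and then verify these. Unwinding the definition in~\cref{eq:YBE}, the equation $\llbracket R, S, T\rrbracket = 0$ holds if and only if the two maps agree on every basis vector, i.e.\ if and only if the two partition functions in~\cref{eq:RTT-picture} and~\cref{eq:RRR-picture} agree for each fixed choice of the six boundary spins, with the internal edges $i,j,k$ (respectively $l,m,n$) summed over. The crucial simplification is color conservation: since every admissible vertex in \cref{tab:T_right_weights,tab:T_left_weights,tab:R-vertices,tab:R-vertices-mixed} preserves the multiset of colors, only finitely many boundary-color patterns give a nonzero contribution, and for each such pattern only finitely many admissible internal colorings occur. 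The theorem thus becomes a finite collection of Laurent-polynomial identities in $z_1,z_2$ (for the $RTT$-case) or $z_1,z_2,z_3$ (for the $RRR$-case) over the fraction field $\mathcal{F}$, one for each boundary-color pattern and each row-type combination.

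Before grinding through cases I would cut down the number of combinations using the symmetries already visible in the weight tables. The captions of \cref{tab:R-vertices,tab:R-vertices-mixed} record that $\Rrr$ is obtained from $\Rll$ by the substitution $\Phi \mapsto 1/(q^2\Phi)$ together with $z_1 \leftrightarrow z_2$, and that $\Rrl$ is obtained from $\Rlr$ by the analogous substitution. Combining these with a left--right reflection of the configurations---which relabels each vertex, replaces $\mathfrak{X}_{i,j}$ by $\mathfrak{X}_{j,i}$ (compatibly with~\cref{eq:X-condition}) and rescales the $\Tl$-weights relative to the $\Tr$-weights by explicit monomials in the $z_i$---one reduces the four $(X,Y)$ combinations of the $RTT$-equation and the eight $(X,Y,Z)$ combinations of the $RRR$-equation to a handful of representatives: the pure right-moving equations and the genuinely mixed ones.

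For the representatives that do not involve the fourth $R$-matrix $\Rrl$, the verification is a direct, if lengthy, case analysis. I would organize the boundary-color patterns by the number of colored strands entering ($0$, $1$ or $2$), by whether the two path colors coincide ($i=j$) or are distinct ($i \neq j$), and by their cyclic order relative to the vertex color $c_k$---recalling that the auxiliary equations shift the vertex color to $c_{k+1}$ on one side of~\cref{eq:RTT-picture} with $c_{m+1}=c_1$, which is exactly what the residues $\res_m$ and $\res^m$ in \cref{tab:R-vertices-mixed} bookkeep. In each subcase both sides are explicit products of tabulated weights, and the identity is checked by a short computation; the pure right-moving $RTT$- and $RRR$-equations moreover specialize to the known solvability of the Iwahori and metaplectic models of~\cite{BBBG:Iwahori,BBBG:duality}, which serves as a consistency check.

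I expect the main obstacle to be the equations involving the fourth $R$-matrix $\Rrl$, precisely the difficulty encountered in~\cite{Buciumas-Scrimshaw}. Here a right-moving and a left-moving strand can close up into an internal loop whose color is \emph{not} fixed by the boundary data, so it must be summed over all $m$ colors, producing an apparent $m$-dependence on only one side of the equation. The key is to show that these color-loop sums telescope: summing the cyclic weights $q^{2\res_m(\cdots)}(1-q^2)z$ over the loop color collapses, using the wraparound $c_{m+1}=c_1$, to the closed $q^{2m}$- and $q^{2m+2}$-factors that appear in the $\Rrl$-weights of \cref{tab:R-vertices-mixed}, thereby matching the $m$-dependence on the other side. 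Equivalently, I would establish the inversion relation $\Rrl\,\Rlr \propto \id$ (the content of~\cref{appendix:proof_of_R-matrix_inverse}), whose proof is where the telescoping occurs, and then deduce every Yang--Baxter equation containing $\Rrl$ from the corresponding already-proved equation containing $\Rlr$ by composing with this inverse. This telescoping/inversion step is the technical heart of the argument.
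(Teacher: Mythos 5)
Your proposal is correct in outline and coincides with the paper's proof exactly where it matters most: you isolate the equations involving $\Rrl$ as the genuinely hard case, prove the inversion relation $\Rlr\,\Rrl \propto \id$ by telescoping the color-loop sums, and then transport every $\Rrl$-equation to the corresponding $\Rlr$-equation by composing with the inverse --- this is precisely the paper's \cref{lem:RXY}, \cref{prop:inverse} and \cref{cor:R_RightLeft}, with the telescoping carried out in \cref{appendix:proof_of_R-matrix_inverse}. Where you diverge is in how the remaining ($\Rrl$-free) equations are verified. You keep $\Phi$ and $\mathfrak{X}_{i,j}$ generic and propose a direct case analysis, compressed by the $\Phi \mapsto 1/(q^2\Phi)$ substitution and a left--right reflection. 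The paper instead first proves the twisting proposition (\cref{prop:twisting}): for fixed boundary spins, every contributing state on either side of any of the equations carries identical $\phi$- and $\Phi$-factors (verified from the conservation equations), so validity for one choice of $(\Phi,\mathfrak{X}_{i,j})$ implies it for all. It then specializes to the Iwahori point, where --- after \cref{lemma:at_most_three_colors} guarantees that in the $\Rrl$-free equations every internal color already appears among the (at most three) boundary colors --- the weights depend only on the \emph{relative order} of the colors, so checking palette size $m=3$ suffices, and the resulting finite list of identities in $\mathbb{C}(z_1,z_2,z_3,v)$ is verified in \texttt{SageMath}. The paper's route buys uniformity in $m$ and in the twist parameters with two clean lemmas and outsources the finite check to a computer; your route avoids computer algebra but leaves two points to be supplied: (i) the uniformity of your check as $m$ varies is exactly the content of \cref{lemma:at_most_three_colors} (you assert that unfixed color loops arise only with $\Rrl$, but for the $\Rrl$-free equations this requires the short compatibility argument the paper gives), and moreover with generic $\mathfrak{X}_{i,j}$ the weights depend on the actual color indices, so your ``finitely many representatives'' claim needs the additional observation that distinct color triples yield identities isomorphic under relabelling of the $\mathfrak{X}$'s; and (ii) the left--right reflection, with its nonuniform per-configuration monomial rescalings between $\Tl$ and the reflected $\Tr$, is plausible as a gauge transformation but is not established anywhere and would need its own proof that it preserves the Yang--Baxter brackets. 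Neither is a fatal gap, but both must be nailed down before your case analysis constitutes a complete proof.
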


It follows from Lemma 5.4 in \cite{BBBG:Iwahori} that the fused models satisfies the standard Yang--Baxter equations in~\eqref{eq:RTT-picture} and~\eqref{eq:RRR-picture} for all $X,Y,Z \in \{\Left,\Right\}$. 
\begin{corollary}
  The fused left- and right-moving models are solvable.
\end{corollary}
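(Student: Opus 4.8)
The plan is to deduce solvability directly from \cref{thm:YBE} together with the fusion lemma, Lemma~5.4 in~\cite{BBBG:Iwahori}, so that the corollary is essentially a bookkeeping statement once the unfused result is in hand. Recall that, by the definition given above, a fused model is \emph{solvable} precisely when the standard $RTT$-equations~\eqref{eq:RTT-picture} and $RRR$-equations~\eqref{eq:RRR-picture} hold for every choice of row types $X,Y,Z \in \{\Left,\Right\}$. \cref{thm:YBE} supplies exactly the unfused (auxiliary) counterparts of these equations for all such combinations, involving the four $R$-vertices $\Rll$, $\Rrr$, $\Rlr$ and $\Rrl$ of \cref{tab:R-vertices,tab:R-vertices-mixed} paired with the $T$-vertices $\Tl$ and $\Tr$.

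First I would confirm that the auxiliary equations established in \cref{thm:YBE} are stated in precisely the form consumed by the fusion lemma: each vertex carries a common vertex color $c_k$, the $R$-vertex on the right-hand side of the $RTT$-equation is shifted to color $c_{k+1}$ with the convention $c_{m+1}=c_1$, and the fused $R$-vertex is recovered by restricting to $k=1$. Since the auxiliary framework in this paper was set up with exactly this color convention, the hypotheses of the lemma are met verbatim, and the lemma then promotes each unfused equation to its fused standard counterpart by an induction over the $m$ columns within a block.

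The one point requiring genuine care---and the main obstacle---is that Lemma~5.4 in~\cite{BBBG:Iwahori} was originally formulated for a single (right-moving) model, whereas here it must apply uniformly to mixed states in which left- and right-moving rows are interleaved. I would verify that the fusion argument is insensitive to the row type: its proof treats one block of $m$ single-colored columns at a time and uses the auxiliary equations only as a black box, so replacing a right-moving $T$-vertex by a left-moving one, or mixing the two on the two rows attached to an $R$-vertex, changes only \emph{which} auxiliary equations are invoked, every one of which is guaranteed by \cref{thm:YBE}. In particular, the color-loop subtlety for the fourth $R$-matrix $\Rrl$ is already absorbed into the validity of the auxiliary equation at that vertex, so no additional telescoping is needed at the fusion stage.

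Finally I would assemble the pieces: for each triple $X,Y,Z \in \{\Left,\Right\}$ the relevant auxiliary $RTT$- and $RRR$-equations hold by \cref{thm:YBE}, the fusion lemma converts them into the fused equations~\eqref{eq:RTT-picture} and~\eqref{eq:RRR-picture}, and since this exhausts all combinations of row types the fused left- and right-moving models satisfy every Yang--Baxter equation required by the definition of solvability.
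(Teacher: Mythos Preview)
Your proposal is correct and follows exactly the paper's approach: the corollary is deduced from \cref{thm:YBE} by applying the fusion lemma, Lemma~5.4 of~\cite{BBBG:Iwahori}, which promotes the auxiliary Yang--Baxter equations to the standard fused ones for every combination of row types. The paper states this in a single sentence just before the corollary, while you have spelled out the verification that the hypotheses of the fusion lemma are met in the mixed setting; this additional care is reasonable but not a different argument.
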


We will prove \cref{thm:YBE} by showing that the Yang--Baxter equations are satisfied for a specific choice of $\Phi$ and $\mathfrak{X}_{i,j}$, and that this then implies that they hold for all $\Phi$ and~$\mathfrak{X}_{i,j}$.

For this choice it would be possible to choose the metaplectic specialization where many of the Yang--Baxter equations are proven.
For example all the metaplectic $RTT$-equations are shown in \cite{BBB, Gray:thesis} and the metaplectic $X=Y=Z=\Gamma$ $RRR$-equation in \cite{BBB}.
The remaining metaplectic equations should be able to be obtained from the preprint \cite{BBBGray}.
However, the relevant part of the preprint was not included in the published version which appeared as an appendix to \cite{BBB} and it relied itself on results in the preprint of \cite{BBB} which were also not published.

In what follows we will instead give an independent and self-contained proof of \cref{thm:YBE} starting from the Iwahori specialization where many things simplify drastically.
The Yang--Baxter equations involving only right-moving row types were originally proved in \cite{BBBG:Iwahori}, but we do not rely on this proof either since this case is covered by our general argument with no additional effort.
We choose this particular specialization because all the Iwahori $T$-weights in \cref{tab:T_right_specializations,tab:T_left_specializations} and the Iwahori specializations of the $R$-weights in \cref{tab:R-vertices,tab:R-vertices-mixed} except for $\Rrl$ depend on the color indices $i$, $j$, and $k$ only by their relative orders in the palette.
In contrast, $\Rrl$ depends on the actual values of $i$, $j$, and $k$ for the involved colors.
Indeed, $\Rrl$ depends on the residues 
$$[0, m - 1] \ni \res_m (x) \equiv x \equiv \res^m (x) \in [1, m].$$
This is important since we want to reduce the proof of the Yang--Baxter equation to checking a finite number of equations independent of $m$.

We do not actually need the Yang--Baxter equations involving $\Rrl$ for the rest of this paper, but we will, for completeness, show that the Yang--Baxter equations involving $\Rrl$ follow from the other equations appearing in \cref{thm:YBE} using the following lemma and proposition.

\begin{lemma}
  \label{lem:RXY}
  If $R_{XY} R_{YX} = \id_{V_X} \otimes \id_{V_Y}$ and $R_{YX} R_{XY} = \id_{V_Y} \otimes \id_{V_X}$ then
  \begin{equation}
    \llbracket R_{XY}, T_X, T_Y \rrbracket = 0 \iff \llbracket R_{YX}, T_Y, T_X \rrbracket = 0
  \end{equation}
  and
  \begin{equation}
    \llbracket R_{XY}, R_{XY}, R_{YY} \rrbracket = 0 \iff 
    \llbracket R_{YX}, R_{YY}, R_{XY} \rrbracket = 0 \iff
    \llbracket R_{YY}, R_{YX}, R_{YX} \rrbracket = 0 
  \end{equation}
\end{lemma}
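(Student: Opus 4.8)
The plan is to prove both biconditionals by a single mechanism. Since $R_{XY}$ and $R_{YX}$ are mutually inverse by hypothesis, each Yang--Baxter commutator identity can be turned into the next one by pre- and post-composing the corresponding operator equation with these invertible $R$-matrices and then cancelling the resulting $R_{XY}R_{YX}$ and $R_{YX}R_{XY}$ factors against the identity. The key point, which I would emphasize, is that only the invertibility of the \emph{pair} $R_{XY}, R_{YX}$ is needed; the map $R_{YY}$ is never inverted.

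For the $RTT$-equivalence I would first unwind both commutators from~\eqref{eq:YBE}. The equation $\llbracket R_{XY}, T_X, T_Y\rrbracket = 0$ reads
\begin{equation*}
  (\id_W \otimes R_{XY})(T_X \otimes \id_{V_Y})(\id_{V_X} \otimes T_Y) = (T_Y \otimes \id_{V_X})(\id_{V_Y} \otimes T_X)(R_{XY} \otimes \id_W)
\end{equation*}
as maps $V_X \otimes V_Y \otimes W \to W \otimes V_Y \otimes V_X$, while $\llbracket R_{YX}, T_Y, T_X\rrbracket = 0$ is the same identity with $X$ and $Y$ interchanged, now as maps $V_Y \otimes V_X \otimes W \to W \otimes V_X \otimes V_Y$. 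I would then pre-compose the first identity with $R_{YX} \otimes \id_W$ and post-compose it with $\id_W \otimes R_{YX}$. On the left-hand side the leading $(\id_W \otimes R_{YX})(\id_W \otimes R_{XY})$ collapses to the identity, and what remains is exactly the right-hand side of the second identity; symmetrically, on the right-hand side the trailing $(R_{XY}\otimes \id_W)(R_{YX}\otimes \id_W)$ collapses, leaving the left-hand side of the second identity. Thus the conjugated first equation is precisely the second, and since conjugation by the invertible maps $R_{YX}$ is reversible, the implication runs both ways.

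For the $RRR$-equivalences I would record the three commutators as colored braid relations. Writing $a = R_{XY}$, $\bar a = R_{YX}$ and $h = R_{YY}$, with a subscript indicating the adjacent pair of tensor factors on which a map acts, \eqref{eq:YBE} turns $\llbracket R_{XY}, R_{XY}, R_{YY}\rrbracket = 0$ into $a_{23}a_{12}h_{23} = h_{12}a_{23}a_{12}$ on $V_X \otimes V_Y \otimes V_Y$, and turns $\llbracket R_{YX}, R_{YY}, R_{XY}\rrbracket = 0$ into $\bar a_{23}h_{12}a_{23} = a_{12}h_{23}\bar a_{12}$ on $V_Y \otimes V_X \otimes V_Y$, with the third relation of the same shape on $V_Y \otimes V_Y \otimes V_X$. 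Moving the single type-$X$ strand down by one position, I would pre-compose the first relation with $\bar a_{12}$ and post-compose it with $\bar a_{23}$; the two cancellations $\bar a_{23}a_{23} = \id$ and $a_{12}\bar a_{12} = \id$, each an $R_{XY}$--$R_{YX}$ pair, transform it exactly into the second relation, and the completely analogous move relates the second and the third. Invertibility again makes every step reversible, yielding the stated chain of equivalences.

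The only genuine subtlety is the bookkeeping of tensor positions and of which inverse relation is being applied at each cancellation, so the main work is organizing these domains cleanly rather than any hard computation. The feature worth highlighting is that throughout the $RRR$ argument the factor $h = R_{YY}$ merely rides along unchanged between the braiding moves and is never inverted, which is exactly why the hypothesis need only concern the pair $R_{XY}, R_{YX}$.
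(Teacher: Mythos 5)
Your proposal is correct and takes essentially the same route as the paper's proof: both conjugate the Yang--Baxter commutator by $R_{YX} \otimes \id$ and $\id \otimes R_{YX}$, cancel the mutually inverse pairs using the hypothesis, and observe that this turns each bracket into (minus) the next one, with invertibility making every implication reversible. The paper records this compactly as three conjugation identities, e.g.\ $(\id \otimes R_{YX}) \llbracket R_{XY}, T_X, T_Y \rrbracket (R_{YX} \otimes \id) = - \llbracket R_{YX}, T_Y, T_X \rrbracket$, whereas you split each bracket equation into its two terms and carry out the same cancellations explicitly.
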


\begin{proof}
  From the definition \eqref{eq:YBE} for the Yang--Baxter bracket we get by direct computation that
\begin{align*}
    (\id \otimes R_{Y X} ) \llbracket R_{X Y}, T_X, T_Y \rrbracket (R_{Y X} \otimes \id) &= - \llbracket R_{Y X}, T_Y, T_X \rrbracket \\
    (\id \otimes R_{YX}) \llbracket R_{XY}, R_{XY}, R_{YY} \rrbracket (R_{YX} \otimes \id) &= - \llbracket R_{YX}, R_{YY}, R_{XY} \rrbracket \\
    (R_{YX} \otimes \id) \llbracket R_{YX}, R_{YY}, R_{XY} \rrbracket (\id \otimes R_{YX}) &= - \llbracket R_{YY},R_{YX}, R_{YX} \rrbracket. \qedhere
\end{align*}
\end{proof}

See also \cite[Lemma~4.1 and Corollary~4.2]{Naprienko22} for similar results in another setting.

\begin{proposition}
  \label{prop:inverse} The maps $\Rlr$ and $\Rrl$ for the lattice models of this paper defined by \cref{tab:R-vertices-mixed} satisfy
  \begin{equation*}
    \Rlr(z_2,z_1) \Rrl(z_1,z_2) = C \cdot \id_{\Vr} \otimes \id_{\Vl} \quad \text{and} \quad \Rrl(z_1,z_2) \Rlr(z_2,z_1) = C \cdot \id_{\Vl} \otimes \id_{\Vr} 
  \end{equation*} 
  where $C = (z_2 - q^2 z_1) (z_2 - q^{2 m} z_1)$.
\end{proposition}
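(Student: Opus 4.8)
The plan is to prove this as a statement about explicit finite-dimensional linear maps. Fixing the vertex color and recalling that in the unfused model every horizontal edge carries a spin in $\{\text{empty}, c_1, \dots, c_m\}$, the maps $\Rlr$ and $\Rrl$ are genuine $(m+1)^2 \times (m+1)^2$ matrices whose entries are read directly off \cref{tab:R-vertices-mixed}. I would prove only the first identity $\Rlr(z_2,z_1)\Rrl(z_1,z_2) = C\cdot\id_{\Vr}\otimes\id_{\Vl}$ in detail, substituting the arguments $(z_2,z_1)$ into the $\Rlr$-weights and $(z_1,z_2)$ into the $\Rrl$-weights; the second identity follows by an entirely parallel computation, or formally by composing the first with the maps already in hand.

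First I would exploit color conservation. Since each admissible configuration in \cref{tab:R-vertices-mixed} transports a single colored path along one strand from an incoming to an outgoing edge, both factors preserve the multiset of path colors carried by the two horizontal edges, and hence so does the product. The composite is therefore block diagonal with respect to this multiset, and the only blocks that occur are the empty block and the ``both strands $c_i$'' blocks (one-dimensional), together with the ``exactly one strand $c_i$'' blocks and the distinct-pair ``$\{c_i,c_j\}$'' blocks (two-dimensional). It then suffices to check that every one-dimensional block equals $C$ and every two-dimensional block equals $C\cdot I_2$. The essential point is that the number of block \emph{types} is finite and independent of $m$: within a two-dimensional block the diagonal entries come from the straight (crossing) configurations of \cref{tab:R-vertices-mixed} and the off-diagonal entries from the bent (\texttt{c}-type) configurations, and all of these weights depend on the colors $i,j,k$ only through their relative order, matching the case splits already displayed in the table.

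Each block then reduces to multiplying two matrices of size at most $2\times 2$ and simplifying. Two algebraic inputs should drive all the cancellations. The first is the defining relation $\mathfrak{X}_{i,j}\mathfrak{X}_{j,i}=q^2$ from \eqref{eq:X-condition}, which collapses the products of the $\mathfrak{X}$-weighted distinct-color entries. The second is the complementarity of residues $\res_m(a)+\res_m(-a)=m$ for $a\not\equiv 0 \pmod m$, which is precisely what assembles the $m$-dependent powers of $q$ appearing in $\Rrl$ into the single factor $q^{2m}$ of $C=(z_2-q^2z_1)(z_2-q^{2m}z_1)$: in a distinct-pair block the product of the two strand-swapping entries carries residue exponents that sum to $m$. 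To contain the algebraic overhead coming from $\Phi$ and the $\mathfrak{X}_{i,j}$, I would mirror the strategy used for \cref{thm:YBE} and first verify the identity on a convenient specialization (for instance the Iwahori specialization, where $\Phi$ and $\mathfrak{X}_{i,j}$ are explicit), then argue that, since both sides are rational functions of the formal parameters subject only to \eqref{eq:X-condition}, the generic identity follows.

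I expect the main obstacle to be exactly this $m$-dependent bookkeeping in the two-dimensional blocks: showing that the strand-swapping (bent) contributions of $\Rlr$ and $\Rrl$, with their $q^{2m}$, $q^{2m\pm 2}$ and $\res_m$ factors, combine so that the two diagonal entries of the composite both equal $C$ while the two off-diagonal entries cancel identically. This is the same $m$-dependent color mixing flagged in the introduction as the source of difficulty for mixed $\Gamma$-$\Delta$ vertices; it is the \emph{correct combination} of these color-swapping weights, rather than any single configuration, that yields the clean scalar $C$, and verifying that they conspire to give exactly $C\cdot\id$ uniformly in $m$ is where the real content of the proposition lies. Once established, the proposition feeds into \cref{lem:RXY} to deduce the Yang--Baxter equations involving $\Rrl$ from the remaining cases of \cref{thm:YBE}.
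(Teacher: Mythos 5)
There is a genuine gap, and it sits exactly where the real content of the proposition lies. Your block-diagonal decomposition is false for the mixed $R$-matrices: the bent (\texttt{c}-type) configurations of $\Rlr$ and $\Rrl$ in \cref{tab:R-vertices-mixed} do \emph{not} preserve the multiset of colors on the two horizontal strands. Because one strand is left-moving and one is right-moving, color conservation holds between the top pair and the bottom pair of edges, not between the left pair and the right pair; the bent configurations connect the doubly-unoccupied pair to the pair $(c_l,c_l)$ for \emph{every} color $l$, and $(c_i,c_i)$ to $(c_j,c_j)$. Hence the composite $\Rlr(z_2,z_1)\Rrl(z_1,z_2)$ has one invariant block of dimension $m+1$, spanned by $\text{empty}\otimes\text{empty}$ together with all $c_l\otimes c_l$, whose entries are sums over an internal color loop running through the whole palette. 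This defeats your claim that the number of block types is finite and independent of $m$: the diagonal entry on the empty state is $(z_2-z_1)(z_2-q^{2m+2}z_1)$ plus an $m$-term loop sum, and the identity holds only because these sums telescope, e.g.\ $\sum_{l=1}^m(1-q^2)q^{2(l-1)}=1-q^{2m}$, with partial sums split at $i$ or $j$ producing the vanishing of the off-diagonal entries such as $(c_i,c_i)\to(c_j,c_j)$. This telescoping is precisely how the paper's proof in \cref{appendix:proof_of_R-matrix_inverse} assembles the factor $(z_2-q^{2m}z_1)$ and kills the cross terms; your residue identity $\res_m(a)+\res_m(-a)=m$ only handles the distinct-pair boundary $(c_i,c_j)$ with $i\neq j$, which is the easy, forced, single-state case settled by $\mathfrak{X}_{i,j}\mathfrak{X}_{j,i}=q^2$.

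Your reduction to a specialization is also invalid as stated: an identity among rational functions of $\Phi$ and the $\mathfrak{X}_{i,j}$ cannot be deduced from its truth at one point of the parameter space subject to \eqref{eq:X-condition}. The analogous reduction for the Yang--Baxter equations in \cref{prop:twisting} is not a rationality argument but a combinatorial invariance lemma showing that every admissible state on either side carries identical $\phi$- and $\Phi$-factors; you would have to supply a counterpart for the present identity. In fact no such reduction is needed or used: in the paper's computation the twisting parameters cancel term by term, since the $\Rlr$ and $\Rrl$ weights pair up as $\Phi$ against $1/\Phi$ (and $\Phi^2$ against $1/\Phi^2$) and as $\mathfrak{X}_{j,i}$ against $\mathfrak{X}_{i,j}/q^2$, so the verification can be done with $\Phi$ and $\mathfrak{X}_{i,j}$ fully generic. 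Your derivation of the second identity from the first (left inverse equals right inverse in finite dimensions) does match the paper and is fine.
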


See \cref{appendix:proof_of_R-matrix_inverse} for the proof.
Diagrammatically the first equation corresponds to showing that
\begin{equation}
  \label{eq:diagram_mixed_R-matrices_inverses}
  \begin{tikzpicture}[baseline={(0,-0.5ex)}, scale = 0.4]
    \draw (-1,-1) node[left] {$a$} -- (0,0);
    \draw (-1,1) node[left] {$b$} -- (0,0);
    \draw (0,0) -- (1,1) -- (3,-1) node[right] {$d$};
    \draw (0,0) -- (1,-1) -- (3,1) node[right] {$c$};
    \node[dot, label={[label distance=1em]above:$\scriptstyle{\Rlr}$}] at (0,0) {};
    \node[dot, label={[label distance=1em]above:$\scriptstyle{\Rrl}$}] at (2,0) {};
  \end{tikzpicture}
  \quad = \quad
  \begin{tikzpicture}[baseline={(0,-0.5ex)}, scale = 0.4]
      \draw (-1,-1) node[left] {$a$} -- (3,-1) node[right] {$d$};
      \draw (-1,1) node[left] {$b$} -- (3,1) node[right] {$c$};
  \end{tikzpicture}
\end{equation}
In contrast to the other proofs of this section, the computations in \cref{appendix:proof_of_R-matrix_inverse} depend strongly on the number of colors $m$ especially since the interior edges on the left-hand side of \eqref{eq:diagram_mixed_R-matrices_inverses} can become a sum over all $m$ colors of the palette even for fixed, given boundary edges $a$, $b$, $c$ and $d$.
Similar \emph{color loops} can occur in Yang--Baxter equations involving $\Rrl$, and it is somewhat surprising that we obtain such a simple right hand side in \cref{prop:inverse}, and that the Yang--Baxter equations hold for all $m$.
As we show in \cref{appendix:proof_of_R-matrix_inverse}, this is explained by the fact that these sums of internal color loops become telescoping sums resulting in this simple expression for $C$.

Since we may rescale $\Rrl$ by $C$ without affecting the Yang--Baxter equation we reach the following conclusion from combining \cref{lem:RXY,prop:inverse}.

\begin{corollary}
  \label{cor:R_RightLeft}
  To prove \cref{thm:YBE} it is enough to prove the cases which do not involve~$\Rrl$ (i.e.\ those involving only $\Tr$, $\Tl$, $\Rrr$, $\Rll$ and $\Rlr$).
\end{corollary}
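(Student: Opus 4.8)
The plan is to combine the linearity of the Yang--Baxter bracket with the inverse relation of \cref{prop:inverse} to verify the hypotheses of \cref{lem:RXY}, and then to match every equation of \cref{thm:YBE} that contains $\Rrl$ against one of the equivalences supplied by that lemma. First I would record the elementary but essential observation that the bracket $\llbracket R, S, T \rrbracket$ of~\eqref{eq:YBE} is homogeneous of degree one in each of its three slots: in both composites making up the bracket, each of $R$, $S$ and $T$ occurs exactly once. Hence multiplying any single one of the three maps by a nonzero scalar in $\mathcal{F}$ depending only on the row parameters (not on the spins) multiplies the entire bracket by that scalar, and therefore leaves the vanishing condition $\llbracket R, S, T\rrbracket = 0$ unchanged. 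This is what lets us replace $\Rrl(z_1,z_2)$ by the rescaled map $\Rrl(z_1,z_2)/C$, where $C = (z_2 - q^2 z_1)(z_2 - q^{2m}z_1)$ is the scalar of \cref{prop:inverse}.

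With that rescaling in place, \cref{prop:inverse} says precisely that the rescaled $\Rrl$ and $\Rlr$ are mutually inverse, so the hypotheses $R_{XY}R_{YX} = \id_{\Vr}\otimes\id_{\Vl}$ and $R_{YX}R_{XY} = \id_{\Vl}\otimes\id_{\Vr}$ of \cref{lem:RXY} hold with $\{R_{XY},R_{YX}\} = \{\Rrl,\Rlr\}$. I would then apply \cref{lem:RXY} in its two instantiations. Taking $(X,Y) = (\Right,\Left)$, the first equivalence converts the $RTT$-equation $\llbracket \Rrl, \Tr, \Tl\rrbracket = 0$ into $\llbracket \Rlr, \Tl, \Tr\rrbracket = 0$, which is free of $\Rrl$; and the triple equivalence relates the $RRR$-equations of types $(\Right,\Left,\Left)$ and $(\Left,\Right,\Left)$ to the $\Rrl$-free equation $\llbracket \Rll, \Rlr, \Rlr\rrbracket = 0$ of type $(\Left,\Left,\Right)$. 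Taking $(X,Y) = (\Left,\Right)$, the triple equivalence relates the $RRR$-equations of types $(\Right,\Left,\Right)$ and $(\Right,\Right,\Left)$ to the $\Rrl$-free equation $\llbracket \Rlr,\Rlr,\Rrr\rrbracket = 0$ of type $(\Left,\Right,\Right)$.

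Finally I would check that these exhaust all occurrences of $\Rrl$ in \cref{thm:YBE}. Among the $RTT$-equations of~\eqref{eq:RTT-picture}, only the one whose $R$-vertex is $\Rrl$ involves it, and that case was handled above. Among the eight $RRR$-equations of~\eqref{eq:RRR-picture} indexed by $(X,Y,Z)\in\{\Left,\Right\}^3$, the matrix $\Rrl = R^{\Right}_{\Left}$ appears exactly when one of the pairs $(X,Y)$, $(X,Z)$, $(Y,Z)$ equals $(\Right,\Left)$, namely in the four types $(\Left,\Right,\Left)$, $(\Right,\Left,\Left)$, $(\Right,\Left,\Right)$ and $(\Right,\Right,\Left)$, each of which was reduced in the previous step to one of the $\Rrl$-free types $(\Left,\Left,\Right)$ or $(\Left,\Right,\Right)$. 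Thus every equation containing $\Rrl$ is equivalent to one built only from $\Tr$, $\Tl$, $\Rrr$, $\Rll$ and $\Rlr$, which is the assertion of the corollary. The one point I would verify with care --- and the only genuine subtlety, since \cref{lem:RXY,prop:inverse} may be assumed --- is that $C$ is a bona fide scalar that pulls out of each tensor factor in the bracket, so that the relations of \cref{prop:inverse} yield the identity maps demanded verbatim by \cref{lem:RXY} at matching spectral parameters; once this is confirmed, the remainder is a finite bookkeeping of row types.
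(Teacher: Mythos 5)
Your proposal is correct and follows essentially the same route as the paper: the paper likewise rescales $\Rrl$ by the nonzero scalar $C$ of \cref{prop:inverse} (harmless since the bracket $\llbracket R,S,T\rrbracket$ contains each argument exactly once in both composites) and then concludes by combining \cref{lem:RXY,prop:inverse}. Your explicit enumeration of the single $RTT$-case and the four $RRR$-types $(\Right,\Left,\Left)$, $(\Left,\Right,\Left)$, $(\Right,\Left,\Right)$, $(\Right,\Right,\Left)$ containing $\Rrl$, and their reduction to the $\Rrl$-free types via the two instantiations of \cref{lem:RXY}, merely spells out the bookkeeping the paper leaves implicit.
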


\begin{lemma}
  \label{lemma:at_most_three_colors}
  For fixed boundary conditions, the admissible states in a Yang--Baxter equation without vertices of type $\Rrl$ (i.e.\ consisting only of types $\Tr$, $\Tl$, $\Rrr$, $\Rll$ and $\Rlr$) involve at most three colors, and those must appear on the boundary and therefore be fixed.
\end{lemma}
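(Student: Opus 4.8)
The plan is to reduce the statement to the absence of closed color loops and then count boundary edges. First I would recall that in the unfused models every edge carries at most one color and every admissible vertex conserves color, so in any admissible state the colored edges decompose into monochromatic \emph{color paths}, each of which either joins two boundary edges or closes up into a loop. Granting that no loops occur, every color lies on a path with two endpoints on the boundary and is therefore one of the fixed boundary colors. Since each of the diagrams \eqref{eq:RTT-picture} and \eqref{eq:RRR-picture} has exactly six boundary edges, each carrying at most one color and each serving as an endpoint of at most one color path, there are at most three color paths and hence at most three colors, all appearing on the boundary. This reduces the lemma to excluding color loops in any Yang--Baxter diagram built from $\Tr,\Tl,\Rrr,\Rll,\Rlr$.

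To exclude loops I would orient every edge in the direction of color flow — vertical (column) edges downward, right-moving horizontal edges rightward, and left-moving horizontal edges leftward — so that a loop becomes a directed cycle of internal edges. The key claim, to be read off from \cref{tab:R-vertices,tab:R-vertices-mixed} and \cref{tab:T_right_weights,tab:T_left_weights}, is that the only admissible configuration carrying color from one internal edge \emph{up} to another internal edge (strictly increasing the level of the row) is the turn-back of an $\Rrl$-vertex: the corresponding turn-back of an $\Rlr$-vertex lowers the level, the vertices $\Rrr$ and $\Rll$ have no turn-back joining their two internal edges at all, and no $\Tr$ or $\Tl$ configuration raises it. Hence, once $\Rrl$ is forbidden, the level is non-increasing along internal color flow, so any directed cycle would have to be level-constant; but level-constant internal flow runs inside a single row, where it is unidirectional, and so cannot cycle.

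Concretely I would then separate the two diagram shapes. In the $RTT$-equation \eqref{eq:RTT-picture} the only internal level-changing moves are the single descending column through the two $T$-vertices and the turn-back of the one $R$-vertex, so a loop would force that turn-back to raise the level, i.e.\ force $\Rrl$. In the $RRR$-equation \eqref{eq:RRR-picture} there is no column, and the only row-type triples $(X,Y,Z)$ avoiding $\Rrl$ are $(\Right,\Right,\Right)$, $(\Left,\Left,\Left)$ and $(\Left,\Left,\Right)$; in the two pure cases the horizontal coordinate is strictly monotone along the flow, and in the mixed case I would check directly that the three internal edges, oriented by flow, form a directed acyclic graph whose unique internal turn-back (occurring at an $\Rlr$-vertex) routes along, rather than against, this order. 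The main obstacle is precisely this flow bookkeeping: one must confirm from the weight tables that, among the permitted vertices, no crossing or turn-back between two internal edges ever transports color to a higher row, so that only an $\Rrl$-turn-back could ever close a loop.
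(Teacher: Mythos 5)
Your overall strategy coincides with the paper's own proof: decompose the colored edges into monochromatic paths, note that each color path must end at two of the six fixed boundary edges (hence at most three colors, all on the boundary), and reduce everything to excluding internal color loops, which only an $\Rrl$-type turn-back could close. Your flow bookkeeping is correct where you carry it out: the tables do confirm that both caps of $\Rlr$ transport color downward, both caps of $\Rrl$ upward, that $\Rrr$ and $\Rll$ admit no turn-backs at all, and that vertical flow through $T$-vertices is downward; for the $RTT$-diagrams this is exactly the paper's observation that a loop would force the incompatible assignment $R_{XY}=\Rlr$, $T_X=\Tr$, $T_Y=\Tl$.

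There is, however, a concrete gap in your $RRR$ case analysis: the triples $(X,Y,Z)$ for which none of $R_{XY}$, $R_{XZ}$, $R_{YZ}$ is $\Rrl$ are four, not three. Besides $(\Right,\Right,\Right)$, $(\Left,\Left,\Left)$ and $(\Left,\Left,\Right)$ you have omitted $(\Left,\Right,\Right)$, for which $R_{XY}=R_{XZ}=\Rlr$ and $R_{YZ}=\Rrr$ --- precisely the equation $\llbracket \Rlr,\Rlr,\Rrr\rrbracket=0$ that the paper needs in \cref{lem:RXY}, so as written your proof does not cover it. Your method does dispose of this case with no new ideas: in a putative loop through the three internal edges of \eqref{eq:RRR-picture}, the color must turn back at both $R_{XY}$ (whose two internal edges lie on its right side) and $R_{YZ}$ (whose two internal edges lie on its left side), while it merely passes through $R_{XZ}$; since $\Rrr$ admits no turn-back, $(\Left,\Right,\Right)$ carries no loop, and in $(\Left,\Left,\Right)$ the blocking vertex is instead $R_{XY}=\Rll$. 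This ``two turn-backs needed, at most one available'' formulation is in fact the paper's argument, phrased there as forcing both turn-back vertices to be $\Rlr$ and hence $Y$ to be simultaneously $\Left$ and $\Right$; it also applies uniformly to the right-hand-side diagrams of \eqref{eq:RTT-picture} and \eqref{eq:RRR-picture}, where the three vertices are arranged differently --- you should verify both sides of each equation rather than a single diagram shape, as the paper does.
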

\begin{proof}
  As a first step, we would like to show that in an admissible state of either of the systems in \eqref{eq:RTT-picture} and \eqref{eq:RRR-picture}, any color appearing on an internal edge must also appear on the boundary.
  From this it would follow that when we compute the partition functions, we only need to consider the colors that appear on the boundary.

  To show this, first consider the system on the left-hand side of the $RTT$-equation in \eqref{eq:RTT-picture} and suppose, for a contradiction, that we have an admissible state with an internal edge with a color that does not appear on the boundary.
  Because of color conservation, this is only possible if $R_{XY} = \Rlr$, $T_X = \Tr$ and $T_Y = \Tl$, but that is not a compatible choice of $R$- and $T$-vertices.
  Similarly, for the right-hand side of \eqref{eq:RTT-picture} we could argue that we must have $R_{XY} = \Rlr$, $T_X = \Tr$ and $T_Y = \Tl$, which again is not a compatible choice of $R$- and $T$-vertices.

  For the left-hand side of the $RRR$-equation in \eqref{eq:RRR-picture} we would be forced to have $R_{XY} = \Rlr$ and $R_{YZ} = \Rlr$ and hence $\Right = Y = \Left$, a contradiction.
  Similarly, for the right-hand side we would be forced to have $R_{YZ} = \Rlr$ and $R_{XY} = \Rlr$ and hence $\Left = Y = \Right$, a contradiction.

  Thus we have showed that any color appearing on an internal edge of an admissible state must also appear on the boundary, and it only remains to be shown that an admissible state can have at most three colors on the boundary.
  So consider some colored edge on the boundary of an admissible state.
  By color conservation, it is part of a path of this color through the system and since one end of the path is on the boundary, the other one must also be on the boundary.
  Hence any color on the boundary must appear at least twice on the boundary, which means that there can be at most three distinct colors on the six boundary edges.

  So the boundary conditions can involve at most three distinct colors which are fixed for the whole system, and any admissible state can only involve those three colors.
\end{proof}

\begin{proposition}
  \label{prop:twisting}
  If \cref{thm:YBE} holds for a particular choice of $\Phi \neq 0$ and $\mathfrak{X}_{i,j}$ satisfying~\eqref{eq:X-condition}, then it holds for all such $\Phi$ and $\mathfrak{X}_{i,j}$.
\end{proposition}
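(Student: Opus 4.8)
The plan is to show that, for fixed boundary conditions, the dependence of every admissible state on the parameters $\Phi$ and $\mathfrak{X}_{i,j}$ is through a single monomial that depends only on the boundary data, so that this monomial factors out of each Yang--Baxter equation and reduces it to an identity not involving $\Phi$ or $\mathfrak{X}_{i,j}$ at all. First I would invoke \cref{lemma:at_most_three_colors} to fix, for each instance of \eqref{eq:RTT-picture} and \eqref{eq:RRR-picture}, a finite set of at most three boundary colors with fixed relative order. This makes the set of relevant parameters $\mathfrak{X}_{i,j}$ finite and, by \eqref{eq:X-condition}, reduces the genuinely free parameters to $\Phi$ together with the $\mathfrak{X}_{i,j}$ for $i<j$, since $\mathfrak{X}_{j,i}=q^2/\mathfrak{X}_{i,j}$ and $\mathfrak{X}_{i,i}=-q^2$ are then determined.

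Next I would track the $\Phi$-exponent. Every weight in \cref{tab:T_right_weights,tab:T_left_weights,tab:R-vertices,tab:R-vertices-mixed} is $\Phi^{e}$ times a factor free of $\Phi$, where the exponent $e$ is a local function of the vertex configuration: for a $\Tr$ or $\Tl$ vertex it equals the occupancy of the bottom edge, while for an $R$-vertex it is determined by the occupancies of its four edges, the two diagonals contributing with opposite signs and the $\Rlr$ weights additionally reaching $\Phi^{\pm2}$. Summing $e$ over the three vertices of a configuration and then eliminating the internal edges using occupancy conservation at each vertex, I would verify that the internal contributions cancel, so that the total $\Phi$-exponent of an admissible state depends only on the fixed boundary occupancies. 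This computation must be run through for every choice of row types $X,Y,Z\in\{\Left,\Right\}$ with the $R$-vertex matched to the rows it separates, and I expect this bookkeeping---particularly in the mixed $\Rlr$ case---to be the main obstacle, since it is precisely the correct pairing of $R$- and $T$-vertices that makes the internal-edge terms telescope away (a mismatched pairing would leave a residual internal contribution and destroy the cancellation).

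I would then argue the analogous statement for $\mathfrak{X}_{i,j}$. Inspecting the tables, a factor $\mathfrak{X}_{i,j}$ with $i\neq j$ appears in a weight exactly when two paths of distinct colors $c_i$ and $c_j$ cross, namely in an $\texttt{a}_2$ vertex or in the crossing $R$-configuration, and the two orientations of such a crossing contribute $\mathfrak{X}_{i,j}$ and $\mathfrak{X}_{j,i}=q^2/\mathfrak{X}_{i,j}$, respectively. Hence the net power of $\mathfrak{X}_{i,j}$ in a state equals the signed number of crossings of the $c_i$- and $c_j$-colored paths, with the leftover powers of $q$ absorbed into the $\mathfrak{X}$-free factor. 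Since by color conservation each color enters and leaves only on the boundary and the paths are monotone, this signed crossing number is determined by the fixed endpoints alone and is therefore the same for every admissible state with the given boundary.

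Combining the two steps, every admissible state contributing to either side of \eqref{eq:RTT-picture} or \eqref{eq:RRR-picture} carries the same monomial $\Phi^{E_0}\prod_{i<j}\mathfrak{X}_{i,j}^{\Delta_{i,j}}$ determined by the boundary. Factoring this monomial out of both sides turns each Yang--Baxter equation into an identity among the remaining weights, which depend only on $q$, the row parameters, and the fixed relative color order, and not on $\Phi$ or the free $\mathfrak{X}_{i,j}$. Because $\mathfrak{X}_{i,j}\mathfrak{X}_{j,i}=q^2\neq0$ forces $\mathfrak{X}_{i,j}\neq0$ and we assume $\Phi\neq0$, this monomial is invertible; so the validity of \cref{thm:YBE} at one admissible choice of $(\Phi,\mathfrak{X}_{i,j})$ is equivalent to the reduced, parameter-free identity, which in turn yields \cref{thm:YBE} for every admissible $(\Phi,\mathfrak{X}_{i,j})$.
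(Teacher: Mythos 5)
Your overall strategy---show that every admissible state with fixed boundary carries one and the same monomial $\Phi^{E_0}\prod_{i<j}\mathfrak{X}_{i,j}^{\Delta_{i,j}}$ determined by the boundary, then factor it out---is exactly the paper's strategy (the paper writes $\mathfrak{X}_{i,j}=\mathfrak{X}'_{i,j}\,\phi(c_i,c_j)$ and tracks the $\phi$- and $\Phi$-exponents of \cref{tab:twists} with an antisymmetric bilinear form, reducing modulo the conservation equations \eqref{eq:conservations}). But your justification has a genuine gap: \cref{thm:YBE} includes the equations involving $\Rrl$ (e.g.\ the $RTT$-equation with $X=\Right$, $Y=\Left$), and for these \cref{lemma:at_most_three_colors} is simply unavailable---it is stated only for equations \emph{without} $\Rrl$, precisely because $\Rrl$ admits internal color loops whose color never appears on the boundary and is summed over. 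Your key step for the $\mathfrak{X}$-bookkeeping, ``by color conservation each color enters and leaves only on the boundary,'' hence the signed crossing number is endpoint-determined, fails in exactly these cases: a loop of color $c_l$ crosses the column line and picks up a priori $l$-dependent factors (the $\Rrl$ weights even depend on absolute color indices through $q^{2\res_m(\cdot)}$), and nothing in your argument shows the $\phi$- and $\Phi$-contributions of the loop are independent of $l$. This is not a pedantic point: it is the failure of precisely such loop-cancellations that obstructed the fourth $R$-matrix in \cite{Buciumas-Scrimshaw}.

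There are two repairs. Either (a) invoke \cref{prop:inverse} and \cref{lem:RXY} (i.e.\ \cref{cor:R_RightLeft}) \emph{before} twisting---these are proved for arbitrary $\Phi\neq 0$ and $\mathfrak{X}_{i,j}$ satisfying \eqref{eq:X-condition}, so all $\Rrl$-equations can be discarded for every parameter choice at once; or (b) do what the paper does: compute the total exponents symbolically and reduce modulo \eqref{eq:conservations}, where the free internal spin (the loop color) cancels identically---e.g.\ in \eqref{eq:phi-RRR}, substituting $j=a+b-i$ and $k=i+c-d$ removes all $i$-dependence by antisymmetry, and likewise $\operatorname{LHS}_\Phi=\operatorname{RHS}_\Phi=\langle -a+c-d+f\rangle$. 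That cancellation is the rigorous content behind your crossing-number heuristic, and it does hold even for loops, but it must be verified rather than inferred from boundary-color counting. Your remaining bookkeeping is sound and matches \cref{tab:twists} (T-vertex $\Phi$-exponent equals the bottom-edge occupancy; the $R$-exponents are signed half-sums of the four edge occupancies, with $\Rlr$ taking all plus signs rather than ``opposite diagonal signs''), and note the paper shortens your ``run through all $X,Y,Z$'' step by observing that left-moving weights and conservation laws are obtained from right-moving ones by negating the spins on left-moving edges, so every case is a reparametrization of the $\Rrr$/$\Tr$ computation.
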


\begin{proof}
  The proof generalizes those of Propositions~4.12 and~4.14 of \cite{BBBG:duality} where all involved vertices are right-moving. 
  We will show that for fixed boundary edges, all state configurations appearing on either side of the Yang--Baxter equations (either $RTT$ or $RRR$) have the same factors of $\Phi$ and $\mathfrak{X}_{i, j}$. 

  For these manipulations it is convenient to interpret the unfused edge spins as maps $\mathcal{P} \to \{ 0, 1 \}$, where $\mathcal{P}=\mathcal{P}_m$ is the palette, and lift these to the ring $\mathcal{P}^{\ast}$ of maps $\mathcal{P} \to \mathbb{Z}$. 
  For $f \in \mathcal{P}^\ast$ we define the $\mathcal{P}$-average $\langle f \rangle = \sum_{c \in \mathcal{P}} f (c) \in \mathbb{Z}$ which is linear in $f$.

  We also let $\phi : \mathcal{P} \times \mathcal{P} \to \mathbb{C}^{\times}$ such that \begin{equation} 
    \label{eq:phi-condition} 
    \phi (c, d) \phi (d, c) = 1 \text{ and }  \phi (c, c) = 1  \text{ for all } c, d \in \mathcal{P}
  \end{equation}
  and introduce the antisymmetric bilinear form $\langle, \rangle : \mathcal{P}^{\ast} \times \mathcal{P}^{\ast} \to \mathbb{C}$ defined by
  \begin{equation*}
    \langle f, g \rangle = \sum_{c, d \in \mathcal{P}} f (c) g (d) \log \phi (c, d) 
  \end{equation*}
  for $f, g \in \mathcal{P}^{\ast}$.
  Here we chosen have a branch cut for the logarithm that avoids the finite image of $\phi$ and our argument is independent of this choice since we will be considering expressions on the form $\exp (\langle f, g \rangle)$.
  Note that $\phi (c, c) = 1$ implies that terms with $c = d$ are excluded. Note also that if we, for $a \in \mathcal{P}$, denote the indicator function $x \mapsto \bigl\{ \begin{smallmatrix*}[l] 1 & \text{if } x = a\\ 0 & \text{otherwise} \end{smallmatrix*}$ by $1_a \in \mathcal{P}^{\ast}$ then $\exp (\langle 1_a, 1_b \rangle) = \phi (a, b)$.

  According to the statement, we assume that the Yang--Baxter equations hold for $\Phi' \neq 0$ and $\mathfrak{X}'_{i, j}$ such that $\mathfrak{X}'_{i, j} \mathfrak{X}'_{j, i} = q^2$ and $\mathfrak{X}'_{i, i} = - q^2$ which, in particular, means that $\mathfrak{X}'_{i, j} \neq 0$.
  Any other $\mathfrak{X}_{i, j}$ satisfying the same relations can be expressed as $\mathfrak{X}'_{i, j} \cdot \phi (c_i, c_j)$ with $\phi$ satisfying~\eqref{eq:phi-condition}.
  This means that it is enough to show that the states have the same factors of $\Phi$ and $\phi$.
  The way these factors appear in the vertex weights of \cref{tab:T_right_weights,tab:T_left_weights,tab:R-vertices,tab:R-vertices-mixed} is described in \cref{tab:twists}.
  
\begin{table}[tb]
  \tikzstyle{default}=[baseline={(0,-0.5ex)}, scale=0.3]
  \caption{\label{tab:twists} The factors of $\Phi$ and $\phi$ appearing in the weights of \cref{tab:T_right_weights,tab:T_left_weights,tab:R-vertices,tab:R-vertices-mixed}.
  The arrows signify our choices of inputs and outputs such that colors are conserved.}
  \vspace{-0.5em}
  \begin{equation*}
    \begin{tblr}{colspec = {|c|ccccc|}, cells = {mode=math}, row{2,3,4,5,6,7} = {ht=2em}}
    \hline
    \SetCell[c=2]{c} \textbf{Vertex type} & & \textbf{Input} \to \textbf{Output} & \boldsymbol{\phi}\textbf{-factor} &  &
    \boldsymbol{\Phi}\textbf{-factor}\\
    \hline 
    \SetCell[r=2]{c}
    \begin{tikzpicture}[default]
      \draw (-1,0) node[left] {$a$} -- (1,0) node[right] {$c$};
      \draw (0,-1) node[below] {$d$} -- (0,1) node[above] {$b$};
      \node[dot,minimum size=3pt] at (0,0){};
    \end{tikzpicture}
    &
    \Tr
    & \searrow & \exp \left( \frac{1}{4} \langle a+c, b+d \rangle \right) &  & \Phi^{\langle d \rangle}\\
    \hline 
    &
    \Tl
    & \swarrow & \exp \left( -\frac{1}{4} (\langle a+c, b+d \rangle \right) &  & \Phi^{\langle d \rangle}\\
    \hline
    \SetCell[r=4]{c}
    \begin{tikzpicture}[default]
      \draw (-1,-1) node[left] {$a$} -- (1,1) node[right] {$c$};
      \draw (-1,1) node[left] {$b$} -- (1,-1) node[right] {$d$};
      \node[dot,minimum size=3pt] at (0,0){};
    \end{tikzpicture}
    &
    \Rrr
    & \rightarrow & \exp \left( \frac{1}{4} (\langle a+c, b+d \rangle \right) &  & \Phi^{\frac{\langle -a-c+b+d \rangle}{2}} \\
    \hline
    &
    \Rll
    & \leftarrow & \exp \left( \frac{1}{4} (\langle a+c, b+d \rangle \right) &  & \Phi^{\frac{\langle a+c-b-d \rangle}{2}} \\
    \hline
    &
    \Rlr
    & \downarrow & \exp \left( -\frac{1}{4} (\langle a+c, b+d \rangle \right) &  & \Phi^{\frac{\langle a+c+b+d \rangle}{2}} \\
    \hline
    &
    \Rrl
    & \uparrow & \exp \left( -\frac{1}{4} (\langle a+c, b+d \rangle \right) &  & \Phi^{\frac{\langle -a-c-b-d \rangle}{2}} \\
    \hline
  \end{tblr}
\end{equation*}
\end{table}

We start by considering the $RRR$-equation involving only $\Rrr$-vertices and label the spins of the edges as in \eqref{eq:RRR-picture}.
We then have the following conservation equations:
\begin{equation}
  \left\{\begin{array}{l}
    a + b = i + j\\
    j + k = e + f\\
    i + c = d + k
  \end{array}\right. \hspace{4em} \left\{\begin{array}{l}
    b + c = n + l\\
    a + n = f + m\\
    m + l = d + e
  \end{array}\right. \label{eq:conservations}
\end{equation}
Note that both systems of equations imply the same global conservation for boundary edges
\begin{equation}
  a + b + c = d + e + f.
\end{equation}

By multiplying the $\phi$-contributions from all vertices in the Yang--Baxter equation using~\cref{tab:twists} we get that the total contributions to the left- and right-hand sides are $\exp \left( \frac{1}{2} \operatorname{LHS}_{\phi} \right)$ and $\exp \left( \frac{1}{2} \operatorname{RHS}_{\phi} \right)$, respectively where
\begin{equation}
  \label{eq:phi-RRR}
  \begin{split}
  \operatorname{LHS}_{\phi} & = \langle a, b \rangle + \langle i, j \rangle + \langle i, c \rangle + \langle d, k \rangle + \langle j, k \rangle + \langle e, f \rangle \\
  \operatorname{RHS}_{\phi} & = \langle a, n \rangle + \langle m, f \rangle + \langle b, c \rangle + \langle l, n \rangle + \langle m, l \rangle + \langle d, e \rangle.
\end{split}
\end{equation}

It is easy to verify by hand as in~\cite{BBBG:duality} that, modulo the conservation equations \eqref{eq:conservations}, these are equal (and thus only depend on the boundary edges). 

For the $\Phi$-contributions we get the powers
\begin{equation}
  \label{eq:Phi-RRR}
  \begin{split}
    \operatorname{LHS}_{\Phi} &= \tfrac{1}{2}\left(\langle -a-i+b+j \rangle + \langle -i-d+c+k \rangle + \langle -j-e+k+f\rangle\right) \\
    \operatorname{RHS}_{\Phi} &= \tfrac{1}{2}\left(\langle -b-l+c+n \rangle + \langle -a-m+n+f \rangle + \langle -m-d+l+e \rangle\right)
  \end{split}
\end{equation}
which, after using the conservation equations \eqref{eq:conservations} are both equal to $\langle -a+c-d+f \rangle$.\footnote{As mentioned earlier the $\phi$ and $\Phi$ invariance for the Yang-Baxter equations only involving the right-moving model was covered in \cite{BBBG:duality} but this case~\eqref{eq:Phi-RRR} had been left out in the proof.}

We will now argue that this computation immediately gives us all the other cases of $RRR$-equations.
First note that the expressions for the $\phi$ and $\Phi$-factors for $\Rll$, $\Rlr$ and $\Rrl$ are the same as for~$\Rrr$, except that the spins for the left-moving edges have been negated.
This is also true true for the conservation equations, which can be expressed as $\pm a \pm b = \pm c \pm d$, where the signs are positive for the spins of right-moving edges and negative for the spins of left-moving edges.

To prove the right-moving $RRR$-equations we only used the conservation equations, and the properties of the antisymmetric form and the $\mathcal{P}$-average. 
This means that since we negate the spins for left-moving edges in both the weights and the conservation equations, the other $RRR$-equations are just reparametrizations of the $\Rrr$ case, and therefore also hold.

Next we consider the $RTT$-equation involving only right-moving vertices, where we label the spins of the edges as in \eqref{eq:RTT-picture}. We then have the same conservation equations \eqref{eq:conservations} as before.

Furthermore, we also obtain exactly the same $\phi$-factors on each side as we did for the $RRR$-equation above\footnote{This is of course easy to compute, but can also be seen by rotating the $T$-vertices so that they become $R$-vertices.}, and since the conservation equations are the same, the $\phi$-factors are equal by the same calculation.

Similarly, the powers of $\Phi$ for the left- and right-hand sides are
\begin{equation}
  \begin{split}
    \operatorname{LHS}_{\Phi} & = \frac{1}{2}\langle -a-i+b+j \rangle + \langle k \rangle + \langle f \rangle = \langle b - i + k + f \rangle \\ 
    \operatorname{RHS}_{\Phi} & = \frac{1}{2}\langle -m-d+l+e\rangle + \langle n \rangle + \langle f \rangle = \langle l - d + n + f \rangle 
\end{split}
\end{equation}
which are both equal to $\langle b+c-d+f \rangle$ by the conservation equations.

As before, we will now argue that these computations also prove the other $RTT$-equations.
Again the expressions for the $\phi$ and $\Phi$-factors for $\Tl$ are the same as for $\Tr$, except that the spins for the left-moving edges have been negated, and the conservation equations for $\Tl$ and $\Tr$ are related in the same way.
Hence the computations for the remaining $RTT$-equations are simply reparametrizations of the one above.
\end{proof}

We note that in the proof, the only assumptions on the weights that are used are the color conservations and the appearance of the $\phi$- and $\Phi$-factors as in \cref{tab:twists}.

\begin{lemma}
  \cref{thm:YBE} holds for a particular the Iwahori specialization of $\Phi$ and $\mathfrak{X}_{i,j}$.
\end{lemma}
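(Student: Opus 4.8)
The plan is to combine the three preceding reduction results---\cref{cor:R_RightLeft}, \cref{lemma:at_most_three_colors}, and the special structure of the Iwahori weights---to turn the claim into a finite, $m$-independent case check. First I would invoke \cref{cor:R_RightLeft} to discard every Yang--Baxter equation containing an $\Rrl$-vertex, so that only the vertices $\Tr$, $\Tl$, $\Rrr$, $\Rll$, $\Rlr$ remain. This is the crucial first move because, as observed in the text, these are exactly the vertices whose Iwahori weights in \cref{tab:T_right_specializations,tab:T_left_specializations,tab:R-vertices,tab:R-vertices-mixed} depend on the color indices $i, j, k$ only through their relative (and cyclic) order in the palette, never through their actual numerical values or their residues modulo $m$.

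Next I would apply \cref{lemma:at_most_three_colors}, which guarantees that for fixed boundary conditions any admissible state of such an equation uses at most three colors, all occurring on the fixed boundary. Combined with the previous observation, the weight of every admissible state then depends only on the row types $X, Y, Z \in \{\Left, \Right\}$, on the relative order of the at most three boundary colors, and on the parameters $z_1, z_2, z_3$ and the Iwahori parameter $v$. Consequently the partition-function identities \eqref{eq:RTT-picture} and \eqref{eq:RRR-picture} collapse to a finite list of rational-function identities in $\mathbb{C}(v, z_1, z_2, z_3)$: one for each choice of row types, each admissible assignment of relative orders to the boundary colors (finitely many, since three distinct colors admit only $3!$ linear orders, which the cyclic-order conditions in the tables group into a few branches), and each fixed boundary edge configuration.

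I would then verify these finitely many identities directly. For each fixed boundary, both sides of \eqref{eq:RTT-picture} or \eqref{eq:RRR-picture} expand as a sum over the internal-edge labelings permitted by color conservation; since the colors are pinned on the boundary and can be routed in only finitely many ways, each side is an explicit finite sum of products of the tabulated Iwahori weights, and the required equality becomes a polynomial identity in $z_1, z_2, z_3$ over $\mathbb{C}(v)$ to be checked term by term. Taking the Iwahori specialization \eqref{eq:Iwahori-specialization} as the particular $\Phi$ and $\mathfrak{X}_{i,j}$ keeps the weights as simple as possible, and one checks immediately that they satisfy \eqref{eq:X-condition} (with $q^2 = 1/v$, one has $\mathfrak{X}_{i,i} = -1/v = -q^2$ and $\mathfrak{X}_{i,j}\mathfrak{X}_{j,i} = 1/v = q^2$ for $i \neq j$), so that the hypotheses of \cref{prop:twisting} are met and the result propagates to all $\Phi$ and $\mathfrak{X}_{i,j}$.

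The main obstacle will be organizational rather than conceptual: managing the bookkeeping of the many color-order and boundary cases, especially the three-distinct-color $\Rlr$ configurations, where the cyclic-order conditions separating the two weight branches must be tracked carefully and matched across the two sides of each equation. I expect no individual case to be difficult---each reduces to a short comparison of monomials in $z_1, z_2, z_3$---so I would present the verification grouped by the number of distinct boundary colors ($0$, $1$, $2$, or $3$) and by row type to keep it transparent, delegating the most repetitive comparisons to a symbolic computation.
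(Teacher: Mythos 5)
Your proposal matches the paper's proof essentially step for step: it discards the $\Rrl$-equations via \cref{cor:R_RightLeft}, pins the at most three colors to the boundary via \cref{lemma:at_most_three_colors}, uses the fact that the Iwahori weights depend only on the relative order of the color indices to reduce to a finite, $m$-independent list of identities in $\mathbb{C}(v,z_1,z_2,z_3)$ (the paper phrases this as taking palette size $m=3$), and settles them by symbolic computation. Your added check that the Iwahori specialization satisfies \eqref{eq:X-condition} is a harmless extra detail, so the argument is correct and takes the same route as the paper.
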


\begin{proof}
  We use \cref{cor:R_RightLeft} to be able to disregard all Yang--Baxter equations involving $\Rrl$.
  \cref{lemma:at_most_three_colors} then tells us that each case of the remaining Yang--Baxter equations involves at most three colors, and that those must appear on the boundary and therefore be fixed.

  For the Iwahori specialization all the remaining vertex weights only depend on the relative order of the colors and not on their absolute index in the palette (like the weights for $\Rrl$ do).
  This means that it is enough to show that each Yang--Baxter equation is satisfied in the case where the palette size is $m=3$ independent of the number of rows $r$.

  We have now reduced the problem to checking a fixed, finite number of equalities of elements in the fraction field $\mathbb{C}(z_1,z_2,z_3, v)$ which we have verified with a symbolic computer algebra system (\texttt{SageMath}).
\end{proof}

Together with \cref{prop:twisting} this finishes the proof of \cref{thm:YBE}.

\subsection{Remarks about quantum groups}
After proving that our families of lattice models are solvable, let us make a few remarks about quantum groups ––– an algebraic structure that was developed in connection with seeking solutions to Yang--Baxter equations.
For our purposes, we may think of a quantum group $\mathcal{A}$ as a $q$-deformation of a universal enveloping algebra.
There is a special element $\mathcal{R}$ in a completion of the tensor product $\mathcal{A} \otimes \mathcal{A}$ called the \emph{universal $R$-matrix}. 
Given three modules $U$, $V$ and $W$ of $\mathcal{A}$ one can obtain a solution to the Yang--Baxter equation~\eqref{eq:YBE} from the action of $\mathcal{R}$ on these modules \cite{Klimyk-Schmudgen}.
Indeed, the name \emph{universal $R$-matrix} comes from the fact that solutions to the Yang--Baxter equations are called $R$-matrices.
The matrix coefficients of the action of $\mathcal{R}$ on the corresponding modules are the weights for the $R$-vertices (and $T$-vertices) in the lattice models.

The weights for the $\Rrr$-vertices in this paper come from the $R$-matrix of a standard evaluation module $V_\Right(z)$ of a (combinatorial) Drinfeld twist of the affine supersymmetric quantum group $U_q(\widehat{\mathfrak{gl}}(m|1))$ where $m$ is the number of colors \cite{BBBG:duality}.
The standard evaluation module is defined as $V_\Right(z) = \mathbb{C}^{m|1} \otimes \mathbb{C}[z, z^{-1}]$ where $\mathbb{C}^{m|1}$ is the standard module of $\mathfrak{gl}(m|1)$.

Drinfeld twists conjugates the universal $R$-matrix, preserving the Yang--Baxter equation but resulting in slightly different $R$-matrix solutions.
The effects of such a Drinfeld twist on $R$-vertex weights was derived in \cite[Section~4]{BBBF}.
They showed that when two paths of colors $c$ and $d$ cross each other the weight gets multiplied by a factor similar to $\phi(c,d)$ in \cref{prop:twisting}.
The $\Phi$-factors in the $\Rrr$ and $\Rll$ weights similarly appear when a colored path crosses unoccupied edges.
By a \emph{combinatorial} Drinfeld twist we mean a transformation of the weights of this type for crossing paths preserving the Yang--Baxter equations.
The difference is that we have not confirmed that it amounts to a valid conjugation of the universal $R$-matrix which is beyond the scope of this paper, and not yet possible for the $T$-vertices since we there lack a quantum module for the vertical edges as will be further discussed below.

The $R$-vertices $\Rgg$ for the $\Gamma$ metaplectic ice model were shown in \cite{BBB} to come from Drinfeld twists of $U_q(\widehat{\mathfrak{gl}}(1|m))$ where $n$ was the number of charges, or equivalently the number of supercolors.
As explained in~\cite{BBBG:duality} supercolors are merely a relabelling of colors, and since the $R$-vertices for the left-moving family of models of this paper are (combinatorial) Drinfeld twists of the $\Gamma$ metaplectic ice model this should mean that they also come from a Drinfeld twist of $U_q(\widehat{\mathfrak{gl}}(1|n))$ with $n=m$.

It is not clear though what the quantum group interpretation is for mixed models with both left- and right-moving rows, and the mixed $R$-vertices $\Rlr$ and $\Rrl$.
See \cite[Section~8]{BBG:vertical} for a discussion of a potential direction of exploration based on the work therein on the case $m=1$.
See also \cite[Section~3.4]{Buciumas-Scrimshaw} for a discussion regarding this in the crystal limit. 

As mentioned above, it is also not clear in general what the quantum module is for vertical edges, but see the recent paper~\cite{BBG:vertical} for the case $m=1$.
The $T$-vertices include both horizontal and vertical edges, which is why we use a combinatorial proof in \cref{prop:twisting} for the $\phi$-factors.
Another reason is that much less is known about the quantum group origins of the $\Phi$-factors, although the weights for $\Rrr$ and $\Rll$ suggest they come from crossing a color path with an ``unoccupied path'' as mentioned above, this behaviour is not mirrored for the $T$-vertices or the mixed $R$-vertices.

\subsection{Solvability of the crystal model}
\label{sec:crystal-YBE}
Recall that the fused crystal $T$-vertices of \cref{tab:T_fused_gamma_Iwahori_crystal_weights,tab:T_fused_delta_Iwahori_crystal_weights} were obtained from the Iwahori specialization of the left- and right-moving families of lattice models by taking the crystal limit $v \to 0$ and restricting to the vertex configurations which have at most one color on the vertical edges (by \cref{lemma:crystal_models_at_most_one_color} these are the only configurations that can appear with the type of boundary conditions we are considering).
Recall also that the left-moving weights have been multiplied by $z$ to remove the partition function's dependence on the number of columns.

For the Iwahori specialization we call the left- and right-moving models $\Delta$ and $\Gamma$, respectively, as seen in \cref{fig:specializations}.
These two sets of vertex configurations and weights agree with the $\Gamma$ and $\Delta$ $T$-vertices of the lattice model in~\cite{Buciumas-Scrimshaw} which were used to compute Demazure characters and atoms for $\operatorname{Sp}_{2n}$ and $\operatorname{SO}_{2n+1}$. Buciumas and Scrimshaw also showed in the above paper that the model is \emph{quasi-solvable} meaning that they found $R$-vertices of types $R^\Delta_\Delta$, $R^\Gamma_\Gamma$, and $R^\Delta_\Gamma$ that satisfy the $RTT$-equations.
However, they were not able to find a suitable $R^\Gamma_\Delta$ matrix.
One obstruction they described was that they obtained color loops in their Yang--Baxter equations resulting in a dependence on the number of colors $m$ on one side of the equation while not on the other. 

We have shown above that the left- and right-moving non-crystal families of lattice models introduced in this paper are solvable with all four types of $R$-vertices and we will in this section investigates what happens with these when we take the crystal limit $v \to 0$.

Recall that the Yang--Baxter equations are still satisfied if we multiply all the weights of a type of $R$-vertex with a power of $v$.
For each type of $R$-vertex there is a unique such $v$-factor such that the weights are finite in the limit $v\to0$ and not all zero.
We present these limits in \cref{tab:R-vertices_crystal,tab:R-vertices-mixed_crystal}.
The fused versions (i.e.\ $k=1$) of the $R^\Delta_\Delta$, $R^\Gamma_\Gamma$ and $R^\Delta_\Gamma$ agree with those of \cite{Buciumas-Scrimshaw}.
For $R^\Gamma_\Delta$ we get a very degenerate case with only one non-zero weight.
Indeed, this is not a very interesting solution to the problem of finding $R^\Gamma_\Delta$ posed in~\cite{Buciumas-Scrimshaw} and it may not fulfill their other requirements.
However, it illuminates (at least in the non-crystal version) how the issue of color loops is resolved by telescoping sums as explained in the proof of \cref{prop:inverse}.

\begin{theorem}
  The crystal $R$-vertices in \cref{tab:R-vertices_crystal,tab:R-vertices-mixed_crystal} and the restricted crystal $T$-vertices in \cref{tab:T_fused_delta_Iwahori_crystal_weights,tab:T_fused_gamma_Iwahori_crystal_weights} satisfy the Yang--Baxter $RTT$-equations in~\eqref{eq:RTT-picture} and the $RRR$-equations in~\eqref{eq:RRR-picture} for all combinations of row types $X,Y,Z \in \{\Gamma,\Delta\}$.
\end{theorem}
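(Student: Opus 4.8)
The plan is to deduce the theorem from \cref{thm:YBE} by a $v\to0$ degeneration of the Iwahori specialization~\eqref{eq:Iwahori-specialization}, rather than by a fresh symbolic verification. Recall that the crystal $R$-weights of \cref{tab:R-vertices_crystal,tab:R-vertices-mixed_crystal} arise from the Iwahori $R$-weights by multiplying each type $\Rgg$, $\Rdd$, $\Rdg$, $\Rgd$ by the unique power of $v$ making its weights finite and not identically zero, and then letting $v\to0$; the fused $T$-weights of \cref{tab:T_fused_gamma_Iwahori_crystal_weights,tab:T_fused_delta_Iwahori_crystal_weights} arise from the unfused crystal $T$-weights (the crystal rows of \cref{tab:T_right_specializations,tab:T_left_specializations}) by fusion, followed by the convenience rescaling of the $\Delta$ weights by $z$. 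The strategy is to observe that every one of these operations preserves the Yang--Baxter equations, so that the crystal equations follow by continuity together with Lemma~5.4 of~\cite{BBBG:Iwahori}.

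First I would record the invariance under the scalar rescalings. In each $RTT$-equation $\llbracket R_{XY}, T_X, T_Y\rrbracket=0$ both sides contain $R_{XY}$ once and each of $T_X, T_Y$ once, and in each $RRR$-equation $\llbracket R_{XY}, R_{XZ}, R_{YZ}\rrbracket=0$ both sides contain each of the three $R$-types once. Hence scaling a single $R$-type, or scaling the $\Delta$ $T$-type by $z$, multiplies both sides of the equation by one and the same factor and so leaves it intact; this is exactly why the regularizing $v$-powers may be chosen separately for each $R$-type. For the limit itself, fix the boundary edges: each side of an equation is then a finite sum over internal edge labels of products of vertex weights, and after the rescaling every weight is finite at $v=0$, so both sides are continuous there. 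Since \cref{thm:YBE} makes the identity hold for every $v\ne0$ (the Iwahori specialization being an admissible choice of $\Phi,\mathfrak{X}_{i,j}$), it persists in the limit. This works uniformly for all $X,Y,Z\in\{\Gamma,\Delta\}$; in particular the equations involving the degenerate vertex $\Rgd$ come for free, since \cref{thm:YBE} already covers them, their non-crystal validity being what \cref{prop:inverse} and \cref{cor:R_RightLeft} ultimately secure through a telescoping of color loops.

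Carrying out this limit at the level of the unfused vertices produces the auxiliary crystal Yang--Baxter equations. Lemma~5.4 of~\cite{BBBG:Iwahori} then lifts them to the standard fused equations, exactly as in the corollary to \cref{thm:YBE}; the subsequent global rescaling of the fused $\Delta$ $T$-weights by $z$ preserves them by the count in the previous paragraph. Finally I would restrict to vertex configurations carrying at most one color on the vertical edges. The $RRR$-equations involve no vertical edges, so nothing changes there; for the $RTT$-equations, whose vertical segment has edges $c$, $k$, $f$ as in~\eqref{eq:RTT-picture}, \cref{lemma:crystal_models_at_most_one_color} applied upward from the bottom forces the summed internal edge $k$ to carry at most one color whenever the boundary edges do. Discarding the multi-color configurations therefore alters neither partition function, and the restricted fused crystal model satisfies all the equations.

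The main point to get right is conceptual rather than computational: one must ensure that the degeneracy of the limit, and in particular the $m$-dependent blow-up $q^{2m}=v^{-m}$ in the Iwahori weights of $\Rgd$ together with the color loops flagged in~\cite{Buciumas-Scrimshaw}, do not spoil the equations. The degeneration argument handles this cleanly, because a single, $m$-independent power of $v$ regularizes each $R$-type while \cref{thm:YBE} is an exact identity for every $v\ne0$; the only genuinely new thing to confirm is the elementary claim that the tabulated crystal weights really are these regularized limits, after which continuity does the rest.
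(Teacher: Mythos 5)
Your proposal is correct and follows essentially the same route as the paper's own proof: invoke \cref{thm:YBE} at the Iwahori specialization, rescale each vertex type by a fixed scalar so the weights stay finite, let $v\to0$ by continuity of both sides with fixed boundary edges, restrict the $T$-vertices to single-color vertical edges via \cref{lemma:crystal_models_at_most_one_color} applied to the internal vertical edge in the $RTT$-configuration, and note that the $z$-rescaling of the $\Delta$ weights is harmless. The only small inaccuracy is your claim that an \emph{$m$-independent} power of $v$ regularizes each $R$-type: the factor for $\Rgd$ is $v^{m+2}$ (and $-v$, not a pure power, for $\Rdd$ and $\Rgg$), but since any fixed nonzero scalar per vertex type multiplies both sides of each equation equally, this does not affect the argument.
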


\begin{proof}
  All the weights of $\Td, \Tg, \Rdd, \Rgg, \Rdg$ and $\Rgd$ for the crystal models can be obtained by taking the weights of $\Tl, \Tr, \Rll, \Rrr, \Rlr$ and $\Rrl$, respectively, specializing to the Iwahori case~\eqref{eq:Iwahori-specialization}, multiplying by $1$, $1$, $-v$, $-v$, $1$ and $v^{m+2}$, setting $v=0$, and finally restricting $\Td$ and $\Tg$ to those vertex configurations which have at most one color on the vertical edges.

  By \cref{thm:YBE} all compatible configurations of $\Tl, \Tr, \Rll, \Rrr, \Rlr$ and $\Rrl$ satisfy the Yang--Baxter equation, and it is therefore sufficient to show that this is still true after each of the steps above.
  Firstly, the Yang--Baxter equations hold for every choice of $\Phi$ and $\mathfrak{X}_{i,j}$, in particular it holds for the Iwahori specialization.
    Secondly, multiplying all weights of a particular type (say $\Td$) by the same factor also does not affect the Yang--Baxter equations since both the left- and right-hand side are multiplied by the same constant.
    Thirdly, since the equations hold for every $v$, they must in particular hold for $v=0$.

    Next, we need to show that we can restrict $\Td$ and $\Tg$ to the vertex configurations which has at most one color on the vertical edges.
    This will only affect the $RTT$-equation~\eqref{eq:RTT-picture} as that is the only one where the $T$-vertices appear.
    So suppose that we have some boundary conditions for~\eqref{eq:RTT-picture} with at most one color on each edge.
    Then \cref{lemma:crystal_models_at_most_one_color} applied to the lower $T$-vertex on either side implies that the only internal vertical edge on each side can carry at most one color, and therefore the Yang--Baxter equation holds with this restricted set of vertex configurations.

    Finally, recall that the weights in \cref{tab:T_fused_delta_Iwahori_crystal_weights} are actually given by $zT_\Delta$, i.e.\ they have been scaled by a factor of $z$.
    Again, this does not affect the validity of the Yang--Baxter equations as both sides of each equation are scaled by the same factor.
\end{proof}

\begin{table}[htpb]
  \centering
  \caption{Configurations and Boltzmann weights for $\Rdd$ and $\Rgg$ for the fused crystal models, where $i \neq j$ in the multicolored vertex configurations.
    The $\Rdd$ and $\Rgg$ weights can be obtained from the $\Rll$ and $\Rrr$ weights in \cref{tab:R-vertices}, respectively, by first specializing to the Iwahori case \eqref{eq:Iwahori-specialization}, then multiplying by $-v$ and finally setting $v=0$.}
  \label{tab:R-vertices_crystal}
  \vspace{-0.5em}
  \begin{equation*}
    \begin{array}{|c|c|c|c|c|c|}
      \hline

      \multicolumn{6}{|c|}{\boldsymbol{\Rdd} \textbf{ and } \boldsymbol{\Rgg} \textbf{ (crystal)} \vphantom{\displaystyle\int}}

      \\\hline\hline

      \begin{tikzpicture}[baseline, scale=0.75]
        \node[anchor=south] at (-1,-1) {$z_1$};
        \node[anchor=north] at (-1,1) {$z_2$};
      \end{tikzpicture}
      &
      \begin{tikzpicture}[baseline, scale=0.75]
        \draw (-1,-1) -- (1,1);
        \draw (-1,1) -- (1,-1);
        \node[label=above:$c_k$] (0,0) {};
      \end{tikzpicture}  
      &
      \begin{tikzpicture}[baseline, scale=0.75]
        \draw[ultra thick, red] (-1,-1) -- (1,1)  node[label=below:$c_i$]{};
        \draw (-1,1) -- (1,-1);
        \node[label=above:$c_k$] (0,0) {};
      \end{tikzpicture} 
      &
      \begin{tikzpicture}[baseline, scale=0.75]
        \draw (-1,-1) -- (1,1);
        \draw[ultra thick, red] (-1,1) -- (1,-1) node[label=above:$c_i$]{};
        \node[label=above:$c_k$] (0,0) {};
      \end{tikzpicture}
      &
      \begin{tikzpicture}[baseline, scale=0.75]
        \draw[ultra thick, red] (-1,-1) -- (1,1) node[label=below:$c_i$]{};
        \draw[ultra thick, red] (-1,1) -- (1,-1) node[label=above:$c_i$]{};
        \node[label=above:$c_k$] (0,0) {};
      \end{tikzpicture}
      &
      \begin{tikzpicture}[baseline, scale=0.75]
        \draw[ultra thick, red] (-1,-1) -- (1,1) node[label=below:$c_i$]{};
        \draw[ultra thick, blue] (-1,1) -- (1,-1) node[label=above:$c_j$]{};
        \node[label=above:$c_k$] (0,0) {};
      \end{tikzpicture}

      \\\hline
      
      \Rdd\vphantom{\Bigl)}
      & z_2 
      & 0
      & z_1 - z_2
      & z_1
      & z_1-z_2 \text{ if } i > j

      \\\hline
      
      \Rgg\vphantom{\Bigl)}
      & z_2
      & z_1 - z_2
      & 0
      & z_1
      & z_1-z_2 \text{ if } i > j

      \\\hline\hline

      \begin{tikzpicture}[baseline, scale=0.75]
        \node[anchor=south] at (-1,-1) {$z_1$};
        \node[anchor=north] at (-1,1) {$z_2$};
      \end{tikzpicture}
      &
      \begin{tikzpicture}[baseline, scale=0.75]
        \draw (-1,-1) -- (0,0) -- (1,-1);
        \draw[ultra thick, red] (-1,1) -- (0,0) --  (1,1) node[label=below:$c_i$]{};
        \node[label=above:$c_k$] (0,0) {};
      \end{tikzpicture}
      &
      \begin{tikzpicture}[baseline, scale=0.75]
        \draw[ultra thick, red] (-1,-1) -- (0,0) -- (1,-1) node[label=above:$c_i$]{};
        \draw (-1,1) -- (0,0) --  (1,1);
        \node[label=above:$c_k$] (0,0) {};
      \end{tikzpicture}  
      & \multicolumn{3}{c|}{
      \begin{tikzpicture}[baseline, scale=0.75]
        \draw[ultra thick, red] (-1,-1) -- (0,0) -- (1,-1) node[label=above:$c_i$]{};
        \draw[ultra thick, blue] (-1,1) -- (0,0) --  (1,1) node[label=below:$c_j$]{};
        \node[label=above:$c_k$] (0,0) {};
      \end{tikzpicture} 
      }

      \\\hline

      \Rdd\vphantom{\Bigl)}
      & z_1
      & z_2
      & \multicolumn{3}{c|}{
        \left\{\begin{smallmatrix*}[l]
          z_2 & \text{if $i < j < k$ or $j < k \leq i$ or $k \leq i < j$}\\
          z_1 & \text{if $j < i < k$ or $i < k \leq j$ or $k \leq j < i$}
        \end{smallmatrix*}\right.
      }

      \\\hline

      \Rgg\vphantom{\Bigl)}
      & z_2
      & z_1
      & \multicolumn{3}{c|}{
        \left\{\begin{smallmatrix*}[l]
          z_1 & \text{if $i < j < k$ or $j < k \leq i$ or $k \leq i < j$}\\
          z_2 & \text{if $j < i < k$ or $i < k \leq j$ or $k \leq j < i$}
        \end{smallmatrix*}\right.
      }

      \\\hline
    \end{array}
  \end{equation*}  
\end{table}

\begin{table}[htpb]
  \centering
  \caption{Configurations and Boltzmann weights for the mixed $\Rdg$ and $\Rgd$ for the fused crystal models, where $i \neq j$ in the multicolored vertex configurations.
    The $\Rdg$ and $\Rgd$ weights can be obtained from the $\Rlr$ and $\Rrl$ weights in \cref{tab:R-vertices}, respectively, by first specializing to the Iwahori case \eqref{eq:Iwahori-specialization}, then multiplying by $1$ and $v^{m+2}$, respectively, and finally setting $v=0$.}
  \label{tab:R-vertices-mixed_crystal}
  \vspace{-0.5em}
  \begin{equation*}
    \begin{array}{|c|c|c|c|c|c|c|}
      \hline

      \multicolumn{6}{|c|}{\boldsymbol{\Rdg} \textbf{ and } \boldsymbol{\Rgd} \textbf{ (crystal)}\vphantom{\displaystyle\int}} 

      \\\hline\hline

      \begin{tikzpicture}[baseline, scale=0.75]
        \node[anchor=south] at (-1,-1) {$z_1$};
        \node[anchor=north] at (-1,1) {$z_2$};
      \end{tikzpicture}
      &
      \begin{tikzpicture}[baseline, scale=0.75]
        \draw (-1,-1) -- (1,1);
        \draw (-1,1) -- (1,-1);
        \node[label=above:$c_k$] (0,0) {};
      \end{tikzpicture}  
      &
      \begin{tikzpicture}[baseline, scale=0.75]
        \draw[ultra thick, red] (-1,-1) -- (1,1)  node[label=below:$c_i$]{};
        \draw (-1,1) -- (1,-1);
        \node[label=above:$c_k$] (0,0) {};
      \end{tikzpicture} 
      &
      \begin{tikzpicture}[baseline, scale=0.75]
        \draw (-1,-1) -- (1,1);
        \draw[ultra thick, red] (-1,1) -- (1,-1) node[label=above:$c_i$]{};
        \node[label=above:$c_k$] (0,0) {};
      \end{tikzpicture}
      &
      \begin{tikzpicture}[baseline, scale=0.75]
        \draw[ultra thick, red] (-1,-1) -- (1,1) node[label=below:$c_i$]{};
        \draw[ultra thick, red] (-1,1) -- (1,-1) node[label=above:$c_i$]{};
        \node[label=above:$c_k$] (0,0) {};
      \end{tikzpicture}
      &
      \begin{tikzpicture}[baseline, scale=0.75]
        \draw[ultra thick, red] (-1,-1) -- (1,1) node[label=below:$c_i$]{};
        \draw[ultra thick, blue] (-1,1) -- (1,-1) node[label=above:$c_j$]{};
        \node[label=above:$c_k$] (0,0) {};
      \end{tikzpicture}

      \\\hline

      \Rdg \vphantom{\Bigl)}
      & z_1 - z_2 
      & z_2 
      & z_2 
      & z_2
      & z_2 \text{ if } j > i

      \\\hline

      \Rgd \vphantom{\Bigl)}
      & 0
      & 0
      & 0
      & 0
      & z_1 \text{ if } j > i

      \\\hline\hline

      \begin{tikzpicture}[baseline, scale=0.75]
        \node[anchor=south] at (-1,-1) {$z_1$};
        \node[anchor=north] at (-1,1) {$z_2$};
      \end{tikzpicture}
      &
      \begin{tikzpicture}[baseline, scale=0.75]
        \draw[ultra thick, red] (1,1) -- (0,0) -- (1,-1) node[label=above:$c_i$]{};
        \draw (-1,-1) -- (0,0) --  (-1,1);
        \node[label=above:$c_k$] (0,0) {};
      \end{tikzpicture} 
      &
      \begin{tikzpicture}[baseline, scale=0.75]
        \draw (1,-1) -- (0,0) -- (1,1);
        \draw[ultra thick, red] (-1,1) -- (0,0) --  (-1,-1) node[label=above:$c_i$]{};
        \node[label=above:$c_k$] (0,0) {};
      \end{tikzpicture}
      & 
      \multicolumn{3}{c|}{
        \begin{tikzpicture}[baseline, scale=0.75]
          \draw[ultra thick, red] (-1,1) -- (0,0) -- (-1,-1) node[label=above:$c_i$]{};
          \draw[ultra thick, blue] (1,1) -- (0,0) --  (1,-1) node[label=above:$c_j$]{};
          \node[label=above:$c_k$] (0,0) {};
        \end{tikzpicture} 
      }

      \\\hline

      \Rdg \vphantom{\Bigl)} 
      & z_1 
      & z_2 
      & \multicolumn{3}{c|}{z_2 \text{ \footnotesize if $i < j < k$ or $j < k \leq i$ or $k \leq i < j$}}

      \\\hline

      \Rgd \vphantom{\Bigl)}
      & 0
      & 0
      & \multicolumn{3}{c|}{0}

      \\\hline
    \end{array}
  \end{equation*}
\end{table}

\subsection{From \texorpdfstring{$\Gamma$-$\Delta$}{Gamma-Delta} duality to left-right duality}
\label{sec:left-right_duality}

We now prove the duality between the left- and right-moving models as shown in \cref{fig:specializations}.

\begin{theorem}
  \label{thm:left-right_duality}
  For any $\mu = (\mu_1, \dots, \mu_r) \in (\mathbb{Z}_{\geq 0})^r$ with $\mu_1 > \mu_2 > \dots > \mu_r$, any $N \in \mathbb{Z}_{\geq 0}$ such that $Nm \geq \mu_1$, and any $\sigma = (\sigma_1, \dots, \sigma_r) \in \mathcal{P}^r$ we have that
  \begin{equation*}
    \Zr_{\mu,\sigma}(\mathbf{z}) = \mathbf{z}^N \ZlN{N}_{\mu,w_0\sigma}(w_0\mathbf{z}),
  \end{equation*}
  where $w_0$ is the longest element of the symmetric group $S_r$, i.e.\ the order reversing permutation.
\end{theorem}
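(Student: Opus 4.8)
The plan is to transform the all-right-moving system $\Sr_{\mu,\sigma}$ into the all-left-moving system $\SlN{N}_{\mu,w_0\sigma}$ through a sequence of elementary moves, each justified by a Yang--Baxter train argument built on the $RTT$-equations of \cref{thm:YBE}. The basic move is the exchange of two vertically adjacent rows of opposite type: one inserts the mixed $R$-vertex $\Rlr$ (or $\Rrl$) at one vertical boundary between the rows, slides it across the entire width of the grid by repeated application of \eqref{eq:RTT-picture}, and extracts it at the opposite boundary. This produces an identity between the partition function before and after the rows are interchanged, up to an explicit scalar coming from the boundary vertices and a swap of the two affected row parameters. Here \cref{prop:inverse} is essential: it guarantees that the compositions of $\Rlr$ and $\Rrl$ arising when the mixed $R$-vertex is slid past configurations collapse to the scalar $C$, so that the color loops generated along the way telescope and do not obstruct the identity.

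\textbf{Two kinds of moves and the base case.} Since an exchange of rows preserves the number of left- and right-moving rows, converting all-$\Right$ into all-$\Left$ additionally requires a type-flip. I would isolate this at an extremal row, where one vertical boundary (the unoccupied bottom boundary) is trivial, so that the flip becomes essentially a one-row statement. The case $r=1$ of the theorem is exactly such a computation and serves as the base case: there $w_0$ is trivial and the claim reduces to $\Zr_{\mu,\sigma}(z_1)=z_1^{N}\,\ZlN{N}_{\mu,\sigma}(z_1)$, which follows by tracking the single turning vertex and the string of $\texttt{b}_2$ vertices in \cref{tab:T_right_weights,tab:T_left_weights}, using that each block contributes exactly one column of each color. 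This comparison shows directly how the right-moving normalization (independent of $N$) and the left-moving normalization (each extra block contributing a factor $z_1\cdots z_r$) differ by the global factor $\mathbf{z}^N$.

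\textbf{Chaining and the bookkeeping of $w_0$ and $\mathbf{z}^N$.} With the exchange and flip moves in hand, I would run an induction that builds intermediate mixed systems interpolating between all-$\Right$ and all-$\Left$: flip the topmost right-moving row to a left-moving one and transport it down to the accumulating block of left-moving rows by a sequence of adjacent opposite-type exchanges, realizing a reduced word for the longest element $w_0$. Each exchange transposes the two adjacent row parameters, so after the full reversal the left-moving system carries the parameters $w_0\mathbf{z}=(z_r,\dots,z_1)$; by color conservation the horizontal boundary colors are correspondingly reversed to $w_0\sigma$. The scalars produced by the individual insertions, extractions, and flips must be collected and checked to combine into the single factor $\mathbf{z}^{N}$, consistently with the base-case computation.

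\textbf{Main obstacle.} The technical heart of the argument is the boundary analysis of the mixed train argument. In the homogeneous case the inserted $R$-vertex meets empty boundary edges and acts as a simple scalar, but here a left-moving row exits on the left while a right-moving row exits on the right, so inserting and extracting the mixed $R$-vertex requires controlling its action on genuinely occupied boundary edges and, crucially, on the internal color loops that are summed over when the vertex is slid across. Showing that these $m$-dependent loop sums telescope to the clean scalar of \cref{prop:inverse}, and keeping the accumulated scalar factors and the parameter/color relabelings consistent across every step of the reversal, is where the difficulty lies; the remainder is careful combinatorial bookkeeping rather than new input.
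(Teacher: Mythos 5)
Your skeleton is the paper's: the duality is proved by composing adjacent-row exchanges of opposite type (a train argument with the mixed $R$-vertex, \cref{lemma:mixed_model_exchange_rows}) with a type-flip of an extremal row (\cref{lemma:mixed_model_last_row}), iterated along a reduced word for $w_0$, with the base flip computation essentially your $r=1$ case. However, the paragraph you single out as the ``technical heart'' is a misdiagnosis. In the actual exchange argument only $\Rlr$ is used: it is attached once at the left boundary, slid across by the $RTT$-equation $\llbracket \Rlr, \Tl, \Tr \rrbracket = 0$, and removed at the right boundary. No composition $\Rlr\Rrl$ ever arises, and by \cref{lemma:at_most_three_colors} no color loops occur in any Yang--Baxter configuration avoiding $\Rrl$ --- every color is pinned by the six boundary edges. \cref{prop:inverse} and its telescoping sums are needed only to recover the Yang--Baxter equations involving $\Rrl$ (\cref{cor:R_RightLeft}), which the duality proof never uses. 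The boundary analysis you worry about is also much simpler than you suggest: with left boundary spins $\sigma_i$ (left-moving row) and unoccupied (right-moving row), \cref{tab:R-vertices-mixed} admits exactly one $\Rlr$ configuration, of weight $\Phi(z_1 - q^2 z_2)$, and the same holds at the right boundary with the \emph{same} weight, so the two scalars cancel and the exchange identity \eqref{eq:mixed_model_exchange_rows} is exact, with no scalar at all. Consequently your bookkeeping is off: the insertions and extractions contribute nothing, and the entire factor $\mathbf{z}^N$ comes from the row flips alone, each contributing $z_j^N$ for the current bottom-row parameter.

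There is also a directional error that would break your induction as stated. You correctly observe that the flip is a one-row statement only at the bottom, where the lower boundary is unoccupied and the single path on that row runs straight to the left or right boundary; yet your itinerary flips the \emph{topmost} right-moving row and transports it \emph{down}. The top row abuts the occupied $\mu$-boundary, so no flip is available there, and transporting a row through rows of its own type is not an option either: same-type exchanges via $\Rrr$ or $\Rll$ produce functional equations mixing $Z_{\mu,\sigma}$ with $Z_{\mu,s_i\sigma}$ rather than a clean swap. The correct routing is the mirror image: flip the bottom row from $\Right$ to $\Left$ (picking up $z^N$ in the current last-row parameter), transport it upward past the remaining right-moving rows by opposite-type exchanges, let the left-moving block accumulate at the top, and iterate; this realizes $w_0 = s_{r-1}(s_{r-2}s_{r-1})\cdots(s_1\cdots s_{r-1})$ and reverses both $\mathbf{z}$ and $\sigma$ as required. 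Since your base case and the exchange mechanism are otherwise right, the repair is organizational rather than conceptual, but as written both the claimed main obstacle and the induction scheme are incorrect.
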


The argument is similar to that used to prove Theorem A.1 in \cite{BBB}.

For the proof we need \emph{mixed} models, which are models where some rows consist of left-moving vertices and the others consist of right-moving vertices.
For any $\Theta \in \{\Left, \Right\}^r$ we consider a grid with $r$ rows where the $i^\text{th}$ row consists of vertices of type $\Theta_i$.
Similarly to before, the boundary conditions are defined by $\mu \in (\mathbf{Z}_{\geq 0})^r$ and $\sigma = (\sigma_1, \dots, \sigma_r) \in (\mathcal{P}_m)^r$.
The top boundary condition is defined by $\mu$ in exactly the same way as before, i.e.~$\mu$ lists the column numbers for which there is a path on the top edge, and the boundary condition for the bottom boundary is still that there are no paths on those edges.
For the left and right boundaries, the boundary conditions depend on whether it is a row of left- or right-moving vertices.
If $\Theta_i = \Left$, then the boundary conditions for the $i^\text{th}$ row are the same as for the left-moving systems, i.e.~there is a path on the left-most edge of color $\sigma_i$ and no path on the right-most edge.
Similarly, if $\Theta_i = \Right$, then there is a path on the right-most edge of color $\sigma_i$ and no path on the left-most edge.
We will denote this mixed system by $\mathfrak{S}^{\Theta,N}_{\mu, \sigma}$, where $Nm$ is the number of columns, and abbreviate its partition function as $Z^{\Theta,N}_{\mu, \sigma}(\mathbf{z})$.

The proof of \cref{thm:left-right_duality} now follows from repeated application of the following two lemmas.

\begin{lemma}
  \label{lemma:mixed_model_exchange_rows}
  Let $\mu$, $N$ and $\sigma$ be as in \cref{thm:left-right_duality} and $\Theta \in \{\Left, \Right\}^r$ with $\Theta_i \neq \Theta_{i+1}$ for some~$i$.
  Then
  \begin{equation}
    \label{eq:mixed_model_exchange_rows}
    Z^{\Theta,N}_{\mu, \sigma}(\mathbf{z}) = Z^{s_i\Theta,N}_{\mu, s_i\sigma}(s_i\mathbf{z}),
  \end{equation}
  where $s_i$ is the $i^\text{th}$ simple transposition in $S_r$.
\end{lemma}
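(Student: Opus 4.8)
The plan is to prove \eqref{eq:mixed_model_exchange_rows} by a \emph{train argument} built on the $RTT$-equation of \eqref{eq:RTT-picture}, which holds for every combination of row types by \cref{thm:YBE}. Since $\Theta_i \neq \Theta_{i+1}$ there are two cases, $(\Theta_i,\Theta_{i+1}) = (\Left,\Right)$ and $(\Right,\Left)$, governed respectively by the mixed vertices $\Rlr$ and $\Rrl$. It suffices to treat the first case: applying the resulting identity with $\sigma \mapsto s_i\sigma$ and $\mathbf{z}\mapsto s_i\mathbf{z}$ and using $s_i^2 = 1$ yields the second, and alternatively one may invoke \cref{cor:R_RightLeft} together with \cref{lem:RXY,prop:inverse} to pass from $\Rlr$ to $\Rrl$. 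So first I would assume $\Theta_i = \Left$ and $\Theta_{i+1} = \Right$, so that the relevant vertex is $\Rlr(z_i,z_{i+1})$.

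Next I would attach this $\Rlr$-vertex to the far left of rows $i$ and $i+1$ of the system computing $Z^{\Theta,N}_{\mu,\sigma}(\mathbf{z})$. The left boundary of these two rows is (row $i$: a path of color $\sigma_i$ exiting left; row $i+1$: no path), so the $R$-vertex's right edges are forced to carry $(\sigma_i,\varnothing)$ from top to bottom. Inspecting the admissible $\Rlr$ configurations in \cref{tab:R-vertices-mixed} under this prescription, I would check that the only possibility is the single-path crossing sending the lower-left edge to the upper-right edge; in particular all turn-back configurations and all two-path configurations are excluded. This fixes the new far-left external boundary to be $(\varnothing,\sigma_i)$ and contributes a factor equal to that configuration's Boltzmann weight $w$, so the enlarged system has partition function $w\cdot Z^{\Theta,N}_{\mu,\sigma}(\mathbf{z})$. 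I would then slide the $R$-vertex rightward, applying the $RTT$-equation \eqref{eq:RTT-picture} for $(\Rlr,\Tl,\Tr)$ once per column; this leaves the partition function unchanged while interchanging the two rows, so that the strip between the (fixed) external boundaries becomes exactly the system for the swapped row types $s_i\Theta$, with parameters $s_i\mathbf{z}$ and horizontal colors $s_i\sigma$.

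Finally I would read off the $R$-vertex at the right end of the slid diagram. Its right edges are the unchanged far-right external boundary $(\varnothing,\sigma_{i+1})$ and its left edges are the right boundary of the swapped grid $(\sigma_{i+1},\varnothing)$; as before these force a unique single-path crossing, now sending the upper-left edge to the lower-right edge, and I would confirm that the far-left and far-right external data are simultaneously consistent with both ends. The crux of the bookkeeping—and the step I expect to be the main obstacle—is to verify that this right-end configuration carries the \emph{same} Boltzmann weight as the left-end one, so that the two factors cancel and no spurious constant survives. Both are single colored paths passing straight through the crossing, the two configurations are exchanged by the top–bottom reflection symmetry of $\Rlr$, and from \cref{tab:R-vertices-mixed} their common weight is independent of the path color. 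The train identity then reads $w\cdot Z^{\Theta,N}_{\mu,\sigma}(\mathbf{z}) = w\cdot Z^{s_i\Theta,N}_{\mu,s_i\sigma}(s_i\mathbf{z})$, giving \eqref{eq:mixed_model_exchange_rows} after cancelling $w$. The real care lies in excluding the turn-back configurations at the two ends—a phenomenon absent from the all-right-moving train argument of \cite{BBB}—and in checking the equality of the two end-weights.
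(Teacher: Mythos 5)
Your proposal is correct and takes essentially the same route as the paper's proof: attach the unique admissible $\Rlr$-configuration forced by the left boundary spins $(\sigma_i,\varnothing)$, slide it across the grid with the $RTT$-equation (train argument), read off the unique configuration forced at the right boundary, and cancel the common factor in the fraction field $\mathcal{F}$. The two checks you flag as delicate are exactly the ones the paper makes implicitly: the mixed turn-back configurations all have both left (or both right) edges of equal occupancy and are thus excluded, and both single-crossing configurations carry the same weight $\Phi(z_1 - q^2 z_2)$ in \cref{tab:R-vertices-mixed}, independent of the path color.
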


\begin{proof}[Proof of \cref{lemma:mixed_model_exchange_rows}]
  Without loss of generality we can assume that $\Theta_i = \Left$ and $\Theta_{i+1} = \Right$.
  Recall that for the system $\mathfrak{S}_{\mu,\sigma}^{\Theta,N}$ on the left-hand side of \eqref{eq:mixed_model_exchange_rows} the boundary conditions for the left boundary edges on rows $i$ and $i+1$ are that their spins are $\sigma_i$ and unoccupied, respectively.
  We may therefore consider a new system where we have attached the $\Rlr$-vertex~$\begin{tikzpicture}[baseline=-1mm,scale=0.175]
    \draw[ultra thick, blue] (-1,-1) -- (1,1);
    \draw (-1,1) -- (1,-1);
  \end{tikzpicture}^{\,\sigma_i}$
  to the left of these two edges.
  According to \cref{tab:R-vertices-mixed} there are no other admissible $\Rlr$-vertex configurations with the same left spins.
  Thus we may replace the fixed interior spins on the right side of the $\Rlr$-vertex with free, unspecified spins that are summed over without affecting the partition function, giving us a new system where only the boundary spins are fixed.
  This can be illustrated as the following equality of partition functions, where the dashed internal edges are summed over and all of the boundary edges are fixed and equal on both systems.
  \newcommand{\bdry}{0.75}
  \newcommand{\cols}{4} 
  \begin{equation}
    \label{eq:attach_R-vertex_on_the_left} 
    \begin{tikzpicture}[train]
      \newcommand{\posR}{0}

      \foreach \x in {0,...,\cols} {
        \ifthenelse{\x=\posR}{}{
          \draw[unknown] (\x, -\bdry) -- (\x, 2+\bdry);
        }
      }
      \draw[unknown] (1-\bdry, 2) -- (\cols+\bdry, 2);
      \draw[unknown] (1, 0) -- (\cols, 0);
      \draw[unknown] (1, 1) -- (\cols, 1);

      \draw (-0.5, 1)  -- (0.5, 0) -- (1, 0);
      \draw[colored, blue] (-0.5, 0) node [below, black] {$\scriptstyle\sigma_i$} -- (0.5, 1) -- (1, 1);
      \draw[colored, red] (\cols, 0) -- (\cols+\bdry, 0) node[below, black] {$\scriptstyle\sigma_{i+1}$};
      \draw (\cols, 1) -- (\cols+\bdry, 1);
    \end{tikzpicture}
    =
    \begin{tikzpicture}[train]
      \newcommand{\posR}{0}

        \foreach \x in {0,...,\cols} {
          \ifthenelse{\x=\posR}{}{
            \draw[unknown] (\x, -\bdry) -- (\x, 2+\bdry);
          }
        }
        \draw[unknown] (1-\bdry, 2) -- (\cols+\bdry, 2);
        \draw[unknown] (\posR, 0.5) -- (\posR+0.5, 0) -- (\cols, 0);
        \draw[unknown] (\posR, 0.5) -- (\posR+0.5, 1) -- (\cols, 1);

        \draw (-0.5, 1) -- (0, 0.5);
        \draw[colored, blue] (-0.5, 0)  node [below, black] {$\scriptstyle\sigma_i$} -- (0, 0.5);
        \draw[colored, red] (\cols, 0) -- (\cols+\bdry, 0) node [below, black] {$\scriptstyle\sigma_{i+1}$};
        \draw (\cols, 1) -- (\cols+\bdry, 1);

      \end{tikzpicture}  
  \end{equation}
  Since the weight of the $\Rlr$-vertex configuration is $\Phi(z_1 - q^2 z_2)$, the partition function of this new system is simply $\Phi(z_1 - q^2 z_2) Z_{\mu,\sigma}^{\Theta,N}(\mathbf{z})$.

  We can now apply the Yang--Baxter equation \eqref{eq:RTT-picture} to the system on the right-hand side of \eqref{eq:attach_R-vertex_on_the_left} to move the $R$-vertex one step to the right, which gives us the first equality below.
  We can then continue this process to move the $R$-vertex all the way to the right, so that it is attached to the right boundary of the grid.
  This is sometimes known as the \emph{train argument} and can be illustrated as
  \begin{equation*}
    \begin{tikzpicture}[train]
        \newcommand{\posR}{0}

        \foreach \x in {0,...,\cols} {
          \ifthenelse{\x=\posR}{}{
            \draw[unknown] (\x, -\bdry) -- (\x, 2+\bdry);
          }
        }
        \draw[unknown] (1-\bdry, 2) -- (\cols+\bdry, 2);
        \draw[unknown] (\posR, 0.5) -- (\posR+0.5, 0) -- (\cols, 0);
        \draw[unknown] (\posR, 0.5) -- (\posR+0.5, 1) -- (\cols, 1);

        \draw (-0.5, 1) -- (0, 0.5);
        \draw[colored, blue] (-0.5, 0) node[below,black]{$\scriptstyle\sigma_{i}$} -- (0, 0.5);
        \draw[colored, red] (\cols, 0) -- (\cols+\bdry, 0) node[below,black]{$\scriptstyle\sigma_{i+1}$};
        \draw (\cols, 1) -- (\cols+\bdry, 1);
      \end{tikzpicture}
      =
      \begin{tikzpicture}[train]
        \newcommand{\posR}{1}

        \foreach \x in {0,...,\cols} {
          \ifthenelse{\x=\posR}{}{
            \draw[unknown] (\x, -\bdry) -- (\x, 2+\bdry);
          }
        }
        \draw[unknown] (-\bdry, 2) -- (\cols+\bdry, 2);
        \draw[unknown] (0, 1) -- (\posR-0.5, 1) -- (\posR, 0.5) -- (\posR+0.5, 0) -- (\cols, 0);
        \draw[unknown] (0, 0) -- (\posR-0.5, 0) -- (\posR, 0.5) -- (\posR+0.5, 1) -- (\cols, 1);

        \draw (-\bdry, 1) -- (0, 1);
        \draw[colored, blue] (-\bdry, 0)node[below,black]{$\scriptstyle\sigma_{i}$} -- (0, 0);
        \draw[colored, red] (\cols, 0) -- (\cols+\bdry, 0)node[below,black]{$\scriptstyle\sigma_{i+1}$};
        \draw (\cols, 1) -- (\cols+\bdry, 1);
      \end{tikzpicture}
      = \cdots =
      \begin{tikzpicture}[train]

        \newcommand{\posR}{4}

        \foreach \x in {0,...,\cols} {
          \ifthenelse{\x=\posR}{}{
            \draw[unknown] (\x, -\bdry) -- (\x, 2+\bdry);
          }
        }
        \draw[unknown] (-\bdry, 2) -- (\cols+\bdry-1, 2);
        \draw[unknown] (0, 1) -- (\posR-0.5, 1) -- (\posR, 0.5);
        \draw[unknown] (0, 0) -- (\posR-0.5, 0) -- (\posR, 0.5);

        \draw (-\bdry, 1) -- (0, 1);
        \draw[colored, blue] (-\bdry, 0) node[below,black]{$\scriptstyle\sigma_{i}$} -- (0, 0);
        \draw[colored, red] (\cols, 0.5) -- (\cols+0.5, 0) node[below,black]{$\scriptstyle\sigma_{i+1}$};
        \draw (\cols, 0.5) -- (\cols+0.5, 1);
    \end{tikzpicture}
  \end{equation*}
  where again boundary edges are fixed and matching.

  Similarly to before, it can be seen in \cref{tab:R-vertices-mixed} that there is only one admissible vertex configuration for the $R$-vertex in the right-most system, and we can therefore deduce that its partition function is $\Phi(z_1 - q^2 z_2) Z^{s_i\Theta,N}_{\mu, s_i\sigma}(s_i\mathbf{z})$, a multiple of the right-hand side of~\eqref{eq:mixed_model_exchange_rows}.
  Note that the row types and row parameters of row $i$ and $i+1$ were swapped when the $R$-vertex moved through the grid.

  Hence
  \begin{equation*}
    \Phi(z_1 - q^2 z_2) Z^{\Theta,N}_{\mu, \sigma}(\mathbf{z}) = \Phi(z_1 - q^2 z_2) Z^{s_i\Theta,N}_{\mu, s_i\sigma}(s_i\mathbf{z})
  \end{equation*}
  as elements of the fraction field $\mathcal{F}$ introduced in \cref{sec:families_of_lattice_models} which implies \eqref{eq:mixed_model_exchange_rows}.
\end{proof}

The following lemma shows that the partition function does not depend on the last row type.

\begin{lemma}
  \label{lemma:mixed_model_last_row}
  Let $\mu$, $N$ and $\sigma$ be as in \cref{thm:left-right_duality} and $\Theta_1,\ldots,\Theta_{r-1} \in \{\Left, \Right\}$.
  Then
  \begin{equation}
    \label{eq:mixed_model_last_row}
    Z_{\mu, \sigma}^{(\Theta_1,\ldots,\Theta_{r-1},\Right),N}(\mathbf{z}) = z_r^N Z_{\mu, \sigma}^{(\Theta_1,\ldots,\Theta_{r-1},\Left),N}(\mathbf{z}).
  \end{equation}
\end{lemma}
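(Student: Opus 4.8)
The plan is to peel off the last row and reduce the claim to a purely local computation on that single row, exploiting that rows $1,\dots,r-1$ form literally the same system on both sides of~\eqref{eq:mixed_model_last_row}. Writing a state of the full grid as a state $\mathfrak t$ of the top $r-1$ rows together with a configuration of row $r$, glued along the shared horizontal line of vertical edges, I would factor
\[
  Z_{\mu,\sigma}^{(\Theta_1,\dots,\Theta_{r-1},\Theta_r),N}(\mathbf z)
  = \sum_{\mathfrak t} \wt(\mathfrak t)\, W_{\Theta_r}\bigl(\iota(\mathfrak t)\bigr),
\]
where $\iota(\mathfrak t)$ is the tuple of vertical spins that $\mathfrak t$ feeds into the top of row $r$ and $W_{\Theta_r}(\iota)$ is the sum of Boltzmann weights of admissible row-$r$ states with top input $\iota$, empty bottom boundary, and the horizontal boundary prescribed by $\sigma_r$ and $\Theta_r$. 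Since the admissible $\mathfrak t$ and their weights $\wt(\mathfrak t)$ are identical for $\Theta_r=\Right$ and $\Theta_r=\Left$, it suffices to prove the pointwise identity $W_{\Right}(\iota) = z_r^{\,N}\, W_{\Left}(\iota)$ for every input $\iota$.

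The next step is to pin down which inputs $\iota$ contribute. Color conservation on row $r$ equates the multiset of colors entering from the top and the prescribed horizontal side with the colors leaving at the bottom and the other horizontal side; since the bottom boundary is unoccupied and exactly one horizontal edge carries $\sigma_r$ while the opposite one is empty, the top must feed in exactly one path, of color $\sigma_r$, at a single column $c$ (every other $\iota$ gives $W_{\Right}(\iota)=W_{\Left}(\iota)=0$, so the identity holds trivially there). This characterization is the same for both row types. For such an input $\iota=\iota_c$ the row-$r$ state is moreover unique: the single path cannot descend because the bottom is empty, so it must turn at column $c$ and run straight to the right boundary when $\Theta_r=\Right$, or to the left boundary when $\Theta_r=\Left$.

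It then remains to evaluate the two monomials, and this is the place I would be most careful. Working in the unfused picture, where row $r$ has $Nm$ columns carved into $N$ blocks with one column of each color per block (hence exactly $N$ columns of color $\sigma_r$), I would read off from \cref{tab:T_right_weights,tab:T_left_weights} that the turning vertex at $c$ is a $\texttt{c}_2$ vertex and that every column the path merely passes through horizontally is a $\texttt{b}_2$ vertex, all remaining columns carrying the trivial $\texttt{a}_1$. Tracking only the factors of $z_r$, a $\texttt{b}_2$ contributes $z_r^{\pm1}$ precisely when its column color equals $\sigma_r$ and $1$ otherwise (sign $+$ for $\Tr$, $-$ for $\Tl$), while the turning $\texttt{c}_2$ contributes $1$ for $\Tr$ and $z_r^{-1}$ for $\Tl$. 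Letting $L$ and $R$ count the $\sigma_r$-colored columns lying to the right and to the left of $c$ respectively, so that $L+R+1=N$, this gives $W_{\Right}(\iota_c)=z_r^{\,L}$ and $W_{\Left}(\iota_c)=z_r^{-1}z_r^{-R}=z_r^{\,L-N}$, whence $W_{\Right}(\iota_c)=z_r^{\,N}\,W_{\Left}(\iota_c)$, and summing over $\mathfrak t$ yields~\eqref{eq:mixed_model_last_row}. The only delicate point is the bookkeeping — the identity $L+R+1=N$ together with the correct reading of the $\texttt{b}_2$ and $\texttt{c}_2$ orientations and weights from the two tables — since this is exactly the mechanism that converts the $N$-independence of the right-moving partition function and the $N$-dependence of the left-moving one (noted in \cref{sec:fusion}) into the clean factor $z_r^{\,N}$.
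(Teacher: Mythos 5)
Your proposal is correct and follows essentially the same route as the paper: both arguments fix the top $r-1$ rows, use color conservation and the empty bottom boundary to force a unique single-path configuration on row $r$, and compare the resulting monomials state by state. The only cosmetic difference is that you tally the $N$ matching-color columns via $L+R+1=N$ to get $W_\Right(\iota_c)=z_r^N W_\Left(\iota_c)$, whereas the paper phrases the same bookkeeping as multiplying the left-moving $\texttt{b}_2$, $\texttt{c}_2$, $\texttt{a}_1$ weights by $z$ on matching columns to recover the right-moving $\texttt{a}_1$, $\texttt{c}_2$, $\texttt{b}_2$ weights.
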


\begin{proof}[Proof of \cref{lemma:mixed_model_last_row}]
  Consider some state $\mathfrak{s}_{\operatorname{LHS}}$ on the left-hand side of \eqref{eq:mixed_model_last_row}, i.e.\ some state where the last row consists of right-moving vertices.
  Because of the boundary condition, there can only be a single path on the last row, say of color $c_i$, which comes down from above on some vertical edge, and then goes straight to the right boundary as illustrated in \cref{fig:last-row}.
  Therefore the only vertex configurations that can appear on the last row are $\texttt{a}_1$, $\texttt{c}_2$ and $\texttt{b}_2$.

  By changing the type of the last row to be left-moving and changing the path to go to the left instead of to the right, we obtain a state $\mathfrak{s}_{\operatorname{RHS}}$ on the right-hand side of \eqref{eq:mixed_model_last_row} as illustrated in \cref{fig:last-row}.
  The only vertex configurations that can appear on its last row are $\texttt{b}_2$, $\texttt{c}_2$ and $\texttt{a}_1$.
  \begin{figure}[htpb]
    \centering
    \newcommand{\bdry}{0.75}
    \newcommand{\topBdry}{0.75}
    \begin{equation*}
    \mathfrak{s}_{\operatorname{LHS}} = 
    \begin{tikzpicture}[train]
      \draw (-\bdry, 0) -- (4+\bdry, 0);
      \foreach \x in {0,...,4} {
        \draw (\x, -\bdry) -- (\x, 1);
      }
      \draw[unknown] (-\bdry, 1) -- (4+\bdry, 1);
      \draw[unknown] (-\bdry, 2) -- (4+\bdry, 2);
      \foreach \x in {0,...,4} {
        \draw[unknown] (\x, 1) -- (\x, 2+\topBdry);
      }
      \draw[red, colored] (1,1) -- (1,0) -- (4+\bdry,0);
    \end{tikzpicture}
    \qquad\qquad
    \mathfrak{s}_{\operatorname{RHS}} = 
    \begin{tikzpicture}[train]
      \draw (-\bdry, 0) -- (4+\bdry, 0);
      \foreach \x in {0,...,4} {
        \draw (\x, -\bdry) -- (\x, 1);
      }
      \draw[unknown] (-\bdry, 1) -- (4+\bdry, 1);
      \draw[unknown] (-\bdry, 2) -- (4+\bdry, 2);
      \foreach \x in {0,...,4} {
        \draw[densely dashed] (\x, 1) -- (\x, 2+\topBdry);
      }
      \draw[red, colored] (1,1) -- (1,0) -- (-\bdry,0);
    \end{tikzpicture}
    \end{equation*}
    \caption{The last row of a right-moving and a left-moving state, respectively.}
    \label{fig:last-row}
  \end{figure}
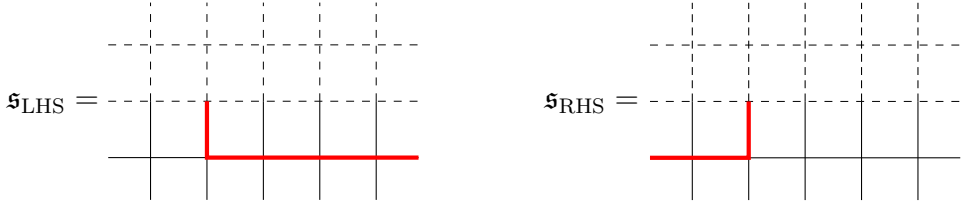
  
  By multiplying the $\texttt{b}_2$, $\texttt{c}_2$ and $\texttt{a}_1$ weights of the left-moving model in \cref{tab:T_left_weights} by $z$ whenever the column color $c_j$ is equal to $c_i$, we obtain precisely the $\texttt{a}_1$, $\texttt{c}_2$ and $\texttt{b}_2$ weights of the right-moving model in \cref{tab:T_right_weights}.
  Therefore the weight of $\mathfrak{s}_{\operatorname{LHS}}$ is equal to the weight of $\mathfrak{s}_{\operatorname{RHS}}$ multiplied by $z_r^N$.

This defines a weight-respecting bijection $\mathfrak{S}_{\mu, \sigma}^{(\Theta_1,\ldots,\Theta_{r-1},\Right),N} \xrightarrow{\sim} \mathfrak{S}_{\mu, \sigma}^{(\Theta_1,\ldots,\Theta_{r-1},\Left),N}$ by $\mathfrak{s}_{\operatorname{LHS}} \mapsto \mathfrak{s}_{\operatorname{RHS}}$ such that $\wt(\mathfrak{s}_{\operatorname{LHS}}) = z_r^N \wt(\mathfrak{s}_{\operatorname{RHS}})$, from which \eqref{eq:mixed_model_last_row} follows.
\end{proof}

\begin{proof}[Proof of \cref{thm:left-right_duality}]
  Consider the right-moving system $\Sr_{\mu,\sigma}(\mathbf{z})$ on the left-hand side of the equality.
  By \cref{lemma:mixed_model_last_row} we can replace the last row with a left-moving row:
  \begin{equation*}
    \Zr_{\mu,\sigma}(\mathbf{z}) = z_r^N Z^{(\Right,\dots,\Right,\Left),N}_{\mu, \sigma}(\mathbf{z}).
  \end{equation*}
  We can then repeatedly apply \cref{lemma:mixed_model_exchange_rows} to move the left-moving row to the top:
  \begin{equation*}
    z_r^N Z^{(\Right,\dots,\Right,\Left),N}_{\mu, \sigma}(\mathbf{z})
    = z_r^N Z^{(\Right,\dots,\Left,\Right),N}_{\mu, s_{r-1}\sigma}(s_{r-1}\mathbf{z})
    = \dots
    = z_r^N Z^{(\Left,\Right,\dots,\Right),N}_{\mu, w_1\sigma}(w_1\mathbf{z}),
  \end{equation*}
  where $w_1 = s_1 s_2 \dots s_{r-1}$.
  Then we can use \cref{lemma:mixed_model_last_row} again:
  \begin{equation*}
    z_r^N z_{r-1}^N Z^{(\Left,\Right,\dots,\Right),N}_{\mu, w_1\sigma}(w_1\mathbf{z})
    = z_r^N z_{r-1}^N Z^{(\Left,\Right,\dots,\Right,\Left),N}_{\mu, w_1\sigma}(w_1\mathbf{z}),
  \end{equation*}
  and \cref{lemma:mixed_model_exchange_rows} to move the last row up to the second row:
  \begin{equation*}
    z_r^N z_{r-1}^N Z^{(\Left,\Right,\dots,\Right,\Left),N}_{\mu, w_1\sigma}(w_1\mathbf{z})
    = \dots
    = z_r^N z_{r-1}^N Z^{(\Left,\Left,\Right,\dots,\Left),N}_{\mu, w_2\sigma}(w_2\mathbf{z}),
  \end{equation*}
  where $w_2 = s_2 \dots s_{r-1} s_1 \dots s_{r-1}$.

  By repeating this for every row, we will eventually end up with
  \begin{equation*}
    \mathbf{z}^N Z^{\Left,N}_{\mu, w_0\sigma}(w_0\mathbf{z}),
  \end{equation*}
  where $w_0 = s_{r-1} (s_{r-2} s_{r-1}) (s_{r-3} \dots s_{r-1}) \dots (s_2 \dots s_{r-1}) (s_1 \dots s_{r-1})$, the longest element in $S_r$, which finishes the proof.
\end{proof}

\section{Crystal models and the Schützenberger involution}
\label{sec:crystal}
In this section we will show that the $\Gamma$-$\Delta$ duality for the crystal limit refines to a weight-respecting bijection of states based on the Schützenberger involution.
Moreover, we show that the individual row exchanges in \cref{lemma:mixed_model_exchange_rows} have a state-by-state interpretation in terms of involutions $t_i$ on Gelfand--Tsetlin patterns defined by Berenstein and Kirillov in~\cite{KirillovBerenstein}.
Gelfand--Tsetlin patterns are in bijection with semistandard Young tableaux for which the Berenstein--Kirillov involutions are the well-known Bender--Knuth involutions introduced in \cite{BenderKnuth}.
Applying these involutions in the same order as the row exchanges in \cref{thm:left-right_duality} gives exactly the Schützenberger involution.

In this section we will only discuss fused models, where a column corresponds to a block in the unfused model.
We fix an arbitrary $r$-tuple of colors which gives the colors on the top boundary of our systems from left to right.
Our arguments will not depend on this choice.
We will denote mixed systems with the fused crystal configurations and weights from \cref{tab:T_fused_gamma_Iwahori_crystal_weights,tab:T_fused_delta_Iwahori_crystal_weights} by $\mathfrak{S}^{\Theta}_{\lambda+\rho,\sigma}$ where $\Theta \in \{\Gamma,\Delta\}^r$ are the row types, the integer partition $\lambda \in \mathbb{Z}^r$ and $\rho = (r-1,r-2,\ldots,1,0)$ determine the occupied top boundary fused column numbers, and $\sigma = (\sigma_1,\ldots, \sigma_r)$ are the colors for the occupied horizontal boundary edges.
Note that $\lambda + \rho$ here gives the fused column numbers instead of the expanded column numbers.
We will sometimes consider the union over all possible $\sigma$ and denote this by $\mathfrak{S}^{\Theta}_{\lambda+\rho} = \bigsqcup_{\sigma} \mathfrak{S}^{\Theta}_{\lambda+\rho,\sigma}$.
In this section it is important that the systems only contain admissible states (i.e.\ with non-zero weight).
As before we abbreviate the partition functions $Z(\mathfrak{S}^\Theta_{\lambda+\rho,\sigma})(\mathbf{z})$ and $Z(\mathfrak{S}^\Theta_{\lambda+\rho})(\mathbf{z})$ to $Z^\Theta_{\lambda+\rho,\sigma}(\mathbf{z})$ and $Z^\Theta_{\lambda+\rho}(\mathbf{z})$, respectively.

\subsection{Gelfand--Tsetlin patterns}
Recall that with our usual boundary conditions, when we let $v \to 0$ in the fused Gamma and Delta Iwahori models we can, without affecting the partition function, restrict the model by disallowing all vertical edges with multiple colors.
Therefore the top boundary conditions are given by $r$ \emph{distinct} fused column numbers and associated colors from the fixed $r$-tuple of colors.
This is in contrast to the non-crystal limit where there may be multiple paths on the same fused edge on the top boundary, as it corresponds to an entire block.

We will show that the states with top boundary columns given by $\lambda + \rho$ are in bijection with Gelfand--Tsetlin patterns with top row $\lambda$.
A Gelfand--Tsetlin pattern $\mathfrak{T}$ is a triangular arrangement of non-negative integers
\begin{equation}
  \label{eq:GTP}
  \mathfrak{T}= \left\{ \begin{array}{ccccccccc} a_{0,0} &  & a_{0,1} &  & a_{0,2} &  & \cdots &  & a_{0, r-1}\\ & a_{1,1} &  & a_{1,2} &  &  & \cdots & a_{1,r-1} & \\ &  & \ddots &  & \vdots &  & \iddots &  & \\ &  &  &  & a_{r-1, r-1} &  &  &  & \end{array} \right\}
\end{equation}
such that each triangle of neighboring integers satisfies the inequalities
\begin{equation}
  \label{eq:triangle}
  \begin{array}{ccccc}
    a_{i-1,j-1} & & & & a_{i-1,j} \\
    & \rotatebox[origin=c]{-40}{$\geq$} &  & \rotatebox[origin=c]{40}{$\geq$} & \\
    &  & a_{i,j} &  &
  \end{array}
\end{equation}
We denote by $\GTP_\lambda$ the set of Gelfand--Tsetlin patterns with top row $\lambda = (\lambda_1, \ldots, \lambda_r)$, that is those satisfying $a_{0,i} = \lambda_{i+1}$ in \eqref{eq:GTP}.
We will also consider the subsets of \emph{left-strict} or \emph{right-strict} Gelfand--Tsetlin patterns where the inequalities in~\eqref{eq:triangle} are restricted to $a_{i-1,j-1} > a_{i,j} \geq a_{i-1,j}$ or $a_{i-1,j-1} \geq a_{i,j} > a_{i-1,j}$, respectively.
As will be shown below, the left-strict Gelfand--Tsetlin patterns are in bijection with the states of the $\Gamma$ crystal model and the right-strict patterns are in bijection with $\Delta$ states.
We will therefore also say that these two subsets of Gelfand--Tsetlin patterns are of type $\Gamma$ and $\Delta$, respectively.

More generally, we say that a pair of consecutive rows in a Gelfand--Tsetlin pattern is of type $\Gamma$ or $\Delta$ if the triangular arrangements of inequalities between these two rows are all left-strict or all right-strict, respectively. 
We may consider subsets of Gelfand--Tsetlin patterns with mixed row-pair types specified by a list $\Theta' = (\Theta_1', \ldots, \Theta'_{r-1})$ where $\Theta'_i \in \{\Gamma, \Delta\}$ counting from top to bottom.
We denote these subsets by $\GTP^{\Theta'}_{\lambda} \subset \GTP_\lambda$.
Note that they are not disjoint.
Note also that the list of row-pair types $\Theta'$ has length $r-1$ while the list of row types $\Theta$ for lattice models defined in~\cref{sec:left-right_duality} has length $r$.
These are related in the following proposition which proves that the set of admissible mixed states with row types $\Theta = (\Theta_1, \ldots, \Theta_r)$ and top row boundary condition $\lambda + \rho$ is in bijection with the set of Gelfand--Tsetlin patterns with $r$ rows, row-pair types $\Theta' = (\Theta_1,\dots,\Theta_{r-1})$ and top row $\lambda + \rho$.
Later, in \cref{prop:state-GTP-weight} we show that this bijection preserves the notion of weight (which we define for Gelfand--Tsetlin patterns below).

\begin{proposition}
  \label{prop:states-GTP-bijection}
  Fix a list of row types $\Theta = (\Theta_1, \dots, \Theta_r) \in \{\Gamma, \Delta\}^r$ and let $\Theta' = (\Theta_1,\dots,\Theta_{r-1})$.
  There is a bijection $\iota_\Theta : \mathfrak{S}^\Theta_{\lambda+\rho} \to \GTP^{\Theta'}_{\lambda+\rho}$.
\end{proposition}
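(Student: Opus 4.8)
The plan is to define $\iota_\Theta$ by recording path positions and to prove bijectivity through a local, one-row-at-a-time analysis. First I would define the map: for an admissible state $\mathfrak{s} \in \mathfrak{S}^\Theta_{\lambda+\rho}$, let the $k$-th row of the array $\iota_\Theta(\mathfrak{s})$ (for $k = 0, \dots, r-1$) list, in decreasing order, the fused column numbers of the vertical edges between lattice rows $k$ and $k+1$ that carry a path, where $k=0$ denotes the top boundary. By \cref{lemma:crystal_models_at_most_one_color} and the restriction to single-color vertical edges, each such edge carries at most one path, so these are honest sets of column numbers, and row $0$ equals $\lambda + \rho$ by the top boundary condition. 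Since the bottom boundary is empty and each of the $r$ top paths must leave through some horizontal boundary edge, color conservation forces exactly one path to exit sideways in every row. Hence row $k$ has exactly $r-k$ entries and $\iota_\Theta(\mathfrak{s})$ has the triangular shape of \eqref{eq:GTP}.

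The heart of the argument is a single-row lemma. Fix one lattice row of type $\Theta_i$ with prescribed occupied top edges $P = (p_1 > \dots > p_s)$ carrying prescribed distinct colors, regarded as incoming data. I claim that its admissible fillings are in bijection with the sets $Q = (q_1 > \dots > q_{s-1})$ of occupied bottom edges satisfying the interlacing dictated by $\Theta_i$ — namely $p_t > q_t \ge p_{t+1}$ (left-strict) when $\Theta_i = \Gamma$ and $p_t \ge q_t > p_{t+1}$ (right-strict) when $\Theta_i = \Delta$ — and that each admissible $Q$ determines the bottom colors and the sideways-exiting path uniquely. The mechanism, read off from \cref{tab:T_fused_gamma_Iwahori_crystal_weights} for $\Gamma$ and its mirror image \cref{tab:T_fused_delta_Iwahori_crystal_weights} for $\Delta$, is that a path moves weakly in the row's direction while descending (rightward for $\Gamma$, leftward for $\Delta$), and when a moving path meets a descending one they either cross ($\texttt{a}_2$) or exchange directions ($\texttt{a}_2'$), with the outcome forced by the relative order of their colors. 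In the crystal limit a lone vertical path is a $\texttt{b}_1$ configuration with weight zero, so a descending path can never stay in its own column; this supplies the strict inequality — $p_t > q_t$ for $\Gamma$ and $q_t > p_{t+1}$ for $\Delta$ — while the remaining inequality is weak, realized either by a color-forced crossing/exchange at the boundary value or by an unobstructed descent. Applying this for $i = 1, \dots, r$ shows that $\iota_\Theta(\mathfrak{s}) \in \GTP^{\Theta'}_{\lambda+\rho}$.

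To show $\iota_\Theta$ is a bijection I would run the same local lemma in reverse, reconstructing the state row by row from the top. The top colors are the fixed $r$-tuple; given them, the single-row lemma produces for each row the unique admissible filling whose top occupations are the previous array row and whose bottom occupations are the next, propagating a well-defined set of colors downward. This yields a two-sided inverse: surjectivity, since every pattern in $\GTP^{\Theta'}_{\lambda+\rho}$ is realized, and injectivity, since the positions together with the fixed top colors determine every interior spin — with at most one color per edge, color conservation and the forced crossing/exchange rule leave no freedom. For the purely right-moving case $\Theta = (\Gamma, \dots, \Gamma)$ this recovers the correspondence of \cite{BBBG:demazure}.

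The main obstacle is the single-row lemma itself: one must verify that, for each row type, admissibility is governed purely by the positional interlacing-with-strictness condition and that the coloring is always forced and consistent, carefully treating the color-order conditions ($i<j$ versus $i>j$) in the $\texttt{a}_2$ and $\texttt{a}_2'$ weights so that they obstruct neither existence nor uniqueness of the filling. The crystal limit is exactly what makes this clean: the vanishing of $\texttt{b}_1$ supplies strictness, and the vanishing of the wrong-direction crossing collapses the many non-crystal fillings down to a single admissible one per interlacing, which is precisely what upgrades the duality to an honest state-by-state bijection. The new content beyond \cite{BBBG:demazure} is that the analysis is entirely internal to a single row, so it applies independently to each row and hence to arbitrary mixed types $\Theta$; the last row type $\Theta_r$ plays no role, matching the fact that $\Theta'$ has length $r-1$ and consistent with \cref{lemma:mixed_model_last_row}.
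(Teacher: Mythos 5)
Your architecture coincides with the paper's proof: the same map recording occupied fused column numbers below each lattice row, the triangular shape from exactly one sideways exit per row, interlacing verified one row-pair at a time, and reconstruction via the observation that the coloring of an uncolored admissible state is forced vertex-by-vertex from the fixed top boundary colors. The genuine gap is in your justification of strictness inside the single-row lemma. The claim that ``a descending path can never stay in its own column'' is false: the $\texttt{a}_2$ configuration of \cref{tab:T_fused_gamma_Iwahori_crystal_weights} has weight $z$ when $i \geq j$, so a vertical path may pass straight down through a $\Gamma$ row provided a horizontal path crosses it at that vertex. This occurs in the paper's own example (\cref{fig:example_flags}), where the green path descends straight through the first ($\Gamma$) row while the blue path crosses it horizontally, and it is exactly the mechanism that realizes the weak equality $q_t = p_{t+1}$. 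What the vanishing of $\texttt{b}_1$ actually forbids is straight descent with \emph{unoccupied} horizontal edges. Consequently $p_t > q_t$ does not follow from your local rule alone: one must rule out a straight-through descent at the borderline column, and that requires a non-local ingredient. The paper handles precisely this case: when $a_{i-1,j-1} = a_{i,j}$, the vertex at that column has both top and bottom edges occupied, hence must be of type $\texttt{a}_2$ or $\texttt{a}_2'$ and emits an extra rightward-outgoing path; comparing in- and out-going paths over the section of the row to the \emph{left} of that column (whose left boundary edge is unoccupied for a $\Gamma$ row) then gives a contradiction by conservation.

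Similarly, your treatment of the weak inequality only addresses realizability (``unobstructed descent''), not impossibility of violations: proving $a_{i,j} \geq a_{i-1,j}$ in the paper uses path conservation on the section of the row to the \emph{right} of column $a_{i,j}$, together with the boundary condition that the right boundary edge of a $\Gamma$ row is occupied (mirrored for $\Delta$). A purely local crossing/exchange analysis is also delicate because crossings are color-restricted (two distinct colors can cross in only one direction), so a path-tracking induction would have to account for which crossings are available. Once the single-row interlacing characterization is established by such conservation counts, the remainder of your argument --- the forced coloring, the row-by-row two-sided inverse, and the independence of the partition function's combinatorics from $\Theta_r$ --- is correct and matches the paper's proof.
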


For similar statements for other lattice models see for example \cite{BBF, BBCFG}.
From now on, we write the truncation of a list row types $\Theta = (\Theta_1, \dots, \Theta_{r-1}, \Theta_r)$ to a list of row-pair types as $\Theta' = (\Theta_1, \dots, \Theta_{r-1})$.

\begin{remark}
  \label{rem:unique_coloring}
  As we will see in the proof, the Gelfand--Tsetlin pattern does not carry any information about the colors of the paths in the lattice model state; it only depends on the positions of the fused vertical edges the paths occupy.
  Recall that the fused columns do not restrict which colors their edges can carry, unlike the unfused columns.
  However, the color information is not lost since given the layouts of the paths and given the fixed top boundary colors we show that there is a unique way of coloring the paths for the crystal model.
  The notion of an uncolored state can be obtained by considering a palette with only a single color, say gray, which we think of as \emph{occupied}.
  This deterministic feature is not shared with the Iwahori model for general $v \neq 0$ as can be seen for example in \cref{fig:example_fused_right-moving_state} where we can swap red and green between the second and third row.
\end{remark}

\begin{proof}[Proof of \cref{prop:states-GTP-bijection}]
  Suppose we have some mixed state $\mathfrak{s}$ with row types $\Theta$ and top boundary condition given by $\lambda + \rho$.
  Recall that $\mathfrak{s}$ has $r$ rows indexed from $1$ to $r$ starting from the top, that there are $r$ paths starting from the top boundary, and that on each row one path exits the lattice to either the right or left depending on $\Theta$.
  Therefore there will be paths on $r$ of the vertical edges at the top, on $r-1$ of the vertical edges directly below the first row, and so on.
  We can then define a Gelfand--Tsetlin pattern $\mathfrak{T}$ by setting the row with index $0\leq i \leq r-1$ to be the column numbers of the occupied vertical edges below row $i$ in the lattice, taken from left to right.
  In particular, for $i=0$ this means that we set $a_{0,j} = (\lambda + \rho)_j$ by imagining a zeroth row above the top boundary.
  Recall that the column numbers are decreasing from left to right so that, by definition, each row in $\mathfrak{T}$ is strictly decreasing. 

  For any $1 \leq i \leq r-1$, to see that the necessary inequalities for the row-pair $(i-1, i)$ in $\mathfrak{T}$ are satisfied, suppose first that $\Theta_i = \Gamma$, i.e.~that the $i^\text{th}$ row of $\mathfrak{s}$ is a Gamma row.
  Recall that for a Gamma vertex we consider the top and left edges to be inputs, and the bottom and right edges to be outputs.
  
  We have to check the two inequalities coming from the left-strict version of \eqref{eq:triangle}: (i) $a_{i-1,j-1} >  a_{i,j}$ and (ii) $a_{i,j} \geq a_{i-1,j}$ for every $j$.
  For (i), suppose instead that $a_{i-1,j-1} \leq a_{i,j}$ for some $j$.
  Consider the section of vertices on the $i^\text{th}$ row that are left of column $a_{i,j}$, including the vertex on column $a_{i,j}$ itself, together with their edges.
  The number of occupied bottom (top) edges in this section is given by the number of entries in $\mathfrak{T}$ on row $i$ (row $i-1$) that are larger than or equal to $a_{i,j}$.
  If $a_{i-1,j-1} < a_{i,j}$, this means that there are strictly fewer occupied edges at the top than at the bottom of the section, since the rows in $\mathfrak{T}$ are strictly decreasing.
  The boundary conditions imply that the left boundary edge of the section is unoccupied, and thus the number of ingoing paths is strictly smaller than the number of outgoing paths for the section, regardless of the occupancy of its right boundary edge, contradicting the fact that the number of paths has to be preserved for this section.

  If, on the other hand, $a_{i-1,j-1} = a_{i,j}$, then there are as many paths coming in from the top as there are going out via the bottom boundary.
  However, the right-most vertex of the section (at column $a_{i,j}$) must have both top and bottom edges occupied, and so has to be of type $\texttt{a}_2$ or $\texttt{a}_2'$.
  This means that there must be an additional path going out to the right, but, as before, none coming in from the left, which again leads to a contradiction of path preservation.

  For (ii), suppose for a contradiction that $a_{i,j} < a_{i-1,j}$.
  We proceed similarly to above, but instead consider the section of vertices on the $i^\text{th}$ row that are \emph{right} of column $a_{i,j}$, including the vertex on column $a_{i,j}$ itself.
  The number of occupied bottom (top) edges in this section is given by the number of entries in $\mathfrak{T}$ on row $i$ (row $i-1$) that are \emph{smaller} than or equal to $a_{i,j}$.
  If $a_{i,j} < a_{i-1,j}$, this means that there are strictly fewer occupied edges at the top than at the bottom of the section.
  The boundary conditions imply that the right boundary edge of the section is occupied, and thus the number of ingoing paths is strictly smaller than the number of outgoing paths for the section, regardless of the occupancy of its left boundary edge, giving a contradiction.

  The $\Delta$ case is similar, and hence $\mathfrak{T}$ is a Gelfand--Tsetlin pattern with row-pair types $\Theta'$.

  Conversely, given a Gelfand--Tsetlin pattern $\mathfrak{T}$ with $r$ rows, row-pair types $\Theta'$ and top row $\lambda+\rho$ we can define an admissible state $\mathfrak{s}$ as follows.
  We will first specify which edges are occupied without considering colors and then show that there is only one way to assign colors to the occupied edges to obtain an admissible state with the fixed top boundary colors.
  
  For every $0\leq i \leq r-1$ and $i \leq j \leq r-1$, let the vertical edge in column $a_{i,j}$ below row $i$ in the lattice be occupied.
  Then fill in the horizontal edges in each row $i$ of $\mathfrak{s}$ by adding a horizontal uncolored path from each top vertical edge to the next bottom vertical edge, or the boundary, to the right if $\Theta_i = \Gamma$, or to the left if $\Theta_i = \Delta$.
  Because of the left-strict and right-strict Gelfand--Tsetlin inequalities, the paths will neither overlap (i.e.~occupy the same edge) nor go straight down, so we get an uncolored state whose only vertex configurations are the uncolored versions of those in \cref{tab:T_fused_gamma_Iwahori_crystal_weights} for $\Gamma$ rows and those in \cref{tab:T_fused_delta_Iwahori_crystal_weights} for $\Delta$ rows.
  
  Now note that given an uncolored version of one of these vertex configurations and a coloring of its occupied input edges, there is a unique way to color the occupied output edges such that the resulting vertex configuration becomes admissible.
  The colors of the top boundary edges are given.
  For each row, the left or right boundary edge is unoccupied for row type $\Gamma$ or $\Delta$, respectively.
  This means that we know the colors of the occupied input edges of the top left or top right vertex, respectively, and can therefore deduce the colors of its occupied output edges.
  By proceeding iteratively along the row we can assign unique colors to each occupied horizontal edge as well as each occupied vertical edge directly below the row.
  We can then continue downwards row by row until every occupied edge has been colored.
\end{proof}

The weight $\wt(\mathfrak{T})$ of a Gelfand--Tsetlin pattern $\mathfrak{T}$ with $r$ rows is defined as the list $(d_{r-1}, d_{r-2} - d_{r-1}, \ldots, d_0 - d_1)$ of differences of consecutive row sums from the bottom up
\begin{equation}
  \label{eq:GTP-weight}
  d_i = \sum_{j=i}^{r-1} a_{i,j}.
\end{equation}
The first entry can also be interpreted as a difference of consecutive row sums if we imagine that there is an extra empty row below the $(r-1)^\text{th}$ row, so that $d_r = 0$.

\begin{proposition}
  \label{prop:state-GTP-weight}
  Let $\Theta \in \{\Gamma, \Delta\}^{r}$ and $\mathfrak{T} \in \GTP^{\Theta'}_{\lambda+\rho}$.
  If $\mathfrak{s} = \iota^{-1}_\Theta (\mathfrak{T})$ is the state in $\mathfrak{S}^\Theta_{\lambda+\rho}$ associated to $\mathfrak{T}$ by \cref{prop:states-GTP-bijection}, then
  \begin{equation}
    \wt(\mathfrak{s})(\mathbf{z}) = \mathbf{z}^{w_0 \wt(\mathfrak{T})},
  \end{equation}
  where $w_0$ is the longest element of the symmetric group $S_r$, i.e.~the order reversing permutation.
\end{proposition}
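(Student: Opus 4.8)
The plan is to compute $\wt(\mathfrak{s})(\mathbf{z})$ directly as a product over vertices and to match the exponent of each $z_k$ with the corresponding coordinate of $w_0\wt(\mathfrak{T})$. Since every nonzero Boltzmann weight in \cref{tab:T_fused_gamma_Iwahori_crystal_weights,tab:T_fused_delta_Iwahori_crystal_weights} is either $1$ or $z$, and every vertex in lattice row $k$ carries the row parameter $z_k$, we have $\wt(\mathfrak{s})(\mathbf{z}) = \prod_{k=1}^{r} z_k^{e_k}$, where $e_k$ is the number of vertices in row $k$ whose weight equals $z$ rather than $1$. Unwinding the definition of $\wt(\mathfrak{T})$ and reversing coordinates with $w_0$, the $k$-th coordinate of $w_0\wt(\mathfrak{T})$ is $d_{k-1}-d_k$ (with $d_r=0$), so the whole statement reduces to proving $e_k = d_{k-1}-d_k$ for each $k$.

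First I would record the local rule governing the factor $z$. Inspecting the two weight tables vertex by vertex, one sees that in a $\Gamma$ (right-moving) row a vertex contributes $z$ exactly when its left horizontal edge is occupied, whereas in a $\Delta$ (left-moving) row a vertex contributes $z$ exactly when its left horizontal edge is \emph{unoccupied}. These two rules are complementary: the $z$-rescaling built into \cref{tab:T_fused_delta_Iwahori_crystal_weights} is precisely what turns the empty vertex $\texttt{a}_1$ into a $z$-vertex in the $\Delta$ case. Hence $e_k$ counts occupied left edges in a $\Gamma$ row and unoccupied left edges in a $\Delta$ row.

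Next I would count occupied left edges using the path description supplied by $\iota_\Theta^{-1}$. By \cref{prop:states-GTP-bijection}, the occupied vertical edges entering row $k$ from above sit in the columns listed by row $k-1$ of $\mathfrak{T}$, and those leaving below sit in the columns of row $k$; recall that columns are numbered decreasingly from right to left. Each path occupies a contiguous horizontal run within the row, and the number of occupied left edges it contributes equals its horizontal travel distance, namely the difference between its entry column (from above) and its exit column (below, or at the horizontal boundary). Summing over the $r-k+1$ paths in row $k$, and using that exactly one of them leaves through the horizontal boundary, a short bookkeeping computation gives, for a $\Gamma$ row, a total of $d_{k-1}-d_k$ occupied left edges, so $e_k = d_{k-1}-d_k$. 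For a $\Delta$ row the same argument (now with paths moving left and the boundary path traversing every far-left column) yields $N + d_k - d_{k-1}$ occupied left edges out of $N$ vertices, so the number of unoccupied left edges is $e_k = N-(N+d_k-d_{k-1}) = d_{k-1}-d_k$, matching the $\Gamma$ case. Combining these, $\wt(\mathfrak{s})(\mathbf{z}) = \prod_{k=1}^{r} z_k^{d_{k-1}-d_k} = \mathbf{z}^{w_0\wt(\mathfrak{T})}$.

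The main obstacle I expect is the $\Delta$ computation: there the factor $z$ tracks \emph{unoccupied} left edges, so one must check that the apparent $N$-dependence cancels — the extra far-left columns are all crossed by the boundary path as weight-$1$ horizontal vertices, not left empty — and then verify that the two genuinely different local rules collapse to the single global answer $d_{k-1}-d_k$. The remaining work is routine index bookkeeping, in particular the off-by-one contributed by the boundary path and the row-reversal encoded by $w_0$.
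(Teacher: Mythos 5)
Your proposal is correct and follows essentially the same route as the paper's proof: both reduce to showing the exponent of $z_k$ equals the row-sum difference $d_{k-1}-d_k$, using the observation that a $\Gamma$ vertex contributes $z$ exactly when its left edge is occupied while a $\Delta$ vertex contributes $z$ exactly when its left edge is unoccupied, and both then count via the entry columns $a_{k-1,\cdot}$ and exit columns $a_{k,\cdot}$, with the $N$-dependence cancelling in the $\Delta$ case just as you describe. Your convention $d_r=0$ also correctly absorbs the last-row case that the paper handles as a separate ``obvious modification.''
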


In particular, this means that the weight $\wt(\mathfrak{s})(\mathbf{z}) = \wt(\iota^{-1}_\Theta (\mathfrak{T}))(\mathbf{z})$ does not depend on the last row type $\Theta_r$ (which should be familiar from the general left-right duality in \cref{sec:left-right_duality}).

\begin{proof}
  First note from \cref{tab:T_fused_gamma_Iwahori_crystal_weights,tab:T_fused_delta_Iwahori_crystal_weights} that the left-hand side is a monomial in $z_1, \dots, z_r$.
  Furthermore, the exponent of $z_i$ is determined by the weight of the $i^\text{th}$ row in $\mathfrak{s}$, and it is therefore sufficient to show that this weight is equal to 
  \begin{equation*}
    z_i^{d_{i-1} - d_i},
  \end{equation*}
  where $d_{i}$ is the sum of the $i^\text{th}$ row of the Gelfand--Tsetlin pattern as above (where again we interpret $d_r$ as $0$).
  We only present the proof for $i<r$; the remaining case $i=r$ is essentially the same but with some obvious modifications due to the fact that there is no row indexed by $r$ in the Gelfand--Tsetlin pattern.

  First consider the case when $\Theta_i = \Gamma$.
  We can see in \cref{tab:T_fused_gamma_Iwahori_crystal_weights} that the admissible $\Gamma$ weights are either $1$ or $z_i$ for row $i$, and that a vertex configuration has weight $z_i$ precisely when its left edge is occupied.
  So the exponent of $z_i$ is equal to the number of vertices on row $i$ with occupied left-edges, which in turn is equal to the number of occupied internal horizontal edges (since there is no path on the left boundary). 
  On row $i$ there are a number of paths coming in from above at the columns numbered $a_{i-1,i-1}, \dots, a_{i-1,r-1}$, by the definition of $\mathfrak{s}$ in the proof of \cref{prop:states-GTP-bijection}.
  Furthermore, all but one path leave the row via the bottom edges at the columns numbered $a_{i,i}, \dots, a_{i,r-1}$, and the remaining path goes out to the right boundary.
  Hence the number of occupied internal horizontal edges on row $i$ is the summed differences of these column numbers, namely
  \begin{equation*}
    \sum_{j=i-1}^{r-1} a_{i-1,j} - \sum_{j=i}^{r-1} a_{i,j},
  \end{equation*}
  which is what we wanted to show.

  For the case when $\Theta_i = \Delta$ we can similarly see from \cref{tab:T_fused_delta_Iwahori_crystal_weights} that the exponent of $z_i$ is equal to the number of vertices on row $i$ with \emph{unoccupied} left-edges, which in turn is equal to the number of unoccupied internal horizontal left-edges (since the left boundary edge is occupied).
  Again, on row $i$ in the lattice there are a number of paths coming in from above at the columns numbered $a_{i-1,i-1}, \dots, a_{i-1,r-1}$ and all but one path leave the row via the bottom edges at the columns numbered $a_{i,i}, \dots, a_{i,r-1}$, while the remaining path goes out to the left boundary at column number $N$.
  Hence the number of occupied internal horizontal edges on row $i$ is $(N + \sum_{j=i}^{r-1} a_{i,j}) - \sum_{j=i-1}^{r-1} a_{i-1,j}$, which means that the number of \emph{unoccupied} such edges is
  \begin{equation*}
    \sum_{j=i-1}^{r-1} a_{i-1,j} - \sum_{j=i}^{r-1} a_{i,j},
  \end{equation*}
  which is what we wanted to show.
\end{proof}

\subsection{Berenstein--Kirillov involutions}
We now define the Berenstein--Kirillov involutions on Gelfand--Tsetlin patterns introduced in~\cite{KirillovBerenstein} and mentioned earlier in this section, but using the index conventions of \cite{BBF:orange}.
Let the entries $a_{i,j}$ of a rank $r$ Gelfand--Tsetlin pattern be indexed as in~\eqref{eq:GTP} such that $i \in \{0,\ldots, r-1\}$ enumerates the rows from top to bottom.
There are $r-1$ Berenstein--Kirillov involutions, and each involution acts on a single row $i \in \{1,\ldots, r-1\}$ of the pattern, leaving the remaining rows unchanged.
This mean that there is one involution acting on each row except for the top row and due to differing indexing conventions the involution that acts on row $i$ is denoted $t_{r-i}$ for $i \in \{1,\ldots, r-1\}$. 

To describe the action of the Berenstein--Kirillov involution $t_{r-i}$, first note that the defining inequalities of the Gelfand--Tsetlin patterns restrict every entry $a_{i,j}$ to the interval with upper bound given by the minimum of the left diagonal entries and lower bound given by the maximum of the right diagonal entries.
The involution $t_{r-i}$ is then given by reflecting each entry $a_{i,j}$ on the $i^\text{th}$ row in its admissible interval, that is, replacing $a_{i,j}$ by
\begin{equation}
  \label{eq:ti}
  a'_{i,j} = \min(a_{i-1,j-1}, a_{i+1,j}) + \max(a_{i-1,j}, a_{i+1,j+1}) - a_{i,j}.
\end{equation}
When $j=i$ or $j=r-1$, that is when $a_{i,j}$ is on one of the diagonals, we interpret the first and second terms as $a_{i-1,j-1}$ and $a_{i-1,j}$, respectively. 
An example is given in \cref{fig:Berenstein-Kirillov-example}.

In terms of lattice models, we will show that by interpreting the Gelfand--Tsetlin patterns as mixed states by \cref{prop:states-GTP-bijection}, these Berenstein--Kirillov involutions can be thought of as swapping a Gamma and a Delta row, just like the mixed $R$-matrix did in \cref{sec:left-right_duality}.
In this way we obtain a state-by-state refinement not only of the complete Gamma-Delta duality, but of each individual step.

There is a well-known bijection between Gelfand--Tsetlin patterns $\mathfrak{T} \in \operatorname{GTP}_{\lambda}$ and semistandard Young tableaux $T \in \operatorname{SSYT}(\lambda)$ such that the rows of $\mathfrak{T}$ define consecutive shapes of $T$ with increasing box contents. 
Under this bijection the Berenstein--Kirillov involutions $t_i$ are the Bender--Knuth involutions $\operatorname{BK}_i$ on semistandard Young tableaux introduced in~\cite{BenderKnuth}. 

Another involution $q_i$ of Gelfand--Tsetlin patterns can be defined recursively by $q_{i} = q_{i-1} t_i \cdots t_2 t_1 $ for $i \in \{1, \ldots, r-1\}$ with $q_0 = 1$, and then $q_{r-1}$ coincides with the Schützenberger involution on semistandard Young tableaux by \cite[Theorem~2.1]{KirillovBerenstein}.
The other $q_i$ involutions are called partial Schützenberger involutions.

Since the Berenstein--Kirillov involution $t_{r-i}$ only acts and depends on rows $i-1$, $i$ and $i+1$ of a Gelfand--Tsetlin pattern (if these rows exist) it is often convenient to restrict our attention to three such rows.
We call three consecutive rows of some Gelfand--Tsetlin pattern a \emph{short pattern} $\mathfrak{t}$ and we will use the following parametrization of its entries
\begin{equation}
  \label{eq:short-pattern}
  \mathfrak{t} =
  \left\{ \begin{array}{lllllllll}  x_0 &  & x_1 &  & x_2 & \cdots & x_{\ell + 1} &  & x_{\ell + 2}\\
  & y_0 &  & y_1 &  & \cdots &  & y_{\ell + 1} & \\
  &  & z_0 &  & z_1 & \cdots & z_{\ell} &  & 
  \end{array} \right\}.
\end{equation}
Mirroring the definition for Gelfand--Tsetlin patterns, we define the weight of a short pattern to be the pair consisting of the two differences of consecutive row sums, i.e.
\begin{equation*}
  \wt (\mathfrak{t}) = \left(\sum_{i=0}^{l+1} y_i - \sum_{i=0}^{l} z_i, \sum_{i=0}^{l+2} x_i - \sum_{i=0}^{l+1} y_i\right).
\end{equation*}
We also define an involution $t_\mathrm{short}$ on short patterns, acting like a Berenstein--Kirillov involution on the middle row.

Recall from \cref{prop:states-GTP-bijection} that mixed lattice model states are in bijection with Gelfand--Tsetlin patterns with corresponding row-pair types.
We will now show how the Berenstein--Kirillov involutions on Gelfand--Tsetlin patterns can be naturally transferred to involutions on lattice model states.
More precisely, for any $\Theta = (\Theta_1, \dots, \Theta_r)$ with $\Theta_i \neq \Theta_{i+1}$ we will show that we have a commuting diagram of bijections:
\begin{equation}
  \label{eq:big-commuting-diagram}
  \begin{tikzcd}[baseline=2mm]
    \mathfrak{S}_{\lambda + \rho}^\Theta  \arrow[r, leftrightarrow, "\iota_\Theta"] \arrow[d, leftrightarrow, dashed] 
    & \GTP_{\lambda+\rho}^{\Theta'}          \arrow[r, leftrightarrow, "\sh_{\Theta'}"] \arrow[d, leftrightarrow, "t_{r-i}"] 
    & \GTP_\lambda                        \arrow[r, leftrightarrow] \arrow[d, leftrightarrow, "t_{r-i}"] 
    & \SSYT(\lambda)                      \arrow[d, leftrightarrow, "\operatorname{BK}_{r-i}"] 
    \\
    \mathfrak{S}_{\lambda + \rho}^{s_i\Theta} \arrow[r, leftrightarrow, "\iota_{s_i\Theta}"] 
    & \GTP_{\lambda+\rho}^{(s_i\Theta)'} \arrow[r, leftrightarrow, "\sh_{(s_i\Theta)'}"]
    & \GTP_\lambda \arrow[r, leftrightarrow]
    & \SSYT(\lambda)   
  \end{tikzcd}
\end{equation}
Here the left-most horizontal bijection $\iota_\Theta$ was defined in \cref{prop:states-GTP-bijection}, the middle horizontal bijection $\sh_{\Theta'}$ will be defined below and proved to be a bijection in \cref{lemma:strict_non-strict_GTP_bijection}, and the right-most bijection is well-known.
Every $t_{r-i}$ preserves the top row of a Gelfand--Tsetlin pattern, and therefore $t_{r-i}$ maps $\GTP_\lambda$ to itself, giving the third vertical bijection.
In \cref{lemma:ti-GTPTheta_commutes} we will show that $t_{r-i}$ also maps the subset $\GTP_{\lambda+\rho}^{\Theta'}$ to $\GTP_{\lambda+\rho}^{(s_i\Theta)'}$, which gives the second vertical bijection, and also show that the middle square commutes.
Finally, we define the new dashed vertical map between lattice model states (which we will also call $t_{r-i}$) from the involution on Gelfand--Tsetlin patterns such that the left-most square commutes.

For a fixed list of row-pair types $\Theta' \in \{\Gamma, \Delta\}^{r-1}$ we define the map $\sh_{\Theta'} :\GTP_\lambda \to \GTP_{\lambda+\rho}^{\Theta'}$ as follows. 
For any $\mathfrak{T} \in \GTP_\lambda$ with entries $a_{i,j}$ as in~\eqref{eq:GTP}, we define the entries $b_{i,j}$ of $\sh_{\Theta'}(\mathfrak{T})$ to be
\begin{equation}
  b_{i,j} = a_{i,j} + \rho^{\Theta'}_{i,j},
\end{equation}
where $\rho^{\Theta'}$ is the unique element of $\GTP_{\lambda+\rho}^{\Theta'}$ with smallest possible non-negative entries.
This $\rho^{\Theta'}$ can be constructed by taking a copy of the row vector $\rho = (r-1,r-2,\ldots,0)$ for the first row, and for each successive row $i$ (starting from $1$) copy the entries on the previous row and truncate it by removing the left-most entry if $\Theta'_i = \Gamma$ and the right-most entry if $\Theta'_i = \Delta$.

\begin{example}
  Let $r = 4$ and $\Theta' = (\Gamma, \Delta, \Delta)$.
  Then $\rho^{\Theta'}$ is the Gelfand--Tsetlin pattern
  \begin{align*}
    \left\{ \begin{array}{ccccccc}  3 &  & 2 &  & 1 &  & 0\\  & 2 &  & 1 &  & 0 & \\  &  & 2 &  & 1 &  & \\  &  &  & 2 &  &  & \end{array} \right\}.
  \end{align*}
\end{example}

\begin{lemma}
  \label{lemma:strict_non-strict_GTP_bijection}
  The map $\sh_{\Theta'} :\GTP_\lambda \to \GTP_{\lambda+\rho}^{\Theta'}$ is a bijection. 
\end{lemma}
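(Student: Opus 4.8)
The plan is to observe that $\sh_{\Theta'}$ is simply the entrywise translation $\mathfrak{T} \mapsto \mathfrak{T} + \rho^{\Theta'}$, so it is automatically injective, and the whole statement reduces to two inclusions: that translation by $\rho^{\Theta'}$ carries $\GTP_\lambda$ into $\GTP_{\lambda+\rho}^{\Theta'}$, and that the reverse translation $\mathfrak{U} \mapsto \mathfrak{U} - \rho^{\Theta'}$ carries $\GTP_{\lambda+\rho}^{\Theta'}$ back into $\GTP_\lambda$. Together these give a two-sided inverse and hence the bijection. Everything hinges on computing the local ``difference structure'' of $\rho^{\Theta'}$ across each triangle \eqref{eq:triangle}.

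First I would record the shape of $\rho^{\Theta'}$. By induction on the row index, the truncation construction shows that every row of $\rho^{\Theta'}$ is a contiguous block of integers decreasing by $1$, so that $\rho^{\Theta'}_{i,j} - \rho^{\Theta'}_{i,j+1} = 1$ always, while the inter-row relation is exactly $\rho^{\Theta'}_{i,j} = \rho^{\Theta'}_{i-1,j}$ when $\Theta'_i = \Gamma$ (dropping the leftmost entry) and $\rho^{\Theta'}_{i,j} = \rho^{\Theta'}_{i-1,j-1}$ when $\Theta'_i = \Delta$ (dropping the rightmost entry). Combining these gives the two offsets I actually need on each triangle: for $\Theta'_i = \Gamma$ one gets $\rho^{\Theta'}_{i-1,j-1} - \rho^{\Theta'}_{i,j} = 1$ and $\rho^{\Theta'}_{i,j} - \rho^{\Theta'}_{i-1,j} = 0$, and for $\Theta'_i = \Delta$ the two values are swapped. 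Since both diagonal neighbours $a_{i-1,j-1}$ and $a_{i-1,j}$ exist for every entry $a_{i,j}$ with $i \leq j \leq r-1$, these relations hold uniformly with no separate boundary cases.

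With $b_{i,j} = a_{i,j} + \rho^{\Theta'}_{i,j}$ the forward direction is then immediate. The top row becomes $a_{0,j} + (r-1-j) = (\lambda+\rho)_{j+1}$, and for a $\Gamma$ row-pair $b_{i-1,j-1} - b_{i,j} = (a_{i-1,j-1}-a_{i,j}) + 1 \geq 1$ gives left-strictness while $b_{i,j} - b_{i-1,j} = (a_{i,j}-a_{i-1,j}) + 0 \geq 0$ keeps the other inequality weak; the $\Delta$ case is the mirror image. (Taking $\lambda = 0$ here also re-proves that $\rho^{\Theta'}$ itself lies in $\GTP^{\Theta'}_\rho$.) The inverse direction is the same computation read backwards: subtracting $\rho^{\Theta'}$ turns each strict inequality of a $\GTP_{\lambda+\rho}^{\Theta'}$ pattern back into a weak one — a strict gap $\geq 1$ minus $1$ is $\geq 0$ — and restores the top row $\lambda$.

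The one point deserving an extra word is non-negativity of the entries of $\mathfrak{U} - \rho^{\Theta'}$, since $\GTP_\lambda$ consists of non-negative patterns; I would note that this is automatic and does not require the minimality of $\rho^{\Theta'}$. Once the weak inequalities $a_{i,j} \geq a_{i-1,j}$ hold and the top row is the partition $\lambda$, iterating $a_{i,j} \geq a_{i-1,j} \geq \cdots \geq a_{0,j} = \lambda_{j+1} \geq 0$ forces every entry to be non-negative. I expect no genuine difficulty here, only bookkeeping: the real care is keeping the two strictness conventions, the two truncation rules, and the index shift $j \mapsto j-1$ in the $\Delta$ case aligned with one another throughout.
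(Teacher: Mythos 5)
Your proof is correct and takes essentially the same route as the paper's: the map is the entrywise translation by $\rho^{\Theta'}$, injectivity is immediate, and the key computation is the local difference structure $\rho^{\Theta'}_{i-1,j-1} = \rho^{\Theta'}_{i,j}+1$ together with $\rho^{\Theta'}_{i,j} = \rho^{\Theta'}_{i-1,j}$ in the $\Gamma$ case (swapped for $\Delta$), with integrality turning strict inequalities into weak ones upon subtraction. You go slightly further than the paper by explicitly verifying the forward inclusion and the non-negativity of the entries of $\mathfrak{T}-\rho^{\Theta'}$, both of which the paper leaves implicit; this is harmless extra thoroughness, not a different method.
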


\begin{proof}
  The map $\sh_{\Theta'}$ is clearly injective, so it is sufficient to prove that it is also surjective, i.e.~that if $\mathfrak{T} \in \GTP_{\lambda+\rho}^{\Theta'}$ then $\mathfrak{T}-\rho^{\Theta'} \in \GTP_\lambda$.
  We thus need to show that, for any $1 \leq i \leq r-1$, the necessary Gelfand--Tsetlin inequalities for the row-pair $(i-1, i)$ in $\mathfrak{T}-\rho^{\Theta'}$ are satisfied. 
  We will present the case when $\Theta_i' = \Gamma$ in detail; the other case $\Theta_i' = \Delta$ is analogous.
  If we denote the entries of $\mathfrak{T}$ by $a_{i,j}$ as in~\eqref{eq:GTP}, then by definition we have that
  \begin{equation*}
    a_{i-1,j-1} > a_{i,j} \geq a_{i-1,j}.
  \end{equation*}
  Because the entries of $\mathfrak{T}$ are integers and $\rho^{\Theta'}_{i-1,j-1} = \rho^{\Theta'}_{i,j}+1$, it follows from the first inequality that $a_{i-1,j-1} - \rho^{\Theta'}_{i-1,j-1} \geq a_{i,j} - \rho^{\Theta'}_{i,j}$.
  Furthermore, the second inequality together with the fact that $\rho^{\Theta'}_{i,j} = \rho^{\Theta'}_{i-1,j}$ implies that $a_{i,j} - \rho^{\Theta'}_{i,j} \geq a_{i-1,j} - \rho^{\Theta'}_{i-1,j}$.
  Thus, the necessary Gelfand--Tsetlin inequalities for the row-pair $(i-1, i)$ in $\mathfrak{T}-\rho^{\Theta'}$ are satisfied.

  Together with the analogous arguments for the case $\Theta_i' = \Delta$ this show that $\mathfrak{T}-\rho^{\Theta'} \in \GTP_\lambda$.
\end{proof}

The following lemma shows how $t_{r-i}$ descends from a map on $\GTP_{\lambda+\rho}$ to a map between subsets $\GTP_{\lambda+\rho}^{\Theta'}$ and why the bijection $\sh_{\Theta'}$ is natural.
\begin{lemma}
  \label{lemma:ti-GTPTheta_commutes}
  Let $\Theta \in \{\Gamma, \Delta\}^r$ and $\mathfrak{T} \in \GTP_{\lambda+\rho}^{\Theta'}$ where $\Theta' \in \{\Gamma, \Delta\}^{r-1}$ is the truncation of~$\Theta$. 
  Fix a row $1 \leq i \leq r-1$ such that $\Theta_i \neq \Theta_{i+1}$.
  Then $t_{r-i} \mathfrak{T} \in \GTP_{\lambda+\rho}^{(s_i \Theta)'}$ and we have the following commuting diagram
  \begin{equation}
  \label{eq:GTP-diagram}
  \begin{tikzcd}[baseline=2mm]
    \GTP_{\lambda+\rho}^{\Theta'}          \arrow[r, leftarrow, "\sim", "\sh_{\Theta'}"'] \arrow[d, leftrightarrow, "t_{r-i}"] 
    & \GTP_\lambda                        \arrow[d, leftrightarrow, "t_{r-i}"] 
    \\
    \GTP_{\lambda+\rho}^{(s_i\Theta)'} \arrow[r, leftarrow, "\sim", "\sh_{(s_i\Theta)'}"'] 
    & \GTP_\lambda 
  \end{tikzcd}
\end{equation}
\end{lemma}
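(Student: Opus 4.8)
The plan is to make the shift $\rho^{\Theta'}$ completely explicit, read off how it changes under $s_i$, and then verify the square \eqref{eq:GTP-diagram} entry by entry. First I would record a closed form for the minimal pattern: writing $D^{\Theta'}_k = \#\{1 \le m \le k : \Theta'_m = \Delta\}$ for the number of $\Delta$'s among the first $k$ row-pair types, the truncation recipe defining $\rho^{\Theta'}$ gives
\begin{equation*}
  \rho^{\Theta'}_{k,j} = (r-1-j) + D^{\Theta'}_k \qquad (k \le j \le r-1).
\end{equation*}
This follows by induction on $k$: a $\Gamma$-truncation deletes the left-most (largest) entry and leaves the remaining values in place in their columns, while a $\Delta$-truncation deletes the right-most (smallest) entry and thereby raises the value in each column by one. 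Comparing $\Theta'$ with $(s_i\Theta)'$ --- which merely swaps positions $i$ and $i+1$ when $i<r-1$, and changes position $r-1$ from $\Theta_{r-1}$ to $\Theta_r$ when $i=r-1$ --- the count $D_k$ is unchanged for every $k \neq i$, whereas on row $i$ it changes by $\epsilon$, where $\epsilon = +1$ if $\Theta_i = \Gamma$ and $\epsilon = -1$ if $\Theta_i = \Delta$ (here $\Theta_i \neq \Theta_{i+1}$ is used). Hence $\rho^{(s_i\Theta)'}$ agrees with $\rho^{\Theta'}$ off row $i$ and satisfies $\rho^{(s_i\Theta)'}_{i,j} = \rho^{\Theta'}_{i,j} + \epsilon$.

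With this in hand, commutativity of \eqref{eq:GTP-diagram} reduces to a single identity on row $i$. Since $t_{r-i}$ fixes all rows but $i$ and the two shifts agree off row $i$, the equality $t_{r-i}\circ\sh_{\Theta'} = \sh_{(s_i\Theta)'}\circ t_{r-i}$ on $\GTP_\lambda$ is immediate away from row $i$. On row $i$, given $\mathfrak{T} \in \GTP_\lambda$ with entries $a_{k,l}$ and writing $b_{k,l} = a_{k,l} + \rho^{\Theta'}_{k,l}$ for $\sh_{\Theta'}(\mathfrak{T})$, I would substitute the closed form of $\rho^{\Theta'}$ into the Berenstein--Kirillov reflection \eqref{eq:ti} and factor the common offset $r-1-j$ out of both the $\min$ and the $\max$. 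Using $D_{i-1} = D_i$, $D_{i+1} = D_i+1$ when $\Theta_i = \Gamma$, and $D_{i-1} = D_i-1$, $D_{i+1} = D_i$ when $\Theta_i = \Delta$, the internal shifts inside the $\min$ and $\max$ collapse to a single global shift, and a short computation gives in both cases
\begin{equation*}
  b'_{i,j} = a'_{i,j} + \rho^{\Theta'}_{i,j} + \epsilon = a'_{i,j} + \rho^{(s_i\Theta)'}_{i,j},
\end{equation*}
which is exactly the entrywise statement of the commuting square.

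The only delicate points are the boundary conventions of \eqref{eq:ti}. For $j=i$ the $\min$ degenerates to $a_{i-1,j-1}$ and for $j=r-1$ the $\max$ degenerates to $a_{i-1,j}$; in each situation the surviving term carries precisely the same $\pm 1$ and $D_{i\pm1}$ offsets as in the generic computation, so the same cancellation applies. The bottom-row case $i=r-1$, where both diagonal conventions act and row $r$ is absent, I would check by hand: there $b'_{r-1,r-1} = b_{r-2,r-2}+b_{r-2,r-1}-b_{r-1,r-1}$, and substituting the closed form reduces the claim to $1 - 2\bigl(D_{r-1}-D_{r-2}\bigr) = \epsilon$, which holds in both subcases $\Theta_{r-1}=\Gamma$ and $\Theta_{r-1}=\Delta$. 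Finally, the first assertion $t_{r-i}\mathfrak{T} \in \GTP^{(s_i\Theta)'}_{\lambda+\rho}$ is then formal: by \cref{lemma:strict_non-strict_GTP_bijection} every $\mathfrak{T} \in \GTP^{\Theta'}_{\lambda+\rho}$ is $\sh_{\Theta'}(\mathfrak{T}_0)$ for a unique $\mathfrak{T}_0 \in \GTP_\lambda$, the element $t_{r-i}\mathfrak{T}_0$ again lies in $\GTP_\lambda$ since $t_{r-i}$ preserves the top row, and the commuting square identifies $t_{r-i}\mathfrak{T}$ with $\sh_{(s_i\Theta)'}(t_{r-i}\mathfrak{T}_0) \in \GTP^{(s_i\Theta)'}_{\lambda+\rho}$. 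I expect the bookkeeping of these three boundary situations to be the main obstacle, since the central cancellation is essentially forced once one sees that the shift changes by exactly $\pm1$ on the single row that $t_{r-i}$ moves.
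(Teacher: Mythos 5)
Your proof is correct, and the computations check out: the closed form $\rho^{\Theta'}_{k,j} = (r-1-j) + D^{\Theta'}_k$ is right, the relation $D_{i+1} = D_{i-1}+1$ (this is precisely where the hypothesis $\Theta_i \neq \Theta_{i+1}$ enters) makes the shifts inside the $\min$ and inside the $\max$ of \eqref{eq:ti} constant so they factor out, the sign bookkeeping gives $\epsilon = \pm 1$ on row $i$ exactly as you state, and the boundary situations $j=i$, $j=r-1$ and $i=r-1$ all work as you claim (in particular $1 - 2(D_{r-1}-D_{r-2}) = \epsilon$ holds in both subcases). The computational core coincides with the paper's proof, which performs the same cancellation on the five-entry subpattern \eqref{eq:subpattern}, writing the shifts locally as $k$ and $k+1$ for an unspecified integer $k$ depending on $i$, $j$ and $\Theta_1,\dots,\Theta_{i-1}$ --- this $k$ is exactly your $(r-1-j)+D_{i-1}$. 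You differ in two respects. First, your global closed form for $\rho^{\Theta'}$ makes the $\pm1$ change on row $i$ and all boundary conventions explicit and uniform, whereas the paper leaves $k$ implicit and dispatches $i=r-1$ with ``obvious modifications.'' Second, and more substantively, the paper proves the membership claim $t_{r-i}\mathfrak{T} \in \GTP^{(s_i\Theta)'}_{\lambda+\rho}$ first and directly: it observes that the Berenstein--Kirillov reflection in the admissible interval interchanges the strict and weak inequalities in the diagrams \eqref{eq:left-right-strict} and \eqref{eq:right-left-strict}, independently of any shift bookkeeping. You instead deduce membership formally from the commuting square together with the codomain statement of \cref{lemma:strict_non-strict_GTP_bijection}, which is a legitimate and arguably cleaner factorization: it concentrates all the work into one entrywise identity and avoids the inequality case analysis, at the cost of leaning on the earlier lemma; the paper's direct argument, by contrast, isolates the conceptual reason the row-pair types swap (reflection swaps strictness). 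Both yield complete proofs.
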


\begin{proof}
  We will show the argument for $i < r - 1$. The case $i = r-1$ is the same but with some obvius modifications due to the fact that it is the last row.
  
  If $\mathfrak{T} \in \GTP_{\lambda+\rho}^{\Theta'}$, then we already know that $t_{r-i} \mathfrak{T} \in \GTP_{\lambda+\rho}$, but it remains to be shown that it has row-pair types $(s_i \Theta)'$.
  The involution $t_{r-i}$ only affects the entries of $\mathfrak{T}$ at the $i^\text{th}$ row, which means that it can only affect the $i^\text{th}$ and $(i+1)^\text{th}$ row-pair types.
  We can therefore focus on the rows indexed $i-1$, $i$ and $i+1$.
  Since $\Theta_i \neq \Theta_{i+1}$ there are two cases: First, if $\Theta_i = \Gamma$ and $\Theta_{i+1} = \Delta$, the row-pair $(i-1,i)$ is left-strict and the row-pair $(i,i+1)$ is right-strict, and thus these three rows look as follows:
  \begin{equation}
    \label{eq:left-right-strict}
    \left\{
    \begin{array}{llllllllll}
      a_{i-1,i-1} &   &         &      & a_{i-1,i}   &   &           &      & a_{i-1,i+1} &        \\
                  & > &         & \geq &             & > &           & \geq &             &        \\
                  &   & a_{i,i} &      &             &   & a_{i,i+1} &      &             & \cdots \\
                  &   &         & \geq &             & > &           & \geq &             &        \\
                  &   &         &      & a_{i+1,i+1} &   &           &      & a_{i+1,i+2} &
    \end{array}
  \right\}
  \end{equation}

  Second, if $\Theta_i = \Delta$ and $\Theta_{i+1} = \Gamma$, the first row-pair is right-strict and the second row-pair is left-strict, giving:
  \begin{equation}
    \label{eq:right-left-strict}
    \left\{
    \begin{array}{llllllllll}
      a_{i-1,i-1} &   &         &      & a_{i-1,i}   &   &           &      & a_{i-1,i+1} &        \\
                  & \geq &         & > &             & \geq &           & > &             &        \\
                  &   & a_{i,i} &      &             &   & a_{i,i+1} &      &             & \cdots \\
                  &   &         & > &             & \geq &           & > &             &        \\
                  &   &         &      & a_{i+1,i+1} &   &           &      & a_{i+1,i+2} &
    \end{array}
  \right\}
  \end{equation} 

  Recall that $t_{r-i}$ acts on an entry $a_{i,j}$ by reflecting it in its admissible interval, which (away from the boundary diagonals) is given by $[\min(a_{i-1,j-1}, a_{i+1,j}), \max(a_{i-1,j}, a_{i+1,j+1})]$ and on the boundary by a similar expression (see after \eqref{eq:ti}).
  Thus, the strict inequalities in \eqref{eq:left-right-strict} and \eqref{eq:right-left-strict} are also reflected and hence $t_{r-i}$ is a bijection between $\GTP_{\lambda+\rho}^{\Theta'}$ and $\GTP_{\lambda+\rho}^{(s_i\Theta)'}$.

  We will now prove that the diagram commutes for the case when $\Theta_i = \Gamma$ and $\Theta_{i+1} = \Delta$; the other case is similar.
  Start in the upper right corner of \eqref{eq:GTP-diagram} with some $\mathfrak{T} \in \GTP_\lambda$.
  For an entry $a_{i,j}$ not on the boundary (i.e. $i < j < r-1$), consider the subpattern of $\mathfrak{T}$ consisting of the entry $a_{i,j}$ together with its diagonally adjacent entries, i.e.
  \begin{equation}
    \label{eq:subpattern}
    \begin{array}{lll}
      a_{i-1,j-1}   &         & a_{i-1,j} \\
                  & a_{i,j} &         \\
      a_{i+1,j} &         & a_{i+1,j+1}
    \end{array}   
  \end{equation}
  Since $\Theta_i = \Gamma$ and $\Theta_{i+1} = \Delta$, applying the map $\sh_{\Theta'}$ to $\mathfrak{T}$ transforms this subpattern to
  \begin{equation*}
    \begin{array}{lll}
      a_{i-1,j-1}+(k+1)   &         & a_{i-1,j}+k \\
                  & a_{i,j}+k &         \\
      a_{i+1,j}+(k+1) &         & a_{i+1,j+1}+k
    \end{array}   
  \end{equation*}
  for some integer $k$ that depends on $i$, $j$ and $\Theta_1, \dots, \Theta_{i-1}$.
  By the definition in \eqref{eq:ti}, applying $t_{r-i}$ to $\mathfrak{T}$ turns the middle entry into
  \begin{align*}
    (a_{i,j} + k)' &= \min(a_{i-1,j-1}+(k+1), a_{i+1,j}+(k+1)) \\
                   &\quad + \max(a_{i-1,j}+k, a_{i+1,j+1}+k) - (a_{i,j}+k) \\
                   &= (k+1) + \min(a_{i-1,j-1}, a_{i+1,j}) + \max(a_{i-1,j}, a_{i+1,j+1}) - a_{i,j} \\
                   &= (k+1) + a_{i,j}'.
  \end{align*}

  If we instead go down and then left in the diagram, the middle entry of the subpattern in~\eqref{eq:subpattern} turns first into $a_{i,j}'$ and then into $a_{i,j}' + (k+1)$, where the integer $k$ is the same as the one above because $s_i$ leaves $\Theta_1, \dots, \Theta_{i-1}$ unchanged.
  
  If the entry $a_{i,j}$ is on a boundary, then $a_{i+1,j}$ or $a_{i+1,j+1}$ does not exist.
  In the definition of $t_{r-i}$ this is mirrored by removing the corresponding entry or entries appearing inside the $\min$ or $\max$ in the expression for the interval, and the proof follows analogously.
\end{proof}

As a result of \cref{prop:states-GTP-bijection,lemma:ti-GTPTheta_commutes}, we can transfer the Berenstein--Kirillov involution $t_{r-i}$ to a bijection $t_{r-i} : \mathfrak{S}^\Theta_{\lambda+\rho} \xrightarrow{\sim} \mathfrak{S}^{s_i\Theta}_{\lambda+\rho}$ of lattice model states, where $\lambda$ is any partition and $\Theta$ is any list of row-types satisfying $\Theta_i \neq \Theta_{i+1}$.
This completes the commuting diagram in \eqref{eq:big-commuting-diagram} and we give an example in~\cref{fig:Berenstein-Kirillov-example}.

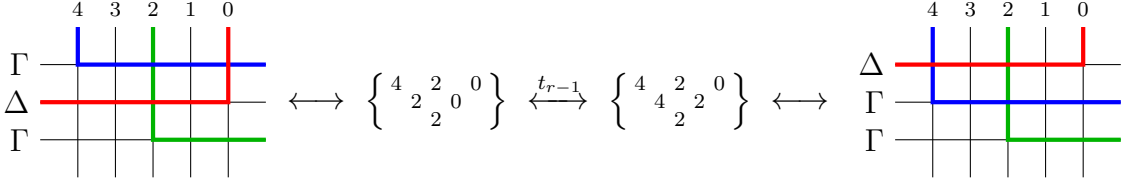
\begin{figure}[htpb]
  \centering
\begin{equation*}
\begin{tikzpicture}[scale=0.5, baseline={(equalline)}]
    \coordinate (equalline) at ($ (0,1) - (0,1ex) $);
    \foreach \x in {0,...,4} {
      \pgfmathtruncatemacro{\colnum}{4-\x}
      \draw (\x, -1) -- (\x, 3) node[above]{$\scriptstyle\colnum$};
    }
    \foreach \y/\lbl in {0/$\Gamma$,1/$\Delta$,2/$\Gamma$} {
      \draw (-1, \y) node[left]{\lbl} -- (5, \y);
    }

    \draw[green, ultra thick] (2,3) -- (2,0) -- (5,0);
    \draw[blue, ultra thick] (0,3) -- (0,2) -- (5,2);
    \draw[red, ultra thick] (4,3) -- (4,1) -- (-1,1);
\end{tikzpicture}
\ 
\longleftrightarrow
\ 
\begin{Bsmallmatrix}
4 & & 2 & & 0 \\
& 2 & & 0 &  \\
& & 2 & &
\end{Bsmallmatrix}
\ 
\xleftrightarrow{t_{r-1}}
\ 
\begin{Bsmallmatrix}
4 & & 2 & & 0 \\
& 4 & & 2 &  \\
& & 2 & &
\end{Bsmallmatrix}
\ 
\longleftrightarrow
\ 
\begin{tikzpicture}[scale=0.5, baseline={(equalline)}]
    \coordinate (equalline) at ($ (0,1) - (0,1ex) $);
    \foreach \x in {0,...,4} {
      \pgfmathtruncatemacro{\colnum}{4-\x}
      \draw (\x, -1) -- (\x, 3) node[above]{$\scriptstyle\colnum$};
    }
    \foreach \y/\lbl in {0/$\Gamma$,1/$\Gamma$,2/$\Delta$} {
      \draw (-1, \y) node[left]{\lbl} -- (5, \y);
    }

    \draw[green, ultra thick] (2,3) -- (2,0) -- (5,0);
    \draw[blue, ultra thick] (0,3) -- (0,1) -- (5,1);
    \draw[red, ultra thick] (4,3) -- (4,2) -- (-1,2);
\end{tikzpicture}
\end{equation*}

\caption{An example of a mixed state in $\mathfrak{S}_{\lambda+\rho}^\Theta$ with row types $\Theta = (\Gamma, \Delta, \Gamma)$ and its associated Gelfand--Tsetlin pattern in $\GTP_{\lambda+\rho}^{\Theta'}$ and how these transform under the Berenstein--Kirillov involution $t_{r-1}$ where $r=3$ acting on the middle row to obtain a state with row types $s_1 \Theta = (\Delta, \Gamma, \Gamma)$.}
  \label{fig:Berenstein-Kirillov-example}
\end{figure}

We would also like to know how this new bijection $t_{r-i}$ on lattice model states changes the Boltzmann weights of the states, which is the content of the following theorem.

\begin{theorem}
  \label{thm:ti-state-weights}
  For a state $\mathfrak{s} \in \mathfrak{S}^\Theta_{\lambda+\rho}$ $($where $\Theta_i \neq \Theta_{i+1})$ we have that $\wt(t_{r-i}\mathfrak{s})(\mathbf{z}) = \wt(\mathfrak{s})(s_i \mathbf{z})$.
\end{theorem}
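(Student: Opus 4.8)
The plan is to transport everything to the combinatorics of Gelfand--Tsetlin patterns via \cref{prop:state-GTP-weight}. Write $\mathfrak{T} = \iota_\Theta(\mathfrak{s}) \in \GTP^{\Theta'}_{\lambda+\rho}$. By the way the dashed vertical map in \eqref{eq:big-commuting-diagram} is defined, together with \cref{lemma:ti-GTPTheta_commutes}, the state $t_{r-i}\mathfrak{s}$ corresponds under $\iota_{s_i\Theta}$ to the pattern $t_{r-i}\mathfrak{T} \in \GTP^{(s_i\Theta)'}_{\lambda+\rho}$. Applying \cref{prop:state-GTP-weight} on both sides then gives
\begin{equation*}
  \wt(t_{r-i}\mathfrak{s})(\mathbf{z}) = \mathbf{z}^{w_0 \wt(t_{r-i}\mathfrak{T})}, \qquad \wt(\mathfrak{s})(s_i\mathbf{z}) = (s_i\mathbf{z})^{w_0\wt(\mathfrak{T})} = \mathbf{z}^{s_i w_0 \wt(\mathfrak{T})},
\end{equation*}
where the last equality uses $(s_i\mathbf{z})^\gamma = \mathbf{z}^{s_i\gamma}$. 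Since the monomial map $\gamma \mapsto \mathbf{z}^\gamma$ is injective on $\mathbb{Z}^r$, it suffices to prove the identity of exponent vectors $w_0\wt(t_{r-i}\mathfrak{T}) = s_i\, w_0\, \wt(\mathfrak{T})$.

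Next I would use the conjugation identity $s_i w_0 = w_0 s_{r-i}$ in $S_r$ (which holds because $w_0 s_i w_0 = s_{r-i}$), so that after cancelling $w_0$ the claim becomes the clean statement
\begin{equation*}
  \wt(t_{r-i}\mathfrak{T}) = s_{r-i}\,\wt(\mathfrak{T}).
\end{equation*}
That is, the whole theorem reduces to the classical fact that the Berenstein--Kirillov involution $t_{r-i}$ transforms the weight of a Gelfand--Tsetlin pattern by the simple transposition exchanging its $(r-i)$-th and $(r-i+1)$-th entries, which I would now prove directly from \eqref{eq:GTP-weight} and \eqref{eq:ti}.

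Write $d_k = \sum_{j=k}^{r-1} a_{k,j}$ for the row sums (with $d_r = 0$) and let $d'_k$ be the corresponding sums after applying $t_{r-i}$. Since $t_{r-i}$ alters only row $i$, we have $d'_k = d_k$ for all $k \neq i$, so $\wt(t_{r-i}\mathfrak{T})$ and $\wt(\mathfrak{T})$ can differ only in the two entries $d_i - d_{i+1}$ and $d_{i-1}-d_i$, sitting at positions $r-i$ and $r-i+1$. Hence it is enough to establish the single scalar identity $d'_i + d_i = d_{i-1} + d_{i+1}$, as this immediately yields $d'_i - d_{i+1} = d_{i-1}-d_i$ and $d_{i-1}-d'_i = d_i - d_{i+1}$, i.e.\ precisely the swap $s_{r-i}$ of the two affected entries. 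Summing \eqref{eq:ti} over $j$ gives
\begin{equation*}
  d'_i + d_i = \sum_{j=i}^{r-1} \bigl[\min(a_{i-1,j-1},a_{i+1,j}) + \max(a_{i-1,j},a_{i+1,j+1})\bigr],
\end{equation*}
which I would evaluate by the pairing that matches the $\max$-term at index $j$ with the $\min$-term at index $j+1$: both involve the pair $\{a_{i-1,j},\,a_{i+1,j+1}\}$, so their contribution is $a_{i-1,j}+a_{i+1,j+1}$. Summing these paired contributions over $j=i,\dots,r-2$ and adding the two unpaired boundary terms (the $\min$ at $j=i$, equal to $a_{i-1,i-1}$ by the diagonal convention after \eqref{eq:ti}, and the $\max$ at $j=r-1$, equal to $a_{i-1,r-1}$) telescopes exactly to $d_{i-1}+d_{i+1}$.

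The main obstacle is really just the careful bookkeeping in this last step: one must track the diagonal boundary conventions of \eqref{eq:ti} so that the unpaired terms recombine into the missing entries of row $i-1$, and separately handle the edge case $i=r-1$, where row $i+1$ is absent and one uses $d_r = 0$. Everything else is formal manipulation with $w_0$ and $s_i$.
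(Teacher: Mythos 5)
Your proposal is correct and follows essentially the same route as the paper: the paper also deduces the theorem from \cref{prop:state-GTP-weight} together with the fact that $\wt(t_{r-i}\mathfrak{T}) = s_{r-i}\wt(\mathfrak{T})$ (its \cref{cor:ti_acts_by_si_on_GTP_weights}), which it proves via precisely your $\min+\max$ pairing/telescoping computation, including the separate treatment of the bottom row $i=r-1$. The only cosmetic differences are that the paper packages the row-sum computation as a lemma about short patterns (\cref{lem:short_pattern_ti_wt}) rather than working on rows $i-1$, $i$, $i+1$ of the full pattern, and that it leaves the $s_i w_0 = w_0 s_{r-i}$ bookkeeping implicit where you spell it out.
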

\noindent The theorem follows from \cref{prop:state-GTP-weight} and \cref{cor:ti_acts_by_si_on_GTP_weights} below.

\begin{lemma}
  \label{lem:short_pattern_ti_wt}
  If $\mathfrak{t}$ is a short pattern and $\wt(\mathfrak{t}) = (w_2, w_1)$, then $\wt(t_\mathrm{short}\mathfrak{t}) = (w_1, w_2)$.
\end{lemma}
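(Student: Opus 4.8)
The plan is to reduce the statement to a single sum identity and then verify it by a direct reflection computation. Since $t_\mathrm{short}$ acts as a Berenstein--Kirillov involution on the middle row only, it fixes the top row $(x_i)$ and bottom row $(z_i)$ of $\mathfrak{t}$ and merely replaces the middle-row entries $y_j$ by new entries $y'_j$. Writing $S_x = \sum_{i=0}^{\ell+2} x_i$, $S_y = \sum_{j=0}^{\ell+1} y_j$, $S_z = \sum_{i=0}^{\ell} z_i$, and $S_{y'} = \sum_{j=0}^{\ell+1} y'_j$, the original weight is $\wt(\mathfrak{t}) = (S_y - S_z,\, S_x - S_y) = (w_2, w_1)$ and the new weight is $(S_{y'} - S_z,\, S_x - S_{y'})$. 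The desired identity $\wt(t_\mathrm{short}\mathfrak{t}) = (w_1, w_2) = (S_x - S_y,\, S_y - S_z)$ is then equivalent in both components at once to the single equation $S_{y'} = S_x + S_z - S_y$, i.e. $S_y + S_{y'} = S_x + S_z$. So the whole lemma reduces to showing that the sum of the old and new middle rows equals the sum of the two flanking rows.

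To establish this I would write out the reflection formula~\eqref{eq:ti} in the short-pattern coordinates of~\eqref{eq:short-pattern}. Matching indices, the entry $y_j$ has diagonal neighbours $x_j, z_{j-1}$ on the left and $x_{j+1}, z_j$ on the right, so for an interior entry $t_\mathrm{short}$ sends $y_j \mapsto y'_j = \min(x_j, z_{j-1}) + \max(x_{j+1}, z_j) - y_j$, while the boundary conventions stated after~\eqref{eq:ti} replace the absent lower neighbour by the sole upper one at the two diagonal entries $y_0$ and $y_{\ell+1}$, giving $x_0$ in the $\min$-slot for $y_0$ and $x_{\ell+2}$ in the $\max$-slot for $y_{\ell+1}$. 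The key observation is that for every $j$ the quantity $y_j + y'_j$ equals the lower endpoint plus the upper endpoint of the admissible interval, that is, a $\min$-term plus a $\max$-term, with the $y_j$ itself cancelling.

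Summing $y_j + y'_j$ over $j = 0, \dots, \ell+1$ then gives $S_y + S_{y'}$, and I would reorganize this sum by collecting all $\min$-terms and all $\max$-terms separately. After reindexing, the interior and boundary contributions combine so that the $\min$-terms form $\sum_{k=0}^{\ell} \min(x_{k+1}, z_k)$, the $\max$-terms form $\sum_{k=0}^{\ell} \max(x_{k+1}, z_k)$, and the two diagonal entries contribute an extra $x_0 + x_{\ell+2}$. Applying $\min(a,b) + \max(a,b) = a + b$ termwise collapses $\sum_{k=0}^{\ell} \bigl(\min(x_{k+1}, z_k) + \max(x_{k+1}, z_k)\bigr) = \sum_{k=0}^{\ell} (x_{k+1} + z_k)$, which equals $(x_1 + \dots + x_{\ell+1}) + S_z$. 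Adding the leftover $x_0 + x_{\ell+2}$ recovers $S_x + S_z$, so $S_y + S_{y'} = S_x + S_z$ and the lemma follows.

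The computation is elementary, so there is no deep obstacle; the only place requiring genuine care is the bookkeeping at the two ends. I would double-check that the boundary conventions after~\eqref{eq:ti} correctly contribute the unpaired terms $x_0$ and $x_{\ell+2}$ rather than the nonexistent entries $z_{-1}$ or $z_{\ell+1}$, and that the reindexing of the $\min$- and $\max$-sums aligns them over the same range $k = 0, \dots, \ell$ so that the $\min + \max$ cancellation applies term by term. Once those index shifts are pinned down, the identity $S_y + S_{y'} = S_x + S_z$ is immediate and the two weight components swap as claimed.
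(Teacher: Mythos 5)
Your proof is correct and follows essentially the same route as the paper: both expand the reflection formula~\eqref{eq:ti} in short-pattern coordinates (with the same boundary conventions contributing $x_0$ and $x_{\ell+2}$), reindex so the $\min$- and $\max$-sums align over $k = 0, \dots, \ell$, and collapse them via $\min(a,b) + \max(a,b) = a + b$ to obtain the middle-row sum identity. Your formulation $S_y + S_{y'} = S_x + S_z$ is just a symmetric rewriting of the paper's computation $d_1' = d_0 - d_1 + d_2$.
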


\begin{proof}
  Let $d_0$, $d_1$ and $d_2$ be the sums of the entries on the first, second and third row of $\mathfrak{t}$.
  The short pattern $t_\mathrm{short} \mathfrak{t}$ will have the same row sum for the first and third row, but its second row sum might be different, so call it $d_1'$.

  If we label the entries as in~\cref{eq:short-pattern}, then by the definition of $t_\mathrm{short}$ we have that
  \begingroup
  \allowdisplaybreaks
  \begin{align*}
    d_1'
    = \sum_{j=0}^{l+1} y_j'
    &= \bigl(x_0 + \max (x_1, z_0) - y_0 \bigr) + \sum_{j = 1}^{l} \bigl(\min (x_j, z_{j-1}) + \max (x_{j+1}, z_j) - y_j \bigr) \\
    &\quad + \bigl(\min (x_{l+1}, z_l) + x_{l + 2} - y_{l+1} \bigr) \\
    &= x_0 + x_{l+2} + \sum_{j=0}^l \bigl(\min(x_{j+1}, z_j) + \max(x_{j+1}, z_j) \bigr) - \sum_{j=0}^{l+1} y_j \\
    &= x_0 + x_{l+2} + \sum_{j=0}^l (x_{j+1} + z_j) - \sum_{j=0}^{l+1} y_j \\
    &= \sum_{j=0}^{l+2} x_j + \sum_{j=0}^l z_j - \sum_{j=0}^{l+1} y_j
    = d_0 - d_1 + d_2.
  \end{align*}
  \endgroup
  Then
  \begin{align*}
    \wt (t_\mathrm{short} \mathfrak{t}) &= (d_1'-d_2, d_0-d_1') = (d_0-d_1, d_1-d_2) = (w_1, w_2). \qedhere
  \end{align*}
\end{proof}

\begin{corollary}
  \label{cor:ti_acts_by_si_on_GTP_weights}
  If $\mathfrak{T}$ is a Gelfand--Tsetlin pattern with $r$ rows, then
  \begin{equation*}
    \wt(t_{r-i}\mathfrak{T}) = s_{r-i} \wt(\mathfrak{T})
  \end{equation*}
  for any $1 \leq i < r$, where $s_{r-i} \in S_r$ is a simple transposition.
\end{corollary}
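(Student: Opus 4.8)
The plan is to deduce the identity directly from \cref{lem:short_pattern_ti_wt} by localizing the Berenstein--Kirillov involution $t_{r-i}$ to the three rows it acts on and matching the short pattern weight with the correct pair of entries of the full weight vector. First I would rewrite the weight from \eqref{eq:GTP-weight} as a vector $(W_1, \dots, W_r)$ with $W_k = d_{r-k} - d_{r-k+1}$, where $d_j = \sum_{\ell=j}^{r-1} a_{j,\ell}$ is the $j$-th row sum and $d_r := 0$; thus $W_1 = d_{r-1}$ and $W_r = d_0 - d_1$. Since $t_{r-i}$ alters only the entries on row $i$, every row sum $d_j$ with $j \neq i$ is preserved, so the only entries of $\wt(\mathfrak{T})$ that can change are $W_{r-i} = d_i - d_{i+1}$ and $W_{r-i+1} = d_{i-1} - d_i$, which is precisely the pair transposed by $s_{r-i}$.

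Next I would restrict $\mathfrak{T}$ to its rows $i-1$, $i$, $i+1$ to obtain a short pattern $\mathfrak{t}$ in the parametrization \eqref{eq:short-pattern}. Because the formula \eqref{eq:ti} defining $t_{r-i}$ depends only on these three rows, its restriction is exactly $t_{\mathrm{short}}$, and under the identification of the short pattern row sums with $d_{i-1}$, $d_i$, $d_{i+1}$ the short pattern weight is $\wt(\mathfrak{t}) = (d_i - d_{i+1}, \, d_{i-1} - d_i) = (W_{r-i}, W_{r-i+1})$. Applying \cref{lem:short_pattern_ti_wt} with $(w_2, w_1) = (W_{r-i}, W_{r-i+1})$ then gives $\wt(t_{\mathrm{short}}\mathfrak{t}) = (W_{r-i+1}, W_{r-i})$, i.e.\ the two entries are interchanged while all remaining entries of $\wt(\mathfrak{T})$ stay fixed. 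This is exactly the claimed $\wt(t_{r-i}\mathfrak{T}) = s_{r-i}\wt(\mathfrak{T})$.

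I expect no serious obstacle, since \cref{lem:short_pattern_ti_wt} already carries the essential computation; the only points requiring care are the index bookkeeping --- the shift between the Berenstein--Kirillov row label $r-i$ and the position in the weight vector, which the matching above is designed to track --- and the boundary case $i = r-1$, where row $i+1$ is absent from the pattern. In that case I would take the short pattern with an empty bottom row, consistent with the convention $d_r = 0$ already built into \eqref{eq:GTP-weight}, so that \cref{lem:short_pattern_ti_wt} still applies with its third row sum equal to $0$ and the argument goes through unchanged.
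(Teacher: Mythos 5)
Your proof is correct and follows essentially the same route as the paper: both reduce the claim to \cref{lem:short_pattern_ti_wt} applied to the short pattern on rows $i-1$, $i$, $i+1$, with the same bookkeeping identifying the changed weight entries as positions $r-i$ and $r-i+1$. The only cosmetic difference is the boundary case $i=r-1$, which the paper settles by a separate explicit computation ($y_0' = x_0 + x_1 - y_0$) rather than by your convention of a degenerate short pattern with empty bottom row and third row sum $0$; both amount to the same two-line verification.
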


\begin{proof}
  If $i < r-1$, then this follows from \cref{lem:short_pattern_ti_wt} by looking at the short pattern consisting of row $i-1$, $i$ and $i+1$.
  Because of the order of the entries in $\wt(\mathfrak{T})$, where the first entry of the weight is the bottom row sum, we get $s_{r-i}$ instead of $s_i$.
  
  If $i = r-1$, for which $t_{r-i} = t_1$ acts on the bottom row with a single entry, then this can be proved analogously to \cref{lem:short_pattern_ti_wt}.
  Indeed, consider the last two rows of $\mathfrak{T}$:
  \begin{equation*}
    \begin{array}{ccccc}
     \ddots &     & \vdots &     & \iddots \\
            & x_0 &        & x_1 &         \\
            &     & y_0    &     &
    \end{array}
  \end{equation*}
  The map $t_{1}$ will only change this by replacing the last entry with
  \begin{equation*}
    y_0' := x_0 + x_1 - y_0
  \end{equation*}
  and so the first two entries of $\wt(t_1\mathfrak{T})$ are
  \begin{align*}
    &y_0' = x_0 + x_1 - y_0 \\
    &(x_0 + x_1) - y_0' = y_0,
  \end{align*}
  which is the second and first entry of $\wt(\mathfrak{T})$, respectively.
\end{proof}

Together with \cref{prop:state-GTP-weight} this finishes the proof of \cref{thm:ti-state-weights}.

So far in this section we have not considered the colors on the left and right boundary edges, only which edges are occupied.
To show that the Berenstein--Kirillov involutions are state-by-state refinements of the steps taken in the proof of the $\Gamma$-$\Delta$ (or left-right) duality that we showed using Yang--Baxter equations in \cref{sec:left-right_duality} we also need to show that the colors in the boundary conditions are interchanged.

\begin{theorem}
  \label{thm:ti-boundary-conditions}
  The map $t_{r-i} : \mathfrak{S}^{\Theta}_{\lambda+\rho} \to \mathfrak{S}^{s_i\Theta}_{\lambda+\rho}$ $($where $\Theta_i \neq \Theta_{i+1})$ descends to a map\linebreak $t_{r-i} : \mathfrak{S}^{\Theta}_{\lambda+\rho, \sigma} \to \mathfrak{S}^{s_i\Theta}_{\lambda+\rho,s_i\sigma}$.
\end{theorem}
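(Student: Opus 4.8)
The plan is to exploit the fact, recorded in \cref{rem:unique_coloring}, that in the crystal limit a state is completely determined by its \emph{occupancy} data together with the fixed top boundary colors: once we know which edges carry paths (equivalently, the Gelfand--Tsetlin pattern $\mathfrak{T} = \iota_\Theta(\mathfrak{s})$) the coloring is forced. Consequently the horizontal boundary colors $\sigma = (\sigma_1,\dots,\sigma_r)$ are a \emph{function} of $\mathfrak{T}$, of $\Theta$, and of the fixed top colors. Since \cref{lemma:ti-GTPTheta_commutes} shows that $t_{r-i}$ alters only row $i$ of $\mathfrak{T}$ — and hence only lattice rows $i$ and $i+1$ — the colors entering this two-row block from above (on the edges of row $i-1$ of $\mathfrak{T}$) are untouched. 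It therefore suffices to prove a purely local statement: for the two-row block of types $(\Theta_i,\Theta_{i+1})$ (one $\Gamma$, one $\Delta$) with fixed incoming top colors and fixed top and bottom occupancy, replacing the middle occupancy (row $i$ of $\mathfrak{T}$) by its Berenstein--Kirillov reflection leaves the colors on the bottom edges (row $i+1$ of $\mathfrak{T}$) unchanged, while interchanging the two side-exit colors $\sigma_i$ and $\sigma_{i+1}$. Granting this, determinism propagates the unchanged bottom colors downward, so $\sigma_k$ is preserved for $k\notin\{i,i+1\}$ and the two remaining entries are swapped, i.e.\ $\sigma \mapsto s_i\sigma$.

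To make the local statement tractable I would first set up the color-recovery mechanism as an action of the Coxeter (0-Hecke) monoid of $S_r$ on tuples of colors, generalizing the corresponding construction of \cite{BBBG:demazure} from the all-$\Gamma$ case to mixed rows. Reading a single row from its unoccupied horizontal boundary inwards, the crossing rules of \cref{tab:T_fused_gamma_Iwahori_crystal_weights,tab:T_fused_delta_Iwahori_crystal_weights} (in the crystal limit two colored paths may only interact in the order-preserving way) translate the lace pattern between consecutive rows of $\mathfrak{T}$ into a product of idempotent generators that bubble-sorts the colors carried down and extracts the one exiting to the side, with a $\Delta$ row handled by the mirror-image convention. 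I would then prove that composing these elements from the top down reproduces exactly the forced coloring, so that $\sigma$ can be read off from the monoid word determined by $\mathfrak{T}$.

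The heart of the argument is the two-row identity, which I would phrase at the level of the short patterns \eqref{eq:short-pattern} consisting of rows $i-1$, $i$ and $i+1$. Using the explicit reflection formula \eqref{eq:ti} for $t_{r-i}$ together with the defining relations of the Coxeter monoid, I would show that the composite monoid element attached to a $(\Gamma,\Delta)$ pair depends only on the top and bottom occupancy and not on the middle occupancy; the $(\Delta,\Gamma)$ pair obtained after the reflection yields the same action on the downward colors, while the two extracted side colors are produced in the opposite order. Equivalently, this is the combinatorial shadow, in the crystal limit, of the Yang--Baxter row swap of \cref{lemma:mixed_model_exchange_rows}, now realized state-by-state by $t_{r-i}$.

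The main obstacle is exactly this last combinatorial verification. The delicate points are: (i) controlling how the middle-row entries move under \eqref{eq:ti}, including the boundary (diagonal) cases where the $\min$/$\max$ bounding the admissible interval switch which diagonal provides the bound, so that one must check the reflected short pattern is still admissible with the asserted row-pair types as in \cref{lemma:ti-GTPTheta_commutes}; and (ii) tracking that the asymmetry between the $\Gamma$ and $\Delta$ crossing rules is precisely compensated when the two row types are swapped, so that the downward color flow is genuinely independent of the middle occupancy. Once these case distinctions are organized — most cleanly by reducing everything to the three-row short-pattern picture and invoking the monoid relations rather than re-deriving each lace configuration by hand — the transformation $\sigma \mapsto s_i\sigma$ follows, completing the commuting diagram \eqref{eq:big-commuting-diagram} with the horizontal boundary colors tracked.
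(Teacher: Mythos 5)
Your scaffolding is exactly the paper's: by \cref{rem:unique_coloring} the coloring of a crystal state is forced by the occupancy data and the top colors, so $\sigma$ is computed from the Gelfand--Tsetlin pattern through an action of the Coxeter monoid $\M_r$ on color tuples (the paper's flags $\Sigma_{i,j}$ and the action \eqref{eq:monoid_action}), and since $t_{r-i}$ touches only row $i$ the problem localizes to the short pattern \eqref{eq:short-pattern} on rows $i-1,i,i+1$, where one must show the associated element of $\M_r$ is unchanged by $t_\mathrm{short}$. Up to this point you have reconstructed the paper's proof.

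However, the key lemma you place at the heart of the argument is false: you assert that the composite monoid element attached to a two-row block ``depends only on the top and bottom occupancy and not on the middle occupancy,'' from which invariance under $t_\mathrm{short}$ would follow for free. Here is a counterexample. Consider the two $(\Delta,\Gamma)$ short patterns with top row $(3,1,0)$, bottom row $(1)$, and middle row $(3,1)$ resp.\ $(2,1)$; both satisfy the right-strict/left-strict inequalities. In the first, the highlighted generators are $\m_1$ and $\m_2$ from the $\Delta$ pair (since $x_0=y_0$ and $x_1=y_1$) and $\m_2$ from the $\Gamma$ pair (since $z_0=y_1$), giving the element $\m_2\m_1\m_2$, the longest element of $\M_3$; in the second, only $\m_2$ is highlighted in each pair, giving $\m_2\m_2=\m_2$. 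These elements are distinct and act differently on flags (the first sorts $(c_3,c_2,c_1)$ completely, the second yields $(c_3,c_1,c_2)$), so the downward color flow genuinely depends on the middle row even with top and bottom fixed. What is true --- and what must be proved --- is only the much weaker invariance under the \emph{specific} reflection $t_\mathrm{short}$, and this cannot be obtained by forgetting the middle row: it requires the precise column-by-column correspondence between the highlightings of $\mathfrak{t}$ and $t_\mathrm{short}(\mathfrak{t})$ established in \cref{lemma:short_GTP_highlighting_gamma_delta} (top highlight becomes bottom and vice versa, inside blocks), the decomposition into commuting blocks along columns with no highlighted entry, and the step-by-step rewriting of \cref{alg:si_pattern}, which uses the idempotent relation $\m_i^2=\m_i$ to normalize the extremal column of each block and the braid relation $\m_i\m_{i+1}\m_i=\m_{i+1}\m_i\m_{i+1}$ to transport a distinguished generator across the pattern. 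Since your proposal identifies this verification as the main obstacle but proposes to resolve it with a statement that is false, the argument does not close, and the genuine combinatorial content of \cref{thm:ti-boundary-conditions} remains unproved in your approach.
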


For this we need to show that if $\mathfrak{s} \in \mathfrak{S}^{\Theta}_{\lambda+\rho, \sigma}$ then the boundary colors of $t_{r-i} (\mathfrak{s}) = \iota_{s_i\Theta}^{-1} \circ t_{r-i} \circ \iota_\Theta(\mathfrak{s})$ are given by $s_i \sigma$, which is the purpose of the next section.

Before we begin the proof, let us first compare with the proof of the duality at the level of partition functions in~\cref{thm:left-right_duality}, which built on two equalities of partition functions from \cref{lemma:mixed_model_exchange_rows,lemma:mixed_model_last_row}.
The above bijection is is a refinement of the equality in \cref{lemma:mixed_model_exchange_rows} to individual states, and the equality in \cref{lemma:mixed_model_last_row} was already proved at the level of states.
In the proof of \cref{thm:left-right_duality} we made a sequence of row swaps using \cref{lemma:mixed_model_exchange_rows} corresponding to the longest word $w_0 = s_{r-1} (s_{r-2} s_{r-1}) (s_{r-3} \dots s_{r-1}) \dots (s_2 \dots s_{r-1}) (s_1 \dots s_{r-1})$ reversing the row types and horizontal boundary colors.
Note that this matches the Schützenberger involution $q_{r-1} = t_1 (t_2 t_1)(t_3 t_2 t_1) \cdots (t_{r-2} \cdots t_1)(t_{r-1} \cdots t_1)$ where $t_{i}$ affects the row types and horizontal boundary colors by $s_{r-i}$. 
Thus, even the individual rows swaps in the proof of the left-right duality is refined by the Berenstein--Kirillov involutions to individual states.

\subsection{Proof of Theorem \ref{thm:ti-boundary-conditions}}

Suppose we have some state of a mixed crystal model with row types given by $\Theta$.
In the proof we will need to keep track of how the colors of the paths are permuted at various intermediate steps between the global input and output boundary edges.
We will call an $r$-tuple of colors a \emph{flag}.
For any $i$ and $j$ satisfying either $0 \leq i < r$ and $0 \leq j \leq N$ or $i=r$ and $j=0$, consider the subsection of the lattice consisting of the first $i$ rows, along with the left-most $j$ vertices from the next row if this next row is of type $\Gamma$ and the right-most $j$ vertices from the next row if it is a $\Delta$ row.
Then define $\Sigma_{i,j}$ to be the flag consisting of the colors of the occupied edges on the output boundary of this subsection in counterclockwise order.
See \cref{fig:example_flags} for an example.
Note that we have $\Sigma_{i,N} = \Sigma_{i+1,0}$ for every $0 \leq i < r$.

We will use $\Sigma_i$ as short for $\Sigma_{i,0}$.
Note that $\Sigma_0$ and $\Sigma_r$ records the colors of the occupied edges on the input and output boundary, respectively, of the entire lattice.
Furthermore, $\Sigma_r$ and $\sigma$ determine each other, and in particular if $\Theta = \Delta$ (i.e.~left-moving), then $\sigma = \Sigma_r$, and if $\Theta = \Gamma$, then $\sigma = w_0 \Sigma_r$, where $w_0$ is the longest element in $S_r$ which acts on a flag by reversing it.

\begin{figure}
  \newcommand{\cred}{\textcolor{red}{c_1}}
  \newcommand{\cgreen}{\textcolor{green}{c_2}}
  \newcommand{\cblue}{\textcolor{blue}{c_3}}
  \begin{tikzpicture}[scale=1.2]
    \foreach \x in {0,...,4} {
      \draw (\x, -1) -- (\x, 3);
    }
    \foreach \y in {0,...,2} {
      \draw (-1, \y) -- (5, \y);
    }

    \draw[densely dashed, thick, -Stealth] (-0.4, 2.8) -- (-0.4, 1.7) -- (4.4, 1.7) -- (4.4, 2.8) node[above, anchor=south west, xshift=-2.5ex] {$\scriptstyle j: 0, 1, 2, 3 \text{ from left to right}$};
    \draw[densely dashed, thick, -Stealth] (-0.5, 2.8) -- (-0.5, 1.6) -- (3.5, 1.6) -- (3.5, 0.6) -- (4.5, 0.6) -- (4.5, 2.8);
    \draw[densely dashed, thick, -Stealth] (-0.6, 2.8) -- (-0.6, 1.5) -- (2.5, 1.5) -- (2.5, 0.5) -- (4.6, 0.5) -- (4.6, 2.8);
    \draw[densely dashed, thick, -Stealth] (-0.7, 2.8) -- (-0.7, 1.4) -- (1.5, 1.4) -- (1.5, 0.4) -- (4.7, 0.4) -- (4.7, 2.8);
    \draw (7, 2)   node {$\Sigma_{1,0} = (\cgreen, \cred, \cblue)$};
    \draw (7, 1.5) node {$\Sigma_{1,1} = (\cgreen, \cred, \cblue)$};
    \draw (7, 1)   node {$\Sigma_{1,2} = (\cgreen, \cred, \cblue)$};
    \draw (7, 0.5) node {$\Sigma_{1,3} = (\cred, \cgreen, \cblue)$};

    \newcommand{\lineWidth}{1}
    \draw[green, line width=\lineWidth mm] (2,3) -- (2,0) -- (5,0);
    \draw[blue, line width=\lineWidth mm] (0,3) -- (0,2) -- (5,2);
    \draw[red, line width=\lineWidth mm] (4,3) -- (4,1) -- (-1,1);
  \end{tikzpicture}
  \caption{An example of a mixed crystal state with $\Theta=(\Gamma, \Delta, \Gamma)$, $\sigma = (\cblue, \cred, \cgreen)$ along with some of its flags $\Sigma_{1,j}$.}
  \label{fig:example_flags}
\end{figure}

Recall from \cref{rem:unique_coloring} that given any uncolored state of a (mixed) crystal model and a coloring of its occupied top edges, there is a unique way to color the remaining occupied edges such that the resulting state becomes admissible.
For what follows we need some more details about this unique coloring, namely how $\Sigma_{i,j+1}$ depends on $\Sigma_{i,j}$.

We can see from \cref{tab:T_fused_gamma_Iwahori_crystal_weights} that on a $\Gamma$ row, when two paths meet, they cross if and only if the left color is larger than the top one.
Similarly, we can see from \cref{tab:T_fused_delta_Iwahori_crystal_weights} that on a $\Delta$ row, when two paths meet they cross if and only if the top color is larger than the right one.

Now suppose that we have a coloring of the subsection corresponding to $\Sigma_{i,j}$.
By definition, $\Sigma_{i,j+1}$ covers one extra vertex compared to $\Sigma_{i,j}$.
If there is only one path that goes through this vertex, then $\Sigma_{i,j+1} = \Sigma_{i,j}$.
On the other hand, if two paths meet at this vertex, then
\begin{equation}
  \Sigma_{i,j+1} =
  \begin{cases}
    s_k \Sigma_{i,j} & \text{if } (\Sigma_{i,j})_k > (\Sigma_{i,j})_{k+1} \\
    \Sigma_{i,j}     & \text{otherwise.}
  \end{cases}
  \label{eq:adjacent_Sigma_relation}
\end{equation}
where $k$ is the index of the first of these paths in $\Sigma_{i,j}$ (which means that the other path is indexed by $k+1$) and $s_k$ is the corresponding simple transposition.
In the first case the paths cross, in the second they do not.
To derive \eqref{eq:adjacent_Sigma_relation} note that if the row is of type $\Delta$ then the path of index $k$ in $\Sigma_{i,j}$ corresponds to the top edge and $k+1$ to the right edge while for type $\Gamma$ they are the left and top edges respectively. 

This defines an action of the \emph{Coxeter monoid} $\M_r$ corresponding to the symmetric group $S_r$, which is the monoid generated by elements $\m_1, \dots, \m_{n-1}$ subject to the relations
\begin{equation}
\begin{alignedat}{2}
  \m_i^2             & = \m_i                   &       & \text{for all } 1 \leq i < r   \\
  \m_i \m_{i+1} \m_i & = \m_{i+1} \m_i \m_{i+1} & \quad & \text{for all } 1 \leq i < r-1 \\
  \m_i \m_j          & = \m_j \m_i              &       & \text{when } |i-j| > 1.
  \label{eq:Coxeter_monoid_relations}
\end{alignedat}
\end{equation}
Note that it has the same number of generators as $S_r$ and that only the first relation is different.
In fact, taking an element with reduced expression $s_{i_1} \dots s_{i_k}$ in $S_r$ to the element $\m_{i_1} \dots \m_{i_k}$ in $\M_r$ gives a well-defined bijection between $S_r$ and $\M_r$ \cite{Tsaranov_Coxeter_monoids}.
In this way we can think of $\M_r$ as having the same elements as $S_r$ but with a different multiplication, known as the \emph{Demazure product}.
This monoid is also known as a \emph{$0$-Hecke monoid} or simply a \emph{Hecke monoid}, because the additive inverses of the generators of a degenerate Hecke algebra with $q=0$ generate a Coxeter monoid under multiplication.

Mirroring \cref{eq:adjacent_Sigma_relation}, we define an action of $\M_r$ on flags $\mathbf{d} = (d_1, \dots, d_r) \in \mathcal{P}^r$ by
\begin{equation}
  \m_i \cdot \mathbf{d} =
  \begin{cases}
    s_i \mathbf{d} & \text{if } d_i > d_{i + 1} \\
    \mathbf{d}     & \text{otherwise.}
  \end{cases}
  \label{eq:monoid_action}
\end{equation}
It is not difficult to check that this indeed is an action of $\M_r$.
We can then rewrite \cref{eq:adjacent_Sigma_relation} as
\begin{equation*}
  \Sigma_{i,j+1} = \m_k \Sigma_{i,j}.
\end{equation*}

In this way each vertex of type $\texttt{a}_2$ or $\texttt{a}_2'$ corresponds to acting on the flag with a certain generator $\m_k$ of the Coxeter monoid.

\begin{example}
  For the state in \cref{fig:example_flags}, the vertices of type $\texttt{a}_2$ and $\texttt{a}_2'$ correspond to the following equalities
  \begin{align*}
    \Sigma_{0,3} &= \m_1 \Sigma_{0,2} \\
    \Sigma_{0,5} &= \m_2 \Sigma_{0,4} \\
    \Sigma_{1,3} &= \m_1 \Sigma_{1,2},
  \end{align*}
  and $\Sigma_{i,j+1}$ is equal to $\Sigma_{i,j}$ in all other cases.
\end{example}

We can also express this in terms of the Gelfand--Tsetlin pattern.
Recall that a $\Gamma$ row-pair is left-strict, and is therefore of the following form:
\begin{equation*}
  \left\{
    \begin{array}{lllllllllllll}
      x_0 &            &     &          & x_1 &            &     &        & x_k &            &     &          & x_{k+1} \\
          & \rcw{>} &     & \rccw{\geq} &     & \rcw{>} &     & \cdots &     & \rcw{>} &     & \rccw{\geq} &         \\
          &            & y_0 &          &     &            & y_1 &        &     &            & y_k &          &
    \end{array}
  \right\}.
\end{equation*}
The $\texttt{a}_2$ and $\texttt{a}_2'$ vertices in the lattice model correspond to non-strict inequalities that are actually equalities in the Gelfand--Tsetlin pattern.
We will therefore decorate the Gelfand--Tsetlin pattern by highlighting these equalities, and we will also identify them with monoid generators as follows:
\begin{equation}
  \left\{
    {\color{lightgray}
      \begin{array}{lllllllllllll}
        x_0 &         &     &              & x_1 &         &     &        & x_k &         &     &                & x_{k+1} \\
            & \rcw{>} &     & \cb \m_{l+1} &     & \rcw{>} &     & \cdots &     & \rcw{>} &     & \cb \m_{l+k+1} &         \\
            &         & y_0 &              &     &         & y_1 &        &     &         & y_k &                &
      \end{array}
    }
  \right\},
  \label{eq:highlighted_gamma_row}
\end{equation}
where $l$ is the number of $\Delta$ row-pairs above this one (i.e. the number of paths on the left boundary above this row).
Then the flags $\Sigma_{i+1}$ and $\Sigma_i$ are related via the action of the highlighted monoid generators, applied sequentially from left to right, i.e.
\begin{equation*}
  \Sigma_{i+1} = \m_{l+k+1} \cdots \m_{l+1} \Sigma_i.
\end{equation*}

On the other hand, a $\Delta$ row-pair is right-strict and has the following form:
\begin{equation*}
  \left\{
    \begin{array}{lllllllllllll}
      x_0 &            &     &          & x_1 &            &     &        & x_k &            &     &          & x_{k+1} \\
          & \rcw{\geq} &     & \rccw{>} &     & \rcw{\geq} &     & \cdots &     & \rcw{\geq} &     & \rccw{>} &         \\
          &            & y_0 &          &     &            & y_1 &        &     &            & y_k &          &
    \end{array}
  \right\}
\end{equation*}
which we can decorate analogously:
\begin{equation}
  \left\{
    {\color{lightgray}
      \begin{array}{lllllllllllll}
        x_0 &              &     &          & x_1 &                         &     &        & x_k &                &     &          & x_{k+1} \\
            & \cb \m_{l+1} &     & \rccw{>} &     & \color{black}{\m_{l+2}} &     & \cdots &     & \cb \m_{l+k+1} &     & \rccw{>} &           \\
            &              & y_0 &          &     &                         & y_1 &        &     &                & y_k &          &
      \end{array}
    }
  \right\},
  \label{eq:highlighted_delta_row}
\end{equation}
where again $l$ is the number of $\Delta$ row-pairs above this one.
Then, similarly to the $\Gamma$ case, the flags $\Sigma_{i+1}$ and $\Sigma_i$ are related via the action of the highlighted monoid generators, applied sequentially from \emph{right to left}, i.e.
\begin{equation*}
  \Sigma_{i+1} = \m_{l+1} \dots \m_{l+k+1} \Sigma_i.
\end{equation*}

If we include all the inequalities in an entire Gelfand--Tsetlin pattern it would be very large, so we will usually shrink these diagrams to only include the simple transpositions, with an arrow to record in which order the monoid generators should be read as a word (i.e.~the opposite order of how they should be applied sequentially).
For example, \eqref{eq:highlighted_gamma_row} would become
\begin{equation*}
  \begin{tikzpicture}[sipattern]
    \matrix (table) {
      \hc $\m_{l+1}$ & $\m_{l+2}$ & \dots & \hc $\m_{l+k+1}$ & \hspace{0.7em} \\
    };
    \draw[arrow] (table-1-5.east) to (table-1-4.east);
  \end{tikzpicture}
\end{equation*}
and \eqref{eq:highlighted_delta_row} would become
\begin{equation*}
  \begin{tikzpicture}[sipattern]
    \matrix (table) {
      \hc $\m_{l+1}$ & \dots &  \hc $\m_{l+k+1}$ & \hspace{0.7em} \\
    };
    \draw[arrow] (table-1-3.east) to (table-1-4.east);
  \end{tikzpicture}.
\end{equation*}

\begin{example}
  The Gelfand--Tsetlin pattern corresponding to the state in \cref{fig:example_flags} is
  \begin{equation}
    \left\{
      \begin{array}{ccccccc}
        4 &   & 2 &   & 0 \\
          & 2 &   & 0 &   \\
          &   & 2 &   &   \\
      \end{array}
    \right\}.
  \end{equation}
  The corresponding pattern of simple transpositions is (where we combine the two arrows to a single curved arrow):
  \begin{equation}
    \begin{tikzpicture}[sipattern]
      \matrix (table) {
        \hc $\m_1$ & \hc $\m_2$ & \\
        \hc $\m_1$ &           & \\
      };
      \draw[left arrow] (table-2-3.west) to (table-1-3.west);
    \end{tikzpicture},
  \end{equation}
  which gives us the following equalities
  \begin{align*}
    \Sigma_{1} &= \m_2 \m_1 \Sigma_{0} \\
    \Sigma_{2} &= \m_1 \Sigma_{1}.
  \end{align*}
\end{example}

Now suppose that we have some mixed state $\mathfrak{s} \in \mathfrak{S}^{\Theta}_{\lambda+\rho}$ and let $i$ be such that $\Theta_i \neq \Theta_{i+1}$.
This means that we can apply $t_{r-i}$ to $\mathfrak{s}$, and what we want to show is that the boundary colors of $t_{r-i}(\mathfrak{s})$ are given by $s_i \sigma$.
By the definition of $\Sigma_r$, this is equivalent to showing that $\Sigma_r$ is the same for both $\mathfrak{s}$ and $t_{r-i}(\mathfrak{s})$.

Since the top boundary is the same in both cases, it is enough to show that the two elements in the Coxeter monoid $\M_r$ taking $\Sigma_0$ to $\Sigma_r$ are equal.
These elements can be computed from the corresponding Gelfand--Tsetlin patterns, $\mathfrak{T}$ and $t_{r-i}(\mathfrak{T})$.
Since the patterns can only differ on row $i$ and we are only interested in the inequalities between the rows, we only need to consider the short patterns $\mathfrak{t}$ and $t_\mathrm{short}(\mathfrak{t})$ consisting of rows $i-1$, $i$ and $i+1$.
If we can show that the two elements of $\M_r$ corresponding to $\mathfrak{t}$ and $t_\mathrm{short}(\mathfrak{t})$ are equal we are done, as the pre- and post-factors corresponding to the rows below and above the short patterns are automatically equal.

A $\Delta$ row followed by a $\Gamma$ row can be described by a short pattern of the following form involving rows $i-1$, $i$ and $i+1$:
\begin{equation}
  \label{eq:short-Delta-Gamma}
  \left\{
    \begin{array}{lllllllllllllll}
      x_0 &            &     &          & x_1 &             &     &          & x_2 & \cdots & x_{k+1} &             &         &          & x_{k+2} \\
          & \rcw{\geq} &     & \rccw{>} &     & \rcw{\geq}  &     & \rccw{>} &     &        &         & \rcw{\geq}  &         & \rccw{>} &         \\
          &            & y_0 &          &     &             & y_1 &          &     & \cdots &         &             & y_{k+1} &          &         \\
          &            &     & \rcw{>}  &     & \rccw{\geq} &     & \rcw{>}  &     &        &         & \rccw{\geq} &         &          &         \\
          &            &     &          & z_0 &             &     &          & z_1 & \cdots & z_k     &             &         &          &
    \end{array}
  \right\}.
\end{equation}
Similarly, a $\Gamma$ row followed by a $\Delta$ row can be described by a short pattern of the following form:
\begin{equation}
  \label{eq:short-Gamma-Delta}
  \left\{
    \begin{array}{lllllllllllllll}
      x_0 &         &     &             & x_1 &          &     &             & x_2 & \cdots & x_{k+1} &          &         &             & x_{k+2} \\
          & \rcw{>} &     & \rccw{\geq} &     & \rcw{>}  &     & \rccw{\geq} &     &        &         & \rcw{>}  &         & \rccw{\geq} &         \\
          &         & y_0 &             &     &          & y_1 &             &     & \cdots &         &          & y_{k+1} &             &         \\
          &         &     & \rcw{\geq}  &     & \rccw{>} &     & \rcw{\geq}  &     &        &         & \rccw{>} &         &             &         \\
          &         &     &             & z_0 &          &     &             & z_1 & \cdots & z_k     &          &         &             &
    \end{array}
  \right\}.
\end{equation}
The map $t_\mathrm{short}$ is a bijection between the sets of these short patterns, and since it is an involution we may without loss of generality assume that $\mathfrak{t}$ is of the first kind.
Note that $t_\mathrm{short}$ will only change the middle row, and we can therefore denote the entries of $\mathfrak{t}$ by $x_i$, $y_i$ and $z_i$ and the entries of $t_\mathrm{short}(\mathfrak{t})$ by $x_i$, $y_i'$ and $z_i$.

Now decorate both $\mathfrak{t}$ and $t_\mathrm{short}(\mathfrak{t})$ as before and remove everything except the monoid generators $\m_k$, which gives us patterns of the following forms shown here with example highlights:
\begin{equation}
  \label{eq:short-Delta-Gamma-highlights}
  \mDG =
  \begin{tikzpicture}[sipattern]
    \matrix (table) {
      & \hc $\m_{l+1}$ & \hc $\m_{l+2}$ & \hc $\m_{l+3}$ & \hspace{1.4em} & \hc $\m_{l+k}$ & \hc $\m_{l+k+1}$ & $\m_{l+k+2}$     \\
      &                & $\m_{l+2}$     & \hc $\m_{l+3}$ &                & \hc $\m_{l+k}$ & $\m_{l+k+1}$     & \hc $\m_{l+k+2}$ \\
    };
    \draw[right arrow] (table-2-1.east) to (table-1-1.east);
    \path (table-1-5.center) -- (table-2-5.center) node[midway] (A) {\dots};
  \end{tikzpicture}
\end{equation}
and
\begin{equation}
  \label{eq:short-Gamma-Delta-highlights}
  \mGD =
  \begin{tikzpicture}[sipattern]
    \matrix (table) {
          $\m_{l+1}$ & \hc $\m_{l+2}$ & \hc $\m_{l+3}$ & \hspace{1.4em} & $\m_{l+k}$     & \hc $\m_{l+k+1}$ & \hc $\m_{l+k+2}$ &  \\
      \hc $\m_{l+1}$ & \hc $\m_{l+2}$ & $\m_{l+3}$     &                & \hc $\m_{l+k}$ & $\m_{l+k+1}$     &                  &  \\
    };
    \draw[left arrow] (table-2-8.west) to (table-1-8.west);
    \path (table-1-4.center) -- (table-2-4.center) node[midway] (A) {\dots};
  \end{tikzpicture},
\end{equation}
where $l$ is the number of $\Delta$ rows above row $i$.

Note that in the case $i=r-1$, the last rows of \eqref{eq:short-Delta-Gamma} and \eqref{eq:short-Gamma-Delta} do not exist and therefore the resulting patterns in \eqref{eq:short-Delta-Gamma-highlights} and \eqref{eq:short-Gamma-Delta-highlights} would only consist of a single column with only the top generator.

With this notation, the proof of \cref{thm:ti-boundary-conditions} reduces to proving that the element in $\M_r$ obtained by taking the product of the highlighted elements in the order indicated by the arrow is the same for both $\mDG$ and $\mGD$.

Since the submonoid generated by $\m_{l+1}, \m_{l+2}, \dots, \m_{l+k+2}$ is isomorphic to the one generated by $\m_1, \m_2, \dots, \m_{k+2}$, we can, without loss of generality, assume that $l = 0$.

\textbf{Blocks:}
Note that a column where neither of the $\m_i$ are highlighted splits the pattern into two parts, where everything on the left commutes with everything on the right.
Furthermore, if the $i^\text{th}$ column in $\mDG$ is such a column, then so is the $i^\text{th}$ column in $\mGD$.
To see this, note that if neither $\m_i$ is highlighted in $\mDG$, then $\min(x_i, z_{i-1}) > y_i$ and therefore $y_i' > \max(x_{i+1}, z_i)$, which means that neither $\m_i$ is highlighted in $\mGD$

The columns with no highlighted entries therefore splits $\mDG$ and $\mGD$ into \emph{blocks}, such that any two elements in different blocks commute and the blocks in $\mDG$ and $\mGD$ cover the same columns.

\begin{example}
  \label{example:blocks}
  The following pattern consists of three blocks:
  \newcommand{\sqbracketbelow}[3]{%
    \draw[thick]
    ([yshift=-5pt]#1.south west) --
    ([yshift=-9pt]#1.south west) --
    ([yshift=-9pt]#2.south east) --
    ([yshift=-5pt]#2.south east);
    \node at ($([yshift=-16pt]#1.south west)!0.5!([yshift=-16pt]#2.south east)$) {#3};
  }
  \begin{equation*}
    \begin{tikzpicture}[sipattern]
      \matrix (table) {
        & \hc $\m_1$ &     $\m_2$ & \hc $\m_3$ & $\m_4$ & \hc $\m_5$ & \hc $\m_6$ & $\m_7$ &     $\m_8$ & \hc $\m_9$ \\
        &            & \hc $\m_2$ & \hc $\m_3$ & $\m_4$ & \hc $\m_5$ &     $\m_6$ & $\m_7$ & \hc $\m_8$ & \hc $\m_9$ \\
      };
      \draw[right arrow] (table-2-1.east) to (table-1-1.east);

      \sqbracketbelow{table-2-2}{table-2-4}{\footnotesize Block 1}
      \sqbracketbelow{table-2-6}{table-2-7}{\footnotesize Block 2}
      \sqbracketbelow{table-2-9}{table-2-10}{\footnotesize Block 3}
    \end{tikzpicture}
  \end{equation*} 
  and after applying $t_\mathrm{short}$ we obtain another pattern with blocks in the same columns:
  \begin{equation*}
    \begin{tikzpicture}[sipattern]
      \matrix (table) {
         \hc $\m_1$ & \hc $\m_2$ & \hc $\m_3$ & $\m_4$ &     $\m_5$ & \hc $\m_6$ & $\m_7$ & \hc $\m_8$ & \hc $\m_9$ &  \\
             $\m_1$ & \hc $\m_2$ &     $\m_3$ & $\m_4$ & \hc $\m_5$ &     $\m_6$ & $\m_7$ & \hc $\m_8$ &            &  \\
      };
      \draw[left arrow] (table-2-10.west) to (table-1-10.west);

      \sqbracketbelow{table-2-1}{table-2-3}{\footnotesize Block 1}
      \sqbracketbelow{table-2-5}{table-2-6}{\footnotesize Block 2}
      \sqbracketbelow{table-2-8}{table-2-9}{\footnotesize Block 3}
    \end{tikzpicture}.
  \end{equation*}
\end{example}

\begin{lemma}
  \label{lemma:short_GTP_highlighting_gamma_delta}
  Let $i$ be such that column $i$ and $i+1$ are both inside a single block.
  Then the highlighting pattern in column $i$ of $\mGD$ is determined by that in column $i+1$ in $\mDG$ as follows:
  \begin{enumerate}
    \item if both of the $\m_{i+1}$ are highlighted in $\mDG$, then the same is true for the $s_i$ in $\mGD$; \label[case]{case:si-double}

    \item if only the top $\m_{i+1}$ is highlighted in $\mDG$, then only the bottom $s_i$ is highlighted in $\mGD$; \label[case]{case:si-top}

    \item if only the bottom $\m_{i+1}$ is highlighted in $\mDG$, then only the top $s_i$ is highlighted in $\mGD$. \label[case]{case:si-bottom}
  \end{enumerate}
\end{lemma}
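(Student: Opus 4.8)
The plan is to rewrite each highlighting condition as an equality between two entries of the short pattern and then to use the block hypothesis to express the relevant entry of $t_\mathrm{short}(\mathfrak{t})$ purely in terms of data shared with $\mDG$. Denote the entries of $\mathfrak{t}$ as in \eqref{eq:short-pattern}, with top row $x$, middle row $y$ and bottom row $z$, and those of $t_\mathrm{short}(\mathfrak{t})$ by $x$, $y'$, $z$. First I would record which inequalities the relevant generators track. In $\mDG$ the top pair is right-strict and the bottom pair is left-strict, so the two generators stacked in column $i+1$ record the two non-strict inequalities incident to the middle entry $y_i$ from its left, namely $x_i \geq y_i$ (top) and $z_{i-1} \geq y_i$ (bottom); they are highlighted exactly when these are equalities. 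Dually, in $\mGD$ the top pair is left-strict and the bottom pair right-strict, so the two generators in column $i$ record the non-strict inequalities incident to $y_{i-1}'$ from its right, namely $y_{i-1}' \geq x_i$ (top) and $y_{i-1}' \geq z_{i-1}$ (bottom). The key observation is that $y_{i-1}$ and $y_i$ are adjacent in the middle row, with $x_i$ sitting between them above and $z_{i-1}$ between them below; hence both columns are controlled by the same pair of entries $x_i$ and $z_{i-1}$, and it only remains to compare $y_i$ and $y_{i-1}'$ with them.

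The crux is the next step, where the block hypothesis enters. Since the empty columns of $\mDG$ and $\mGD$ coincide, column $i$ lying in a block forces column $i$ of $\mDG$ to be non-empty; that column records the two non-strict inequalities incident to $y_{i-1}$ from its left, so non-emptiness means $y_{i-1}$ equals the minimum of its left-diagonal neighbours, i.e.\ $y_{i-1}$ sits at the upper endpoint of its admissible interval. The involution $t_{r-i}$ reflects $y_{i-1}$ in that interval, and therefore $y_{i-1}'$ lands on the lower endpoint, which by the description following \eqref{eq:ti} is the maximum of the right-diagonal neighbours:
\begin{equation*}
  y_{i-1}' = \max(x_i, z_{i-1}).
\end{equation*}
Consequently the top generator in column $i$ of $\mGD$ is highlighted precisely when $x_i \geq z_{i-1}$ and the bottom generator precisely when $z_{i-1} \geq x_i$.

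Finally I would read off the three cases using $y_i \leq \min(x_i, z_{i-1})$. If both generators in column $i+1$ of $\mDG$ are highlighted then $x_i = y_i = z_{i-1}$, so $x_i = z_{i-1}$ and both generators in column $i$ of $\mGD$ are highlighted, giving \cref{case:si-double}. If only the top one is highlighted then $x_i = y_i$ while $z_{i-1} > y_i$, forcing $z_{i-1} > x_i$ and hence only the bottom generator in $\mGD$, giving \cref{case:si-top}; the symmetric computation yields \cref{case:si-bottom}. The boundary columns of the short pattern (where a diagonal neighbour is missing, and the degenerate case $i = r-1$ with a one-entry bottom row) are handled by the same reflection argument after deleting the absent term inside the corresponding $\min$ or $\max$, exactly as in the conventions following \eqref{eq:ti}. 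I expect the only real obstacle to be bookkeeping: matching the two indexing conventions for generators and pattern entries, and recognising that the block hypothesis is precisely what collapses $y_{i-1}'$ — a priori a function of the entire left part of the pattern — to the purely local quantity $\max(x_i, z_{i-1})$, after which the statement becomes a one-line comparison.
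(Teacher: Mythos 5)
Your proof is correct and follows essentially the same route as the paper's: translate the highlights into the equalities $x = y$, $z = y$ in $\mDG$ and $y' = x$, $y' = z$ in $\mGD$, use the block hypothesis to pin the reflected middle entry to $y' = \max(x, z)$, and then compare $x$ with $z$ in the three cases. The only immaterial difference is that you obtain $y_{i-1}' = \max(x_i, z_{i-1})$ from non-emptiness of column $i$ of $\mDG$ together with the reflection formula \eqref{eq:ti}, whereas the paper deduces it from non-emptiness of column $i$ of $\mGD$ — equivalent statements, given the already-established coincidence of the empty columns of $\mDG$ and $\mGD$.
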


\begin{proof}
  First note that since column $i+1$ is inside a block, these three cases cover all possibilities.

  If we are in \cref*{case:si-top}, i.e.~only the top $\m_{i+1}$ is highlighted in $\mDG$, then we must have $y_{i+1} = x_{i+1} < z_i$.
  Now since column $i$ is also inside a block, at least one of the $\m_i$ in $\mGD$ must be highlighted and therefore $y_i' = \max(x_{i+1}, z_i) = z_i$, which means that only the bottom $\m_i$ is highlighted in $\mGD$.

  The other two cases are similar.
\end{proof}

\Cref{lemma:short_GTP_highlighting_gamma_delta} tells us that $\mDG$ and $\mGD$ almost determine each other, except for the left-most column of the blocks in $\mDG$ and the right-most column of the blocks in $\mGD$.
However, every way of highlighting these columns (such that at least one entry is highlighted) is equivalent.
To see this, first note that the left-most column of a block in $\mDG$ has at least one highlighted entry.
The elements in this column also commute with every highlighted entry to the left, and we can therefore write them next to each other in the product.
Then, because of the quadratic relation $\m_i^2 = \m_i$ in $\M_r$, all three possible highlighting patterns (top, bottom, both) are equivalent.
We can therefore assume that only the top entry is highlighted, and to remember this special property of the left-most column in a block in $\mDG$ we will remove the bottom entry.
We will do the same thing with the right-most column of the blocks in $\mGD$.

\begin{example}
  In \cref{example:blocks}, the patterns would become
  \begin{equation*}
    \begin{tikzpicture}[sipattern]
      \matrix (table) {
        & \hc $\m_1$ &     $\m_2$ & \hc $\m_3$ & $\m_4$ & \hc $\m_5$ & \hc $\m_6$ & $\m_7$ &     $\m_8$ & \hc $\m_9$ \\
        &            & \hc $\m_2$ & \hc $\m_3$ & $\m_4$ & \hc $\m_5$ &     $\m_6$ & $\m_7$ & \hc $\m_8$ & \hc $\m_9$ \\
      };
      \draw[right arrow] (table-2-1.east)   to (table-1-1.east);
    \end{tikzpicture}
    \rightarrow
    \begin{tikzpicture}[sipattern]
      \matrix (table) {
        & \hc $\m_1$ &     $\m_2$ & \hc $\m_3$ & $\m_4$ & \hc $\m_5$ & \hc $\m_6$ & $\m_7$ & \hc $\m_8$ & \hc $\m_9$ \\
        &            & \hc $\m_2$ & \hc $\m_3$ & $\m_4$ &            &     $\m_6$ & $\m_7$ &            & \hc $\m_9$ \\
      };
      \draw[right arrow] (table-2-1.east)   to (table-1-1.east);
    \end{tikzpicture}
  \end{equation*}
  and
  \begin{equation*}
    \begin{tikzpicture}[sipattern]
      \matrix (table) {
         \hc $\m_1$ & \hc $\m_2$ & \hc $\m_3$ & $\m_4$ &     $\m_5$ & \hc $\m_6$ & $\m_7$ & \hc $\m_8$ & \hc $\m_9$ &  \\
             $\m_1$ & \hc $\m_2$ &     $\m_3$ & $\m_4$ & \hc $\m_5$ &     $\m_6$ & $\m_7$ & \hc $\m_8$ &            &  \\
      };
      \draw[left arrow] (table-2-10.west) to (table-1-10.west);
    \end{tikzpicture}
    \rightarrow
    \begin{tikzpicture}[sipattern]
      \matrix (table) {
         \hc $\m_1$ & \hc $\m_2$ & \hc $\m_3$ & $\m_4$ &     $\m_5$ & \hc $\m_6$ & $\m_7$ & \hc $\m_8$ & \hc $\m_9$ &  \\
             $\m_1$ & \hc $\m_2$ &            & $\m_4$ & \hc $\m_5$ &            & $\m_7$ & \hc $\m_8$ &            &  \\
      };
      \draw[left arrow] (table-2-10.west) to (table-1-10.west);
    \end{tikzpicture}.
  \end{equation*}
\end{example}

The idea now is to step-by-step transform $\mDG$ into $\mGD$ in such a way that the corresponding product in $\M_r$ is preserved at each step.

For these intermediate steps we will need to generalize the patterns above to patterns of the following form:
\begin{equation*}
  \begin{tikzpicture}[sipattern]
    \newcommand{\centralLabel}{$\m_i$}
    \matrix (table) {
      \hc $\m_1$ & \hc $\m_2$ & \ldots &     $\m_{i-1}$ & \emptycol & \hc $\m_{i+1}$ & \ldots & \hc $\m_{k+2}$ \\
      \hc $\m_1$ & \hc $\m_2$ & \ldots & \hc $\m_{i-1}$ & \emptycol & \hc $\m_{i+1}$ & \ldots &     $\m_{k+2}$ \\
    };
    \path              (table-1-5.center) -- (table-2-5.center) node[midway,highlighted entry] (A) {\centralLabel};
    \draw[left arrow]  (table-2-5.west)   to (table-1-5.west);
    \draw[right arrow] (table-2-5.east)   to (table-1-5.east);
  \end{tikzpicture}.
\end{equation*}
The corresponding element of $\M_r$ is obtained by taking the product of the highlighted entries on the second row inwards towards the arrows, then the central entry (if it is highlighted), and finally the highlighted entries on the first row outwards, away from the arrows.
Note that it does not matter if we start by multiplying all highlighted entries on the left-hand side of the second row and then all highlighted entries on the right-hand side, or if we start with the right-hand side and continue with the left-hand side, or if we alternate in some fashion, as every element on the left-hand side commute with every element on the right-hand side.

\begin{example}
  The element of $\M_6$ corresponding to
  \begin{equation*}
    \begin{tikzpicture}[sipattern]
      \newcommand{\centralLabel}{$\m_3$}
      \matrix (table) {
            $\m_1$ & \hc $\m_2$ & \emptycol &     $\m_4$ & \hc $\m_5$ \\
        \hc $\m_1$ &     $\m_2$ & \emptycol & \hc $\m_4$ &     $\m_5$ \\
      };
      \path              (table-1-3.center) -- (table-2-3.center) node[midway,highlighted entry] (A) {\centralLabel};
      \draw[left arrow]  (table-2-3.west)   to (table-1-3.west);
      \draw[right arrow] (table-2-3.east)   to (table-1-3.east);
    \end{tikzpicture}
  \end{equation*}
  can be computed in either of the following equivalent ways
  \begin{align*}
        &\m_1 \m_4 \m_3 \m_2 \m_5 \\
    ={} &\m_1 \m_4 \m_3 \m_5 \m_2 \\
    ={} &\m_4 \m_1 \m_3 \m_2 \m_5 \\
    ={} &\m_4 \m_1 \m_3 \m_5 \m_2.
  \end{align*}
\end{example}

The patterns $\mDG$ and $\mGD$ can also be written in this form:
\begin{equation*}
  \mDG =
  \begin{tikzpicture}[sipattern]
    \matrix (table) {
      & \hc $\m_1$ & \hc $\m_2$ & \hspace{1.4em} & \hc $\m_{k+1}$ &     $\m_{k+2}$ \\
      &            &     $\m_2$ &                &     $\m_{k+1}$ & \hc $\m_{k+2}$ \\
    };
    \draw[right arrow] (table-2-1.east) to (table-1-1.east);
    \path (table-1-4.center) -- (table-2-4.center) node[midway] (A) {\dots};
  \end{tikzpicture}
  =
  \begin{tikzpicture}[sipattern]
    \newcommand{\centralLabel}{$\m_1$}
    \matrix (table) {
      \emptycol & \hc $\m_2$ & \hspace{1.4em} & \hc $\m_{k+1}$ &     $\m_{k+2}$ \\
      \emptycol &     $\m_2$ &                &     $\m_{k+1}$ & \hc $\m_{k+2}$ \\
    };
    \path              (table-1-1.center) -- (table-2-1.center) node[midway,highlighted entry] (A) {\centralLabel};
    \draw[left arrow]  (table-2-1.west)   to (table-1-1.west);
    \draw[right arrow] (table-2-1.east)   to (table-1-1.east);

    \path (table-1-3.center) -- (table-2-3.center) node[midway] (A) {\dots};
  \end{tikzpicture}
\end{equation*}
and
\begin{equation*}
  \mGD =
  \begin{tikzpicture}[sipattern]
    \matrix (table) {
          $\m_1$ & \hc $\m_2$ & \hspace{1.4em} & \hc $\m_{k+1}$ & \hc $\m_{k+2}$ &  \\
      \hc $\m_1$ & \hc $\m_2$ &                &     $\m_{k+1}$ &                &  \\
    };
    \draw[left arrow] (table-2-6.west) to (table-1-6.west);
    \path (table-1-3.center) -- (table-2-3.center) node[midway] (A) {\dots};
  \end{tikzpicture}
  =
  \begin{tikzpicture}[sipattern]
    \newcommand{\centralLabel}{$\m_{k+2}$}
    \matrix (table) {
          $\m_1$ & \hc $\m_2$ & \hspace{1.4em} & \hc $\m_{k+1}$ & \emptycol \\
      \hc $\m_1$ & \hc $\m_2$ &                &     $\m_{k+1}$ & \emptycol \\
    };
    \path              (table-1-5.center) -- (table-2-5.center) node[midway,highlighted entry] (A) {\centralLabel};
    \draw[left arrow]  (table-2-5.west)   to (table-1-5.west);
    \draw[right arrow] (table-2-5.east)   to (table-1-5.east);

    \path (table-1-3.center) -- (table-2-3.center) node[midway] (A) {\dots};
  \end{tikzpicture}.
\end{equation*}

The algorithm transforming $\mDG$ into $\mGD$ while preserving the corresponding product in $\M_r$ can then be described as follows.

\begin{algorithm}
  \label{alg:si_pattern}
  We start with some block of the following shape:
  \begin{equation*}
    \mDG =
    \begin{tikzpicture}[sipattern]
      \newcommand{\centralLabel}{$\m_i$}
      \matrix (table) {
        \hc $\m_1$ & \hc $\m_2$ & \ldots &     $\m_{i-1}$ & \emptycol & \hc $\m_{i+1}$ &     $\m_{i+2}$ & \ldots & \hc $\m_n$ \\
        \hc $\m_1$ & \hc $\m_2$ & \ldots & \hc $\m_{i-1}$ & \emptycol & \hc $\m_{i+1}$ & \hc $\m_{i+2}$ & \ldots &     $\m_n$ \\
      };
      \path              (table-1-5.center) -- (table-2-5.center) node[midway,highlighted entry] (A) {\centralLabel};
      \draw[left arrow]  (table-2-5.west)   to (table-1-5.west);
      \draw[right arrow] (table-2-5.east)   to (table-1-5.east);
    \end{tikzpicture}
  \end{equation*}
  for some $i$ (initially $i = 1$).

  If $\m_i$ is highlighted, then there are four possibilities for what the $i^\text{th}$ and $(i+1)^\text{th}$ column can look like:
  \begin{enumerate}
    \item \label[case]{case:mi_middle_up_down}
      \begin{tikzpicture}[sipattern]
        \newcommand{\centralLabel}{$\m_i$}
        \matrix (table) {
          \emptycol & \hc $\m_{i+1}$ \\
          \emptycol & \hc $\m_{i+1}$ \\
        };
        \path              (table-1-1.center) -- (table-2-1.center) node[midway,highlighted entry] (A) {\centralLabel};
        \draw[left arrow]  (table-2-1.west)   to (table-1-1.west);
        \draw[right arrow] (table-2-1.east)   to (table-1-1.east);
      \end{tikzpicture}
      in which case we can replace it with
      \begin{tikzpicture}[sipattern]
        \newcommand{\centralLabel}{$\m_{i+1}$}
        \matrix (table) {
          \hc $\m_i$ & \emptycol \\
          \hc $\m_i$ & \emptycol \\
        };
        \path              (table-1-2.center) -- (table-2-2.center) node[midway, highlighted entry] (A) {\centralLabel};
        \draw[left arrow]  (table-2-2.west)   to (table-1-2.west);
        \draw[right arrow] (table-2-2.east)   to (table-1-2.east);
      \end{tikzpicture}
      using the braid relation $\m_{i+1} \m_i \m_{i+1} = \m_i \m_{i+1} \m_i$;

    \item \label[case]{case:mi_middle_up}
      \begin{tikzpicture}[sipattern]
        \newcommand{\centralLabel}{$\m_i$}
        \matrix (table) {
          \emptycol & \hc $\m_{i+1}$ \\
          \emptycol &     $\m_{i+1}$ \\
        };
        \path              (table-1-1.center) -- (table-2-1.center) node[midway,highlighted entry] (A) {\centralLabel};
        \draw[left arrow]  (table-2-1.west)   to (table-1-1.west);
        \draw[right arrow] (table-2-1.east)   to (table-1-1.east);
      \end{tikzpicture}
      in which case we can trivially replace it with
      \begin{tikzpicture}[sipattern]
        \newcommand{\centralLabel}{$\m_{i+1}$}
        \matrix (table) {
              $\m_i$ & \emptycol \\
          \hc $\m_i$ & \emptycol \\
        };
        \path              (table-1-2.center) -- (table-2-2.center) node[midway, highlighted entry] (A) {\centralLabel};
        \draw[left arrow]  (table-2-2.west)   to (table-1-2.west);
        \draw[right arrow] (table-2-2.east)   to (table-1-2.east);
      \end{tikzpicture};

  \item \label[case]{case:mi_middle_down}
      \begin{tikzpicture}[sipattern]
        \newcommand{\centralLabel}{$\m_i$}
        \matrix (table) {
          \emptycol &     $\m_{i+1}$ \\
          \emptycol & \hc $\m_{i+1}$ \\
        };
        \path              (table-1-1.center) -- (table-2-1.center) node[midway,highlighted entry] (A) {\centralLabel};
        \draw[left arrow]  (table-2-1.west)   to (table-1-1.west);
        \draw[right arrow] (table-2-1.east)   to (table-1-1.east);
      \end{tikzpicture}
      in which case we can trivially replace it with
      \begin{tikzpicture}[sipattern]
        \newcommand{\centralLabel}{$\m_{i+1}$}
        \matrix (table) {
          \hc $\m_i$ & \emptycol \\
              $\m_i$ & \emptycol \\
        };
        \path              (table-1-2.center) -- (table-2-2.center) node[midway, highlighted entry] (A) {\centralLabel};
        \draw[left arrow]  (table-2-2.west)   to (table-1-2.west);
        \draw[right arrow] (table-2-2.east)   to (table-1-2.east);
      \end{tikzpicture};

  \item \label[case]{case:mi_middle}
      \begin{tikzpicture}[sipattern]
        \newcommand{\centralLabel}{$\m_i$}
        \matrix (table) {
          \emptycol & $\m_{i+1}$ \\
          \emptycol & $\m_{i+1}$ \\
        };
        \path              (table-1-1.center) -- (table-2-1.center) node[midway,highlighted entry] (A) {\centralLabel};
        \draw[left arrow]  (table-2-1.west)   to (table-1-1.west);
        \draw[right arrow] (table-2-1.east)   to (table-1-1.east);
      \end{tikzpicture}
      in which case we can trivially replace it with
      \begin{tikzpicture}[sipattern]
        \newcommand{\centralLabel}{$\m_{i+1}$}
        \matrix (table) {
          \hc $\m_i$ & \emptycol \\
                     & \emptycol \\
        };
        \path              (table-1-2.center) -- (table-2-2.center) node[midway] (A) {\centralLabel};
        \draw[left arrow]  (table-2-2.west)   to (table-1-2.west);
        \draw[right arrow] (table-2-2.east)   to (table-1-2.east);
      \end{tikzpicture}.
  \end{enumerate}
  \bigskip
  On the other hand, if $\m_i$ is not highlighted, then there are two possibilities:
  \begin{enumerate}[resume]
    \item \label[case]{case:mi_none}
      \begin{tikzpicture}[sipattern]
        \newcommand{\centralLabel}{$\m_i$}
        \matrix (table) {
          \emptycol & $\m_{i+1}$ \\
          \emptycol & $\m_{i+1}$ \\
        };
        \path              (table-1-1.center) -- (table-2-1.center) node[midway] (A) {\centralLabel};
        \draw[left arrow]  (table-2-1.west)   to (table-1-1.west);
        \draw[right arrow] (table-2-1.east)   to (table-1-1.east);
      \end{tikzpicture}
      in which case we can trivially replace it with
      \begin{tikzpicture}[sipattern]
        \newcommand{\centralLabel}{$\m_{i+1}$}
        \matrix (table) {
          $\m_i$ & \emptycol \\
          $\m_i$ & \emptycol \\
        };
        \path              (table-1-2.center) -- (table-2-2.center) node[midway] (A) {\centralLabel};
        \draw[left arrow]  (table-2-2.west)   to (table-1-2.west);
        \draw[right arrow] (table-2-2.east)   to (table-1-2.east);
      \end{tikzpicture};

    \item \label[case]{case:mi_up}
      \begin{tikzpicture}[sipattern]
        \newcommand{\centralLabel}{$\m_i$}
        \matrix (table) {
          \emptycol & \hc $\m_{i+1}$ \\
          \emptycol & \\
        };
        \path              (table-1-1.center) -- (table-2-1.center) node[midway] (A) {\centralLabel};
        \draw[left arrow]  (table-2-1.west)   to (table-1-1.west);
        \draw[right arrow] (table-2-1.east)   to (table-1-1.east);
      \end{tikzpicture}
      in which case we can trivially replace it with
      \begin{tikzpicture}[sipattern]
        \newcommand{\centralLabel}{$\m_{i+1}$}
        \matrix (table) {
          $\m_i$ & \emptycol \\
                 & \emptycol \\
        };
        \path              (table-1-2.center) -- (table-2-2.center) node[midway, highlighted entry] (A) {\centralLabel};
        \draw[left arrow]  (table-2-2.west)   to (table-1-2.west);
        \draw[right arrow] (table-2-2.east)   to (table-1-2.east);
      \end{tikzpicture}.
  \end{enumerate}
  If $i = n-1$ we are finished.
  Otherwise, repeat from the beginning with $i$ replaced by $i+1$.
\end{algorithm}

\begin{example}
  For
  \begin{tikzpicture}[sipattern]
    \newcommand{\centralLabel}{}
    \matrix (table) {
      & \hc $\m_1$ & \hc $\m_2$ &     $\m_3$ & \hc $\m_4$ & \hc $\m_5$ \\
      &            & \hc $\m_2$ & \hc $\m_3$ &     $\m_4$ & \hc $\m_5$ \\
    };
    \draw[right arrow] (table-2-1.east)   to (table-1-1.east);
  \end{tikzpicture}
  the steps are as follows:
  \begin{align*}
    \begin{tikzpicture}[sipattern]
      \newcommand{\centralLabel}{$\m_1$}
      \matrix (table) [ampersand replacement=\&] {
        \emptycol \& \hc $\m_2$ \&     $\m_3$ \& \hc $\m_4$ \& \hc $\m_5$ \\
        \emptycol \& \hc $\m_2$ \& \hc $\m_3$ \&     $\m_4$ \& \hc $\m_5$ \\
      };
      \path              (table-1-1.center) -- (table-2-1.center) node[midway, highlighted entry] (A) {\centralLabel};
      \draw[left arrow]  (table-2-1.west)   to (table-1-1.west);
      \draw[right arrow] (table-2-1.east)   to (table-1-1.east);
    \end{tikzpicture}
    &=
    \begin{tikzpicture}[sipattern]
      \newcommand{\centralLabel}{$\m_2$}
      \matrix (table) [ampersand replacement=\&] {
        \hc $\m_1$ \& \emptycol \&     $\m_3$ \& \hc $\m_4$ \& \hc $\m_5$ \\
        \hc $\m_1$ \& \emptycol \& \hc $\m_3$ \&     $\m_4$ \& \hc $\m_5$ \\
      };
      \path              (table-1-2.center) -- (table-2-2.center) node[midway, highlighted entry] (A) {\centralLabel};
      \draw[left arrow]  (table-2-2.west)   to (table-1-2.west);
      \draw[right arrow] (table-2-2.east)   to (table-1-2.east);
    \end{tikzpicture}
    =
    \begin{tikzpicture}[sipattern]
      \newcommand{\centralLabel}{$\m_3$}
      \matrix (table) [ampersand replacement=\&] {
        \hc $\m_1$ \& \hc $\m_2$ \& \emptycol \& \hc $\m_4$ \& \hc $\m_5$ \\
        \hc $\m_1$ \&     $\m_2$ \& \emptycol \&     $\m_4$ \& \hc $\m_5$ \\
      };
      \path              (table-1-3.center) -- (table-2-3.center) node[midway, highlighted entry] (A) {\centralLabel};
      \draw[left arrow]  (table-2-3.west)   to (table-1-3.west);
      \draw[right arrow] (table-2-3.east)   to (table-1-3.east);
    \end{tikzpicture}
    \\
    &=
    \begin{tikzpicture}[sipattern]
      \newcommand{\centralLabel}{$\m_4$}
      \matrix (table) [ampersand replacement=\&] {
        \hc $\m_1$ \& \hc $\m_2$ \&     $\m_3$ \& \emptycol \& \hc $\m_5$ \\
        \hc $\m_1$ \&     $\m_2$ \& \hc $\m_3$ \& \emptycol \& \hc $\m_5$ \\
      };
      \path              (table-1-4.center) -- (table-2-4.center) node[midway, highlighted entry] (A) {\centralLabel};
      \draw[left arrow]  (table-2-4.west)   to (table-1-4.west);
      \draw[right arrow] (table-2-4.east)   to (table-1-4.east);
    \end{tikzpicture}
    =
    \begin{tikzpicture}[sipattern]
      \newcommand{\centralLabel}{$\m_5$}
      \matrix (table) [ampersand replacement=\&] {
        \hc $\m_1$ \& \hc $\m_2$ \&     $\m_3$ \& \hc $\m_4$ \& \emptycol \\
        \hc $\m_1$ \&     $\m_2$ \& \hc $\m_3$ \& \hc $\m_4$ \& \emptycol \\
      };
      \path              (table-1-5.center) -- (table-2-5.center) node[midway, highlighted entry] (A) {\centralLabel};
      \draw[left arrow]  (table-2-5.west)   to (table-1-5.west);
      \draw[right arrow] (table-2-5.east)   to (table-1-5.east);
    \end{tikzpicture},
  \end{align*}
  and so the resulting pattern is 
  \begin{tikzpicture}[sipattern]
    \matrix (table) {
      \hc $\m_1$ & \hc $\m_2$ &     $\m_3$ & \hc $\m_4$ & \hc $\m_5$ & \\
      \hc $\m_1$ &     $\m_2$ & \hc $\m_3$ & \hc $\m_4$ &       & \\
    };
    \draw[left arrow]  (table-2-6.west)   to (table-1-6.west);
  \end{tikzpicture}.
\end{example}

Note that the result of the algorithm is a pattern of the same form as $\mGD$, and that the $\m_i$-column was determined by the $\m_{i+1}$-column in $\mDG$.
Furthermore, when the $i^\text{th}$ and $(i+1)^\text{th}$ columns are inside a block, \cref*{case:mi_middle_up_down,case:mi_middle_up,case:mi_middle_down} above exactly match \cref*{case:si-double,case:si-top,case:si-bottom} in \cref{lemma:short_GTP_highlighting_gamma_delta}.
The remaining cases, \ref*{case:mi_middle} describe what happens at the end of a block, \ref*{case:mi_up} at the beginning of a block, and \ref*{case:mi_none} between blocks and agree with how blocks are treated in connection with \cref{lemma:short_GTP_highlighting_gamma_delta}.

To summarize, the algorithm has step-by-step transformed $\mDG$ to $\mGD$ while preserving the associated element in $\M_r$.
We have thus proved the following proposition:

\begin{proposition}
  The elements of $\M_r$ associated to $\mDG$ and $\mGD$ are the same, and hence the boundary conditions of $t_{r-i}(\mathfrak{s})$ are the same as those of $\mathfrak{s}$ but with row $i$ and $i+1$ switched.
\end{proposition}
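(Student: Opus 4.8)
The plan is to prove the two assertions in turn: first that the elements of $\M_r$ determined by $\mDG$ and $\mGD$ coincide, and second that this equality forces the horizontal boundary colors of $t_{r-i}(\mathfrak{s})$ to be $s_i\sigma$. Since the submonoid generated by $\m_{l+1},\dots,\m_{l+k+2}$ is isomorphic to the one generated by $\m_1,\dots,\m_{k+2}$, and since a column with no highlighted entry lets everything to its left commute with everything to its right, I would first reduce to a single block with $l=0$; it then suffices to compare the two monoid words block by block, the factors coming from the rows above and below the short pattern being identical on the two sides.

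For the first assertion I would run \cref{alg:si_pattern} and check that every replacement preserves the associated element of $\M_r$. The six cases split according to whether the central generator $\m_i$ is highlighted. \Cref*{case:mi_middle_up_down} is the only substantial one: there the rewrite $\m_{i+1}\m_i\m_{i+1}=\m_i\m_{i+1}\m_i$ is exactly the braid relation from \eqref{eq:Coxeter_monoid_relations}. \Cref*{case:mi_middle_up,case:mi_middle_down,case:mi_middle,case:mi_none,case:mi_up} are each justified by the idempotency $\m_i^2=\m_i$ together with the commutation of non-adjacent generators, and are routine to verify directly on the associated words. Because each step leaves the element of $\M_r$ unchanged, the word read off from $\mDG$ equals the word read off from the pattern the algorithm terminates on; the edge case $i=r-1$, where the short pattern degenerates to a single column carrying only the top generator, is handled the same way with the obvious simplifications.

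The main work — and the step I expect to be the main obstacle — is confirming that the pattern produced by the algorithm is exactly $\mGD$, and not merely some pattern representing the same $\M_r$-element. Here I would argue block by block, using \cref{lemma:short_GTP_highlighting_gamma_delta} to match the highlighting of column $i$ in $\mGD$ against that of column $i+1$ in $\mDG$ for interior columns, noting that \cref*{case:mi_middle_up_down,case:mi_middle_up,case:mi_middle_down} of the algorithm mirror \cref*{case:si-double,case:si-top,case:si-bottom} of that lemma. The delicate point is the normalization at block boundaries: the leftmost column of each block in $\mDG$ and the rightmost column of each block in $\mGD$ always carry at least one highlight, and by idempotency any choice of highlighting there represents the same monoid element, which is why one may record only the top entry. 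I would then verify that \cref*{case:mi_up} produces precisely this normalized left boundary, \cref*{case:mi_middle} the normalized right boundary, and \cref*{case:mi_none} the gaps between blocks, so that the output agrees with $\mGD$ column for column. Combined with the preservation established above, this gives the equality of $\M_r$-elements.

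For the second assertion I would use the flag formalism. The top boundaries of $\mathfrak{s}$ and $t_{r-i}(\mathfrak{s})$ are identical, so $\Sigma_0$ is the same for both states. The element of $\M_r$ carrying $\Sigma_0$ to $\Sigma_r$ factors as the product of the contributions of the rows below, of the short pattern on rows $i-1,i,i+1$, and of the rows above; only the middle factor can change under $t_{r-i}$, and by the first part it does not, so $\Sigma_r$ is unchanged. Finally I would translate $\Sigma_r$ back into $\sigma$ via the correspondence recorded before \cref{fig:example_flags} (read $\Sigma_r$ counterclockwise, inserting $w_0$ on $\Gamma$ rows): swapping the adjacent rows $i$ and $i+1$ while keeping the counterclockwise output colors fixed is exactly the transposition $\sigma\mapsto s_i\sigma$. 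Together with the weight statement of \cref{thm:ti-state-weights}, this yields the refined bijection $t_{r-i}:\mathfrak{S}^{\Theta}_{\lambda+\rho,\sigma}\to\mathfrak{S}^{s_i\Theta}_{\lambda+\rho,s_i\sigma}$ claimed in \cref{thm:ti-boundary-conditions}.
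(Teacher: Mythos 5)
Your proposal is correct and follows essentially the same route as the paper: reduce to $l=0$ and argue block by block, run \cref{alg:si_pattern} using the braid and idempotency relations of \eqref{eq:Coxeter_monoid_relations} to preserve the element of $\M_r$, match the algorithm's cases against \cref{lemma:short_GTP_highlighting_gamma_delta} (including the normalized block-boundary columns) to confirm the output is exactly $\mGD$, and conclude via the flag formalism that $\Sigma_r$, and hence the transposed boundary coloring $s_i\sigma$, is as claimed. You even isolate the same delicate point the paper handles — that the algorithm must produce $\mGD$ itself and not merely an equivalent word — so there is nothing to add.
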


This concludes the proof of \cref{thm:ti-boundary-conditions}.

\newpage

\appendix

\section{Proof of Proposition \ref{prop:inverse}}
\label{appendix:proof_of_R-matrix_inverse}

In this appendix we prove that the mixed $R$-matrices $\Rlr : \Vl \otimes \Vr \to \Vr \otimes \Vl$ and $\Rrl : \Vr \otimes \Vl \to \Vl \otimes \Vr$ from \cref{tab:R-vertices-mixed} satisfy $\Rlr \Rrl = C \cdot \id_{\Vr} \otimes \id_{\Vl}$ for some constant $C$ depending on $q$, $z_1$, $z_2$ and the palette size $m$.
Since $\Vl$ and $\Vr$ are finite-dimensional modules over the field of rational functions in $q$, $z_1$ and $z_2$ for a fixed integer $m$, this means that $\Rrl \Rlr = C \cdot \id_{\Vl} \otimes \id_{\Vr}$ as well. 
Indeed, let $A = \frac{1}{C} \Rrl \tau $ and $B = \tau \Rlr$ where $\tau$ is the linear map defined on pure tensors by $u \otimes v \mapsto v \otimes u$.
Then $A,B \in \operatorname{End}(\Vl \otimes \Vr)$ and the left-inverse $A$ of $B$ is also the right-inverse of $B$.

The weights in \cref{tab:R-vertices-mixed} assume that the $R$-vertices attach to left of rows with row parameters $z_1$ above $z_2$ and an $R$-vertex swaps these row parameters.
Thus we actually want to show that 
\begin{equation}
  \label{appendix:eq:mixed_R-matrices_inverses}
  \Rlr(z_2,z_1) \Rrl(z_1,z_2) = (z_2 - q^2 z_1) (z_2 - q^{2 m} z_1) \id_{\Vr} \otimes \id_{\Vl}.
\end{equation}
Both R-vertices have the same vertex color $c_k$.
Due to the fact that the weights only depend on the possible edge colors $c_i$ and $c_j$ by expressions in $i-k$ and $j-k$ mod $m$ we may without loss of generality assume that $k=1$.
Indeed, the conditions $\left\{\begin{smallmatrix} i < j < k \text{ or } j < k \leqslant i \text{ or } k \leqslant i < j \\ j < i < k \text{ or } i < k \leqslant j \text{ or } k \leqslant j < i \end{smallmatrix}\right.$ are equivalent to $\res_m (i - j) - \res_m (i - k) + \res_m (j - k) = \left\{\begin{smallmatrix} m \\ 0 \end{smallmatrix}\right.$.

We first enumerate the possible admissible states using the configurations of \cref{tab:R-vertices-mixed}, starting from the possible left boundary edges and iteratively filling in the rest.
Here, dashed edges signify unknown edges and the equations should be interpreted as equalities and unions of sets of states.
\newcommand\scale{0.4}
{%
  \allowdisplaybreaks
  \begin{align*}
  \begin{tikzpicture}[baseline={(0,-0.5ex)}, scale = \scale]
    \draw (-1,-1) -- (0,0);
    \draw (-1,1) -- (0,0);
    \draw[dashed] (0,0) -- (1,1) -- (3,-1);
    \draw[dashed] (0,0) -- (1,-1) -- (3,1);
    \node[dot, label={[label distance=1em]above:$\scriptstyle{\Rlr}$}] at (0,0) {};
    \node[dot, label={[label distance=1em]above:$\scriptstyle{\Rrl}$}] at (2,0) {};
  \end{tikzpicture}
  &=
  \begin{tikzpicture}[baseline={(0,-0.5ex)}, scale = \scale]
    \draw (-1,-1) -- (0,0);
    \draw (-1,1) -- (0,0); 
    \draw (0,0) -- (1,1) -- (2,0);
    \draw (0,0) -- (1,-1) -- (2,0);
    \draw[dashed] (2,0) -- (3,-1);
    \draw[dashed] (2,0) -- (3,1);
    \node[dot] at (0,0) {};
    \node[dot] at (2,0) {};
  \end{tikzpicture}
  +
  \sum_i
  \begin{tikzpicture}[baseline={(0,-0.5ex)}, scale = \scale]
    \draw (-1,-1) -- (0,0);
    \draw (-1,1) --  (0,0); 
    \draw[ultra thick, red] (0,0) -- (1,1) node[label=left:$\scriptstyle{i}$]{} -- (2,0);
    \draw[ultra thick, red] (0,0) -- (1,-1) node[label=left:$\scriptstyle{i}$]{}  -- (2,0);
    \draw[dashed] (2,0) -- (3,-1);
    \draw[dashed] (2,0) -- (3,1);
    \node[dot] at (0,0) {};
    \node[dot] at (2,0) {};
  \end{tikzpicture}
  \\
  &=
  \begin{tikzpicture}[baseline={(0,-0.5ex)}, scale = \scale]
    \draw (-1,-1) -- (0,0);
    \draw (-1,1) -- (0,0); 
    \draw (0,0) -- (1,1) -- (2,0);
    \draw (0,0) -- (1,-1) -- (2,0);
    \draw (2,0) -- (3,-1);
    \draw (2,0) -- (3,1);
    \node[dot] at (0,0) {};
    \node[dot] at (2,0) {};
  \end{tikzpicture}
  + 
  \sum_j
  \begin{tikzpicture}[baseline={(0,-0.5ex)}, scale = \scale]
    \draw (-1,-1) -- (0,0);
    \draw (-1,1) -- (0,0); 
    \draw (0,0) -- (1,1) -- (2,0);
    \draw (0,0) -- (1,-1) -- (2,0);
    \draw[ultra thick, blue] (2,0) -- (3,-1) node[label={[label distance=-1ex]right:$\scriptstyle{j}$}]{};
    \draw[ultra thick, blue] (2,0) -- (3,1) node[label={[label distance=-1ex]right:$\scriptstyle{j}$}]{};
    \node[dot] at (0,0) {};
    \node[dot] at (2,0) {};
  \end{tikzpicture}
  +
  \sum_i
  \begin{tikzpicture}[baseline={(0,-0.5ex)}, scale = \scale]
    \draw (-1,-1) -- (0,0);
    \draw (-1,1) --  (0,0); 
    \draw[ultra thick, red] (0,0) -- (1,1) node[label=left:$\scriptstyle{i}$]{} -- (2,0);
    \draw[ultra thick, red] (0,0) -- (1,-1) node[label=left:$\scriptstyle{i}$]{}  -- (2,0);
    \draw (2,0) -- (3,-1);
    \draw (2,0) -- (3,1);
    \node[dot] at (0,0) {};
    \node[dot] at (2,0) {};
  \end{tikzpicture}
  +
  \sum_{i,j}
  \begin{tikzpicture}[baseline={(0,-0.5ex)}, scale = \scale]
    \draw (-1,-1) -- (0,0);
    \draw (-1,1) --  (0,0); 
    \draw[ultra thick, red] (0,0) -- (1,1) node[label=left:$\scriptstyle{i}$]{} -- (2,0);
    \draw[ultra thick, red] (0,0) -- (1,-1) node[label=left:$\scriptstyle{i}$]{}  -- (2,0);
    \draw[ultra thick, blue] (2,0) -- (3,-1) node[label={[label distance=-1ex]right:$\scriptstyle{j}$}]{};
    \draw[ultra thick, blue] (2,0) -- (3,1) node[label={[label distance=-1ex]right:$\scriptstyle{j}$}]{};
    \node[dot] at (0,0) {};
    \node[dot] at (2,0) {};
  \end{tikzpicture}
  \\
  \begin{tikzpicture}[baseline={(0,-0.5ex)}, scale = \scale]
    \draw[ultra thick, red] (-1,-1) node[label={[label distance=-1ex]left:$\scriptstyle{i}$}]{} -- (0,0);
    \draw[ultra thick, red] (-1,1) node[label={[label distance=-1ex]left:$\scriptstyle{i}$}]{} -- (0,0);
    \draw[dashed] (0,0) -- (1,1) -- (3,-1);
    \draw[dashed] (0,0) -- (1,-1) -- (3,1);
    \node[dot] at (0,0) {};
    \node[dot] at (2,0) {};
  \end{tikzpicture}
  &=
  \begin{tikzpicture}[baseline={(0,-0.5ex)}, scale = \scale]
    \draw[ultra thick, red] (-1,-1) node[label={[label distance=-1ex]left:$\scriptstyle{i}$}]{} -- (0,0);
    \draw[ultra thick, red] (-1,1) node[label={[label distance=-1ex]left:$\scriptstyle{i}$}]{} -- (0,0);
    \draw (0,0) -- (1,1) -- (2,0);
    \draw (0,0) -- (1,-1) -- (2,0);
    \draw[dashed] (2,0) -- (3,-1);
    \draw[dashed] (2,0) -- (3,1);
    \node[dot] at (0,0) {};
    \node[dot] at (2,0) {};
  \end{tikzpicture}
  +
  \sum_l
  \begin{tikzpicture}[baseline={(0,-0.5ex)}, scale = \scale]
    \draw[ultra thick, red] (-1,-1) node[label={[label distance=-1ex]left:$\scriptstyle{i}$}]{} -- (0,0);
    \draw[ultra thick, red] (-1,1) node[label={[label distance=-1ex]left:$\scriptstyle{i}$}]{} -- (0,0);
    \draw[ultra thick, green] (0,0) -- (1,1) node[label=left:$\scriptstyle{l}$]{} -- (2,0);
    \draw[ultra thick, green] (0,0) -- (1,-1) node[label=left:$\scriptstyle{l}$]{}  -- (2,0);
    \draw[dashed] (2,0) -- (3,-1);
    \draw[dashed] (2,0) -- (3,1);
    \node[dot] at (0,0) {};
    \node[dot] at (2,0) {};
  \end{tikzpicture}
  \\
  &=
  \begin{tikzpicture}[baseline={(0,-0.5ex)}, scale = \scale]
    \draw[ultra thick, red] (-1,-1) node[label={[label distance=-1ex]left:$\scriptstyle{i}$}]{} -- (0,0);
    \draw[ultra thick, red] (-1,1) node[label={[label distance=-1ex]left:$\scriptstyle{i}$}]{} -- (0,0);
    \draw (0,0) -- (1,1) -- (2,0);
    \draw (0,0) -- (1,-1) -- (2,0);
    \draw (2,0) -- (3,-1);
    \draw (2,0) -- (3,1);
    \node[dot] at (0,0) {};
    \node[dot] at (2,0) {};
  \end{tikzpicture}
  +
  \sum_j
  \begin{tikzpicture}[baseline={(0,-0.5ex)}, scale = \scale]
    \draw[ultra thick, red] (-1,-1) node[label={[label distance=-1ex]left:$\scriptstyle{i}$}]{} -- (0,0);
    \draw[ultra thick, red] (-1,1) node[label={[label distance=-1ex]left:$\scriptstyle{i}$}]{} -- (0,0);
    \draw (0,0) -- (1,1) -- (2,0);
    \draw (0,0) -- (1,-1) -- (2,0);
    \draw[ultra thick, blue] (2,0) -- (3,-1) node[label={[label distance=-1ex]right:$\scriptstyle{j}$}]{};
    \draw[ultra thick, blue] (2,0) -- (3,1) node[label={[label distance=-1ex]right:$\scriptstyle{j}$}]{};
    \node[dot] at (0,0) {};
    \node[dot] at (2,0) {};
  \end{tikzpicture}
  +
  \sum_l
  \begin{tikzpicture}[baseline={(0,-0.5ex)}, scale = \scale]
    \draw[ultra thick, red] (-1,-1) node[label={[label distance=-1ex]left:$\scriptstyle{i}$}]{} -- (0,0);
    \draw[ultra thick, red] (-1,1) node[label={[label distance=-1ex]left:$\scriptstyle{i}$}]{} -- (0,0);
    \draw[ultra thick, green] (0,0) -- (1,1) node[label=left:$\scriptstyle{l}$]{} -- (2,0);
    \draw[ultra thick, green] (0,0) -- (1,-1) node[label=left:$\scriptstyle{l}$]{}  -- (2,0);
    \draw (2,0) -- (3,-1);
    \draw (2,0) -- (3,1);
    \node[dot] at (0,0) {};
    \node[dot] at (2,0) {};
  \end{tikzpicture}
  +
  \sum_{j,l}
  \begin{tikzpicture}[baseline={(0,-0.5ex)}, scale = \scale]
    \draw[ultra thick, red] (-1,-1) node[label={[label distance=-1ex]left:$\scriptstyle{i}$}]{} -- (0,0);
    \draw[ultra thick, red] (-1,1) node[label={[label distance=-1ex]left:$\scriptstyle{i}$}]{} -- (0,0);
    \draw[ultra thick, green] (0,0) -- (1,1) node[label=left:$\scriptstyle{l}$]{} -- (2,0);
    \draw[ultra thick, green] (0,0) -- (1,-1) node[label=left:$\scriptstyle{l}$]{}  -- (2,0);
    \draw[ultra thick, blue] (2,0) -- (3,-1) node[label={[label distance=-1ex]right:$\scriptstyle{j}$}]{};
    \draw[ultra thick, blue] (2,0) -- (3,1) node[label={[label distance=-1ex]right:$\scriptstyle{j}$}]{};
    \node[dot] at (0,0) {};
    \node[dot] at (2,0) {};
  \end{tikzpicture}
  \\
  \begin{tikzpicture}[baseline={(0,-0.5ex)}, scale = \scale]
    \draw (-1,-1) -- (0,0);
    \draw[ultra thick, red] (-1,1) node[label={[label distance=-1ex]left:$\scriptstyle{i}$}]{} -- (0,0);
    \draw[dashed] (0,0) -- (1,1) -- (3,-1);
    \draw[dashed] (0,0) -- (1,-1) -- (3,1);
    \node[dot] at (0,0) {};
    \node[dot] at (2,0) {};
  \end{tikzpicture}
  &= 
  \begin{tikzpicture}[baseline={(0,-0.5ex)}, scale = \scale]
    \draw (-1,-1) -- (0,0);
    \draw[ultra thick, red] (-1,1) node[label={[label distance=-1ex]left:$\scriptstyle{i}$}]{} -- (0,0);
    \draw (0,0) -- (1,1) -- (2,0);
    \draw[ultra thick, red] (0,0) -- (1,-1) node[label={[label distance=0ex]left:$\scriptstyle{i}$}]{} -- (2,0);
    \draw[dashed] (2,0) -- (3,-1);
    \draw[dashed] (2,0) -- (3,1); 
    \node[dot] at (0,0) {};
    \node[dot] at (2,0) {};
  \end{tikzpicture}
  =
  \begin{tikzpicture}[baseline={(0,-0.5ex)}, scale = \scale]
    \draw (-1,-1) -- (0,0);
    \draw[ultra thick, red] (-1,1) node[label={[label distance=-1ex]left:$\scriptstyle{i}$}]{} -- (0,0);
    \draw (0,0) -- (1,1) -- (2,0);
    \draw[ultra thick, red] (0,0) -- (1,-1) node[label={[label distance=0ex]left:$\scriptstyle{i}$}]{} -- (2,0);
    \draw[ultra thick, red] (2,0) -- (3,1) node[label={[label distance=-1ex]right:$\scriptstyle{i}$}]{};
    \draw (2,0) -- (3,-1); 
    \node[dot] at (0,0) {};
    \node[dot] at (2,0) {};
  \end{tikzpicture}
  \\
  \begin{tikzpicture}[baseline={(0,-0.5ex)}, scale = \scale, yscale=-1]
    \draw (-1,-1) -- (0,0);
    \draw[ultra thick, red] (-1,1) node[label={[label distance=-1ex]left:$\scriptstyle{i}$}]{} -- (0,0);
    \draw[dashed] (0,0) -- (1,1) -- (3,-1);
    \draw[dashed] (0,0) -- (1,-1) -- (3,1);
    \node[dot] at (0,0) {};
    \node[dot] at (2,0) {};
  \end{tikzpicture}
  &= 
  \begin{tikzpicture}[baseline={(0,-0.5ex)}, scale = \scale, yscale=-1]
    \draw (-1,-1) -- (0,0);
    \draw[ultra thick, red] (-1,1) node[label={[label distance=-1ex]left:$\scriptstyle{i}$}]{} -- (0,0);
    \draw (0,0) -- (1,1) -- (2,0);
    \draw[ultra thick, red] (0,0) -- (1,-1) node[label={[label distance=0ex]left:$\scriptstyle{i}$}]{} -- (2,0);
    \draw[dashed] (2,0) -- (3,-1);
    \draw[dashed] (2,0) -- (3,1); 
    \node[dot] at (0,0) {};
    \node[dot] at (2,0) {};
  \end{tikzpicture}
  =
  \begin{tikzpicture}[baseline={(0,-0.5ex)}, scale = \scale, yscale=-1]
    \draw (-1,-1) -- (0,0);
    \draw[ultra thick, red] (-1,1) node[label={[label distance=-1ex]left:$\scriptstyle{i}$}]{} -- (0,0);
    \draw (0,0) -- (1,1) -- (2,0);
    \draw[ultra thick, red] (0,0) -- (1,-1) node[label={[label distance=0ex]left:$\scriptstyle{i}$}]{} -- (2,0);
    \draw[ultra thick, red] (2,0) -- (3,1) node[label={[label distance=-1ex]right:$\scriptstyle{i}$}]{};
    \draw (2,0) -- (3,-1); 
    \node[dot] at (0,0) {};
    \node[dot] at (2,0) {};
  \end{tikzpicture}
  \\
  \begin{tikzpicture}[baseline={(0,-0.5ex)}, scale = \scale]
    \draw[ultra thick, red] (-1,-1) node[label={[label distance=-1ex]left:$\scriptstyle{i}$}]{} -- (0,0);
    \draw[ultra thick, blue] (-1,1) node[label={[label distance=-1ex]left:$\scriptstyle{j}$}]{} -- (0,0);
    \draw[dashed] (0,0) -- (1,1) -- (3,-1);
    \draw[dashed] (0,0) -- (1,-1) -- (3,1);
    \node[dot] at (0,0) {};
    \node[dot] at (2,0) {};
  \end{tikzpicture}
  &= 
  \begin{tikzpicture}[baseline={(0,-0.5ex)}, scale = \scale]
    \draw[ultra thick, red] (-1,-1) node[label={[label distance=-1ex]left:$\scriptstyle{i}$}]{} -- (0,0);
    \draw[ultra thick, blue] (-1,1) node[label={[label distance=-1ex]left:$\scriptstyle{j}$}]{} -- (0,0);
    \draw[ultra thick, red] (0,0) -- (1,1) node[label={[label distance=0ex]left:$\scriptstyle{i}$}]{} -- (2,0);
    \draw[ultra thick, blue] (0,0) -- (1,-1) node[label={[label distance=0ex]left:$\scriptstyle{j}$}]{} -- (2,0);
    \draw[dashed] (2,0) -- (3,-1);
    \draw[dashed] (2,0) -- (3,1); 
    \node[dot] at (0,0) {};
    \node[dot] at (2,0) {};
  \end{tikzpicture}
  =
  \begin{tikzpicture}[baseline={(0,-0.5ex)}, scale = \scale]
    \draw[ultra thick, red] (-1,-1) node[label={[label distance=-1ex]left:$\scriptstyle{i}$}]{} -- (0,0);
    \draw[ultra thick, blue] (-1,1) node[label={[label distance=-1ex]left:$\scriptstyle{j}$}]{} -- (0,0);
    \draw[ultra thick, red] (0,0) -- (1,1) node[label={[label distance=0ex]left:$\scriptstyle{i}$}]{} -- (2,0);
    \draw[ultra thick, blue] (0,0) -- (1,-1) node[label={[label distance=0ex]left:$\scriptstyle{j}$}]{} -- (2,0);
    \draw[ultra thick, red] (2,0) -- (3,-1) node[label={[label distance=-1ex]right:$\scriptstyle{i}$}]{};
    \draw[ultra thick, blue] (2,0) -- (3,1) node[label={[label distance=-1ex]right:$\scriptstyle{j}$}]{}; 
    \node[dot] at (0,0) {};
    \node[dot] at (2,0) {};
  \end{tikzpicture}
  \qquad (i \neq j)
\end{align*}
}

Let us now consider the weights.
It remains to show that when the left and right boundary edges match (from top to bottom) then the corresponding partition function, summing over all admissible internal edge configurations, is equal to $(z_2 - q^2 z_1) (z_2 - q^{2 m} z_1)$ and that otherwise the partition function is zero. 

Using the weights from \cref{tab:R-vertices-mixed} we immediately get that
\begin{align*}
  Z\left( 
    \begin{tikzpicture}[baseline={(0,-0.5ex)}, scale = \scale]
    \draw (-1,-1) -- (0,0);
    \draw[ultra thick, red] (-1,1) node[label={[label distance=-1ex]left:$\scriptstyle{i}$}]{} -- (0,0);
    \draw (0,0) -- (1,1) -- (2,0);
    \draw[ultra thick, red] (0,0) -- (1,-1) node[label={[label distance=0ex]left:$\scriptstyle{i}$}]{} -- (2,0);
    \draw[ultra thick, red] (2,0) -- (3,1) node[label={[label distance=-1ex]right:$\scriptstyle{i}$}]{};
    \draw (2,0) -- (3,-1); 
    \node[dot] at (0,0) {};
    \node[dot] at (2,0) {};
  \end{tikzpicture} 
\right) &=
  Z \left(
    \begin{tikzpicture}[baseline={(0,-0.5ex)}, scale = \scale, yscale=-1]
    \draw (-1,-1) -- (0,0);
    \draw[ultra thick, red] (-1,1) node[label={[label distance=-1ex]left:$\scriptstyle{i}$}]{} -- (0,0);
    \draw (0,0) -- (1,1) -- (2,0);
    \draw[ultra thick, red] (0,0) -- (1,-1) node[label={[label distance=0ex]left:$\scriptstyle{i}$}]{} -- (2,0);
    \draw[ultra thick, red] (2,0) -- (3,1) node[label={[label distance=-1ex]right:$\scriptstyle{i}$}]{};
    \draw (2,0) -- (3,-1); 
    \node[dot] at (0,0) {};
    \node[dot] at (2,0) {};
  \end{tikzpicture}
    \right)
    =
    \Phi(z_2 - q^2 z_1) \cdot (z_2 - q^{2m}z_1)/\Phi = C 
    \\
  Z\left(
    \begin{tikzpicture}[baseline={(0,-0.5ex)}, scale = \scale]
    \draw[ultra thick, red] (-1,-1) node[label={[label distance=-1ex]left:$\scriptstyle{i}$}]{} -- (0,0);
    \draw[ultra thick, blue] (-1,1) node[label={[label distance=-1ex]left:$\scriptstyle{j}$}]{} -- (0,0);
    \draw[ultra thick, red] (0,0) -- (1,1) node[label={[label distance=0ex]left:$\scriptstyle{i}$}]{} -- (2,0);
    \draw[ultra thick, blue] (0,0) -- (1,-1) node[label={[label distance=0ex]left:$\scriptstyle{j}$}]{} -- (2,0);
    \draw[ultra thick, red] (2,0) -- (3,-1) node[label={[label distance=-1ex]right:$\scriptstyle{i}$}]{};
    \draw[ultra thick, blue] (2,0) -- (3,1) node[label={[label distance=-1ex]right:$\scriptstyle{j}$}]{}; 
    \node[dot] at (0,0) {};
    \node[dot] at (2,0) {};
  \end{tikzpicture}
    \right)
    &= \Phi^2 \mathfrak{X}_{j,i} (z_2 - q^2 z_1) \cdot \frac{\mathfrak{X}_{i,j}}{(q\Phi)^2}(z_2 - q^{2m}z_1) = C \qquad(i \neq j)
\end{align*}
since $\mathfrak{X}_{i,j} \mathfrak{X}_{j,i} = q^2$ for $i\neq j$.

The remaining cases involve sums over internal colors, i.e.\, color loops, where the sum are over all $m$ colors.
That \eqref{appendix:eq:mixed_R-matrices_inverses}, and therefore also the Yang--Baxter equations, holds for all $m$ can be explained by the fact that these sums turn out to be telescopic due to an intricate interplay between the two vertex weights in each state.

\begin{align*}
  \MoveEqLeft
  Z\left(
    \begin{tikzpicture}[baseline={(0,-0.5ex)}, scale = \scale]
    \draw (-1,-1) -- (0,0);
    \draw (-1,1) -- (0,0); 
    \draw (0,0) -- (1,1) -- (2,0);
    \draw (0,0) -- (1,-1) -- (2,0);
    \draw (2,0) -- (3,-1);
    \draw (2,0) -- (3,1);
    \node[dot] at (0,0) {};
    \node[dot] at (2,0) {};
  \end{tikzpicture}
  + 
  \sum_i
  \begin{tikzpicture}[baseline={(0,-0.5ex)}, scale = \scale]
    \draw (-1,-1) -- (0,0);
    \draw (-1,1) --  (0,0); 
    \draw[ultra thick, red] (0,0) -- (1,1) node[label=left:$\scriptstyle{i}$]{} -- (2,0);
    \draw[ultra thick, red] (0,0) -- (1,-1) node[label=left:$\scriptstyle{i}$]{}  -- (2,0);
    \draw (2,0) -- (3,-1);
    \draw (2,0) -- (3,1);
    \node[dot] at (0,0) {};
    \node[dot] at (2,0) {};
  \end{tikzpicture}
    \right) = \\
    &= (z_2-z_1)(z_2 - q^{2m+2}z_1) + \sum_{i=1}^m \Phi(1-q^2)z_2 \cdot \frac{1}{\Phi}q^{2\res_m(i-1)}(1-q^2)z_1 \\
    &= (z_2-z_1)(z_2 - q^{2m+2}z_1) + z_1 z_2(1-q^2)\sum_{i=1}^m  (1-q^2) q^{2(i-1)} \\
    &= (z_2-z_1)(z_2 - q^{2m+2}z_1) + z_1 z_2(1-q^2)(1-q^{2m}) = C
\end{align*}
since $\sum_{i=1}^m  (1-q^2) q^{2(i-1)} = \sum_{i=1}^m (q^{2(i-1)} - q^{2i}) = 1 - q^{2m}$.

For the next computation we first separate the term where $i=j$ and then use the fact that $k=1$ to simplify the conditions appearing in the weights in \cref{tab:R-vertices-mixed}.
\begin{align*}
  \MoveEqLeft
  Z\left(
  \begin{tikzpicture}[baseline={(0,-0.5ex)}, scale = \scale]
    \draw (-1,-1) -- (0,0);
    \draw (-1,1) -- (0,0); 
    \draw (0,0) -- (1,1) -- (2,0);
    \draw (0,0) -- (1,-1) -- (2,0);
    \draw[ultra thick, blue] (2,0) -- (3,-1) node[label={[label distance=-1ex]right:$\scriptstyle{j}$}]{};
    \draw[ultra thick, blue] (2,0) -- (3,1) node[label={[label distance=-1ex]right:$\scriptstyle{j}$}]{};
    \node[dot] at (0,0) {};
    \node[dot] at (2,0) {};
  \end{tikzpicture}
  +
  \sum_{i}
  \begin{tikzpicture}[baseline={(0,-0.5ex)}, scale = \scale]
    \draw (-1,-1) -- (0,0);
    \draw (-1,1) --  (0,0); 
    \draw[ultra thick, red] (0,0) -- (1,1) node[label=left:$\scriptstyle{i}$]{} -- (2,0);
    \draw[ultra thick, red] (0,0) -- (1,-1) node[label=left:$\scriptstyle{i}$]{}  -- (2,0);
    \draw[ultra thick, blue] (2,0) -- (3,-1) node[label={[label distance=-1ex]right:$\scriptstyle{j}$}]{};
    \draw[ultra thick, blue] (2,0) -- (3,1) node[label={[label distance=-1ex]right:$\scriptstyle{j}$}]{};
    \node[dot] at (0,0) {};
    \node[dot] at (2,0) {};
  \end{tikzpicture}
  \right) =   
   Z\left(
  \begin{tikzpicture}[baseline={(0,-0.5ex)}, scale = \scale]
    \draw (-1,-1) -- (0,0);
    \draw (-1,1) -- (0,0); 
    \draw (0,0) -- (1,1) -- (2,0);
    \draw (0,0) -- (1,-1) -- (2,0);
    \draw[ultra thick, blue] (2,0) -- (3,-1) node[label={[label distance=-1ex]right:$\scriptstyle{j}$}]{};
    \draw[ultra thick, blue] (2,0) -- (3,1) node[label={[label distance=-1ex]right:$\scriptstyle{j}$}]{};
    \node[dot] at (0,0) {};
    \node[dot] at (2,0) {};
  \end{tikzpicture}
  +
  \begin{tikzpicture}[baseline={(0,-0.5ex)}, scale = \scale]
    \draw (-1,-1) -- (0,0);
    \draw (-1,1) --  (0,0); 
    \draw[ultra thick, blue] (0,0) -- (1,1) node[label=left:$\scriptstyle{j}$]{} -- (2,0);
    \draw[ultra thick, blue] (0,0) -- (1,-1) node[label=left:$\scriptstyle{j}$]{}  -- (2,0);
    \draw[ultra thick, blue] (2,0) -- (3,-1) node[label={[label distance=-1ex]right:$\scriptstyle{j}$}]{};
    \draw[ultra thick, blue] (2,0) -- (3,1) node[label={[label distance=-1ex]right:$\scriptstyle{j}$}]{};
    \node[dot] at (0,0) {};
    \node[dot] at (2,0) {};
  \end{tikzpicture}
  +
  \sum_{i\neq j}
  \begin{tikzpicture}[baseline={(0,-0.5ex)}, scale = \scale]
    \draw (-1,-1) -- (0,0);
    \draw (-1,1) --  (0,0); 
    \draw[ultra thick, red] (0,0) -- (1,1) node[label=left:$\scriptstyle{i}$]{} -- (2,0);
    \draw[ultra thick, red] (0,0) -- (1,-1) node[label=left:$\scriptstyle{i}$]{}  -- (2,0);
    \draw[ultra thick, blue] (2,0) -- (3,-1) node[label={[label distance=-1ex]right:$\scriptstyle{j}$}]{};
    \draw[ultra thick, blue] (2,0) -- (3,1) node[label={[label distance=-1ex]right:$\scriptstyle{j}$}]{};
    \node[dot] at (0,0) {};
    \node[dot] at (2,0) {};
  \end{tikzpicture}
  \right) = \\
  &= (z_2-z_1)\cdot\frac{1}{\Phi}q^{2\res^m(1-j)-2}(1-q^2)z_2 + \Phi(1-q^2)z_2 \cdot (q^{2m-2} z_1 - z_2)/\Phi^2  + {}\\
  &\quad +
  \sum_{i\neq j} \Phi(1-q^2)z_2 \cdot \frac{1}{\Phi^2} q^{2\res_m(i-j)-2}(1-q^2) \Bigl\{ \begin{smallmatrix} z_1 & \text{if } i < j \\ z_2 & \text{if }j < i\end{smallmatrix}  .
\end{align*}
Since
\begin{multline*}
  \sum_{i\neq j} q^{2\res_m(i-j)-2}(1-q^2) \Bigl\{ \begin{smallmatrix} z_1 & \text{if } i < j \\ z_2 & \text{if }j < i\end{smallmatrix} = z_1 \sum_{i<j} q^{2(m+i-j)-2}(1-q^2) + z_2 \sum_{i>j} q^{2(i-j)-2}(1-q^2) = \\
  = z_1 \bigl( q^{2(m+1-j)-2} - q^{2(m+j-1-j)} \bigr) + z_2 \bigl( q^{2(j+1-j)-2)} - q^{2(m-j)} \bigr)
\end{multline*}
and $\res^m(1-j) = m+1-j$ for $j \in \{1,\ldots,m\}$ we get that the partition function equals
\begin{multline*}
  \frac{1}{\Phi}(1-q^2)z_2 \Bigl( (z_2-z_1)q^{2(m-j)} + (q^{2m-2} z_1 - z_2) + z_1 \bigl( q^{2(m-j)} - q^{2(m-1)} \bigr) + z_2 \bigl( 1 - q^{2(m-j)} \bigr) \Bigr) = 0.
\end{multline*}

The mirrored case is similar.
\begin{align*}
  \MoveEqLeft
  Z\left(
  \begin{tikzpicture}[baseline={(0,-0.5ex)}, scale = \scale]
    \draw[ultra thick, red] (-1,-1) node[label={[label distance=-1ex]left:$\scriptstyle{i}$}]{} -- (0,0);
    \draw[ultra thick, red] (-1,1) node[label={[label distance=-1ex]left:$\scriptstyle{i}$}]{} -- (0,0);
    \draw (0,0) -- (1,1) -- (2,0);
    \draw (0,0) -- (1,-1) -- (2,0);
    \draw (2,0) -- (3,-1);
    \draw (2,0) -- (3,1);
    \node[dot] at (0,0) {};
    \node[dot] at (2,0) {};
  \end{tikzpicture}
  +
  \sum_l
  \begin{tikzpicture}[baseline={(0,-0.5ex)}, scale = \scale]
    \draw[ultra thick, red] (-1,-1) node[label={[label distance=-1ex]left:$\scriptstyle{i}$}]{} -- (0,0);
    \draw[ultra thick, red] (-1,1) node[label={[label distance=-1ex]left:$\scriptstyle{i}$}]{} -- (0,0);
    \draw[ultra thick, green] (0,0) -- (1,1) node[label=left:$\scriptstyle{l}$]{} -- (2,0);
    \draw[ultra thick, green] (0,0) -- (1,-1) node[label=left:$\scriptstyle{l}$]{}  -- (2,0);
    \draw (2,0) -- (3,-1);
    \draw (2,0) -- (3,1);
    \node[dot] at (0,0) {};
    \node[dot] at (2,0) {};
  \end{tikzpicture}
    \right) = 
  Z\left(
  \begin{tikzpicture}[baseline={(0,-0.5ex)}, scale = \scale]
    \draw[ultra thick, red] (-1,-1) node[label={[label distance=-1ex]left:$\scriptstyle{i}$}]{} -- (0,0);
    \draw[ultra thick, red] (-1,1) node[label={[label distance=-1ex]left:$\scriptstyle{i}$}]{} -- (0,0);
    \draw (0,0) -- (1,1) -- (2,0);
    \draw (0,0) -- (1,-1) -- (2,0);
    \draw (2,0) -- (3,-1);
    \draw (2,0) -- (3,1);
    \node[dot] at (0,0) {};
    \node[dot] at (2,0) {};
  \end{tikzpicture}
  +
  \begin{tikzpicture}[baseline={(0,-0.5ex)}, scale = \scale]
    \draw[ultra thick, red] (-1,-1) node[label={[label distance=-1ex]left:$\scriptstyle{i}$}]{} -- (0,0);
    \draw[ultra thick, red] (-1,1) node[label={[label distance=-1ex]left:$\scriptstyle{i}$}]{} -- (0,0);
    \draw[ultra thick, red] (0,0) -- (1,1) node[label=left:$\scriptstyle{i}$]{} -- (2,0);
    \draw[ultra thick, red] (0,0) -- (1,-1) node[label=left:$\scriptstyle{i}$]{}  -- (2,0);
    \draw (2,0) -- (3,-1);
    \draw (2,0) -- (3,1);
    \node[dot] at (0,0) {};
    \node[dot] at (2,0) {};
  \end{tikzpicture}
  +
  \sum_{l \neq i}
  \begin{tikzpicture}[baseline={(0,-0.5ex)}, scale = \scale]
    \draw[ultra thick, red] (-1,-1) node[label={[label distance=-1ex]left:$\scriptstyle{i}$}]{} -- (0,0);
    \draw[ultra thick, red] (-1,1) node[label={[label distance=-1ex]left:$\scriptstyle{i}$}]{} -- (0,0);
    \draw[ultra thick, green] (0,0) -- (1,1) node[label=left:$\scriptstyle{l}$]{} -- (2,0);
    \draw[ultra thick, green] (0,0) -- (1,-1) node[label=left:$\scriptstyle{l}$]{}  -- (2,0);
    \draw (2,0) -- (3,-1);
    \draw (2,0) -- (3,1);
    \node[dot] at (0,0) {};
    \node[dot] at (2,0) {};
  \end{tikzpicture}
  \right) = \\
  &= \Phi(1-q^2)z_1 \cdot (z_2 - q^{2m+2} z_1) + \Phi^2(q^4 z_1 - z_2) \cdot \frac{1}{\Phi} q^{2\res_m(i-1)} (1-q^2) z_1 + {} \\
  & \quad + \sum_{l\neq i} -\Phi^2(1-q^2) \left\{\begin{smallmatrix*}[l] q^2 z_1 & \text{if } i < l \\ z_2 & \text{if } l < i \end{smallmatrix*} \right\} \cdot \frac{1}{\Phi} q^{2\res_m(l-1)}(1-q^2)z_1 \\
  &= \Phi(1-q^2) z_1 \Biggl(z_2 - q^{2m+2} z_1 + (q^4 z_1 - z_2)q^{2(i-1)} + {} \\[-0.5em]
  & \hspace{3cm} - z_2 \sum_{l<i} (1-q^2) q^{2(l-1)} - q^2 z_1 \sum_{l>i} (1-q^2) q^{2(l-1)} \Biggr) \\
  &= \Phi(1-q^2) z_1 \Bigl(z_2 - q^{2m+2} z_1 + (q^4 z_1 - z_2)q^{2(i-1)} - z_2(1-q^{2(i-1)}) - q^2 z_1 (q^{2i} - q^{2m}) \Bigr) = 0.
\end{align*}

Finally, the last remaining partition function is
\begin{align*}
  Z\left(
    \begin{tikzpicture}[baseline={(0,-0.5ex)}, scale = \scale]
    \draw[ultra thick, red] (-1,-1) node[label={[label distance=-1ex]left:$\scriptstyle{i}$}]{} -- (0,0);
    \draw[ultra thick, red] (-1,1) node[label={[label distance=-1ex]left:$\scriptstyle{i}$}]{} -- (0,0);
    \draw (0,0) -- (1,1) -- (2,0);
    \draw (0,0) -- (1,-1) -- (2,0);
    \draw[ultra thick, blue] (2,0) -- (3,-1) node[label={[label distance=-1ex]right:$\scriptstyle{j}$}]{};
    \draw[ultra thick, blue] (2,0) -- (3,1) node[label={[label distance=-1ex]right:$\scriptstyle{j}$}]{};
    \node[dot] at (0,0) {};
    \node[dot] at (2,0) {};
  \end{tikzpicture}
  +
  \sum_{l}
  \begin{tikzpicture}[baseline={(0,-0.5ex)}, scale = \scale]
    \draw[ultra thick, red] (-1,-1) node[label={[label distance=-1ex]left:$\scriptstyle{i}$}]{} -- (0,0);
    \draw[ultra thick, red] (-1,1) node[label={[label distance=-1ex]left:$\scriptstyle{i}$}]{} -- (0,0);
    \draw[ultra thick, green] (0,0) -- (1,1) node[label=left:$\scriptstyle{l}$]{} -- (2,0);
    \draw[ultra thick, green] (0,0) -- (1,-1) node[label=left:$\scriptstyle{l}$]{}  -- (2,0);
    \draw[ultra thick, blue] (2,0) -- (3,-1) node[label={[label distance=-1ex]right:$\scriptstyle{j}$}]{};
    \draw[ultra thick, blue] (2,0) -- (3,1) node[label={[label distance=-1ex]right:$\scriptstyle{j}$}]{};    \node[dot] at (0,0) {};
    \node[dot] at (2,0) {};
  \end{tikzpicture}
  \right).
\end{align*}
We will split this computation into two cases: $i=j$ and $i\neq j$.
For $i = j$ we get that the partition function equals
\begin{align*}
  \MoveEqLeft
  Z\left(
    \begin{tikzpicture}[baseline={(0,-0.5ex)}, scale = \scale]
    \draw[ultra thick, red] (-1,-1) node[label={[label distance=-1ex]left:$\scriptstyle{i}$}]{} -- (0,0);
    \draw[ultra thick, red] (-1,1) node[label={[label distance=-1ex]left:$\scriptstyle{i}$}]{} -- (0,0);
    \draw (0,0) -- (1,1) -- (2,0);
    \draw (0,0) -- (1,-1) -- (2,0);
    \draw[ultra thick, red] (2,0) -- (3,-1) node[label={[label distance=-1ex]right:$\scriptstyle{i}$}]{};
    \draw[ultra thick, red] (2,0) -- (3,1) node[label={[label distance=-1ex]right:$\scriptstyle{i}$}]{};
    \node[dot] at (0,0) {};
    \node[dot] at (2,0) {};
  \end{tikzpicture}
  +
  \begin{tikzpicture}[baseline={(0,-0.5ex)}, scale = \scale]
    \draw[ultra thick, red] (-1,-1) node[label={[label distance=-1ex]left:$\scriptstyle{i}$}]{} -- (0,0);
    \draw[ultra thick, red] (-1,1) node[label={[label distance=-1ex]left:$\scriptstyle{i}$}]{} -- (0,0);
    \draw[ultra thick, red] (0,0) -- (1,1) node[label=left:$\scriptstyle{i}$]{} -- (2,0);
    \draw[ultra thick, red] (0,0) -- (1,-1) node[label=left:$\scriptstyle{i}$]{}  -- (2,0);
    \draw[ultra thick, red] (2,0) -- (3,-1) node[label={[label distance=-1ex]right:$\scriptstyle{i}$}]{};
    \draw[ultra thick, red] (2,0) -- (3,1) node[label={[label distance=-1ex]right:$\scriptstyle{i}$}]{};
    \node[dot] at (0,0) {};
    \node[dot] at (2,0) {};
  \end{tikzpicture}
  +
  \sum_{l\neq i}
  \begin{tikzpicture}[baseline={(0,-0.5ex)}, scale = \scale]
    \draw[ultra thick, red] (-1,-1) node[label={[label distance=-1ex]left:$\scriptstyle{i}$}]{} -- (0,0);
    \draw[ultra thick, red] (-1,1) node[label={[label distance=-1ex]left:$\scriptstyle{i}$}]{} -- (0,0);
    \draw[ultra thick, green] (0,0) -- (1,1) node[label=left:$\scriptstyle{l}$]{} -- (2,0);
    \draw[ultra thick, green] (0,0) -- (1,-1) node[label=left:$\scriptstyle{l}$]{}  -- (2,0);
    \draw[ultra thick, red] (2,0) -- (3,-1) node[label={[label distance=-1ex]right:$\scriptstyle{i}$}]{};
    \draw[ultra thick, red] (2,0) -- (3,1) node[label={[label distance=-1ex]right:$\scriptstyle{i}$}]{};
    \node[dot] at (0,0) {};
    \node[dot] at (2,0) {};
  \end{tikzpicture}
  \right) = {} 
  \\
  &= \Phi(1-q^2)z_1 \cdot \frac{1}{\Phi} q^{2\res^m(1-i)-2}(1-q^2)z_2 + \Phi^2(q^4z_1 - z_2) \cdot (q^{2m-2}z_1 - z_2)/\Phi^2 + {} \\
  & \quad + \sum_{l\neq i} -\Phi^2(1-q^2) \Bigl\{ \begin{smallmatrix*}[l] q^2 z_1 & \text{if } i < l \\ z_2 & \text{if } l < i \end{smallmatrix*} \Bigr\} \cdot \frac{1}{\Phi^2}q^{2\res_m(l-i)-2} (1-q^2) \Bigl\{ \begin{smallmatrix*}[l] z_1 & \text{if } l < i \\ z_2 & \text{if } i < l \end{smallmatrix*} 
  \\
  &= (1-q^2)z_1 \cdot q^{2(m+1-i)-2}(1-q^2)z_2 + (q^4z_1 - z_2) \cdot (q^{2m-2}z_1 - z_2) + {} \\
  & \quad - (1-q^2) z_1 z_2 \sum_{l\neq i} (1-q^2) q^{2\res_m(l-i)-2} \Bigl\{ \begin{smallmatrix*}[l] q^2 & \text{if } i < l \\ 1 & \text{if } l < i \end{smallmatrix*} 
\end{align*}
where we have used that $\res^m(1-i) = m+1-i$ for $i \in \{1, \ldots m\}$.
We split up the sum into two parts to obtain
\begin{multline*}
  \sum_{l\neq i} (1-q^2) q^{2\res_m(l-i)-2} \Bigl\{ \begin{smallmatrix*}[l] q^2 & \text{if } i < l \\ 1 & \text{if } l < i \end{smallmatrix*} = \sum_{l<i} (1-q^2) q^{2(m+l-i)-2} + \sum_{l>i} (1-q^2) q^{2(l-i)} = \\
    = q^{2(m+1-i)-2} - q^{2(m+i-1-i)} + q^{2(i+1-i)} - q^{2(m-i)+2}.
\end{multline*}
Thus we obtain the partition function
\begin{align*}
&= (1-q^2)z_1 \cdot q^{2(m-i)}(1-q^2)z_2 + (q^4z_1 - z_2) \cdot (q^{2m-2}z_1 - z_2) + {} \\
  & \quad - (1-q^2) z_1 z_2 (q^{2(m-i)} - q^{2(m-1)} + q^{2} - q^{2(m-i)+2}) = C.
\end{align*}

For $i \neq j$ we get that the partition function equals
\begin{align*}
  \MoveEqLeft
  Z\left(
    \begin{tikzpicture}[baseline={(0,-0.5ex)}, scale = \scale]
    \draw[ultra thick, red] (-1,-1) node[label={[label distance=-1ex]left:$\scriptstyle{i}$}]{} -- (0,0);
    \draw[ultra thick, red] (-1,1) node[label={[label distance=-1ex]left:$\scriptstyle{i}$}]{} -- (0,0);
    \draw (0,0) -- (1,1) -- (2,0);
    \draw (0,0) -- (1,-1) -- (2,0);
    \draw[ultra thick, blue] (2,0) -- (3,-1) node[label={[label distance=-1ex]right:$\scriptstyle{j}$}]{};
    \draw[ultra thick, blue] (2,0) -- (3,1) node[label={[label distance=-1ex]right:$\scriptstyle{j}$}]{};
    \node[dot] at (0,0) {};
    \node[dot] at (2,0) {};
  \end{tikzpicture}
  +
  \begin{tikzpicture}[baseline={(0,-0.5ex)}, scale = \scale]
    \draw[ultra thick, red] (-1,-1) node[label={[label distance=-1ex]left:$\scriptstyle{i}$}]{} -- (0,0);
    \draw[ultra thick, red] (-1,1) node[label={[label distance=-1ex]left:$\scriptstyle{i}$}]{} -- (0,0);
    \draw[ultra thick, red] (0,0) -- (1,1) node[label=left:$\scriptstyle{i}$]{} -- (2,0);
    \draw[ultra thick, red] (0,0) -- (1,-1) node[label=left:$\scriptstyle{i}$]{}  -- (2,0);
    \draw[ultra thick, blue] (2,0) -- (3,-1) node[label={[label distance=-1ex]right:$\scriptstyle{j}$}]{};
    \draw[ultra thick, blue] (2,0) -- (3,1) node[label={[label distance=-1ex]right:$\scriptstyle{j}$}]{};
    \node[dot] at (0,0) {};
    \node[dot] at (2,0) {};
  \end{tikzpicture}
  +
  \begin{tikzpicture}[baseline={(0,-0.5ex)}, scale = \scale]
    \draw[ultra thick, red] (-1,-1) node[label={[label distance=-1ex]left:$\scriptstyle{i}$}]{} -- (0,0);
    \draw[ultra thick, red] (-1,1) node[label={[label distance=-1ex]left:$\scriptstyle{i}$}]{} -- (0,0);
    \draw[ultra thick, blue] (0,0) -- (1,1) node[label=left:$\scriptstyle{j}$]{} -- (2,0);
    \draw[ultra thick, blue] (0,0) -- (1,-1) node[label=left:$\scriptstyle{j}$]{}  -- (2,0);
    \draw[ultra thick, blue] (2,0) -- (3,-1) node[label={[label distance=-1ex]right:$\scriptstyle{j}$}]{};
    \draw[ultra thick, blue] (2,0) -- (3,1) node[label={[label distance=-1ex]right:$\scriptstyle{j}$}]{};
    \node[dot] at (0,0) {};
    \node[dot] at (2,0) {};
  \end{tikzpicture}
  +
  \sum_{l\neq i,j}
  \begin{tikzpicture}[baseline={(0,-0.5ex)}, scale = \scale]
    \draw[ultra thick, red] (-1,-1) node[label={[label distance=-1ex]left:$\scriptstyle{i}$}]{} -- (0,0);
    \draw[ultra thick, red] (-1,1) node[label={[label distance=-1ex]left:$\scriptstyle{i}$}]{} -- (0,0);
    \draw[ultra thick, green] (0,0) -- (1,1) node[label=left:$\scriptstyle{l}$]{} -- (2,0);
    \draw[ultra thick, green] (0,0) -- (1,-1) node[label=left:$\scriptstyle{l}$]{}  -- (2,0);
    \draw[ultra thick, blue] (2,0) -- (3,-1) node[label={[label distance=-1ex]right:$\scriptstyle{j}$}]{};
    \draw[ultra thick, blue] (2,0) -- (3,1) node[label={[label distance=-1ex]right:$\scriptstyle{j}$}]{};
    \node[dot] at (0,0) {};
    \node[dot] at (2,0) {};
  \end{tikzpicture}
  \right) = {} 
  \\
  &= \Phi(1-q^2)z_1 \cdot \frac{1}{\Phi} q^{2\res^m(1-j)-2}(1-q^2)z_2 + {} \\
  & \quad + \Phi^2(q^4z_1 - z_2) \cdot \frac{1}{\Phi^2} q^{2\res_m(i-j)-2}(1-q^2) \Bigl\{ \begin{smallmatrix*}[l] z_1 & \text{if } i < j \\ z_2 & \text{if } j < i \end{smallmatrix*} \Bigr\} + {} \\
  & \quad - \Phi^2(1-q^2) \Bigl\{ \begin{smallmatrix*}[l] q^2 z_1 & \text{if } i < j \\ z_2 & \text{if } j < i \end{smallmatrix*} \Bigr\} \cdot (q^{2m-2} z_1 - z_2)/\Phi^2 + {} \\
  & \quad + \sum_{l\neq i,j} -\Phi^2(1-q^2) \Bigl\{ \begin{smallmatrix*}[l] q^2 z_1 & \text{if } i < l \\ z_2 & \text{if } l < i \end{smallmatrix*} \Bigr\} \cdot \frac{1}{\Phi^2}q^{2\res_m(l-j)-2} (1-q^2) \Bigl\{ \begin{smallmatrix*}[l] z_1 & \text{if } l < j \\ z_2 & \text{if } j < l \end{smallmatrix*} 
  \\
  &= (1-q^2) \Biggl( z_1  q^{2(m-j)}(1-q^2)z_2 + (q^4z_1 - z_2)  q^{2(i-j)-2} \Bigl\{ \begin{smallmatrix*}[l] q^{2m} z_1 & \text{if } i < j \\ z_2 & \text{if } j < i \end{smallmatrix*} \Bigr\} + {} \\
  & \quad -  \Bigl\{ \begin{smallmatrix*}[l] q^2 z_1 & \text{if } i < j \\ z_2 & \text{if } j < i \end{smallmatrix*} \Bigr\} (q^{2m-2} z_1 - z_2) - 
\underbrace{\sum_{l\neq i,j} \Bigl\{ \begin{smallmatrix*}[l] q^2 z_1 & \text{if } i < l \\ z_2 & \text{if } l < i \end{smallmatrix*} \Bigr\} q^{2(l-j)-2} (1-q^2) \Bigl\{ \begin{smallmatrix*}[l] q^{2m} z_1 & \text{if } l < j \\ z_2 & \text{if } j < l \end{smallmatrix*}\Bigr\}}_{A} \Biggr).
\end{align*}
Let us simplify the last sum $A$.
For $i<j$ we split the sum over $l$ into the cases: $l<i<j$, $i<l<j$ and $i<j<l$.
We then get that
\begin{align*}
  A &\stackrel{\mathclap{i<j}}{=} q^{2m-2j-2} z_1 z_2 \sum_{l<i<j} q^{2l}(1-q^2) + q^{2m-2j} z_1^2 \sum_{i<l<j} q^{2l}(1-q^2) + q^{-2j} z_1 z_2 \sum_{i<j<l} q^{2l}(1-q^2) \\
  &= q^{2m-2j-2} z_1 z_2 (q^2 - q^{2i}) + q^{2m-2j} z_1^2 (q^{2(i+1)} - q^{2j}) + q^{-2j} z_1 z_2 (q^{2(j+1)} - q^{2m+2}). 
\end{align*}
For $j<i$ we split $A$ into the cases: $l<j<i$, $j<l<i$ and $j<i<l$, and get that
\begin{align*}
  A &\stackrel{\mathclap{j<i}}{=} q^{2m-2j-2}z_1z_2  \sum_{l<j<i} q^{2l}(1-q^2) +  q^{-2j-2} z_2^2 \sum_{j<l<i} q^{2l}(1-q^2) + q^{-2j}z_1z_2 \sum_{j<i<l} q^{2l}(1-q^2) \\
  &= q^{2m-2j-2}z_1z_2 (q^2 - q^{2j}) +  q^{-2j-2} z_2^2 (q^{2(j+1)} - q^{2i}) + q^{-2j}z_1z_2 (q^{2(i+1)} - q^{2m+2}).
\end{align*}
After collecting all the terms this shows that the partition function is zero for both $i<j$ and $j<i$, and this completes the proof.

\bibliographystyle{hyperalpha} 
\bibliography{gammadelta}

\newcommand{\etalchar}[1]{$^{#1}$}
\begin{thebibliography}{BBBG24b}

\bibitem[BBB19]{BBB}
Ben Brubaker, Valentin Buciumas, and Daniel Bump.
\newblock A {Y}ang-{B}axter equation for metaplectic ice. {A}ppendix
  (\cite{BBBGray}) joint with {N}athan {G}ray.
\newblock {\em Commun. Number Theory Phys.}, 13(1):101--148, 2019.

\bibitem[BBBF18]{BBBF}
Ben Brubaker, Valentin Buciumas, Daniel Bump, and Solomon Friedberg.
\newblock Hecke modules from metaplectic ice.
\newblock {\em Selecta Math. (N.S.)}, 24(3):2523--2570, 2018.

\bibitem[BBBG17]{BBBGray}
Ben Brubaker, Valentin Buciumas, Daniel Bump, and Nathan Gray.
\newblock Duality for metaplectic ice (appendix to \cite{BBB}).
\newblock 2017,  \href{http://arxiv.org/abs/1709.06500}{{\ttfamily
  arXiv:1709.06500}}.

\bibitem[BBBG21]{BBBG:demazure}
Ben Brubaker, Valentin Buciumas, Daniel Bump, and Henrik P.~A. Gustafsson.
\newblock Colored five-vertex models and {D}emazure atoms.
\newblock {\em J. Combin. Theory Ser. A}, 178:Paper No. 105354, 48, 2021.

\bibitem[BBBG24a]{BBBG:Iwahori}
Ben Brubaker, Valentin Buciumas, Daniel Bump, and Henrik P.~A. Gustafsson.
\newblock Colored vertex models and {I}wahori {W}hittaker functions.
\newblock {\em Selecta Math. (N.S.)}, 30(4):Paper No. 78, 58, 2024.

\bibitem[BBBG24b]{BBBG:duality}
Ben Brubaker, Valentin Buciumas, Daniel Bump, and Henrik P.~A. Gustafsson.
\newblock Iwahori-metaplectic duality.
\newblock {\em J. Lond. Math. Soc. (2)}, 109(6), 2024.

\bibitem[BBBG24c]{BBBG:metahori}
Ben Brubaker, Valentin Buciumas, Daniel Bump, and Henrik P.~A. Gustafsson.
\newblock {Metaplectic Iwahori Whittaker functions and supersymmetric lattice
  models}.
\newblock 2024,  \href{http://arxiv.org/abs/2012.15778}{{\ttfamily
  arXiv:2012.15778}}.

\bibitem[BBC{\etalchar{+}}12]{BBCFG}
Ben Brubaker, Daniel Bump, Gautam Chinta, Solomon Friedberg, and Paul~E.
  Gunnells.
\newblock Metaplectic ice.
\newblock In {\em Multiple {D}irichlet series, {L}-functions and automorphic
  forms}, volume 300 of {\em Progr. Math.}, pages 65--92.
  Birkh\"{a}user/Springer, New York, 2012.

\bibitem[BBF11a]{BBF}
Ben Brubaker, Daniel Bump, and Solomon Friedberg.
\newblock Schur polynomials and the {Y}ang-{B}axter equation.
\newblock {\em Comm. Math. Phys.}, 308(2):281--301, 2011.

\bibitem[BBF11b]{BBF:orange}
Ben Brubaker, Daniel Bump, and Solomon Friedberg.
\newblock {\em Weyl group multiple {D}irichlet series: type {A} combinatorial
  theory}, volume 175 of {\em Annals of Mathematics Studies}.
\newblock Princeton University Press, Princeton, NJ, 2011.

\bibitem[BBG25]{BBG:vertical}
Ben Brubaker, Daniel Bump, and Henrik P.~A. Gustafsson.
\newblock Quantum superalgebras and the free-fermionic {Y}ang-{B}axter
  equation.
\newblock 2025,  \href{http://arxiv.org/abs/2503.24189}{{\ttfamily
  arXiv:2503.24189}}.

\bibitem[BK72]{BenderKnuth}
Edward~A. Bender and Donald~E. Knuth.
\newblock Enumeration of plane partitions.
\newblock {\em J. Combinatorial Theory Ser. A}, 13:40--54, 1972.

\bibitem[BN24]{BumpNaprienko}
Daniel Bump and Slava Naprienko.
\newblock Colored {B}osonic models and matrix coefficients.
\newblock {\em Commun. Number Theory Phys.}, 18(2):441--484, 2024.

\bibitem[BS22]{Buciumas-Scrimshaw}
Valentin Buciumas and Travis Scrimshaw.
\newblock Quasi-solvable lattice models for {${\rm Sp}_{2n}$} and {${\rm
  SO}_{2n+1}$} {D}emazure atoms and characters.
\newblock {\em Forum Math. Sigma}, 10:Paper No. e53, 34, 2022.

\bibitem[BW22]{Borodin-Wheeler}
Alexei Borodin and Michael Wheeler.
\newblock Nonsymmetric {M}acdonald polynomials via integrable vertex models.
\newblock {\em Trans. Amer. Math. Soc.}, 375(12):8353--8397, 2022.

\bibitem[Gra17]{Gray:thesis}
Nathan~Tyler Gray.
\newblock {\em Metaplectic {I}ce for {C}artan {T}ype {C}}.
\newblock ProQuest LLC, Ann Arbor, MI, 2017.
\newblock Thesis (Ph.D.)--University of Minnesota.

\bibitem[Kas90]{Kashiwara}
Masaki Kashiwara.
\newblock Crystalizing the {$q$}-analogue of universal enveloping algebras.
\newblock {\em Comm. Math. Phys.}, 133(2):249--260, 1990.

\bibitem[KB95]{KirillovBerenstein}
A.~N. Kirillov and A.~D. Berenstein.
\newblock Groups generated by involutions, {G}el'fand-{T}setlin patterns, and
  combinatorics of {Y}oung tableaux.
\newblock {\em Algebra i Analiz}, 7(1):92--152, 1995.

\bibitem[KS97]{Klimyk-Schmudgen}
Anatoli Klimyk and Konrad Schm\"udgen.
\newblock {\em Quantum groups and their representations}.
\newblock Texts and Monographs in Physics. Springer-Verlag, Berlin, 1997.

\bibitem[Lus90]{Lusztig}
G.~Lusztig.
\newblock Canonical bases arising from quantized enveloping algebras.
\newblock {\em J. Amer. Math. Soc.}, 3(2):447--498, 1990.

\bibitem[Nap22]{Naprienko22}
Slava Naprienko.
\newblock {I}ntegrability of the six-vertex model and the {Y}ang-{B}axter
  groupoid, 2022,  \href{http://arxiv.org/abs/2210.14883}{{\ttfamily
  arXiv:2210.14883}}.

\bibitem[Tsa90]{Tsaranov_Coxeter_monoids}
S.~V. Tsaranov.
\newblock Representation and classification of {C}oxeter monoids.
\newblock {\em European J. Combin.}, 11(2):189--204, 1990.

\end{thebibliography}

\end{document}